\newtheorem{theorem}{Theorem}[section]
\newtheorem{lemma}[theorem]{Lemma}
\newtheorem{proposition}[theorem]{Proposition}
\newtheorem{definition}[theorem]{Definition}
\theoremstyle{remark}
\newtheorem{remark}[theorem]{\it \bf{Remark}\/}
\numberwithin{equation}{section}
\def\section{\@startsection{section}{1}%
  \z@{1.5\linespacing\@plus\linespacing}{.5\linespacing}%
  {\normalfont\bfseries\large\centering}}
\newcommand{\be}{\begin{equation}}
\newcommand{\ee}{\end{equation}}
\newcommand{\bea}{\begin{eqnarray}}
\newcommand{\eea}{\end{eqnarray}}
\newcommand{\bee}{\begin{eqnarray*}}
\newcommand{\eee}{\end{eqnarray*}}
\def\pa{\partial}
\def\RR{\mathbb{R}}
\def\fref#1{{\rm (\ref{#1})}}
\def\tn{\tilde{n}}
\def\tm{\tilde{m}}
\def\supess{\mathop{\operator@font Sup\,ess}}
\def\bt{\tilde{b}}
\def\RR{\mathbb{R}}
\def\Mod{\rm Mod}
\def\e{\varepsilon}
\def\fref#1{{\rm (\ref{#1})}}
\def\R2+{\RR ^2_+}
\def\hb{\hat{b}}
\def\bh{\hat{b}}
\def\lsl{\frac{\lambda_s}{\lambda}}
\def\tt{\tilde{T}}
\def\pa{\partial}
\def\lim{\mathop{\rm lim}}
\def\R{\Bbb R}
\def\e{\varepsilon}
\def\l{\lambda}
\def\log{{\rm log}}
\def\et{\tilde{\e}}
\def\lsl{\frac{\lambda_s}{\lambda}}
\def\Psit{\tilde{\Psi}}
\def\ut{\tilde{u}}
\def\qbt{\tilde{Q}_{b}}
\def\tPsi{\tilde{\Psi}}
\def\Phit{\tilde{\Phi}}
\def\tPhi{\tilde{\Phi}}
\def\pa{\partial}
\def\wh{\hat{w}}
\def\zb{\zeta_{\rm big}}
\def\zs{\z_{\rm sm}}
\def\et{\tilde{\e}}
\def\eh{\hat{\e}}
\def\matchal{\mathcal}
\def\pa{\partial}
\def\psit{\tilde{\psi}}
\def\z{\zeta}
\def\hw{\hat{w}}
\def\hF{\hat{F}}
\title[]{On the stability of critical chemotactic aggregation}
\author[P. Rapha\"el]{Pierre Rapha\"el}
\address{Laboratoire J.A Dieudonn\'e, Universit\'e de Nice-Sophia Antipolis, and Institut Universitaire de France, }
\email{praphael@math.unice.fr}
\author[R. Schweyer]{R\'emi Schweyer}
\address{Institut de Math\'ematiques de Toulouse, Universit\'e Toulouse III, France}
\email{remi.schweyer@math.univ-toulouse.fr}
\begin{document}
\maketitle

\begin{abstract}

We consider the two dimensional parabolic-elliptic Patlak-Keller-Segel model of chemotactic aggregation for radially symmetric initial data. We show the existence of a stable mechanism of singularity formation and obtain a complete description of the associated aggregation process.

\end{abstract}

%%%%%%%%%%%%%%%%%%%%%%%%%%%%
%%%%%%%%%%%%%%%%%%%%%%%%%%%%

\section{Introduction}

%%%%%%%%%%%%%%%%%%%%%%%%%%%%
%%%%%%%%%%%%%%%%%%%%%%%%%%%%

%%%%%%%%%%%%%%%%%%%%%%%%%%%%
%%%%%%%%%%%%%%%%%%%%%%%%%%%%

\subsection{Setting of the problem}

%%%%%%%%%%%%%%%%%%%%%%%%%%%%
%%%%%%%%%%%%%%%%%%%%%%%%%%%%
 
We consider the two dimensional Patlak-Keller-Segel problem:
\be
\label{kps}
(PKS)\ \ \left\{\begin{array}{lll}
\pa_tu=\nabla\cdot(\nabla u+u\nabla \phi_u),\\
\phi_u=\frac{1}{2\pi}\log |x|\star u\\
u_{|t=0}=u_0> 0
\end{array} \right .\ \ (t,x)\in \Bbb R\times \Bbb R^2,\
\ee
This degenerate nonlocal diffusion equation is a canonical limit of kinetic models of particles evolving through a nonlocal attractive force, and is one of the model which arises in the description of colonies of bacteria, \cite{KS}, \cite{Patlak}. This system has attracted a considerable attention for the past twenty years in the mathematical community, in particular in connection with the local regularity of weak solutions, the qualitative behavior of solutions and the possibility of finite time blow up for large enough data corresponding to the aggregation of bacteria. We refer to \cite{BCC} for an extensive introduction to the literature on these subjects. Our point of view in this paper is to replace the problem within the sets of {\it critical blow up problems} which for both parabolic and dispersive problems have also attracted a considerable attention for the past ten years.\\ 
The existence of unique local smooth solutions in some suitable Sobolev sense can be obtained from standard fixed point arguments as in \cite{Nagaired}. The corresponding non negative strong solution satisfies the conservation of mass 
\be
\label{consmasse}
\int_{\Bbb R^2}u(t,x)dx=\int_{\Bbb R^2}u(0,x)dx
\ee
and the flow dissipates the logarithmically degenerate entropy: 
\be
\label{deacyetnropy}
\matchal F(u)=\int u\log u+\frac12\int u\phi_u\leq \matchal F(u_0).
\ee
The scaling symmetry $$u_\l(t,x)=\l^2u(\l^2 t,\l x)$$ leaves the $L^1$ norm unchanged $$\int_{\Bbb R^2}u_\l(t,x)dx=\int_{\Bbb R^2}u(\l^2 t,x)dx$$ and hence the problem is $L^1$ critical. Note from \fref{deacyetnropy} that the problem is also {\it almost energy critical} and therefore particularly degenerate.

%%%%%%%%%%%%%%%%%%%%%%%%%%%%
%%%%%%%%%%%%%%%%%%%%%%%%%%%%

\subsection{Previous results}

%%%%%%%%%%%%%%%%%%%%%%%%%%%%
%%%%%%%%%%%%%%%%%%%%%%%%%%%%

A fundamental role is played for the analysis by the explicit {\it ground state} stationary solution 
 \be
 \label{defQ}
 Q(x)=\frac{8}{(1+|x|^2)^2}.
 \ee
 From \cite{Beckner}, \cite{CL},  $Q$ is up to symmetry the unique minimizer of the logarithmic Hardy-Littlewood-Sobolev inequality:
 $\forall u\geq 0$ with $\int u=M$, 
\be
\label{logsobolev}
\int u\log u+\frac{4\pi}{M}\int \phi_u u\geq M\left[1+\log\pi-\log M\right].
\ee

 For smooth well localized data with small mass $\int u_0<\int Q$, the flow is global and zero is the universal local attractor, see \cite{DNR}, \cite{BDP}, \cite{PV}, and in this sense $Q$ is the first nonlinear object. For data above the ground state $\int u_0>\int Q$, the virial type identity $$\frac{d}{dt}\int |x|^2u(t,x)dx=4\left(1-\frac{\int u}{\int Q}\right)\int u$$ implies that all smooth well localized data {\it blow up} in finite time. Note however that as usual, this argument is very unstable by perturbation of the equation, and provides  almost no insight into the structure of the singularity formation.\\
 Substantial progress have been made in the critical case $\int u=\int Q$. In \cite{BCM}, finite variance initial data $\int |x|^2u_0<+\infty$ with minimal mass are shown to grow up in {\it infinite time}. The argument is again by contradiction and does not give the associated blow up rate. More dynamical results for domains are obtained in \cite{KavSou}. In \cite{BCC}, it is shown that infinite variance initial data $\int |x|^2u_0=+\infty$ exhibit a completely different behavior and generate a global flow asymptotically attracted by the soliton \fref{defQ}, see also \cite{CF} for quantitative convergence rates. The proof involves the use of an additional Lyapounov functional at the minimal mass level, and the importation of tools from optimal transportation thanks to the gradient flow structure of the problem.\\
All the above subcritical mass results heavily rely on the fact that for $\int u_0\leq \int Q$, the dissipated entropy \fref{deacyetnropy} coupled with the variational characterization of $Q$ through the logarithmic HLS \fref{logsobolev} imply a priori uniform bounds on the solution. This structure is completely lost for $\int u_0>\int Q$, and the description of the flow for mass super critical data is very poorly understood. In the pioneering work \cite{HV}, Herrero and Velasquez use formal matching asymptotics, the radial reduction of the problem and ODE techniques to produce the first example of blow up solution.

%%%%%%%%%%%%%%%%%%%%%%%%%%%%
%%%%%%%%%%%%%%%%%%%%%%%%%%%%

\subsection{Connection with critical problems}

%%%%%%%%%%%%%%%%%%%%%%%%%%%%
%%%%%%%%%%%%%%%%%%%%%%%%%%%%

The $L^1$ critical structure of the (PKS) system is canonical from the point of view of critical problems, and a number of examples extracted both from the parabolic and dispersive worlds have recently attracted a considerable attention and led to a new approach for the construction of blow up solutions. Examples of such critical flows are the mass critical Non Linear Schr\"odinger equations of nonlinear optics 
\be
\label{nls}
(NLS) \ \ \left\{\begin{array}{ll}i\pa_tu+\Delta u+u|u|^2=0,\\u_{|t=0}=u_0,\end{array}\right., \ \ (t,x)\in \Bbb R\times \Bbb R^2,\ \ u\in \Bbb C,
\ee
 the geometric energy critical parabolic heat flow to the 2-sphere of crystal physics $$(HF) \ \ \left\{\begin{array}{ll}\pa_tu=\Delta u+|\nabla u|^2u,\\u_{|t=0}=u_0,\end{array}\right., \ \ (t,x)\in \Bbb R\times \Bbb R^2, \ \ u\in \Bbb S^2$$ and its dispersive Schr\"odinger map analogue in ferromagnetism: $$(SM) \ \ \left\{\begin{array}{ll}u\wedge \pa_tu=\Delta u+|\nabla u|^2u,\\u_{|t=0}=u_0,\end{array}\right., \ \ (t,x)\in \Bbb R\times \Bbb R^2, \ \ u\in \Bbb S^2.$$ For all these problems, a robust approach for the construction of blow up solutions and the study of their possible stability has been developed in the past ten years, see \cite{MR1}, \cite{MR2}, \cite{MR3}, \cite{MR4}, \cite{MR5} for the mass critical NLS, \cite{RaphRod}, \cite{MRR} for the wave and Schr\"odinger maps, and \cite{RS} for the harmonic heat flow. The strategy proceeds in two steps: the construction of suitable approximate solutions through the derivation of the leading order ODE's driving the trajectory of the solution on the modulated manifold of ground states; the control of the exact flow near these approximate profiles through the derivation of suitable Lyapounov functionals and a robust energy method. The first step avoids the common use of matching asymptotics which is most of the time delicate both to implement and to make rigorous, see for example \cite{LPSS} for (NLS), \cite{BHG} for the heat flow, \cite{LS} for (PKS). The second step is a pure energy method which therefore applies both to dispersive problems and parabolic systems, and makes in this last case no use of the maximum principle commonly used for scalar parabolic problems. The sharp knowledge of the spectral structure of the linearized operator, which is typically a delicate problem for (PKS), see \cite{LS}, is replaced by canonical spectral gap estimates as initiated in \cite{RodSter}, \cite{RaphRod}.

%%%%%%%%%%%%%%%%%%%%%%%%%%%%
%%%%%%%%%%%%%%%%%%%%%%%%%%%%

\subsection{Statement of the result}

%%%%%%%%%%%%%%%%%%%%%%%%%%%%
%%%%%%%%%%%%%%%%%%%%%%%%%%%%

Our main claim in this paper is that despite the nonlocal structure of the problem and in particular of the linearized operator close to $Q$, and the almost energy critical degeneracy of the problem\footnote{which is reflected by the weakness of the a priori information \fref{deacyetnropy} for mass super critical initial data.},
the above route map can be implemented for (PKS). We address the radial case only for the sake of simplicity, but the full non radial problem can be analyzed in principle along similar lines.\\ 
In order to make our statement precise, let us consider the following function spaces. Let the weighted $L^2$ space 
\be
\label{l2rnoms}
\|\e\|_{L^2_Q}=\left(\int\frac{\e^2}{Q}\right)^{\frac 12}
\ee and the weighted $H^2$ space:
\be
\label{defhtwow}
\|\e\|_{H^2_Q}=\|\Delta \e\|_{L^2_Q}+\left\|\frac{\nabla \e}{1+|x|}\right\|_{L^2_Q}+\|\e\|_{L^2}.
\ee
We introduce the energy norm
\be
\label{energyspcaee}
\|\e\|_{\matchal E}=\|\e\|_{H^2_Q}+\|\e\|_{L^1}.
\ee
The main result of this paper is the complete description of a {\it stable} chemotactic blow up with radial data arbitrarily close to the ground state in the $L^1$ critical topology.

\begin{theorem}[Stable chemotactic blow up]
\label{thmmain}
There exists a set of initial data of the form $$u_0=Q+\e_0\in\mathcal E, \ \ u_0> 0, \ \ \|\e_0\|_{\mathcal E}\ll 1$$ such that the corresponding solution $u\in \mathcal C ([0,T),\mathcal E)$ to \fref{kps} satisfies the following:\\
{\em (i) Small super critical mass}: $$ 8\pi<\int u_0<8\pi+ \alpha^*$$ for some $0<\alpha^*\ll1$ which can be chosen arbitrarily small;\\
{\em (ii) Blow up} : the solution blows up in finite time $0<T<+\infty$;\\
{\em (iii) Universality of the blow up bubble}: the solution admits for all times $t\in [0,T)$ a decomposition $$u(t,x)=\frac{1}{\l^2(t)}(Q+\e)\left(t,\frac{x}{\l(t)}\right)$$ with 
\be
\label{boievbebeo}
\|\e(t)\|_{H^2_Q}\to 0\ \ \mbox{as}\ \ t\to T
\ee and the universal blow up speed: 
\be
\label{blkpeoghenepojg}
\lambda(t)=\sqrt{T-t}e^{-\sqrt{\frac{|\log (T-t)|}{2}}+O(1)}\ \ \mbox{as} \ \ t\to T.
\ee
{\em (iv) Stability}: the above blow up dynamics is stable by small perturbation of the data in $\mathcal E$: $$v_0> 0, \ \ \|v_0-u_0\|_{\mathcal E}<\epsilon(u_0).$$
\end{theorem}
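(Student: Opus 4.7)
The plan is to implement the two-step route map of \cite{MR1}-\cite{MR5}, \cite{RaphRod}, \cite{RS}: construct a one-parameter family $\{Q_b\}_{b>0}$ of approximate self-similar profiles that solve the renormalized (PKS) equation up to a controllable error, then control the exact flow near the modulated family by modulation theory coupled with a sharp monotonicity estimate for the remainder in the $H^2_Q$ topology.

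Passing to renormalized variables $u(t,x) = \lambda^{-2}(t) v(s, x/\lambda(t))$, $ds/dt = \lambda^{-2}$, and setting $b = -\lambda\lambda_t$, equation \fref{kps} becomes $\partial_s v - b\Lambda v = \nabla\cdot(\nabla v + v \nabla \phi_v)$ with $\Lambda v = 2v + y\cdot\nabla v$. I would seek a stationary ansatz $Q_b = Q + b T_1 + b^2 T_2 + \cdots$ and solve at each order the nonlocal linear problem $\mathcal{L} T_j = F_j$, where $\mathcal{L}\varepsilon = \Delta\varepsilon + \nabla\cdot(Q\nabla\phi_\varepsilon + \varepsilon\nabla\phi_Q)$. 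The scaling identity yields $\mathcal{L}\Lambda Q = 0$, so $\Lambda Q$ is a resonance for $\mathcal{L}$; since $F_1 = \Lambda Q$, this reflects the $L^1$-critical degeneracy and forces $T_1$ to grow logarithmically at infinity. As in \cite{RS}, the remedy is to truncate the expansion at a scale $B_0 \sim 1/b$ and glue an outgoing radiation tail. The matching condition at $|y|\sim B_0$ produces the leading order modulation ODE $b_s + c\, b^2/|\log b| \approx 0$, whose integration in the original time $t$ gives precisely the sharp rate \fref{blkpeoghenepojg}.

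For the exact solution, decompose $u(t,x) = \lambda^{-2}(t)(Q_{b(t)} + \varepsilon)(t, x/\lambda(t))$ and fix $(\lambda, b)$ by imposing two orthogonality conditions on $\varepsilon$ against fixed profiles adapted to $\Lambda Q$ and $\partial_b Q_b$. Differentiating these relations in $s$ yields the modulation system
\[
\left|\frac{\lambda_s}{\lambda} + b\right| + \left|b_s + c\frac{b^2}{|\log b|}\right| \lesssim \|\varepsilon\|_{\mathrm{loc}} + b^K
\]
for some $K>1$ determined by the order of the profile expansion. The bootstrap regime to be propagated on a maximal interval requires $b(s) \in (0, b^*)$ with $b^*$ small, $\|\varepsilon(s)\|_{H^2_Q} \lesssim b(s)^{K'}$ for a suitable $K'>0$, and a mass budget encoding the small excess $\int u_0 - 8\pi \in (0, \alpha^*)$; together with the profile expansion this predicts the reference trajectory from which $\lambda(t)$ is read off.

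The heart of the proof is a monotonicity estimate for the remainder at the $H^2_Q$ level. From the equation $\partial_s \varepsilon + \mathcal{L}\varepsilon = \text{modulation} + \Psi_b + \text{nonlinear}$ I would prove
\[
\frac{d}{ds}\mathcal{I}(\varepsilon) + c\|\varepsilon\|^2_{H^2_Q} \lesssim b^{2K+\delta}
\]
for a quadratic form $\mathcal{I}$ equivalent to $\|\varepsilon\|^2_{H^2_Q}$ on the orthogonal of the finite-dimensional resonance subspace. The principal technical obstacle, and the key novelty over the scalar parabolic setting of \cite{RS}, is the \emph{nonlocality} of $\mathcal{L}$ through the Poisson field $\phi_\varepsilon$: this precludes pointwise maximum-principle arguments and complicates both the spectral analysis of $\mathcal{L}$ and the estimates of the quadratic terms $\nabla\cdot(\varepsilon\nabla\phi_\varepsilon)$ in the strong weighted norm. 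Subcoercivity orthogonal to the generalized kernel will be obtained via the logarithmic HLS inequality \fref{logsobolev} at the $L^2_Q$ level, then upgraded to $H^2_Q$ by weighted elliptic estimates against the heavy weight $1/Q \sim |y|^4$; the nonlinear contributions are controlled by Hardy-type bounds tying $\nabla\phi_\varepsilon$ back to $\varepsilon$. Once the monotonicity closes, the bootstrap is preserved on $[0,T)$ with $T<\infty$, yielding simultaneously (ii), the convergence \fref{boievbebeo}, and the sharp rate \fref{blkpeoghenepojg}; stability (iv) follows from openness of the bootstrap regime in $\mathcal{E}$, while (i) reflects the free parameter $b(0)$ of the construction.
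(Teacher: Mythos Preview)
Your high-level strategy matches the paper's, but two steps as you have written them would fail, and each hides a genuine idea.

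First, the sharp modulation law for $b$ cannot be obtained as you describe. Projecting the $\varepsilon$-equation onto your two orthogonality directions only yields $|b_s|\lesssim b^{3/2}$, not $|b_s + 2b^2/|\log b||\lesssim \|\varepsilon\|_{\mathrm{loc}}+b^K$; the reason is that the elements of $\mathrm{Ker}(\mathcal L^*)$ (namely $1$ and $|y|^2$) grow too fast, so the $b_s\partial_b\tilde Q_b$ forcing term is of order one in $\varepsilon$ after projection. The paper resolves this by introducing a \emph{second} decomposition with a lifted parameter $\hat b$ satisfying $|b-\hat b|\lesssim b/|\log b|$ and an associated radiation $\zeta=\tilde Q_b-\tilde Q_{\hat b}$; only $\hat b$ obeys the sharp law pointwise, while $b$ satisfies it merely up to an oscillation. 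This splitting is what allows the $\widehat{\mathrm{Mod}}$ terms in the energy identity to be controlled at the level $b^{5/2}/|\log b|$ rather than $b^{3/2}$, which is exactly the threshold for closing. Without it, your modulation estimate and hence the entire monotonicity argument would lose a full power of $b$.

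Second, your proposed inequality $\frac{d}{ds}\mathcal I(\varepsilon)+c\|\varepsilon\|_{H^2_Q}^2\lesssim b^{2K+\delta}$ is not what is actually achievable here: the dissipation available is $\int Q|\nabla\mathcal M\varepsilon_2|^2$, which is \emph{not} coercive in $H^2_Q$ (it controls one more derivative but with a decaying weight). Moreover, the term $b\Lambda\varepsilon$ in the renormalized equation is not perturbative in $s$-variables. The paper instead works in original variables $t$, computes $\frac{d}{dt}\big[\lambda^{-2}(\mathcal M\varepsilon_2,\varepsilon_2)\big]$, and uses a Morawetz-type correction (built from $\hat b\,\Lambda Q/Q^2$ and the factorization $\mathcal L=\nabla\cdot A$) to cancel the indefinite scaling term $\hat b\,(\Lambda Q/Q^2)\hat\varepsilon_2^2$. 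The closing then relies on the exact numerology that $b^3/(\lambda^2|\log b|^2)$ is itself monotone along the reference ODE, not on a dissipation-versus-forcing balance in $s$. Finally, a small correction: the truncation/parabolic scale is $B_0=1/\sqrt b$, not $1/b$.
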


{\it Comments on the result}\\

{\it 1. On the stability statement}: Formal arguments to prove the stability of chemotactic blow up are presented in \cite{Vlinear}, and in fact Theorem \ref{thmmain} answers one of the problems mentioned in \cite{VICM}. It is likely that there are many other blow regimes, possibly stable or unstable depending on the tail of the initial data, as is expected for the heat flow \cite{BHG}, see also \cite{NT2}, or known for the super critical heat equation \cite{Mizo}, see also \cite{BT} for a further illustration of the  role of the topology for the long time dynamics. Note that a complete description of the flow near the ground state is obtained for the first time for a critical problem in \cite{MMR1}, \cite{MMR2}, \cite{MMR3} for the critical (KdV) problem, and this includes pathological blow up rates depending on the structure of the data. In this sense,  Theorem \ref{thmmain} is a first step toward the description of the flow near $Q$ which even in the radial setting is still a challenging problem.\\

{\it 2. On the non locality}: It is known that in the radial setting, the nonlocal (PKS) can be turned to a nonlinear {\it local} problem for the partial mass, see for example \cite{HV}. We shall not use this structure at all, and therefore our approach can in principle be extended to the non radial setting. The non locality of the chemotactic problem induces a nonlinear linearized operator $\matchal L$ close to $Q$, which due to the slow decay of the soliton makes the use of spectral techniques delicate, see \cite{LS} for an attempt in this direction. Moreover, and this is for example a {\it major} difference with the mass critical (NLS) \fref{nls}, the entropy control \fref{deacyetnropy} is very weak and yields almost no information on the flow near $Q$. However, following the approach in \cite{RaphRod}, simple spectral gap estimates can be obtained on $\matchal L$ {\it and its iterates} as a consequence of the variational characterization of $Q$ which lead to the control of derivatives of $u$ in suitable weighted $L^2$ norms. In other words, for the linearized flow close to $Q$, we control the solution through the control of the natural linearized entropy involving the derivatives of $u$, and the associated dissipated quantities. The control of derivatives yields through Hardy inequalities the local control of the solution on the soliton core which is the heart of the control of the speed of concentration. We also obtain sharp convergence rates of the solution towards the solitary wave after renormalization.\\

Our proof provides a robust approach to the construction of blow up regimes in the presence of a nonlocal nonlinearity, and is a first step towards the understanding of the flow near the soliton. It also provides a new insight to attack more complicated chemotactic coupling, in particular the parabolic-parabolic Patlak-Keller-Segel problem for which blow up in the critical setting is open.\\

{\bf Aknowledgements.} The authors would like to thank A. Blanchet, J. Dolbeaut and P. Laurencot for their interest and support during the preparation of this work. Part of this work was done while P.R was visiting the MIT Mathematics department which he would like to thank for its kind hospitality. This work is supported by the ERC/ANR grant SWAP and the advanced ERC grant BLOWDISOL\\

{\bf Notations}. For a given function $u$, we note its Poisson field $$\phi_u=\frac{1}{2\pi}\log |x|\star u.$$ We note the real $L^2$ scalar product $$(f,g)=\int_{\Bbb R^2}f(x)g(x)dx.$$ Given a vector $y\in \Bbb R^2$, we will systematically note $r=|y|=\sqrt{y_1^2+y_2^2}$. We will generically note $$\delta(z^*)=o(1)\ \ \mbox{as}\ \ z^*\to 0.$$ We let $\chi\in \mathcal C^{\infty}_c(\R^2)$ be a radially symmetric cut off function with $$\chi(x)=\left\{\begin{array}{ll} 1\ \ \mbox{for}\ \ |x|\leq 1,\\ 0\ \ \mbox{for}\ \ |x|\geq 2\end{array}\right., \ \ \chi(x)\geq 0.$$ We introduce the differential operator $$\Lambda f=2f+y\cdot\nabla f=\nabla\cdot(yf).$$ Given $f\in L^2$, we introduce its decomposition into radial and non radial part:
\be
\label{radialnonradial}
f^{(0}(r)=\frac{1}{2\pi}\int_0^{2\pi}f(r,\theta)d\theta, \ \ f^{(1)}(x)=f(x)-f^{(0)}(r).
\ee
Observe that $$(\Lambda f)^{(0)}=(r\pa_rf)^{(0)}=r\pa_rf^{(0)}=\Lambda f^{(0)}.$$ Given $b>0$, we let 
\be
\label{defb}
B_0=\frac{1}{\sqrt{b}}, \ \ B_1=\frac{|\log b|}{\sqrt{b}}.
\ee
Give two linear operators $(A,B)$, we note their commutator: $$[A,B]=AB-BA.$$

%%%%%%%%%%%%%%%%%%%%%%%%%%%%%%%%%%%%%%%%%%%%%%%%%%%%%%%%%%%%%%%%%%%%%%%%%%%%%%%%%%%%%%%%%%%%%%%%%%%%%%%%%%%%%%%%%%%%%%%%%%%%%%%%%%%%%%%%%%%%%%%%%%%%

\subsection{Strategy of the proof}

%%%%%%%%%%%%%%%%%%%%%%%%%%%%%%%%%%%%%%%%%%%%%%%%%%%%%%%%%%%%%%%%%%%%%%%%%%%%%%%%%%%%%%%%%%%%%%%%%%%%%%%%%%%

We briefly explain the main steps of the proof of Theorem \ref{thmmain} which follows the approach developed in \cite{RaphRod}, \cite{MRR}, \cite{RS}.\\

{\it Step 1: Construction of approximate blow up profiles}. Let us renormalize the (PKS) flow by considering $$u(t,x)=\frac1{\l^2(t)}v(s,y), \ \ \frac{ds}{dt}=\frac{1}{\l^2}, \ \ y=\frac{x}{\l}$$ which leads to the renormalized flow $$\pa_s v+b\Lambda v=\nabla\cdot(\nabla v+v\nabla \phi_v), \  \ b=-\frac{\l_s}{\l}.$$ We look for a slowly modulated approximate solution of the form $$v(s,y)=Q_{b(s)}(y), \ \ Q_b(y)=Q(y)+bT_1(y)+b^2T_2(y)$$ and where the law $b_s=F(b)=O(b^2)$ is an unknown of the problem. The construction of $T_1$, $T_2$ amounts solving stationary in $s$ elliptic problems of the form 
\be
\label{cneioeihivoe}
\mathcal LT_j=F_j(T_{j-1})
\ee
 where we introduced the linearized operator close the soliton:
 \be
\label{formuleL}
\mathcal L\e=\nabla \cdot(Q\nabla \mathcal M\e)=\Delta \e+2 Q\e+\nabla \phi_Q\cdot\nabla \e+\nabla Q\cdot\nabla \phi_\e.
\ee 
This operator displays explicit resonances\footnote{i.e. slowly decaying zeroes.} induced by the symmetry group of (PKS). In the radial setting, the solution to \fref{cneioeihivoe} is almost explicit, and the slow decay of $Q$ at $+\infty$ induces slowly decaying function $T_j$ with explicit tails at $+\infty$. In particular, $$T_1(r)\sim \frac{c}{r^2}\ \  \mbox{as}\ \ r\to+\infty$$ and to leading order in terms of tails at $+\infty$, the $T_2$ equation looks like $$\mathcal LT_2=-b_sT_1-b^2\Lambda T_1+\mbox{lot}.$$ We now observe the cancellation $$\Lambda T_1\sim \frac{1}{r^4}\ \ \mbox{as}\ \ r\to +\infty$$ and hence the choice $b_s=0$ yields the most decreasing right hand side. In fact, a more careful analysis leads to the logarithmically degenerate law: $$b_s=-\frac{2b^2}{|\log b|},$$ see the proof of Proposition \ref{propblowup}. We refer to \cite{RaphRod}, \cite{MRR} for a further discussion on the derivation of these logarithmic gains. The outcome is the derivation of the leading order dynamical system 
\be
\label{odeintro}
\left\{\begin{array}{ll} \frac{ds}{dt}=\frac{1}{\l^2}\\b=-\lsl, \ \ b_s=-\frac{2b^2}{|\log b|}\end{array}\right.
\ee
which after reintegration in time implies that $\l$ touches zero in finite time $T<+\infty$, this is blow up, with the asymptotics \fref{blkpeoghenepojg}.\\

{\it Step 2: Energy bounds}. We know consider an initial data of the form $$u_0=Q_{b_0}+\e_0\ \ \mbox{with}\ \ \|\e_0\|\lesssim b_0^{10}$$ in some suitable topology. We claim that we can bootstrap the existence of a decomposition $$u(t,x)=\frac{1}{\l^2(t)}(Q_{b(t)}+\e)(t,\frac{x}{\l(t)})\ \ \ \ \mbox{with}\ \ \|\e(t)\|\lesssim (b(t))^{10}$$ with $(\l(t),b(t))$ satisfying to leading order the ODE's \fref{odeintro}. Roughly speaking, the equation driving $\e$ is of the form $$\pa_s\e=\mathcal L\e-b\Lambda \e+\Psi_b+\Mod +N(\e)$$ where $\Psi_b=O(b^3)$ is the error in the construction of $Q_b$, $\Mod$ is the forcing term induced by the modulation parameters $$\Mod=(\lsl+b)\Lambda Q_b-b_s\frac{\partial Q_b}{\partial b},$$ and $N(\e)$ denotes the lower order nonlinear term. In order to treat the $b\Lambda $ correction to the linearized term {\it which cannot be treated perturbatively in blow up regimes}, we reformulate the problem in terms of the original variables $$\tilde{u}(t.x)=\frac{1}{\l^2}\e(t,\frac x{\lambda})$$ which satisfies 
\be
\label{nkenenenoe}
\pa_t \tilde{u}=\mathcal L_\l\tilde{u}+\frac{1}{\l^4}\left[\Psi_b+\Mod +N(\e)\right](t,\frac{x}{\l}).
\ee
We now control $\tilde{u}$ using an energy method on its derivatives as in \cite{RaphRod}. Let $\mathcal M$ be the linearized entropy close to $Q$, see \fref{defm}. One can show using the explicit knowledge of the resonances of $\matchal L$, which itself essentially follows from the variational characterization of $Q$, that modulo suitable orthogonality conditions which remove the zero modes and correspond to the dynamical adjustment of $(b(t),\l(t))$, the generalized linearized HLS energy is coercive on suitable derivatives: $$(\mathcal M \mathcal L\e,\mathcal L\e)\gtrsim \|\mathcal L\e\|_{L^2_Q}^2\gtrsim \int \frac{|\Delta \e|^2}{Q}+\int |\e|^2\sim \|\e\|_{H^2_Q}^2.$$ Applying this to $\tilde{u}$, we are led to consider the natural higher order entropy for \fref{nkenenenoe}: $$\frac{d}{dt}\left(\mathcal M_{\l}\mathcal L_{\l}\tilde{u},\mathcal L_{\l}\tilde{u}\right)$$ where $\mathcal M_{\l},\mathcal L_{\l}$ correspond to the linearization close to the singular bubble $\frac{1}{\l^2}Q(\frac x\l)$. Two difficulties occur. First one needs to treat the local terms on the soliton core induced by the time dependance $\l(t)$ of the renormalized operator $\mathcal M_\l,\mathcal L_\l$, and here the dissipated terms together with the sharp control of tails at infinity play a crucial role. An additional problem occurs 
due to the pointwise control of $\Mod$ terms which is not good enough in our regime\footnote{This is a technical unpleasant problem which directly relates to the slow decay of $Q$ at infinity. Similar issues already occurred in related settings, see for example \cite{MMfintietime}.}, and requires some further integration by parts in time, see Lemma \ref{lemmasharpmod}.\\
The energy method leads roughly to a pointwise bound $$\frac{d}{dt}\left(\mathcal M_{\l}\mathcal L_{\l}\tilde{u},\mathcal L_{\l}\tilde{u}\right)\lesssim \frac{1}{\l^6}\frac{b^4}{|\log b|^2}$$ which after rescaling leads {\it in the regime} \fref{odeintro} to the pointwise bound: $$\|\e\|_{H^2_Q}^2\lesssim \frac{b^3}{|\log b|^2}.$$ It turns out that this estimate is sharply good enough to close the ODE for b and show that the $\e$ radiation does not perturb the leading order system \fref{odeintro}, and this concludes the proof of Theorem \ref{thmmain}.\\

This work is organized as follows. In section \ref{spectral}, we prove spectral gap estimates for the nonlocal linearized operator close to $Q$ in the continuation of \cite{RaphRod}. In section \ref{construction}, we build the family of approximate self similar profiles which contains the main qualitative informations on the singularity formation, and adapts in the non local setting the strategy developed in \cite{RaphRod}, \cite{MRR}, \cite{RS}. In section \ref{sectionboot}, we start the analysis of the full solution and explicitly display the set of initial data leading to the conclusions of Theorem \ref{thmmain} and the bootstrap argument to control the radiation. In section \ref{htwowbound}, we exhibit the $H^2_Q$ monotonicity formula which is  the heart of our analysis. In section \ref{proofthmmai}, we collect the estimates of sections \ref{sectionboot}, \ref{htwowbound} to close the bootstrap and complete the description of the singularity formation.

%%%%%%%%%%%%%%%%%%%%%%%%%%%%%%%%%%%%%%%%%%%%%%%%%%%%%%%%%%%%%%%%%%%%%%%%%%%%%%%%%%%%%%%%%%%%%%%%%%%%%%%%%%%%%%%%%%%%%%%%%%%%%%%%%%%%%%%%%%%%%%%%%%%%

\section{Spectral gap estimates}
\label{spectral}

%%%%%%%%%%%%%%%%%%%%%%%%%%%%%%%%%%%%%%%%%%%%%%%%%%%%%%%%%%%%%%%%%%%%%%%%%%%%%%%%%%%%%%%%%%%%%%%%%%%%%%%%%%%

This section is devoted to the derivation of spectral gap estimates for the linearized operator close to $Q$ given by \fref{formuleL}. The radial assumption is useless here and we address directly the general case. These energy bounds are at the heart of the control of the radiation correction for flows evolving close to the approximate $\qbt$ blow up profiles. We derive two type of estimates: energy estimates which are mostly a consequence of the linearization close to $Q$ of the sharp log Sobolev estimate\footnote{following the celebrated proof by Weinstein \cite{W2} for nonlinear Schr\"odinger equations.} and higher order iterated estimates which rely on the explicit knowledge of the kernel of $\mathcal L$ and suitable repulsivity properties at infinity.

%%%%%%%%%%%%%%%%%%%%%%%%%%%%%%%%%%%%%%%%%%%%%%%%%%%%%%%%%%%%%%%%%%%%%%%%%%%%%%%%%%%%%%%%%%%%%%%%%%%%%%%%%%%%%%%%%%%%%%%%%%%%%%%%%%%%%%%%%%%%%%%%%%%%

\subsection{Energy bounds}

%%%%%%%%%%%%%%%%%%%%%%%%%%%%%%%%%%%%%%%%%%%%%%%%%%%%%%%%%%%%%%%%%%%%%%%%%%%%%%%%%%%%%%%%%%%%%%%%%%%%%%%%%%%

We start with the study of the linearized log Sobolev energy on the weighted $L^2$ space \fref{l2rnoms}.

\begin{lemma}[Structure of the linearized energy]
\label{lemmakernel}
The operator 
\be
\label{defm}
\mathcal Mu=\frac{u}{Q}+\phi_u
\ee is a continuous self adjoint operator $\mathcal M:L^2_Q\to (L^2_Q)^*$ ie:
\be
\label{mcontiniuos}
\int Q|\mathcal Mu|^2\lesssim \|u\|_{L^2_Q}^2,
\ee
\be
\label{efadjointness}
\forall (u,v)\in L^2_Q\times L^2_Q, \ \ (\mathcal M u,v)=(u,\mathcal Mv).
\ee
Moreover, there holds:\\
{\it (i) Algebraic identities}:
\be
\label{relationsm}
\mathcal M(\Lambda Q)=-2, \ \ \mathcal M(\pa_iQ)=0, \ i=1,2.
\ee
{\it (ii) Generalized kernel}: Let $(u,\phi)\in \mathcal C^{\infty}(\Bbb R^2)$ satisfy 
\be
\label{euiheheoifho}
\left\{\begin{array}{ll}\nabla\left(\frac{u}{Q}+\phi\right)=0\\
\Delta \phi=u
\end{array}\right.,\ \ \ \int\frac{\phi^2}{(1+r^4)(1+|\log r|^2)}<+\infty,
\ee then 
\be
\label{identificationkernel}
u\in \mbox{Span}(\Lambda Q,\pa_1 Q,\pa_2Q), \ \ \phi=\phi_u.
\ee
\end{lemma}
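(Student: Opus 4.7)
The proof splits naturally into the four claims of the lemma.

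For the continuity bound \eqref{mcontiniuos}, expand $\int Q|\mathcal{M}u|^2 = \int u^2/Q + 2\int u\phi_u + \int Q\phi_u^2$. The first term equals $\|u\|_{L^2_Q}^2$. A Cauchy--Schwarz estimate using that $Q$ and $Q(\log(1+r))^2$ both lie in $L^1(\RR^2)$ (thanks to $Q\sim |x|^{-4}$ at infinity) gives $\int |u|(1+\log(1+|x|))\,dx \lesssim \|u\|_{L^2_Q}$; standard pointwise control of the two-dimensional logarithmic convolution then yields $|\phi_u(x)| \lesssim (1+\log(1+|x|))\|u\|_{L^2_Q}$, and both remaining terms follow. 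Self-adjointness \eqref{efadjointness} is then immediate: $\int uv/Q$ is manifestly symmetric, and the cross term is symmetric by Fubini applied to the symmetric kernel $\log|x-y|$, which is justified by the just-established continuity.

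The algebraic identities \eqref{relationsm} follow from two structural facts about $Q$. First, $Q$ is a stationary solution of (PKS), equivalently $\nabla Q + Q\nabla \phi_Q = 0$, so $\partial_i Q/Q = -\partial_i\phi_Q$; combined with $\phi_{\partial_i Q} = \partial_i\phi_Q$ this gives $\mathcal{M}(\partial_i Q) = 0$. Second, for the mass-preserving rescaling $Q_\lambda(x) = \lambda^2 Q(\lambda x)$, a change of variable in the logarithmic convolution and $\int Q = 8\pi$ yield $\phi_{Q_\lambda}(x) = \phi_Q(\lambda x) - 4\log\lambda$; differentiating at $\lambda = 1$ gives $\phi_{\Lambda Q} = y\cdot\nabla\phi_Q - 4$. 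Combining with $\Lambda Q/Q = 2 + y\cdot\nabla\log Q = 2 - y\cdot\nabla\phi_Q$ produces $\mathcal{M}(\Lambda Q) = -2$.

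The heart of the lemma is the kernel identification \eqref{identificationkernel}. Setting $c = u/Q + \phi$, which is constant by the first equation, and $\psi = \phi - c$, the Poisson equation $\Delta\phi = u$ rewrites as the linear Schr\"odinger-type equation
\be
\label{schrodingerplan}
\Delta\psi + Q\psi = 0, \qquad u = -Q\psi,
\ee
with the weighted $L^2$ growth condition inherited by $\psi$. Decompose $\psi = \sum_{n\in\ZZ}\psi_n(r)e^{in\theta}$, so that each profile solves the radial ODE $\psi_n'' + \psi_n'/r - n^2\psi_n/r^2 + Q\psi_n = 0$. For $n=0$, the explicit solution $\phi_{\Lambda Q} + 2 = 2(r^2-1)/(r^2+1)$ from Part 3 is smooth at $0$, while the second independent solution has a logarithmic singularity at the origin (a Bessel-type analysis, since $Q(0) = 8$); smoothness forces $\psi_0 \propto \phi_{\Lambda Q}+2$. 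The case $|n|=1$ is analogous with $\partial_i\phi_Q$ playing the role of the scaling mode and the competing solution being $r^{-1}$-singular. For $|n|\geq 2$, smoothness at the origin fixes $\psi_n \sim r^{|n|}$ near $0$, which upon extension to infinity grows like $r^{|n|}$ and violates the weighted bound. Summing the modes, $\psi$ lies in the three-dimensional span of $\phi_{\Lambda Q}+2, \partial_1\phi_Q, \partial_2\phi_Q$; applying $-Q$ and using the identities of Part 3 yields $u \in \mathrm{Span}(\Lambda Q, \partial_1 Q, \partial_2 Q)$, and the exact matching of the additive constant in $\phi = \psi + c$ against the normalizations built into Part 3 produces $\phi = \phi_u$.

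The main obstacle is the asymptotic analysis at infinity for \eqref{schrodingerplan} in the sectors $|n|\geq 2$: one must show that the solution branch smooth at the origin genuinely grows like $r^{|n|}$ rather than decaying by a miraculous matching. I expect to handle this through spectral coercivity of $-\Delta - Q$ on high angular-momentum sectors, which is the operator-theoretic counterpart of the variational characterization of $Q$ via the logarithmic HLS inequality \eqref{logsobolev} and the fact that the three symmetry-induced zero modes already exhaust the kernel in the sectors $|n|\leq 1$.
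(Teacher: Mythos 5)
Your treatment of the continuity, self-adjointness, and the two algebraic identities mirrors the paper's step-by-step computation (logarithmic growth of $\phi_u$, Fubini for symmetry, the scaling relation $\phi_{Q_\lambda}(x)=\phi_Q(\lambda x)-4\log\lambda$ differentiated at $\lambda=1$), so those parts are fine and take essentially the same route. Your reduction of the kernel equation to $\Delta\psi+Q\psi=0$ with $u=-Q\psi$ and the subsequent decomposition into angular sectors is also the same idea as the paper (which keeps the inhomogeneity $cQ$ at $k=0$ and works with the partial mass there, rather than subtracting the constant up front, but these are cosmetically different).

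The genuine gap is exactly the one you flag at the end, and it is not a technicality: your chain of reasoning for $|n|\geq 2$ is ``the Frobenius branch regular at the origin grows like $r^{|n|}$ at infinity, hence violates the weighted bound.'' That growth at infinity is precisely the assertion that $0$ is not an eigenvalue of the radial operator $H_n=-\partial_r^2-\tfrac1r\partial_r+\tfrac{n^2}{r^2}-Q$ on its energy space; without that, the regular-at-origin branch could a priori decay, and nothing would be contradicted. The paper closes this by reversing the direction (the growth bound forces the decaying branch at infinity, regularity plus $H_k\phi_k=0$ then places $\phi_k$ in the energy space of $H_k$) and then supplying the missing spectral ingredient: for $k\geq 2$ one has $H_k\geq H_1+\tfrac1{r^2}$ in the sense of quadratic forms, and $H_1\geq 0$ because $\phi_Q'(r)=\tfrac{4r}{1+r^2}$ is an explicit \emph{positive} zero of $H_1$, which by the standard Sturm--Liouville oscillation argument identifies it as the ground state of $H_1$ at eigenvalue zero. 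Hence $H_k>0$ for $k\geq 2$ and the energy-space solution must vanish. This is the concrete form of the ``spectral coercivity'' you say you expect to invoke; without it the argument does not close, so you should state and prove this comparison explicitly rather than defer it. Once you have $H_k>0$ the rest of your sector-by-sector analysis goes through; note also that the $k=0$ constant must be tracked (the paper's shift $\tilde u_0=u_0+\tfrac c2\Lambda Q$, $\tilde\phi_0=\phi_0+\tfrac c2\phi_{\Lambda Q}$ is designed so that $\mathcal M(\Lambda Q)=-2$ kills the inhomogeneity and pins down the additive constant), so the phrase ``exact matching of the additive constant'' needs to become an actual computation using $\mathcal M(\Lambda Q)=-2$.
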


\begin{remark} This structure is reminiscent from kinetic transport models like the gravitational Vlasov Poisson equation, see \cite{LMRlinearized}.
\end{remark}

\begin{proof}[Proof of Lemma \ref{lemmakernel}]
{\bf step 1} Continuity and self adjointness. The continuity of $\mathcal M$ as an operator from $L^2_Q\to (L^2_Q)^*$ follows from \fref{linftybound}:
$$\int Q|\mathcal Mu|^2\lesssim \int\frac{u^2}{Q}+\left\|\frac{\phi_u}{1+|\log r|}\right\|^2_{L^{\infty}}\|(1+|\log r|)Q\|_{L^1}\lesssim \|u\|_{L^2_Q}^2.$$ The self adjointness \fref{efadjointness} is equivalent to: 
\be
\label{cneneoneo}
\forall (u,v)\in L^2_Q\times L^2_Q, \  \ (\phi_u,v)=(u,\phi_v).
\ee
Indeed, the integrals are absolutely convergent arguing like for \fref{linftybound}:$$\int|\log |x-y|||u(y)||v(x)|dxdy\lesssim \|u\|_{L^2_Q}\|v\|_{L^2_Q}$$ and the claim follows from Fubbini.\\

{\bf  step 2}. Proof of (i).  By definition of $Q$:
$$\phi_Q'(r)=\frac{1}{r}\int_0^r\frac{8\tau}{(1+\tau^2)^2}d\tau=\frac{4r}{1+r^2}.$$ Moreover, using the convolution representation $$\phi_Q(0)=\int_0^{+\infty}\frac{8r\log r}{(1+r^2)^2}dr=4\lim_{(\e,R)\to (0,+\infty)}\left[\frac{r^2\log r}{1+r^2}-\frac12\log (1+r^2)\right]_\e^R=0,$$ and thus: $$\phi_Q(r)=\int_0^r\frac{4\tau}{1+\tau^2}d\tau=2\log (1+r^2).$$ We therefore obtain the ground state equation: 
\be
\label{eqqsoliton}
\log Q+\phi_Q-\log 8=0.
\ee
We translate with $x_0\in \R^2$: $$ \log Q(x-x_0)+\phi_Q(x-x_0)-\log 8=0$$ and differentiate at $x_0\in \R^2$ to get: 
\be
\label{cnecneoneonenoe}
\frac{\nabla Q}{Q}+\nabla \phi_Q=0,
\ee and hence \be
\label{pofeueoowj}
\mathcal M(\nabla Q)=\frac{\nabla Q}{Q}+\phi_{\nabla Q}=\frac{\nabla Q}{Q}+\nabla \phi_Q=0.
\ee Similarily, given $\l>0$, let $Q_\l(y)=\l^2Q(\l y)$. We compute using $\int Q=8\pi$:
$$\phi_{Q_\l}(y)=\frac{1}{2\pi}\int \log |x-y| Q_{\l}(y)dy=\frac{1}{2\pi}\int \log\left(\frac{1}{\l}|\lambda x -z|\right) Q(z)dz=-4\log \lambda +\phi_Q(\lambda y)$$
and hence from \fref{eqqsoliton}: 
$$-\log 8+\log Q_\l+\phi_{Q_\l}+2\log \l=0.$$ We differentiate this expression at $\l=1$ and obtain: 
\be
\label{philambdaq}
\frac{\Lambda Q}{Q}+\phi_{\Lambda Q}+2=0,
\ee this is \fref{relationsm}.\\

{\bf step 3} Generalized kernel. Let now $(u,\phi)$  smooth solve \fref{euiheheoifho}, then:
$$\Delta \phi+Q\phi=cQ$$ for some $c\in \Bbb R$. $Q$ being radially symmetric, we develop $\phi$ in real spherical harmonics:
$$\phi(x)=\Sigma_{k\geq 0}\left[\phi_{1,k}(r)\cos(k\theta)+\phi_{2,k}(r)\sin(k\theta)\right]$$ and obtain $$\forall k\in \Bbb N, \ \ H_k\phi_{i,k}=  -\phi_{i,k}''-\frac{\phi_{i,k}'}{r}+\frac{k^2}{r^2}\phi_{i,k}-Q\phi_{i,k}=-cQ\delta_{k=0}.$$ We omit in the sequel the $i$-dependance to simplify notations. For $k\geq 2$, we can from standard argument using the decay $Q=O(\frac1{r^4})$ at infinity construct a basis of solutions to $H_k\psi_{i,k}=0$ with at infinity $$\psi_{1,k}\sim \frac{1}{r^k}, \ \ \psi_{2,k}\sim r^k.$$ The a priori bound \fref{euiheheoifho} implies $\phi_k\in \mbox{Span}\{\psi_{1,k}\}$ and then the regularity of $\phi$ and $H_k\phi_k=0$ easily yield 
\be
\label{cnoneoneo}
\int |\pa_r\phi_k|^2+\int\frac{(\phi_k)^2}{r^2}<+\infty,
\ee and hence $\phi_k$ belongs to the natural energy space associated to $H_k$. Now $$\forall k\geq 2, \ \ H_k\geq H_1+\frac1{r^2}$$ in the sense of quadratic forms, while from \fref{pofeueoowj}, $$H_1(\phi_Q')=0, \ \phi_Q'(r)>0\ \ \mbox{for}\ \ r>0$$ implies from standard Sturm Liouville argument that $\phi_Q'$ is the bound state of $H_1$. Hence $H_k>0$ for $k\geq 2$ and then from \fref{cnoneoneo}: 
\be
\label{bvoebobeo}
\phi_k=0\ \ \mbox{for}\ \ k\ge2.
\ee For $k=1$, the zeroes of $H_1$ are explicit: $$\psi_{1,1}(r)=\phi'_Q(r)=\frac{4r}{1+r^2}, \ \ \psi_{2,1}(r)=-\psi_{1,1}(r)\int_1^r\frac{d\tau}{\tau\psi_{1,1}^2(\tau)}.$$ The second zero is singular at the origin $\psi_{2,1}(r)\sim \frac{c}{r}$ and thus the assumed regularity of $\phi$ implies:
\be
\label{cnbocnoneo}
\phi_1\in \mbox{Span}(\phi_Q').
\ee
For $k=0$, we first observe from \fref{relationsm} that $$\tilde{u}_0=u_0+\frac c2\Lambda Q, \ \ \tilde{\phi}_0=\phi_0+\frac c2\phi_{\Lambda Q}$$ satisfies 
\be
\label{cnbjibeibiberi}
\left\{\begin{array}{ll}\frac{\tilde{u}_0}{Q}+\tilde{\phi}_0=0\\
\Delta \tilde{\phi}_0=\tilde{u}_0
\end{array}\right.\ \ \mbox{and thus}\ \ H_0\tilde{\phi}_0=0
\ee
 Let the partial mass be $$m_0(r)=\int_0^r\tau \tilde{u}_0(\tau)d\tau\ \ \mbox{so that}\ \ m'_0=r \ut_0, \ \ \tilde{\phi}'_0=\frac{m_0}{ r},$$
then:
$$
0=rQ\left(\frac{\ut_0}{Q}+\tilde{\phi}_0\right)'=r\left[\ut_0'-\frac{Q'}{Q}\ut_0+Q\tilde{\phi}'_0\right]=r\left[\left(\frac{m'_0}{r}\right)'-\frac{Q'}{yQ}m_0'+\frac{Q}{r}m_0\right].$$
Equivalently, $$L_0m_0=0$$  where 
\bea
\label{defmo}
L_0 m_0 & = &  -m_0''+\left(\frac{1}{r}+\frac{Q'}{Q}\right)m_0'-Qm_0\\
\nonumber& = & -m_0''-\frac{3r^2-1}{r(1+r^2)}m_0'-\frac{8}{(1+r^2)^2}.
\eea
The basis of solutions to this homogeneous equation is explicit\footnote{this is due to the fact that the non linear equation satisfied by the partial mass of $Q_b$ is invariant by $m(r)\mapsto m(\lambda r)$, see \fref{defphib}, and indeed $\psi_0\in \mbox{Span} \{rm'_Q\}$, $m_Q(r)=\int_0^rQ(\tau)\tau d\tau$.} and given by:
 \be
 \label{basislzero}
 \psi_0(r)=\frac{r^2}{(1+r^2)^2}, \ \ \psi_1(r)=\frac{1}{(1+r^2)^2}\left[r^4+4r^2\log r-1\right].
 \ee
The regularity of $\tilde{u}_0$ implies $m_0(r)\to 0$ as $r\to 0$ and thus $$m_0\in \mbox{Span}(\psi_0), \ \ \tilde{u}_0=\frac{m'_0}{r}\in \mbox{Span}(\frac{\psi'_0}{y})=\mbox{Span}(\Lambda Q)$$ from direct check. Hence $\tilde{u_0}=c\Lambda Q$ and the regularity of $\tilde{\phi}_0$ at the origin\footnote{recall that the only radially symmetric harmonic function in $R^2$ is $\log r$ which is singular at the origin.} implies $\tilde{\phi}_0(r)=c\phi_{\Lambda Q}$,  but then from \fref{relationsm}, \fref{cnbjibeibiberi}: $$0=\frac{\tilde{u}_0}{Q}+\tilde{\phi}_0=c\mathcal M\Lambda Q=-2c\ \ \mbox{and thus}\ \ \tilde{u}_0=\tilde{\phi}_0=0.$$
 Together with \fref{bvoebobeo}, \fref{cnbocnoneo}, this implies: $$\phi\in \mbox{Span}(\phi_{\Lambda Q}, \pa_1 \phi_Q,\pa_2\phi_Q), \ \ u=\Delta \phi\in \mbox{Span}(\Lambda Q, \pa_1Q,\pa_2Q),$$ and \fref{identificationkernel} follows. This concludes the proof of Lemma \ref{lemmakernel}.
\end{proof}
 
 The explicit knowledge of the kernel of $\matchal M$ given by Lemma \ref{lemmakernel} and the variational characterization of $Q$ imply from standard argument the modulated coercivity of the linearized energy. 

\begin{proposition}[Coercivity of the linearized energy]
\label{propenergy}
There exists a universal constant $\delta_0>0$ such that for all $u\in L^2_Q$,
\be
\label{corec}
(\mathcal Mu,u)\geq \delta_0\int\frac{u^2}{Q}-\frac{1}{\delta_0}\left[(u,\Lambda Q)^2+(u,1)^2+\Sigma_{i=1}^2(u,\pa_i Q)^2\right].
\ee
\end{proposition}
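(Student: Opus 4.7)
The approach is the classical Weinstein-type compactness--contradiction argument, combining two inputs already at hand: the variational characterization of $Q$ by the logarithmic Hardy--Littlewood--Sobolev inequality \eqref{logsobolev}, and the complete description of the generalized kernel of $\mathcal M$ provided by Lemma \ref{lemmakernel}. The quadratic form $(\mathcal Mu,u)=\int u^2/Q+\int u\phi_u$ is precisely the second variation at $Q$ of the entropy $F(u)=\int u\log u+\tfrac12\int u\phi_u$, and since \eqref{logsobolev} forces $Q$ to minimize $F$ under $\int u=8\pi$, expanding $F(Q+\varepsilon v)\geq F(Q)$ to order $\varepsilon^2$ for smooth $v$ with $\int v=0$ and invoking the continuity bound \eqref{mcontiniuos} for density yields the non-negativity
\[
(\mathcal Mv,v)\geq 0\quad\text{whenever}\quad (v,1)=0.
\]

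To upgrade to strict coercivity, I would argue by contradiction: if \eqref{corec} fails, there exists a sequence $u_n\in L^2_Q$ with $\|u_n\|_{L^2_Q}=1$, $(\mathcal M u_n,u_n)\to 0$, and the four inner products $(u_n,1),(u_n,\Lambda Q),(u_n,\partial_i Q)$ all tending to zero (after renormalizing the penalty). Extracting a weak limit $u_n\rightharpoonup u_\infty$ in $L^2_Q$, one gets $(u_\infty,1)=0$, so by the preceding step $(\mathcal Mu_\infty,u_\infty)\geq 0$. A lower semicontinuity argument for $\int u^2/Q$ combined with compactness of the nonlocal bilinear form $u\mapsto\int u\phi_u$ along the sequence forces $(\mathcal Mu_\infty,u_\infty)\leq 0$, hence $u_\infty$ attains the minimum of the second variation under the mass constraint. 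Lagrange multipliers then give $\nabla(u_\infty/Q+\phi_{u_\infty})=0$, so $(u_\infty,\phi_{u_\infty})$ satisfies \eqref{euiheheoifho} and Lemma \ref{lemmakernel} places $u_\infty\in\mathrm{Span}(\Lambda Q,\partial_1 Q,\partial_2 Q)$. The three remaining orthogonality conditions, combined with the non-degeneracy of the $3\times 3$ Gram matrix of $\{\Lambda Q,\partial_1 Q,\partial_2 Q\}$ in the standard $L^2$ pairing, then force $u_\infty=0$.

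Finally, writing $\int u_n^2/Q=(\mathcal Mu_n,u_n)-\int u_n\phi_{u_n}$, the same compactness of the nonlocal term along $(u_n)$ upgrades the weak convergence to $\|u_n\|_{L^2_Q}\to 0$, contradicting the normalization and closing the argument.

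The main obstacle is precisely this compactness of $u\mapsto\int u\phi_u$ on sequences bounded in $L^2_Q$. The weight $1/Q\sim |x|^4$ enforces strong relative spatial localization, but the two-dimensional Poisson kernel $\log|x|$ is not absolutely integrable, so passing $\int u_n\phi_{u_n}$ to the limit requires splitting the kernel into a local convolution piece (which is compact from $L^2_Q$ into a suitable weighted space by a Rellich-type argument) and a logarithmic far-field contribution, the latter handled by using the mass cancellation $(u_n,1)\to 0$ to kill the constant-at-infinity part of $\phi_{u_n}$. This splitting, together with the pointwise control of $\phi_u$ by $\|u\|_{L^2_Q}$ already contained in \eqref{mcontiniuos}, is the technical heart of the proof.
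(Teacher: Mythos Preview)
Your proposal is correct and follows essentially the same route as the paper: positivity of $(\mathcal M\cdot,\cdot)$ on the mass-zero subspace via second variation of the logarithmic HLS functional at $Q$, then a compactness--contradiction argument to upgrade to a spectral gap, using Lemma~\ref{lemmakernel} to identify the limiting minimizer. Two points of comparison are worth noting. First, the paper separates the argument into proving the gap under \emph{exact} orthogonality $(u,1)=(u,\Lambda Q)=(u,\partial_iQ)=0$ and then deducing \eqref{corec} by a projection step, whereas you negate \eqref{corec} directly; both work, but the paper's organization makes the Lagrange multiplier step cleaner (all four constraints are active, and pairing the Euler--Lagrange identity against $u,\Lambda Q,\partial_iQ$ kills the multipliers one by one). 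Second, and more substantively, the compactness of $u\mapsto\int u\phi_u$ that you flag as the technical heart is handled in the paper without any kernel splitting: since $(u_n,1)=0$ exactly along the minimizing sequence, one has the identity $\int u_n\phi_{u_n}=-\int|\nabla\phi_{u_n}|^2$ from \eqref{improvedlinfty}, and it then suffices to show $\nabla\phi_{u_n}\to\nabla\phi_u$ strongly in $L^2$, which follows from the $L^\infty$ bound \eqref{improvedlinftybis} on $\phi_{u_n}$, local $H^1$ compactness, and a near/far decomposition of $\int(u_n-u)(\phi_{u_n}-\phi_u)$. This is shorter than the log-kernel surgery you outline, and it simultaneously yields the nondegeneracy $\|u_\infty\|_{L^2_Q}=1$ (from $\int|\nabla\phi_u|^2\geq 1$), so one never has to run your final paragraph as a separate step.
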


{\it Proof of Proposition \ref{propenergy}} A slightly weaker version is proved in \cite{Vlinear} using explicit and somewhat miraculous computations. We give a simpler and stronger argument by adapting the robust proof by \cite{W2}, and which here relies on the classical sharp Hardy Littlewood Sobolev, see also \cite{LMRlinearized} for a very similar argument.\\

{\bf step 1} Positivity. We claim:
\be
\label{positivut}
\forall u\in L^2_Q\ \ \mbox{with} \ \ \int u=0, \ \ (\mathcal Mu,u)\geq 0.
\ee
From standard density argument, it suffices to prove \fref{positivut} for $u\in \mathcal C^\infty_0(\Bbb R^2)$ with $\int u=0$. We apply the sharp logarithmic HLS inequality \fref{logsobolev} at minimal mass $M=8\pi$ to $v=Q+\lambda u>0$ for $\lambda\in \Bbb R$ small enough: $\int v=\int Q=8\pi$ and thus $$F(\l)=\int v\log v+\frac{1}{2}\int v\phi_v\geq F(0).$$ We compute 
\bee
F'(\l) & = & \int\left[(\log(Q+\l u)+1)u+\frac{1}{2}(u\phi_{Q+\l u}+(Q+\l u)\phi_u)\right]\\
& = &  \int\left[(\log(Q+\l u)-\log 8+\phi_Q)u+\l u\phi_u)\right]\\
\eee where we used \fref{cneneoneo} and the vanishing $\int u=0$, and then $Q$ is a critical point from \fref{eqqsoliton}: $$F'(0)=0,$$ and the Hessian is positive: $$F''(\l)_{|\l=0}=\int\left[\frac{u}{Q}+\phi_{u}\right]u=(\mathcal M u,u)\geq 0.$$

{\bf step 2} Coercivity. We now claim the spectral gap: 
\be
\label{cneonoen}
I=\inf\left\{(\mathcal Mu,u), \ \ \|u\|_{L^2_Q}=1, \ \ (u,1)=(u,\Lambda Q)=(u,\pa_i Q)=0\right\}>0.
\ee
We argue by contradiction and consider from \fref{positivut} a sequence $u_n\in L^2_Q$ with $$0\leq (\mathcal Mu_n,u_n)\leq \frac 1n, \ \ \|u_n\|_{L^2_Q}=1, \ \ (u_n,1)=(u_n,\Lambda Q)=(u_n, \pa_i Q)=0.$$ Hence up to a subsequence, 
\be
\label{weakconv}
u_n\rightharpoonup u\ \ \mbox{in}\ \ L^2_Q
\ee which implies using the decay $|Q(y)|\lesssim \frac{1}{1+r^4}$:
\be
\label{cenokneoen}
\int \frac{u^2}{Q}\leq \liminf_{n\to +\infty}\int\frac{u_n^2}{Q}=1, \ \ (u,1)=(u,\Lambda Q)=(u,\pa_i Q )=0.
\ee 
Observe now from \fref{improvedlinfty} that $$\int u_n\phi_{u_n}=-\int |\nabla \phi_{u_n}|^2$$ and we claim from standard argument:
\be
\label{strongfilsltwoloc}
\nabla \phi_{u_n}\to \nabla \phi_{u} \ \mbox{in}\ \ L^2.
\ee
Assume \fref{strongfilsltwoloc}, then from \fref{cenokneoen}: $$(\mathcal Mu,u)\leq 0, \ \ \int\frac{u^2}{Q}\leq 1$$ and thus from \fref{positivut}, $$(\mathcal Mu,u)=0, \ \ \int\frac{u^2}{Q}=\int |\nabla \phi_u|^2\leq 1.$$ Moreover, from the normalization of the sequence: $$(\mathcal Mu_n,u_n)=\int \frac{u_n^2}{Q}-\int |\nabla \phi_{u_n}|^2=1-\int|\nabla \phi_{u_n}|^2\leq \frac 1n$$ and thus using \fref{strongfilsltwoloc}: $$\int|\nabla \phi_u|^2=\lim_{n\to +\infty}\int |\nabla \phi_{u_n}|^2\geq 1,$$ from which 
\be
\label{cneoneoen}
(\mathcal Mu,u)= 0, \ \ \int\frac{u^2}{Q}=1, \ \ (u,1)=(u,\Lambda Q)=(u,\pa_iQ)=0.
\ee Hence $u$ is non trivial and attains the infimum. We conclude from standard Lagrange multiplier argument using the selfadjointness of $\mathcal M$ that: $$\mathcal Mu=a+b\Lambda Q+c\cdot\nabla Q+d\frac{u}{Q}, \ \ u\in L^2_Q.$$ Taking the scalar product with $u$ yields $d=0$ and thus $\mathcal M u\in L^2_Q$. We then take the scalar product with $1,\Lambda Q,\pa_iQ$ and obtain: 
\be
\label{eqcminimia}
\mathcal Mu=0.
\ee
Moreover $(u,\phi_u)\in \mathcal C^{\infty}(\Bbb R^2)$ from standard bootstrapped regularity argument and $\phi_u\in L^{\infty}$ from \fref{improvedlinftybis}. Hence $(u,\phi_u)$ satisfy the generalized kernel equation \fref{euiheheoifho} and $u\in \mbox{Span}(\Lambda Q, \pa_1Q,\pa_2Q)$ from Lemma \ref{lemmakernel}. The orthogonality conditions on $u$ now imply $u\equiv 0$ which contradicts \fref{cneoneoen} and concludes the proof of \fref{cneonoen}.\\
{\em Proof of \fref{strongfilsltwoloc}}: From \fref{improvedlinfty}:
$$\int |\nabla \phi_u-\nabla \phi_{u_n}|^2=-\int (u_n-u)(\phi_{u_n}-\phi_u).$$ From \fref{improvedlinftybis}, \fref{improvedlinfty}, $\phi_{u_n}$ is bounded in $H^1(|x|\leq R)$ for any $R>0$, and thus from the local compactness of Sobolev embeddings and a standard diagonal extraction argument, we may find $\psi \in H^1_{\rm loc}(\R^2)$ such that up to a subsequence\footnote{we need to prove that $\phi=\phi_u$}:
\be
\label{strongconvergcne}
\phi_{u_n}\to \phi\ \ \mbox{in}\ \ L^2_{\rm loc}(\R^2).
\ee
Moreover 
\be
\label{cbeibeibe}
\phi\in L^{\infty}(\R^ 2)
\ee from \fref{improvedlinftybis}. We  then split the integral in two. For the outer part, we use Cauchy Schwarz and \fref{improvedlinftybis} to estimate:
\bee
 -\int_{|x|\geq R} (u_n-u)(\phi_{u_n}-\phi_u)\lesssim  \frac{1}{R}(\|\phi_{u}\|_{L^{\infty}}+\|\phi_{u_n}\|_{L^{\infty}})(\|u\|_{L^2_Q}+\|u_n\|_{L^2_Q})\lesssim \frac{1}{R}.
\eee
For the inner part, we use the strong local convergence \fref{strongconvergcne}, the a priori bound \fref{cbeibeibe} and the weak convergence \fref{weakconv} to conclude:
\bee
& &-\int_{|x|\leq R} (u_n-u)(\phi_{u_n}-\phi_u)=  -\int_{|x|\leq R} (u_n-u)(\phi_{u_n}-\phi)-\int_{|x|\leq R} (u_n-u)(\phi-\phi_u)\\
& \lesssim & (\|u_n\|_{L^2_Q}+\|u\|_{L^2_Q})\|\phi_{u_n}-\phi\|_{L^2(|x|\leq R)}-\int_{|x|\leq R} (u_n-u)(\phi-\phi_u)\\
& \to & 0 \ \ \mbox{as} \ \ n\to +\infty,
\eee
and \fref{strongfilsltwoloc} is proved.\\

{\bf step 3} Conclusion. Let now $u\in L^2_Q$ and $$v=u-aQ-b\Lambda Q-c_1\pa_1Q-c_2\pa_2Q$$ with $$a=\frac{(u,1)}{\|Q\|_{L^1}}, \ \ b=\frac{1}{\|\Lambda Q\|_{L^2}^2}\left[(u,\Lambda Q)+a(Q,\Lambda Q)\right], \ \ c_i=\frac{(u,\pa_iQ)}{\|\pa_iQ\|_{L^2}^2},$$ then $$(v,1)=(v,\Lambda Q)=(v,\pa_iQ)=0$$ and thus from \fref{cneonoen}: $$(\mathcal Mv,v)\geq \delta_0\int\frac{v^2}{Q}\geq \delta_0\int\frac{u^2}{Q}-\frac{1}{\delta _0}(a^2+b^2+c_1^2+c_2^2)$$ and the claim follows by expanding $(\mathcal Mv,v)$. This concludes the proof of Proposition \ref{propenergy}.

 %%%%%%%%%%%%%%%%%%%%%%%%%%%%%%%%%%%%%%%%%% %%%%%%%%%%%%%%%%%%%%%
 %%%%%%%%%%%%%%%%%%%%%%%%%%%%%%%%%%%%%%%%%% %%%%%%%%%%%%%%%%%%%%%

\subsection{Structure of $\mathcal L$}

 %%%%%%%%%%%%%%%%%%%%%%%%%%%%%%%%%%%%%%%%%% %%%%%%%%%%%%%%%%%%%%%
 %%%%%%%%%%%%%%%%%%%%%%%%%%%%%%%%%%%%%%%%%% %%%%%%%%%%%%%%%%%%%%%
 
 We now study the linearized operator close to $Q$ of the (PKS) flow given by \fref{formuleL} for perturbations in $\matchal E$ given by \fref{energyspcaee}. We define its formal adjoint for the $L^2$ scalar product by 
 \be
\label{foraldjioint}
\mathcal L^*\e=\mathcal M\nabla\cdot(Q\nabla \e).
\ee
  
\begin{lemma}[Continuity of $\mathcal L$ on $\mathcal E$]
\label{lemmacontiniuty}
{\it (i)} Continuity: 
\be
\label{upperbound}
\|\mathcal L\e\|_{L^2_Q}\lesssim \|\e\|_{\mathcal E}.
\ee
{\it (ii)} Adjunction: $\forall (\e,\et)\in \mathcal E^2$,
\be
\label{biebibvei}
 (\mathcal L\e,\et)=(\e,\mathcal L^*\et).
\ee
 {\it (iii) Algebraic identities}: 
  \be
 \label{cneneovneo}
 \mathcal L(\Lambda Q)=\mathcal L(\pa_1Q)=\mathcal L(\pa_2Q)=0,
 \ee
 \be
 \label{nveovneneon}
 \mathcal L^*(y_1)=\mathcal L^*(y_2)=\mathcal L^*(1)=0, \ \ \mathcal L^*(|y|^2)=-4.
 \ee
 {\em (iv) Vanishing average}: $\forall \e\in\mathcal E$,
 \be
 \label{estvooee}
 (\mathcal L\e,1)=0.
 \ee
 \end{lemma}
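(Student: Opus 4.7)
The whole lemma is essentially bookkeeping around the factorization $\mathcal L=\nabla\cdot(Q\nabla\mathcal M\,\cdot)$ and its formal adjoint $\mathcal L^*=\mathcal M\nabla\cdot(Q\nabla\,\cdot)$, together with the self-adjointness and kernel structure of $\mathcal M$ supplied by Lemma \ref{lemmakernel}. The plan is to handle (ii)--(iv) algebraically and to reduce (i) to the four explicit terms in the expansion of $\mathcal L$.

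For (i), I would start from the expansion $\mathcal L\e=\Delta\e+2Q\e+\nabla\phi_Q\cdot\nabla\e+\nabla Q\cdot\nabla\phi_\e$ and estimate $\|\mathcal L\e\|_{L^2_Q}^2=\int|\mathcal L\e|^2/Q$ term by term. The first contribution is literally $\|\Delta\e\|_{L^2_Q}^2$. For the second, since $Q$ is bounded, $\int Q\e^2\lesssim\|\e\|_{L^2}^2$, which is part of $\|\e\|_{H^2_Q}$. The third uses the explicit identity $\nabla\phi_Q=\frac{4r}{1+r^2}\hat r$, which gives $|\nabla\phi_Q|\lesssim (1+|x|)^{-1}$ and hence $\int|\nabla\phi_Q\cdot\nabla\e|^2/Q\lesssim \|\nabla\e/(1+|x|)\|_{L^2_Q}^2$. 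The nonlocal fourth term is the only slightly subtle one: $|\nabla Q|\lesssim (1+|x|)^{-5}$, so $|\nabla Q|/\sqrt Q\lesssim (1+|x|)^{-3}$, which has enough decay that it suffices to plug in the pointwise bound $\|\nabla\phi_\e\|_{L^\infty}\lesssim \|\e\|_{L^1}+\|\e\|_{L^2}\lesssim\|\e\|_\mathcal E$ coming from the standard Newtonian-potential splitting near and far from the singularity. This is the step I expect to take the most care.

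For (ii), I would integrate by parts twice in $(\mathcal L\e,\et)=\int\nabla\cdot(Q\nabla\mathcal M\e)\,\et=-\int Q\nabla\mathcal M\e\cdot\nabla\et=\int\mathcal M\e\,\nabla\cdot(Q\nabla\et)$, then invoke the self-adjointness \fref{efadjointness} of $\mathcal M$ to rewrite this as $(\e,\mathcal M\nabla\cdot(Q\nabla\et))=(\e,\mathcal L^*\et)$. Boundary terms vanish thanks to the rapid decay $Q=O(|x|^{-4})$ against the $\mathcal E$-control on $\e,\et$ and their gradients; a standard truncation-then-pass-to-the-limit argument makes this rigorous.

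For (iii), the algebraic identities are immediate from Lemma \ref{lemmakernel}: $\mathcal M(\Lambda Q)=-2$ is constant so $\nabla\mathcal M(\Lambda Q)\equiv 0$ and $\mathcal L(\Lambda Q)=0$; likewise $\mathcal M(\pa_i Q)=0$ gives $\mathcal L(\pa_iQ)=0$. Dually, $\mathcal L^*(1)=\mathcal M\nabla\cdot(Q\cdot 0)=0$, $\mathcal L^*(y_i)=\mathcal M\nabla\cdot(Q e_i)=\mathcal M(\pa_i Q)=0$, and using $\nabla\cdot(2yQ)=2\Lambda Q$ together with $\mathcal M(\Lambda Q)=-2$ yields $\mathcal L^*(|y|^2)=-4$. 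Finally (iv) follows either from (ii)+(iii) via $(\mathcal L\e,1)=(\e,\mathcal L^*1)=0$, or directly as the integral of the divergence $\nabla\cdot(Q\nabla\mathcal M\e)$, which vanishes because $Q\nabla\mathcal M\e$ decays fast enough to discard the boundary at infinity.
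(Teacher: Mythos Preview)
Your proposal is correct and follows essentially the same route as the paper: the same four-term estimate for (i) (the paper cites the Poisson-field bound \fref{estcahmos}, which is exactly your near/far splitting), the same two integrations by parts plus self-adjointness of $\mathcal M$ for (ii), and the same use of \fref{relationsm} for (iii). One small caveat on (iv): your first option $(\mathcal L\e,1)=(\e,\mathcal L^*1)$ is only formal since $1\notin\mathcal E$, so the adjunction (ii) does not directly apply; the paper, like your second option, argues directly from the divergence form by picking a sequence $r_n\to\infty$ along which the boundary flux vanishes.
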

 
{\it Proof of Lemma \ref{lemmacontiniuty}}.\\
{\em Proof of (i)}: We have from the explicit formula \fref{formuleL} and the energy bound \fref{estcahmos} on the Poisson field:
\bee
\int \frac{|\mathcal L\e|^2}{Q} & \lesssim & \int\frac{|\Delta \e|^2}{Q}+\int Q|\e|^2+\int\frac{|\nabla \phi_Q|^2}{Q}|\nabla \e|^2+\int \frac{|\nabla Q|^2}{Q}|\nabla \phi_\e|^2\\
& \lesssim & \|u\e|_{H^2_Q}^2+\int\frac{|\nabla\phi_\e|^2}{1+r^4}\lesssim \|\e\|_{\mathcal E}^2.
\eee
{\em Proof of  (ii)}: The representation formula
\be\label{reperesnetiaion}
\mathcal L \e=\nabla\cdot(Q\nabla \mathcal M\e)
\ee
ensures that the formal adjoint of $\matchal L$ for the $L^2$ scalar product is given by \fref{foraldjioint}. To justify the integration by parts \fref{biebibvei}, we first remark that both integrals are absolutely convergent. Indeed, from \fref{upperbound}: $$\int |\mathcal L\e||\et|\lesssim \|\mathcal L\e\|_{L^2_Q}\|\et\|_{L^2}\lesssim \|\e\|_{\mathcal E}\|\et\|_{\mathcal E}.$$ Moreover, $$\mathcal L^*\et=\frac{1}{Q}\nabla \cdot(Q\nabla \et)+\phi_{\nabla \cdot (Q\nabla \et)}$$ and from $\int \nabla \cdot(Q\nabla \et)=0$ and \fref{improvedlinftybis}:
$$\|\phi_{\nabla \cdot (Q\nabla \et)}\|^2_{L^{\infty}}\lesssim \|\nabla\cdot(Q\nabla \et)\|^2_{L^2_Q}\lesssim \int \frac{|Q\Delta \et|^2+|\nabla Q\cdot\nabla \et|^2}{Q}\lesssim\|\et\|^2_{H^2_Q},
$$
from which:
$$\int |\e||\mathcal L^*\et|\lesssim \|\e\|_{L^1}\|\phi_{\nabla \cdot (Q\nabla \et)}\|_{L^{\infty}}+\int |\e|\left[|\Delta \et|+\frac{|\nabla Q|}{Q}|\nabla\et|\right]\lesssim \|\e\|_{\mathcal E}\|\et\|_{\mathcal E}.$$
Hence: $$(\mathcal L\e,\et)=\lim_{R\to +\infty}\int_{|x|\leq R}\et\mathcal L\e.$$ Now 
\bee
\int_{|x|\leq R}\et\mathcal L\e& = & \int_{|x|\leq R}\nabla \cdot(Q\nabla \mathcal M \e) \et=\int_{S_R}Q\et\pa_r(\mathcal M\e)d\sigma_R-\int_{|x|\leq R}Q\nabla \mathcal M \et\cdot\nabla \et\\
& = & \int_{S_R}\left[Q\et\pa_r(\mathcal M\e)-Q\pa_r\et\mathcal M\e \right]d\sigma_R+\int_{|x|\leq R}\mathcal M\e\nabla \cdot(Q\nabla \et).
\eee
Moreover, $$\|\nabla \cdot (Q\nabla \et)\|^2_{L^2_Q}\lesssim \int \frac{|\nabla \et\cdot\nabla Q|^2+Q^2|\Delta \et|^2}{Q}\lesssim \|\et\|^2_{H^2_Q},$$ and thus from \fref{efadjointness}:
$$\lim_{R\to +\infty}\int_{|y|\leq R}\mathcal M\e\nabla \cdot(Q\nabla \et)=(\mathcal M\e,\nabla \cdot(Q\nabla \et))=(\e,\mathcal M(\nabla \cdot(Q\nabla \et)))=(\e,\mathcal L^*\et).$$ Moreover, we estimate from Cauchy Schwarz and \fref{estcahmos} :
\bee
\left|\int Q\et\pa_r(\mathcal M\e)\right|\lesssim\int Q|\et|\left[\frac{|\pa_r\e|}{Q}+\frac{|\pa_rQ||\e|}{Q^2}+|\pa_r\phi_\e|\right]<+\infty
\eee 
and from \fref{mcontiniuos}:
\bee
\left|\int Q\pa_r \et\mathcal M\e\right|\lesssim \left(\int Q|\mathcal M\e|^2\right)^{\frac 12}\left(\int Q |\nabla \et|^2\right)^{\frac12}\lesssim  \|\et\|_{H^2_Q}\|\e\|_{H^2_Q}
\eee
and thus we can find a sequence $R_n\to +\infty$ such that $$ \int_{S_{R_n}}\left[Qv\pa_r( \mathcal Mu)-Q\pa_rv\mathcal Mu \right]d\sigma_{R_n}\to 0\ \ \mbox{as}\ \ n\to +\infty,$$ and \fref{biebibvei} follows.\\
{\em  Proof of (iii)} The algebraic identities \fref{cneneovneo} follow directly from \fref{relationsm}, \fref{foraldjioint}: $$\mathcal L^*(y_i)=\mathcal M(\nabla \cdot(Q\nabla y_i))=\mathcal M(\pa_iQ)=0,$$ $$\mathcal L^*(|y|^2)=\mathcal M(\nabla \cdot(Q\nabla |y|^2)=\mathcal M(2\Lambda Q)=-4.$$
{\em  Proof of (iv)}: From \fref{upperbound}, $\matchal L\e\in L^2_Q\subset L^1$, and thus from \fref{reperesnetiaion}: $\forall r_n\to +\infty$,
$$\int\mathcal L\e=\lim_{r_n\to +\infty}\int_{|x|\leq r_n}\mathcal L\e=\lim_{r_n\to +\infty}r_n\int_{0}^{2\pi}Q\pa_r(\mathcal M\e)d\theta.$$
But $$\int|Q\nabla \mathcal M\e|^2\lesssim \int\left[ |\nabla \e|^2+\frac{|\e|^2}{1+|y|^2}+\frac{|\nabla \phi_\e|^2}{1+r^4}\right]<+\infty$$ where we used \fref{estcahmos} , and thus we can find a sequence $r_n\to +\infty$ such that $$\int_0^{2\pi}|Q\pa_r\mathcal M\e|^2d\theta=o\left(\frac{1}{r_n^2}\right).$$ Hence $$r_n\int_{0}^{2\pi}Q\pa_r(\mathcal M\e)d\theta\lesssim \left(r_n^2\int_0^{2\pi}|Q\pa_r \mathcal M \e|^2d\theta\right)^{\frac 12}\to 0\ \ \mbox{as}\  \ r_n\to +\infty,$$ and \fref{estvooee} follows. This concludes the proof of Lemma \ref{lemmacontiniuty}.\\
 
 We now introduce the directions $\Phi_M,\Phi_M^{(1,2)}$ which are localizations of the generalized kernel of $\matchal L^*$, and on which we will construct our set of orthogonality conditions. We anticipate on section \ref{secitonanan} and let $T_1$ be the explicit radially symmetric function given by \fref{deftun} and which satisfies from \fref{eqtonekvne}, \fref{esttoneprop}: 
 \be
 \label{cneobeoeojjejeo}
 \mathcal LT_1=\Lambda Q, \ \ |r^i\pa_r^iT_1|\lesssim \frac{1}{1+r^2}.
 \ee
 We claim:
 
 \begin{lemma}[Direction $\Phi_M$, $\Phi_M^{(1,2)}$]
 \label{direcitonroth}
 Given $M\geq M_0>1$ large enough, we define the directions:
 \be
 \label{defphimzero}
 \Phi_{0,M}(y)=\chi_Mr^2,
 \ee
 \be
\label{defphim}
\Phi_M(y)=\chi_M r^2+c_M\mathcal L^*(\chi_Mr^2), \ \ c_M=-\frac{(\chi_M|y|^2,T_1)}{(\chi_M|y|^2,\Lambda Q)},
\ee
\be
\label{defphitilde}
\Phi_{i,M}=\chi_M y_i, \ \  i=1,2.
\ee
Then:\\
{\it (i) Estimate on $\Phi_M$}:
\be
\label{estphim}
\Phi_M(r)=r^2\chi_M-4c_M\chi_M+\frac{M^2}{\log M}O\left(\frac{1}{1+r^2}{\bf 1}_{r\leq 2M}+{\bf 1}_{M\leq r\leq 2M}\right),
\ee
\be
\label{orthophim}
(\Phi_M,T_1)=0, \ \ (\Phi_M,\Lambda Q)=-(32\pi) \log M+O_{M\to +\infty(1)},
\ee
 \be
 \label{nezscalrar}
(\Phi_{i,M},\pa_iQ)=-\|Q\|_{L^1}+O\left(\frac1M\right).
\ee
{\it (ii) Estimate on scalar products}: $\forall \e \in L^1$, 
\be
\label{estimationorthobis}
|(\e,\Phi_M)| \lesssim  \int_{r\leq 2M}(1+r^2)|\e|+\frac{M^2}{\log M}\left[|(\e,1)|+\int_{r\geq M}|\e|\right],
\ee
\be
\label{bjebbeibei}
|(\e,\mathcal L^*\Phi_{0,M})|\lesssim \int_{r\leq 2M}|\e|,
\ee
\be
\label{newestimate}
\left|(\e,\mathcal L^*\Phi_M)\right|\lesssim  \int_{r\leq 2M}|\e|+\frac{M^2}{\log M}\int_{r\geq M} \frac{|\e|}{1+r^2},
\ee
\be
\label{estfonamentalebus}
 \left|(\e,\mathcal L^*\Phi_M)+c_M^{(1)}(\e,1)\right|+\left|(\e,\mathcal L^*\Phi_{0,M})+c_M^{(2)}(\e,1)\right|\lesssim \int_{r\geq M}|\e|
\ee
with $$c_M^{(i)}=4+O\left(\frac{1}{M^2}\right), \ \ i=1,2,$$
\be
\label{esterivd}
 |(\e,\mathcal L^*\Phi_{i,M})|\lesssim\int_{M\leq r\leq 2M}\frac{|\e|}{1+r}+\frac{1}{M^2}\int\frac{|\e|}{1+r} \ \ i=1,2.
\ee
\end{lemma}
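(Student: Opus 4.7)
The plan is to unpack the definitions of $\Phi_M,\Phi_{0,M},\Phi_{i,M}$, to rely on the adjunction $(\mathcal L\e,\et)=(\e,\mathcal L^*\et)$ from Lemma \ref{lemmacontiniuty}, and to exploit the three algebraic identities $\mathcal L T_1=\Lambda Q$, $\mathcal L(\Lambda Q)=0$ and $\mathcal L^*(|y|^2)=-4$ to decompose each object into an explicit interior part plus a commutator-type error supported in the boundary layer $\{M\leq r\leq 2M\}$.

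For (i), the orthogonality $(\Phi_M,T_1)=0$ is automatic from the choice of $c_M$ combined with adjunction: $(\mathcal L^*(\chi_M|y|^2),T_1)=(\chi_M|y|^2,\mathcal L T_1)=(\chi_M|y|^2,\Lambda Q)$. The second identity in \eqref{orthophim} reduces to $(\Phi_M,\Lambda Q)=(\chi_M|y|^2,\Lambda Q)$ because $\mathcal L(\Lambda Q)=0$ kills the $c_M$-correction by adjunction; then integrating by parts via $\Lambda Q=\nabla\cdot(yQ)$ together with $\nabla(\chi_M r^2)\cdot y=2r^2\chi_M+r^2 y\cdot\nabla\chi_M$ gives the main term $-2\int\chi_M r^2 Q=-32\pi\int_0^{2M}r^3/(1+r^2)^2\,dr=-32\pi\log M+O(1)$, while the $\nabla\chi_M$ contribution is $O(1)$ by the $r^{-4}$ decay of $Q$. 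Since $|T_1|\lesssim 1/(1+r^2)$ one also gets $(\chi_M|y|^2,T_1)=O(M^2)$, hence $|c_M|\lesssim M^2/\log M$. For the pointwise shape \eqref{estphim} I would use $\chi_M r^2\equiv r^2$ on $\{r\leq M\}$ together with $\mathcal L^*(|y|^2)=-4$ to write $\mathcal L^*(\chi_M r^2)=-4\chi_M+[\mathcal L^*,\chi_M]r^2$, with the commutator localised in $[M,2M]$. Identity \eqref{nezscalrar} is a one-line integration by parts against $\partial_iQ$ using the $r^{-4}$ decay of $Q$ to handle $(1-\chi_M)Q=O(1/M^2)$.

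For (ii), I would plug the structural decomposition of (i) into each scalar product. Writing $-4c_M\chi_M=-4c_M+4c_M(1-\chi_M)$ in \eqref{estphim} isolates a constant mode giving the $(\e,1)$ contribution and a tail supported in $\{r\geq M\}$, both weighted by $|c_M|\lesssim M^2/\log M$; this produces \eqref{estimationorthobis}. The same splitting applied to $\mathcal L^*\Phi_M$ yields \eqref{bjebbeibei} and \eqref{newestimate}: the commutators $[\mathcal L^*,\chi_M]r^2$ and $[\mathcal L^*,\chi_M]\chi_M$ live in $[M,2M]$, and the associated Poisson-field contributions $\phi_{\nabla\cdot(Q\nabla\chi_M)}$ come from sources of vanishing monopole (forced by the divergence structure together with $\mathcal L^*(1)=0$), hence decay like $1/r^2$ at infinity and match exactly the weight in \eqref{newestimate}. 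The refined cancellation \eqref{estfonamentalebus} is extracted by writing $-4\chi_M=-4+4(1-\chi_M)$ globally inside $\mathcal L^*\Phi_{0,M}$ and $\mathcal L^*\Phi_M$, so that the $-4(\e,1)$ term is isolated and only a $\{r\geq M\}$ remainder survives, which simultaneously identifies the universal constant $c_M^{(i)}=4+O(1/M^2)$. Finally \eqref{esterivd} exploits $\mathcal L^*(y_i)=0$ to reduce $\mathcal L^*(\chi_M y_i)$ to the pure commutator $[\mathcal L^*,\chi_M]y_i$, supported in $[M,2M]$, plus an $1/(M^2(1+r))$ Poisson-field tail from its vanishing-mass source.

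The main obstacle is tracking the precise $1/(1+r^2)$ (respectively $1/(1+r)$) decay of the Poisson-field tails generated by the cut-off commutators. Because the sources $\nabla\cdot(Q\nabla\chi_M|y|^2)$ and $\nabla\cdot(Q\nabla\chi_M)$ are compactly supported with vanishing total mass, their Newtonian potentials gain one extra power of decay compared to the generic logarithmic behaviour; this gain, combined with the scaling $c_M\sim M^2/\log M$, is precisely what produces the weights appearing in (ii), and it is what ultimately makes the orthogonality conditions built on $\Phi_M$ usable for the coercivity estimates of the bootstrap argument in the later sections.
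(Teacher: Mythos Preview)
Your proposal is correct and follows essentially the same strategy as the paper: both exploit the adjunction formula, the identities $\mathcal L T_1=\Lambda Q$, $\mathcal L(\Lambda Q)=0$, $\mathcal L^*(|y|^2)=-4$, and decompose each object into an explicit interior piece plus a boundary-layer error coming from the cut-off. The paper simply carries this out by direct computation rather than in commutator language: it writes $\mathcal L^*(\chi_Mr^2)=\mathcal M r_M$ with $r_M=\nabla\cdot(Q\nabla(\chi_M|y|^2))$, computes $r_M/Q$ and $\phi_{r_M}$ separately, and for \eqref{newestimate}--\eqref{estfonamentalebus} computes $(\mathcal L^*)^2(\chi_Mr^2)$ explicitly via the auxiliary $\tilde r_M=r_M-2\Lambda Q$ (supported in $\{r\ge M\}$), which is exactly your iterated commutator. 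Two small points where the paper is more explicit than your sketch: first, your claim that $[\mathcal L^*,\chi_M]r^2$ is ``localised in $[M,2M]$'' is only true up to the constant Poisson spill-over $e_M=O(1/M^2)$ inside $\{r<M\}$ (this is precisely the $O(1/M^2)$ you later recover in $c_M^{(i)}$); second, for \eqref{esterivd} the source is a $k=1$ spherical harmonic rather than radial, so the ``vanishing-mass gives extra decay'' heuristic needs the paper's explicit ODE inversion for the $k=1$ Poisson component to produce the $1/(M^2(1+r))$ tail. Neither point is a gap in your argument, only a place where more detail is needed.
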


 \begin{remark} It is important for the analysis to keep a sharp track of the $M$ dependence of constants in Lemma \ref{direcitonroth}. The estimate \fref{estfonamentalebus} will play a distinguished role in the analysis for the derivation of the blow up speed. The estimates on scalar products \fref{estimationorthobis}, \fref{newestimate}, \fref{estfonamentalebus}, \fref{esterivd} are a sharp measurement of the effect of localization on the exact formulae \fref{nveovneneon}.
 \end{remark}
 
 \begin{remark} For $\e\in L^2_Q$, \fref{estimationorthobis} implies by Cauchy Schwarz:
 \be
\label{estimationortho}
|(\e,\Phi_M)| +|(\e,\Phi_{0,M})|\lesssim  M\|\e\|_{L^2_Q}+\frac{M^2}{\log M}|(\e,1)|.
\ee
Also \fref{newestimate} implies the rough bound 
\be
\label{novneioheonoen}
|(\e,\matchal L^*\Phi_M)|\lesssim M\|\e\|_{L^2}.
\ee
\end{remark}

{\it Proof of Lemma \ref{direcitonroth}} \\
{\bf step 1} Proof of (i). We integrate by parts and use the cancellation $\mathcal L(\Lambda Q)=0$ to compute:$$(\Phi_M,\Lambda Q)=(\chi_M|y|^2,\Lambda Q)=-2\int|x|^2\chi_M Q-\int|x|^2Qx\cdot\nabla \chi_M=-(32\pi)\log M+O(1).$$ This yields using \fref{cneobeoeojjejeo} the upper bound: 
\be
\label{controlccm}
 |c_M|\lesssim \frac{1}{\log M}\int_{|y|\leq 2M}\frac{|y|^2}{1+|y|^2}\lesssim \frac{M^2}{\log M}.
 \ee
We estimate by definition:
\be
\label{cbeioveibeoiheo}
\mathcal L^*(\chi_M|y|^2)=\mathcal M r_M, \ \ r_M=\nabla \cdot\left[Q\nabla(\chi_M|x|^2)\right]
\ee and thus using \fref{gfijgbeigei}:
\bea
\label{estrm}
\frac{r_M}{Q} & = & \frac{1}{Q}\left[2\chi_M\Lambda Q+|x|^2(Q\Delta \chi_M+\nabla Q\cdot\nabla \chi_M)+4 Qx\cdot\nabla \chi_M\right]\\
\nonumber & = & -4\chi_M+O\left(\frac{1}{1+r^2}{\bf 1}_{r\leq 2M}+{\bf 1}_{M\leq r\leq2M}\right)
\eea
Moreover, solving the Poisson equation for radial fields: $$\left|\phi_{r_M}(r)\right|=\left|\int_{r}^{+\infty}Q\pa_r(\chi_Mr^2)dr\right|\lesssim\frac{1}{1+r^2} {\bf 1}_{r\leq 2M}$$ and thus 
\be
\label{eopjepjpeg}
\mathcal L^*(\chi_M|y|^2)=\mathcal Mr_m=-4\chi_M+O\left(\frac{1}{1+r^2}{\bf 1}_{r\leq 2M}+{\bf 1}_{M\leq r\leq2M}\right)
\ee
 which together with \fref{controlccm} yields \fref{estphim}.\\
We now compute the scalar product: $$(\Phi_M,T_1)=(\chi_M |y|^2+c_M\mathcal L^*(\chi_M|y|^2),T_1)=(\chi_M|y|^2,T_1)+c_M(\mathcal LT_1,\chi_M|y|^2)=0$$ where we used \fref{cneobeoeojjejeo} and the definition \fref{defphim} of $c_M$, and integration by parts which is easily justified using the compact support of $\chi_M|y|^2$ and the decay \fref{cneobeoeojjejeo}. We finally compute after an integration by parts: $$  (\pa_iQ,\chi_Mx_i)=-\|Q\|_{L^1}+O\left(\frac1M\right).$$

{\bf step 2} Proof of \fref{estimationorthobis}, \fref{bjebbeibei}, \fref{newestimate}, \fref{estfonamentalebus}. We claim the pointwise bound:
 \be
 \label{vbeojboihe}
 (\mathcal L^*)^2(\chi_Mr^2)= d_M\chi_M+\frac{{\bf 1}_{r\geq M}}{1+r^2} \ \ \mbox{with}\ \ d_M=O\left(\frac1{M^4}\right).
 \ee
Assume \fref{vbeojboihe}, then from \fref{controlccm}, \fref{eopjepjpeg}, \fref{vbeojboihe}:
$$|(\e,\mathcal L^*\Phi_M)|\lesssim \int_{r\leq 2M}|\e|+\frac{M^2}{\log M}\left[\frac{1}{M^4}\int_{r\leq M}|\e|+\int_{r\geq M} \frac{|\e|}{1+r^2}\right],$$ which implies \fref{newestimate}. The estimate \fref{bjebbeibei} follows from \fref{eopjepjpeg}. Recall now \fref{cbeioveibeoiheo}, then:
\be
\label{estrmtilde}
\tilde{r}_M=r_M-2\Lambda Q=\nabla \cdot\left[Q\nabla((\chi_M-1)|x|^2)\right]=O\left(\frac{{\bf 1}_{|y|\geq M}}{1+|y|^4}\right),
\ee 
and computing the poisson field in radial coordinates:
\bee
\phi_{\tilde{r}_M} & = & -\int_r^{+\infty}Q\pa_{\tau}\left[(1-\chi_M)\tau^2\right] d\tau\\
& = & -{\bf 1}_{r\leq M}\int_M^{+\infty}Q\pa_{\tau}\left[(1-\chi_M)\tau^2\right] d\tau- {\bf 1}_{r\ge M}\int_r^{+\infty}Q\pa_{\tau}\left[(1-\chi_M)\tau^2\right] d\tau\\
& = & e_M\chi_M+O\left(\frac{{\bf 1}_{r\ge M}}{1+r^2}\right)
\eee
with $$e_M=-\int_M^{+\infty}Q\pa_{\tau}\left[(1-\chi_M)\tau^2\right] d\tau=O\left(\frac{1}{M^2}\right).$$
We then compute from $\mathcal M\Lambda Q=-2$: 
\bea
\label{estfonamentale}
  \left|(\e,\mathcal L^*(\chi_M|y|^2))+(4-e_M)(\e,1)\right| & = &   \left|(\e,\mathcal M\tilde{r}_M-e_M)\right|\lesssim  \int_{r\geq M}|\e|.\eea
Hence:
\bee
|(\e,\Phi_M)|& \lesssim&  |(\e,|y|^2\chi_M)|+|c_M|\left|(\e,\mathcal L^*(\chi_M|y|^2))\right|\\
& \lesssim & \int_{r\leq 2M}r^2|\e|+\frac{M^2}{\log M}\left[|(\e,1)|+\int_{r\geq M}|\e|\right]
\eee
 and \fref{estimationorthobis} is proved.  Moreover, from \fref{controlccm}, \fref{vbeojboihe}, \fref{estfonamentale}:
 \bee
  \left|(\e,\mathcal L^*\Phi_M)+(4-e_M-c_Md_M)(\e,1)\right|& \lesssim & \int_{r\geq M}|\e|+  \frac{M^2}{|\log M|}\left[\int_{r\geq M}\frac{|\e|}{1+r^2}\right]\\
 & \lesssim & \int_{r\geq M}|\e|
 \eee
 and \fref{estfonamentalebus} is proved.\\
 {\it Proof of \fref{vbeojboihe}}: First observe from $(\mathcal L^*)^2(|y|^2)=\mathcal L^*(-4)=0$ that $$(\mathcal L^*)^2(\chi_M|y|^2)=\mathcal L^*f_M \ \ \mbox{with}\ \ f_M=\mathcal M\tilde{r}_M.$$ We compute explicitly using the radial representation of the Poisson field
\bee
\mathcal L^*f_M=\mathcal M\nabla \cdot (Q\nabla f_M) & = & \frac{1}{rQ}\pa_r(rQ\pa_rf_M)+\phi_{\frac{1}{r}\pa_r(rQ\pa_rf_M)}\\
& = & \pa^2_r f_M+\left(\frac{1}{r}+\frac{\pa_r Q}{Q}\right)\pa_r f_M-\int_r^{+\infty}Q\pa_{\tau}f_Md\tau.
\eee
We then estimate from \fref{estrmtilde}:
$$|\pa^i_r\tilde{r}_M|\lesssim \frac{{\bf 1}_{r\geq M}}{1+r^{4+i}}, \ \ \left|\pa^i_r\left(\frac{\tilde{r}_M}{Q}\right)\right|\lesssim \frac{{\bf 1}_{r\geq M}}{1+r^{i}}, \ \ i\geq 0.$$ To estimate the Poisson field, we use: $$\pa_r\phi_{\tilde{r}_M}(r)=\frac{1}{r}\int_0^r\tau\tilde{r}_Md\tau$$ and thus: $$|\pa_r\phi_{\tilde{r}_M}(r)|\lesssim {\bf 1}_{r\geq M}\frac{1}{r}\int_M^{+\infty}\frac{\tau d\tau}{1+\tau^4}\lesssim \frac{{\bf 1}_{r\geq M}}{M^2r},$$ and similarily: $$|\pa^i_r\phi_{\tilde{r}_M}(y)|\lesssim \frac{{\bf 1}_{r\geq M}}{M^2r^{i}}, \ \ i\geq 1.$$ This yields the bounds $$|\pa^i_rf_M|\lesssim  \frac{{\bf 1}_{r\geq M}}{1+r^{i}}, $$ 
\bee
\mathcal L^*f_M & = & -{\bf 1}_{r\leq M}\int_M^{+\infty}Q\pa_{\tau}f_Md\tau+{\bf 1}_{r\geq M}\int_r^{+\infty}Q\pa_{\tau}f_Md\tau+O\left(\frac{1}{1+r^2}\right)\\
& = & O\left(\frac{{\bf 1}_{r\geq M}}{1+r^2}\right)+d_M\chi_M
\eee
with $$d_M=-\int_M^{+\infty}Q\pa_{\tau}f_Md\tau=O\left(\frac1{M^4}\right)$$
 and \fref{vbeojboihe} is proved.\\

 {\bf step 3} Proof of \fref{esterivd}.  We estimate using \fref{nveovneneon}: $$\mathcal L^*(\chi_My_i)=\frac{1}{Q}\nabla \cdot(Q\nabla f_M^{(i)})+\phi_{\nabla \cdot(Q\nabla f_M^{(i)})}, \ \ f^{(i)}_M=(\chi_M-1)y_i.$$ The local term is estimated in brute force: $$\left|\frac{1}{Q}\nabla \cdot(Q\nabla f_M^{(i)})\right|=\left|\Delta f_M^{(i)}+\frac{\nabla Q}{Q}\cdot\nabla f_M^{(i)}\right|\lesssim \frac{{\bf 1}_{M\leq |y|\leq 2M}}{1+|y|}.$$ We compute the Poisson field by writing: $$f_M^{(i)}=\zeta_M^{(i)}\frac{y_i}{r}, \ \ \zeta_M^{(i)}(r)=(\chi_M-1)r, \ \ \phi_{\nabla \cdot(Q\nabla f_M^{(i)})}=\Phi_{i,M}(r)\frac{y_i}{r}$$ with $$\pa_r^2\Phi_{i,M}+\frac{\pa_r\Phi_{i,M}}{r}-\frac{\Phi_{i,M}}{r^2}=g_{M}^{(i)}, $$$$ g_M^{(i)}=\frac{1}{r}\pa_r(rQ\pa_r\zeta_M^{(i)})-\frac{Q}{r^2}\zeta_M^{(i)}=O\left(\frac{{\bf 1}_{r\geq M}}{1+r^5}\right).$$ Letting $v^{(i)}_M=\frac{\Phi_{i,M}}{r}$, we obtain: $$\frac{1}{r^3}\pa_r(r^3\pa_rv_M^{(i)})=\frac{g_M^{(i)}}{r}, \ \ \pa_rv_M^{(i)}=\frac{1}{r^3}\int_0^r\tau^2g_M^{(i)}d\tau=O\left(\frac{{\bf 1}_{r\geq M}}{M^2r^3}\right), $$ $$\Phi_{i,M}(r)=-r\int_r^{+\infty}\pa_\tau v_M^{(i)}d\tau=O\left(\frac{1}{M^2(1+r)}\right).$$ We therefore obtain the estimate: 
\be
\label{wbfebebei}
|\mathcal L^*(\chi_M y_i)|\lesssim \frac{1}{1+r}\left[{\bf 1}_{M\leq |y|\leq 2M}+\frac{1}{M^2}\right]
\ee 
and \fref{esterivd} is proved.\\
 This concludes the proof of Lemma \ref{direcitonroth}.

%%%%%%%%%%%%%%%%%%%%%%%%%%%%%%%%%%%%%%%%%%%%%%%%%%%%%%%%%%%%%%
%%%%%%%%%%%%%%%%%%%%%%%%%%%%%%%%%%%%%%%%%%%%%%%%%%%%%%%%%%%%%%

\subsection{$H^2_Q$ bounds}

%%%%%%%%%%%%%%%%%%%%%%%%%%%%%%%%%%%%%%%%%%%%%%%%%%%%%%%%%%%%%%
%%%%%%%%%%%%%%%%%%%%%%%%%%%%%%%%%%%%%%%%%%%%%%%%%%%%%%%%%%%%%%

We now claim the following coercivity property which is the keystone of our analysis. We recall the radial/non radial decomposition \fref{radialnonradial}. The radial symmetry of $Q$ and the structure of the Poisson field ensure: $$(\mathcal L\e)^{(0)}=\mathcal L\e^{(0)}.$$

\begin{proposition}[Coercivity of $\mathcal L $]
\label{interpolationhtwo}
There exist universal constants $\delta_0,M_0>0$ such that $\forall M\geq M_0$, there exists $\delta(M)>0$ such that the following holds. Let $\e\in \mathcal E$ with 
\be
\label{orthowappendix}
(\e,\Phi_M)=(\e,\matchal L^*\Phi_M)=(\e,\Phi_{i,M})=(\e,\mathcal L^*\Phi_{i,M})=0,\ \ i=1,2,
\ee
then there hold the bounds:\\
(i) Control of $\mathcal L\e$: 
\be
\label{coerclwotht}
(\mathcal M\mathcal L\e^{(0)},\mathcal L\e^{(0)})\geq \frac{\delta_0(\log M)^2}{M^2} \int\frac{(\mathcal L \e^{(0)})^2}{Q},
\ee
\be
\label{idemnonraidal}
(\mathcal M\mathcal L\e^{(1},\mathcal L\e^{(1)})\geq \frac{\delta_0}{\log M} \int\frac{(\mathcal L \e^{(1)})^2}{Q}
\ee
(ii) Coercivity of $\mathcal L$: 
\bea
\label{contorlcoerc}
\nonumber \frac{1}{\delta(M)}\int\frac{(\mathcal L \e)^2}{Q}& \geq & \int (1+r^4)|\Delta \e|^2+\int(1+r^2)|\nabla \e|^2+\int \e^2\\
 & + & \int\frac{|\nabla \phi_\e|^2}{r^2(1+|\log r|)^2}.
\eea
\end{proposition}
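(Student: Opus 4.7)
The plan is to apply Proposition \ref{propenergy} to $u=\mathcal L\e$ after splitting into radial and non-radial parts. The three obstruction directions in Proposition \ref{propenergy} are the $L^2$-projections onto $1$, $\Lambda Q$, and $\partial_iQ$. First, $(\mathcal L\e,1)=0$ automatically by Lemma \ref{lemmacontiniuty}(iv). Second, $(\mathcal L\e^{(0)},\partial_iQ)=0$ and $(\mathcal L\e^{(1)},\Lambda Q)=0$ by angular orthogonality of the radial/non-radial decomposition. So it remains only to absorb the projections $(\mathcal L\e^{(0)},\Lambda Q)$ (radial) and $(\mathcal L\e^{(1)},\partial_iQ)$ (non-radial) using the orthogonalities \fref{orthowappendix} on $\e$.

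For the radial case, I would introduce $\tilde u=\mathcal L\e^{(0)}-a\Lambda Q$ with $a=(\mathcal L\e^{(0)},\Lambda Q)/\|\Lambda Q\|_{L^2}^2$, so that $(\tilde u,1)=(\tilde u,\Lambda Q)=(\tilde u,\partial_iQ)=0$. The algebraic identities $\mathcal M\Lambda Q=-2$ from \fref{relationsm}, $\int\Lambda Q=0$, and $(\mathcal L\e,1)=0$ conspire to make every cross term vanish in the quadratic form, yielding the crucial cancellation $(\mathcal M\tilde u,\tilde u)=(\mathcal M\mathcal L\e^{(0)},\mathcal L\e^{(0)})$. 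Proposition \ref{propenergy} applied to $\tilde u$ then gives $(\mathcal M\mathcal L\e^{(0)},\mathcal L\e^{(0)})\ge\delta_0(\|\mathcal L\e^{(0)}\|_{L^2_Q}-|a|\,\|\Lambda Q\|_{L^2_Q})^2$. The remaining task is to show $|a|\,\|\Lambda Q\|_{L^2_Q}\le(1-\kappa\log M/M)\|\mathcal L\e^{(0)}\|_{L^2_Q}$ for some $\kappa>0$. Rewriting $(\mathcal L\e,\Lambda Q)=(\e,\mathcal L^*\Lambda Q)$ via adjointness, this reduces to expressing $\mathcal L^*\Lambda Q$ as a combination of $\Phi_M$ and $\mathcal L^*\Phi_M$ (the directions $\e$ is orthogonal to) modulo a controlled remainder. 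The construction of $\Phi_M$ in \fref{defphim} with the precise correction $c_M$ enforcing $(\Phi_M,T_1)=0$ via $\mathcal L T_1=\Lambda Q$ is tailored exactly for this purpose, and the scalar product estimates \fref{estimationorthobis}, \fref{newestimate}, \fref{estfonamentalebus} from Lemma \ref{direcitonroth} then quantify the remainder and yield the $(\log M)^2/M^2$ loss in \fref{coerclwotht}. The non-radial bound \fref{idemnonraidal} follows by an identical scheme with $\Phi_{i,M}$, $\partial_iQ$ replacing $\Phi_M$, $\Lambda Q$; the sharper estimate \fref{esterivd} together with the faster decay of $\partial_iQ$ produces the much milder loss $1/\log M$.

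For part (ii), the implication is an elliptic regularity statement for the nonlocal operator $\mathcal L$. Starting from the explicit representation $\mathcal L\e=\Delta\e+2Q\e+\nabla\phi_Q\cdot\nabla\e+\nabla Q\cdot\nabla\phi_\e$ in \fref{formuleL}, I would isolate $\Delta\e$ as the principal term and absorb the lower-order terms via weighted Hardy inequalities combined with the Poisson-field bounds already recorded in the earlier part of the excerpt. The logarithmic weight in the final term $\int|\nabla\phi_\e|^2/(r^2(1+|\log r|)^2)$ is dictated by a radial Hardy estimate applied to the Poisson problem $\Delta\phi_\e=\e$, whose resonant radial zero mode near the origin is killed by the condition $(\e,1)\approx 0$ implied by $(\e,\mathcal L^*\Phi_M)=0$ through \fref{estfonamentalebus}.

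The main obstacle is the $M$-dependent decomposition of $\mathcal L^*\Lambda Q$ that sits at the heart of the $a$-bound. Since $\Lambda Q\in\ker\mathcal L\setminus\ker\mathcal L^*$, no orthogonality of $\e$ coming directly from $\ker\mathcal L^*=\mathrm{span}(1,y_1,y_2)$ can cancel this projection; one must instead exploit the generalized-kernel direction $|y|^2$ of $\mathcal L^*$ (for which $\mathcal L^*|y|^2=-4$), localize it via $\chi_M$ to obtain $\Phi_M$, and add the Fredholm-type correction $c_M$ tied to the slow $r^{-2}$ tail of $T_1$. The $(\log M)^2/M^2$ degradation is a direct reflection of this $r^{-2}$ tail and is expected to be essentially sharp.
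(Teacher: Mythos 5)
Your treatment of part (i) is essentially the paper's argument, but with one misdirection that should be flagged. You correctly form $\tilde u = \mathcal L\e^{(0)} - a\Lambda Q$ and correctly identify the cancellation $(\mathcal M\tilde u,\tilde u)=(\mathcal M\mathcal L\e^{(0)},\mathcal L\e^{(0)})$ coming from $\mathcal M\Lambda Q=-2$, $(\Lambda Q,1)=0$, $(\mathcal L\e,1)=0$. However, your proposed route for bounding $a$ --- writing $(\mathcal L\e,\Lambda Q)=(\e,\mathcal L^*\Lambda Q)$ and then trying to expand $\mathcal L^*\Lambda Q$ in the directions $\Phi_M,\ \mathcal L^*\Phi_M$ --- is not the mechanism that works here. $\mathcal L^*\Lambda Q$ is just some smooth function with no reason to lie in that span, and the remainder in any approximate decomposition would be controlled only by lower-order norms of $\e$, not by $\|\mathcal L\e^{(0)}\|_{L^2_Q}$. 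The correct move is simpler and does not pass back to $\e$ at all: since $(\e,\mathcal L^*\Phi_M)=0$, adjointness gives $(\mathcal L\e,\Phi_M)=0$ directly, i.e.\ an orthogonality \emph{on $\e_2=\mathcal L\e$ itself}. Then $a=-(\tilde u,\Phi_M)/(\Phi_M,\Lambda Q)$, the denominator is of size $\log M$ by \fref{orthophim}, and the numerator is $\lesssim M\|\tilde u\|_{L^2_Q}$ by \fref{estimationortho} (using $(\tilde u,1)=0$), so $|a|\lesssim \tfrac{M}{\log M}\|\tilde u\|_{L^2_Q}$, whence $\|\e_2^{(0)}\|_{L^2_Q}\lesssim \tfrac{M}{\log M}\|\tilde u\|_{L^2_Q}$ and the $(\log M)^2/M^2$ loss follows. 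Your version would need to be recast this way to close.

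Part (ii) has a genuine gap. A direct ``isolate $\Delta\e$ and absorb the lower-order terms'' strategy cannot yield \fref{contorlcoerc}: the operator $\mathcal L$ annihilates $\Lambda Q$ and $\partial_i Q$, so no pointwise or Hardy-type absorption can give a lower bound by $\|\e\|_{H^2_Q}$ without invoking the orthogonality conditions in a structural way. What the Pohozaev/Hardy computation actually produces is only the \emph{sub}coercivity estimate \fref{subcerovitybis}, which carries negative remainders $-\int|\nabla\phi_\e|^2/(1+r^4)-\int\e^2/(1+r^2)-\int|\nabla\e|^2$ that compete at the same scaling as the terms you want to control. The paper then runs a compactness/contradiction argument: take a minimizing sequence $\e_n$ with $\|\mathcal L\e_n\|_{L^2_Q}\to 0$ and the orthogonality conditions, extract weak $H^2$ and strong local limits, use the subcoercivity to prove the limit $(\e,\phi)$ is \emph{nontrivial}, then show it solves $\nabla\cdot(Q\nabla(\e/Q+\phi))=0$ and invoke the Liouville classification of Lemma \ref{lemmakernel} to conclude $\e\in\mathrm{Span}(\Lambda Q,\partial_1 Q,\partial_2 Q)$, which contradicts the surviving orthogonalities $(\e,\Phi_M)=(\e,\partial_i Q)=0$. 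This compactness + Liouville step is where the orthogonality conditions actually enter part (ii), and it is absent from your plan; the role you assign to $(\e,1)\approx 0$ near the origin is not the decisive mechanism.
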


{\begin{proof}[Proof of Proposition \ref{interpolationhtwo}] 
{\bf step 1} Control of $\matchal L\e$. Let $$\e_2=\mathcal L\e\in L^2_Q.$$ From \fref{estvooee}, $$(\e_2,1)=0$$ and from the choice of orthogonality conditions \fref{orthowappendix} and \fref{biebibvei}: $$(\e_2,\Phi_M)=(\matchal L\e,\Phi_M)=(\e,\mathcal L^*\Phi_M)=0, \ \ (\e_2,\Phi_{i,M})=(\matchal L\e,\Phi_{i,M})=(\e,\mathcal L^*\Phi_{i,M})=0$$ Let:
$$\tilde{\e}_2=\e_2-a_1\Lambda Q-c_1\pa_1Q-c_2\pa_2Q$$ with $$ a_1=\frac{(\e_2,\Lambda Q)}{\|\Lambda Q\|_{L^2}^2}, \ \ c_i=\frac{(\e_2,\pa_iQ)}{\|\pa_iQ\|_{L^2}^2}.$$ Then:
$$(\tilde \e_2,\Lambda Q)=0, \ \ (\tilde{\e}_2,\pa_iQ)=0,\ \ (\tilde{\e}_2,1)=(\e_2,1)=0$$ where we used the fundamental $L^1$ critical degeneracy: $$(\Lambda Q,1)=0.$$ We split into radial and non radial parts and conclude from \fref{corec}:
\be
\label{cnbeoiefhiofhe}
(\mathcal M\tilde{\e}^{(0)}_2,\tilde{\e}^{(0)}_2)\geq \delta_0\|\tilde{\e}^{(0)}_2\|^2_{L^2_Q}, \ \ (\mathcal M\tilde{\e}^{(1)}_2,\tilde{\e}^{(1)}_2)\geq \delta_0\|\tilde{\e}^{(1)}_2\|^2_{L^2_Q}.
\ee
 We now use the orthogonality conditions on $\e_2$, the radially of $Q$ and the bounds \fref{estimationortho}, \fref{orthophim} to estimate:
\bee
|a_1|&=& \left|\frac{(\tilde{\e}_2,\Phi_M)}{(\Phi_M,\Lambda Q)}\right|=\left|\frac{(\tilde{\e}^{(0)}_2,\Phi_M)}{(\Phi_M,\Lambda Q)}\right|\lesssim \frac{1}{|\log M|}\left[M\|\tilde{\e}^{(0)}_2\|_{L^2_Q}+\frac{M^2}{\log M}|(\tilde{\e}^{(0)}_2,1)|\right] \\
& \lesssim & \frac{M}{\log M}\|\tilde{\e}^{(0)}_2\|_{L^2_Q},
\eee
and thus 
\be
\label{cenneoneoen}
\|\e^{(0)}_2\|_{L^2_Q}\lesssim \frac{M}{\log M}\|\tilde{\e}^{(0)}_2\|_{L^2_Q}
\ee
 On the other hand,  thanks to the orthogonality $(\e_2,1)=(\e_2^{(0)},1)=0$ and \fref{relationsm}:
\bee
(\mathcal M \tilde{\e}^{(0)}_2,\tilde{\e}^{(0)}_2) & = & (\mathcal M\e^{(0)}_2+2a_1, \e_2-a_1\Lambda Q)=(\mathcal M\e^{(0)}_2,\e^{(0)}_2)
\eee
and thus from \fref{cnbeoiefhiofhe}, \fref{cenneoneoen}:
$$(\mathcal M\e^{(0)}_2,\e^{(0)}_2)=(\mathcal M \tilde{\e}^{(0)}_2,\tilde{\e}^{(0)}_2)\geq \delta_0\|\tilde{\e}_2^{(0)}\|^2_{L^2_Q}\gtrsim \frac{\delta_0(\log M)^2}{M^2}\|\e_2^{(0)}\|^2_{L^2_Q},$$ this is \fref{coerclwotht}.\\
Similarly, we estimate using \fref{nezscalrar}, \fref{esterivd}:
\bee
|c_i|& = &  \left|\frac{(\tilde{\e}_2,\Phi_{i,M})}{(\Phi_{i,M},\pa_iQ)}\right|=\left|\frac{(\tilde{\e}^{(1)}_2,\Phi_{i,M})}{(\Phi_{i,M},\pa_iQ)}\right|\lesssim\int_{r\leq 2M}r|\et^{(1)}_2|\lesssim \sqrt{\log M}\|\et_2\|_{L^2_Q}
\eee
and thus $$\|\e^{(1)}_2\|_{L^2_Q}\lesssim \sqrt{\log M}\|\tilde{\e}^{(1)}_2\|_{L^2_Q}.$$ We conclude from $\mathcal M\pa_iQ=0$:
$$(\mathcal M\e^{(1)}_2,\e_2^{(1)})=(\mathcal M\et^{(1)}_2,\et_2^{(1)})\geq \delta_0\|\tilde{\e}^{(1)}_2\|^2_{L^2_Q}\geq \frac{\delta_0}{\log M}\|\e^{(1)}_2\|^2_{L^2_Q},$$ this is \fref{idemnonraidal}.\\

{\bf step 2} Subcoercivity of $\mathcal L$. The spectral gap \fref{contorlcoerc} follows as in \cite{RaphRod}, \cite{MRR}, \cite{RS} from a compactness argument and the explicit knowledge of the kernel of $\matchal L$. We first claim as a consequence of two dimensional Hardy inequalities and the explicit repulsive structure of the operator far out the subcoercivity estimate: $\forall \e\in H^2_Q$,
\bea
\label{subcerovitybis}
\nonumber  \int\frac{(\mathcal L\e)^2}{Q} & \gtrsim & \int (1+r^4)(\Delta \e)^2+\int\left[\frac{1}{r^2(1+|\log r|)^2}+r^2\right]|\nabla \e|^2+\int \e^2+\int\frac{|\nabla \phi_\e|^2}{r^2(1+|\log r|)^2}\\
&  - &   \int\frac{|\nabla \phi_\e|^2}{1+r^4}-\int\frac{\e^2}{1+r^2}-\int |\nabla\e|^2.
\eea
Indeed, from \fref{formuleL}:
\bea
\label{cnoneoneoc}
\nonumber \int \frac{(\mathcal L\e)^2}{Q}&\gtrsim& \int\frac{(\Delta \e+\nabla \phi_Q\cdot\nabla \e)^2 }{Q}-\int\frac{|\nabla Q|^2}{Q}|\nabla \phi_\e|^2-\int Q\e^2\\
& \gtrsim & \int\frac{(\Delta \e+\nabla \phi_Q\cdot\nabla \e)^2 }{Q}-\int \frac{\e^2}{1+r^4}- \int\frac{|\nabla \phi_\e|^2}{1+r^4}.
\eea
We develop:
\bee
\int\frac{(\Delta \e+\nabla \phi_Q\cdot\nabla \e)^2 }{Q}=\int\frac{(\Delta \e)^2}{Q}+\frac{(\pa_r\phi_Q\pa_r\e)^2 }{Q}+\int\frac{2}{Q}\Delta \e\nabla\phi_Q\cdot \nabla \e.
\eee
We observe from \fref{cnecneoneonenoe}: $$\frac{2\nabla\phi_Q}{Q}=-2\frac{\nabla Q}{Q^2}=2\nabla \psi, \ \ \psi=\frac{1}{Q}.$$ We now recall the classical Pohozaev integration by parts formula:
\bee
\nonumber &&2\int \Delta \e\pa_r\psi\pa_r\e=    2 \int_0^{+\infty}\int_0^{2\pi}\left[\pa_r(r\pa_r\e)+\frac{1}{r}\pa^2_\theta \e\right]\pa_r\psi\pa_r\e drd\theta\\
\nonumber & = & -\int_0^{+\infty}\int_0^{2\pi}(r\pa_r\e)^2\pa_r\left(\frac{\pa_r\psi}{r}\right)drd\theta+\int_0^{+\infty}\int_0^{2\pi}(\pa_\theta \e)^2\pa_r\left(\frac{\pa_r\psi}{r}\right)drd\theta\\
& = & -\int\left[(\pa_r\e)^2-\left(\frac1r\pa_{\theta}\e\right)^2\right] \left[\pa^2_r\psi-\frac{\pa_r\psi}{r}\right].
\eee
We compute for $r\gg 1$: $$\psi(r)=\frac{1}{Q}=\frac{r^4}{8}+O(r^2), \ \ \pa^2_r\psi-\frac{\pa_r\psi}{r}=r^2+O(1),$$ $$\phi_Q'(r)=\frac{1}{r}\int_0^rQ(\tau)\tau d\tau=\frac{4}{r}+O(\frac{1}{r^3}),\ \ \frac{(\pa_r\phi_Q)^2}{Q}=2r^2+O(1)$$  from which:
\bee
&&\int\frac{(\Delta \e+\nabla \phi_Q\cdot\nabla \e)^2 }{Q}\\
&  \gtrsim &  \int(1+r^4)(\Delta \e)^2+\int\left[(2r^2-r^2)(\pa_r\e)^2+r^2(\frac{1}{r}\pa_{\theta}\e)^2\right]-\int|\nabla\e|^2.
\eee
Injecting this into \fref{cnoneoneoc} yields the lower bound:
$$
 \int\frac{(\mathcal L\e)^2}{Q}  \gtrsim  \int(1+r^4)(\Delta \e)^2+\int r^2|\nabla \e|^2-\int \frac{\e^2}{1+r^4}-\int|\nabla\e|^2- \int\frac{|\nabla \phi_\e|^2}{1+r^4}
$$
and \fref{subcerovitybis} follows from the Hardy bounds \fref{hardyboundbis}, \fref{nenoeneo} and the relation $\e=\Delta \phi_\e$.\\

{\bf step 3} Coercivity of $\mathcal L$. We are now in position to prove the coercivity \fref{contorlcoerc}. We claim that it is enough to prove it for $\e\in \mathcal C^{\infty}_c(\R^2)$. Indeed, if so, let $\e\in \mathcal L$ and $$\e_n\to \e\ \ \mbox{in}\ \ \mathcal E, \ \ \e_n\in \mathcal C^{\infty}_c(\R^2),$$ then $$\|\mathcal L\e_n\|_{L^2_Q}\to \|\mathcal L\e\|_{L^2_Q}$$ from \fref{upperbound}. Moreover, from \fref{estcahmos} and Sobolev: $$\int\frac{|\nabla \phi_{\e_n}-\nabla \phi_\e|^2}{r^2(1+|\log r|)^2}\lesssim \|\nabla \phi_{\e_n}-\nabla\phi_\e\|_{L^{\infty}}^2\lesssim \|\e_n-\e\|_{\mathcal E}^2\to 0,$$ and we may thus pass to the limit in \fref{contorlcoerc} for $\e_n$ and conclude that \fref{contorlcoerc} holds for $\e$.\\
In order to prove \fref{contorlcoerc} for $\e\in \mathcal C^{\infty}_c(\R^2)$, assume by contradiction that there exists a sequence $\e_n\in \mathcal C^{\infty}_c(\R^2)$ such that\footnote{Working with $\e_n\in \mathcal C^{\infty}(\R^2)$ ensures that $\phi_{\e_n}$ makes sense while only $\nabla\phi_\e$ is well defined for $\e\in \mathcal E$, and we may thus recover a Hardy type control on $\phi_{\e_n}$ from \fref{nenoeneobis}.} \bea
\label{estinitiale}
&&\int (1+r^4)(\Delta \e_n)^2+\int\left[\frac{1}{r^2(1+|\log r|)^2}+r^2\right]|\nabla \e_n|^2+\int \e_n^2\\
\nonumber & + & \int\frac{|\nabla \phi_{\e_n}|^2}{r^2(1+|\log r|)^2}+\int\frac{\phi_{\e_n}^2}{(1+r^4)(1+|\log r|)^2}=1,
\eea 
$$(\e_n,\Phi_M)=(\e_n,\matchal L^*\Phi_M)=(\e_n,\pa_iQ)=0,\ \ i=1,2$$  and 
\be
\label{degenineo}
\int \frac{(\mathcal  L \e_n)^2}{Q}\leq \frac 1n.
\ee 
From \fref{estinitiale} and the local compactness of Sobolev embeddings\footnote{recall that $\e_n=\Delta \phi_{\e_n}$}, we have up to a subsequence:
\be
\label{stronenoenvo}
\phi_{\e_n}\to \phi\ \ \mbox{in}\ \ H^1_{\rm loc}
\ee
 Similarily, $\e_n$ is bounded in $H^2(\Bbb R^2)$ and we may extract up to a subsequence 
\be
\label{vnononre}
\e_n\rightharpoonup \e\ \ \mbox{in}\ \ H^2, \ \ \e_n\to \e\ \mbox{in}\ \ H^1_{\rm loc}
\ee 
and there holds from standard lower semi continuity estimates the  a priori bounds:
\bea
\label{lowerestimte}
&&\int (1+r^4)(\Delta \e)^2+\int\left[\frac{1}{r^2(1+|\log r|)^2}+r^2\right]|\nabla \e|^2+\int \e^2\\
\nonumber & + & \int\frac{|\nabla \phi|^2}{r^2(1+|\log r|)^2}+\int\frac{\phi^2}{(1+r^4)(1+|\log r|)^2}\lesssim 1.
\eea
The subcoercivity estimate \fref{subcerovitybis}, the initialization \fref{estinitiale}, the assumption \fref{degenineo} and the strong convergences \fref{stronenoenvo}, \fref{vnononre} yield the non degeneracy:
\bea
\label{cbeobeoheoheog}
\nonumber
&& \int\frac{\e^2}{1+r^2}+\int|\nabla \e|^2+\int\frac{|\nabla \phi|^2}{1+r^4}+\int\frac{\phi^2}{1+r^6}\\
\nonumber & = & \lim_{n\to +\infty}\left[\int\frac{\e_n^2}{1+r^2}+\int|\nabla \e_n|^2+\int\frac{|\nabla \phi_{\e_n}|^2}{1+r^4}+\int\frac{\phi_{\e_n}^2}{1+r^6}\right]\\
& \gtrsim &1.
\eea
Passing to the limit in the distribution sense, we also conclude $$\Delta \phi=\e\ \ \mbox{in}\ \ \mathcal D'(\R^2).$$ Note however that we do not know the relation $\phi=\phi_\e$.\\
We now pass to the limit in $$\mathcal L\e_n=\Delta \e_n+2 Q\e_n+\nabla \phi_Q\cdot\nabla \e_n+\nabla Q\cdot\nabla \phi_{\e_n}$$ and conclude from \fref{stronenoenvo}, \fref{vnononre} that $$\mathcal L\e_n\rightharpoonup \Delta \e+2Q\e+\nabla \phi_Q\cdot\nabla \e+\nabla Q\cdot\nabla \phi\ \ \mbox{in}\ \ \mathcal D'(\Bbb R^2)$$ while $\mathcal L\e_n\to 0$ in $L^2(\R^2)$ from \fref{degenineo} and thus 
\be
\label{eqaieje}
 \left\{\begin{array}{ll} \Delta \e+2Q\e+\nabla \phi_Q\cdot\nabla \e+\nabla Q\cdot\nabla \phi=0\\
   \Delta \phi=\e\end{array}\right.\ \ \mbox{in}\ \ \mathcal D'(\R^2).
 \ee
 The a priori bound \fref{lowerestimte} and standard elliptic regularity estimates ensure the bootstrapped regularity $(\e,\phi)\in C^{\infty}(\Bbb R^2)$ and thus \fref{eqaieje} holds is strong sense.\\
 Let $$f=\frac{\e}{Q}+\phi,$$ we rewrite \fref{eqaieje} as the divergence equation:
 \be 
 \label{divejbfebi}
 \nabla \cdot(Q\nabla f)=0.
 \ee
 Moreover, we have the a priori bound from \fref{lowerestimte}:
 \be
 \label{eniowenoeno}
 \int \frac{Q}{1+r^2}|\nabla f|^2\lesssim \int\frac{1}{1+r^2}\left[\frac{|\nabla \e|^2}{Q}+\frac{\e^2}{(1+y^2)Q}+Q|\nabla \phi|^2\right]<+\infty
 \ee
 and thus in particular there exists a sequence $r_n\to +\infty$ such that:
 \be
 \label{ypboundaryond}
 \lim_{r_n\to +\infty}\int_0^{2\pi}Q|\nabla f_n|^2(r_n,\theta)d\theta=0.
 \ee
 We claim from standard argument in Liouville classification of diffusion equations that this implies 
 \be
 \label{nabkabweero}
 \nabla f=0.
 \ee 
 Assume \fref{nabkabweero}, then from \fref{lowerestimte}, \fref{nabkabweero}, $(\e,\phi)$ satisfies \fref{euiheheoifho} and hence from Lemma \ref{lemmakernel}: $$\e\in \mbox{Span}(\Lambda Q,\pa_1 Q,\pa_2Q), \ \ \phi=\phi_\e.$$ The orthogonality conditions $(\e,\Phi_M)=(\e,\pa_1Q)=(\e,\pa_2Q)=0$ and \fref{orthophim} imply $\e\equiv 0$ and thus $\phi=\phi_\e\equiv 0$ which contradicts the non degeneracy \fref{cbeobeoheoheog} and concludes the proof of \fref{contorlcoerc} for $\e\in \mathcal C^{\infty}_c(\R^2)$.\\
 {\it Proof of \fref{nabkabweero}}: We integrate \fref{divejbfebi} on $B_r$ and use the regularity of $f$ at the origin and the spherical symmetry of $Q$ to compute: 
 $$ 0=r\int_0^{2\pi}Q(r)\pa_rf(r,\theta)d\theta=rQ(r)\pa_r\left(\int_0^{2\pi}f(r,\theta)d\theta\right).
 $$ and hence 
 \be
 \label{cneienoeno}
\pa_rf^{(0)}(r)=0, \  \ f^{(0)}(r)=\frac{1}{2\pi}\int_0^{2\pi}f(r,\theta)d\theta.
\ee
 We then multiply  \fref{divejbfebi} by $f$, integrate over $B_r$ and estimate using \fref{cneienoeno}, the spherical symmetry of $Q$ and the sharp Poincar\'e inequality on $[0,2\pi]$:
 \bea
 \label{cneenoenoen}
\nonumber && \int_{B_r}Q(y)|\nabla f(y)|^2dy  =  r\int_0^{2\pi}Q(r)f(r,\theta)\pa_rf(r,\theta)d\theta=rQ(r)\int_0^{2\pi}\left[f-f^{(0)}\right]\pa_rfd\theta\\
 \nonumber & \leq& rQ(r)\left(\int_0^{2\pi}(f-f^{(0)})^2d\theta\right)^{\frac12}\left(\int_0^{2\pi}(\pa_rf)^2d\theta\right)^{\frac 12}\\
\nonumber  & \leq &  rQ(r)\left(\int_0^{2\pi}(\pa_{\theta}f)^2d\theta\right)^{\frac12}\left(\int_0^{2\pi}(\pa_rf)^2d\theta\right)^{\frac 12}\\
 & \leq & \frac{r^2Q(r)}{2}\int_0^{2\pi}\left[(\pa_rf)^2+\left(\frac1r\pa_\theta f\right)^2\right]d\theta=\frac{r^2}{2}\int_0^{2\pi}Q(r)|\nabla f|^2(r,\theta)d\theta.
 \eea
 Hence $$g(r)= \int_{B_r}Q(y)|\nabla f(y)|^2dy$$ satisfies the differential inequation: $$g'(r)=r\int_0^{2\pi}Q(r)|\nabla f|^2(r,\theta)d\theta\geq \frac{2g}{r}$$ ie 
 \be
 \label{neonronro}
 \frac{d}{dr}\left[\frac{g}{r^2}\right]\geq 0.
 \ee On the other hand, the boundary condition \fref{ypboundaryond} and the control \fref{cneenoenoen} imply $$\lim_{r_n\to +\infty}\frac{g(r_n)}{r_n^2}=0$$ which together with the positivity of $g$ and the monotonicity \fref{neonronro} implies $g\equiv 0$, and \fref{nabkabweero} is proved.\\
 This concludes the proof of Proposition \ref{interpolationhtwo}.
\end{proof}

%%%%%%%%%%%%%%%%%%%%%%%%%%%%
%%%%%%%%%%%%%%%%%%%%%%%%%%%%

\section{Construction of approximate blow up profiles}
\label{construction}
%%%%%%%%%%%%%%%%%%%%%%%%%%%%
%%%%%%%%%%%%%%%%%%%%%%%%%%%%

This section is devoted to the construction of suitable approximate blow up profiles which contain the main qualitative informations on the solution. These profiles are one mass super critical continuation of the exact mass subcritical self similar solutions exhibited in \cite{BDP}.

%%%%%%%%%%%%%%%%%%%%%%%%%%%%
%%%%%%%%%%%%%%%%%%%%%%%%%%%%

\subsection{Approximate blow up profiles}
\label{secitonanan}
%%%%%%%%%%%%%%%%%%%%%%%%%%%%
%%%%%%%%%%%%%%%%%%%%%%%%%%%%

We build the family of approximate self similar solutions. The key in the construction is to track in a sharp way the size of tails at infinity.  We start with building radial profiles which contain the leading order terms.

\begin{proposition}[Radial blow up profiles]
\label{propblowup}
Let $M>0$ enough large, then there exists a small enough universal constant $b^*(M)>0$ such that the following holds. Let $b \in ]0,b^*(M)[$ and $B_0,B_1$ be given by \fref{defb}, then there exist radially symmetric profiles $T_1(r), T_2(b,r)$ such that 
$$Q_b(r) = Q(r) + bT_1(r)+ b^2T_2(b,r)$$
is an approximate self similar solution in the following sense. Let the error:
\be
\label{deferreur}
\Psi_b=\nabla\cdot(\nabla Q_b+Q_b\nabla \phi_{Q_b})-b\Lambda Q_b+c_b b^2 \chi_{\frac{B_0}{4}}T_1,
\ee
with $c_b$ given by \fref{defcb}, then there holds:\\
{\em (i) Control of the tails}: $ \forall r\geq 0$, $ \forall i\geq 0$:
\be
\label{esttoneprop}
|r^i\pa^i_rT_1|\lesssim r^2{\bf 1}_{r\le 1}+\frac{1}{r^2}{\bf 1}_{r\geq 1},
\ee
and $ \forall r\leq 2B_1$, $ \forall i\geq 0$:
\be
 \label{esttwo}
 |r^i\pa_r^iT_2|\lesssim r^4{\bf 1}_{r\le1} +\frac{1 + |\log (r\sqrt b)|}{|\log b|}{\bf 1}_{1\leq r\leq 6B_0}+ \frac{1}{b^2r^4|\log b|}{\bf 1}_{r\geq 6B_0},
 \ee
\be
\label{estttwodtdb}
|b\pa_br^i\pa_r^iT_2|\lesssim\frac{1}{|\log b|}\left[ r^4{\bf 1}_{r\le1} +\frac{ |\log r|}{|\log b|}{\bf 1}_{1\leq r\leq 6B_0}+ \frac{1}{b^2r^4}{\bf 1}_{r\geq 6B_0}\right].
\ee
{\em (ii) Control of the error in weighted norms}: for $i\geq 0$,
\be
\label{roughboundltaow}
\int_{r\leq 2B_1} |r^i\pa_r^i\Psi_b|^2+\int_{r\leq 2B_1} \frac{|\phi'_{\Psi_b}|^2}{1+\tau^2}+\int_{r\leq 2B_1}\frac{|\mathcal L\Psi_b|^2}{Q} \lesssim \frac{b^5}{|\log b|^2},
\ee
\be
\label{cneneoneonoenoe}
\int_{r\leq 2B_1} Q|\nabla \mathcal M\Psi_b|^2\lesssim \frac{b^4}{|\log b|^2}.
\ee
\end{proposition}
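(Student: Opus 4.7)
The plan is to determine $T_1$, $T_2$, and the constant $c_b$ by matching powers of $b$ in the expansion of $\Psi_b$. Using \fref{formuleL} and the ground-state identity $\nabla\cdot(\nabla Q+Q\nabla\phi_Q)=0$, direct expansion of $\nabla\cdot(\nabla Q_b+Q_b\nabla\phi_{Q_b})$ yields
\[
\Psi_b = b\bigl(\mathcal{L}T_1-\Lambda Q\bigr)+b^2\bigl[\mathcal{L}T_2-\Lambda T_1+\nabla\cdot(T_1\nabla\phi_{T_1})+c_b\chi_{B_0/4}T_1\bigr]+\mathrm{R}_b,
\]
where $\mathrm{R}_b$ collects the higher-order remainders $-b^3\Lambda T_2$, the self-interaction $b^4\nabla\cdot(T_2\nabla\phi_{T_2})$ and the mixed cubic terms in $T_1,T_2$. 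We therefore impose $\mathcal{L}T_1=\Lambda Q$ on $\mathbb{R}^2$, and $\mathcal{L}T_2=\Lambda T_1-\nabla\cdot(T_1\nabla\phi_{T_1})-c_b\chi_{B_0/4}T_1$ on $r\lesssim B_1$, with the scalar $c_b$ selected to neutralize a logarithmic resonance.

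For $T_1$, pass to the radial partial mass $m_1(r)=\int_0^r\tau T_1(\tau)\,d\tau$. The equation reduces, exactly as in the kernel analysis leading to \fref{defmo}, to $L_0 m_1=\Lambda m_Q$ with $m_Q(r)=\int_0^r\tau Q(\tau)\,d\tau$, whose homogeneous solutions are $\psi_0,\psi_1$ in \fref{basislzero}. Variation of constants combined with the regularity requirement at the origin (which eliminates the singular branch) yields $m_1$ in closed form; the decay $Q\sim 8/r^4$ and the vanishing $\int\Lambda Q=0$ ensure that $m_1$ stabilizes as $r\to\infty$, whence $T_1(r)=m_1'(r)/r\sim c/r^2$. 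Straightforward differentiation of the closed formula then produces the pointwise bounds \fref{esttoneprop}.

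The core of the proof is the construction of $T_2$. The scaling cancellation $\Lambda(c/r^2)=0$ forces $\Lambda T_1\sim 1/r^4$, which sits at the integrability threshold for the Green's function of $L_0$: although both $\psi_0,\psi_1$ from \fref{basislzero} remain bounded at infinity, one of the variation-of-constants integrals $\int^{r}\tau\psi_0\cdot(\mathrm{RHS})\,d\tau$ diverges logarithmically as $r\to\infty$, producing a spurious $\log r$ contribution in $m_2$. The role of $c_b\chi_{B_0/4}T_1$ is to absorb this divergence on the scale $B_0=1/\sqrt b$: requiring that the offending component of $m_2$ saturate at a fixed constant on $\{r\le B_0\}$ is a single linear condition on $c_b$, solved by $c_b=2/|\log b|+o(1/|\log b|)$. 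This reproduces the modulation law $b_s=-2b^2/|\log b|$ announced in the introduction. Once $c_b$ is fixed, the three-scale pointwise structure \fref{esttwo} follows from the explicit representation formula, and differentiation in $b$ (which acts only through the cutoff location and the explicit $b$-dependence of $c_b$) gains the extra $1/|\log b|$ factor appearing in \fref{estttwodtdb}.

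Finally, by construction $\Psi_b=\mathrm{R}_b$ on $\{r\le B_0/4\}$, and on the annular region $\{B_0/4\le r\le 2B_1\}$ one picks up additional cutoff-derivative contributions supported near $r\sim B_0$, all of which are controlled termwise by \fref{esttwo}--\fref{estttwodtdb}. The weighted bounds \fref{roughboundltaow} and \fref{cneneoneonoenoe} then follow by integrating these pointwise estimates against the relevant weights $(1+r^{2i})$, $1/Q$, $Q$, and by controlling $\phi_{\Psi_b}$ via its convolution representation together with the Hardy-type bounds already exploited in the proof of Proposition \ref{interpolationhtwo}. The main obstacle is the accurate bookkeeping in the third step: the sharp tracking of constants that determines $c_b=2/|\log b|+o(1/|\log b|)$ is precisely what delivers the crucial $1/|\log b|^2$ gain in \fref{cneneoneonoenoe}; without this logarithmic improvement the pointwise bootstrap $\|\varepsilon\|_{H^2_Q}^2\lesssim b^3/|\log b|^2$ cannot close the dynamical system \fref{odeintro} in the subsequent sections.
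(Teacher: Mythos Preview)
Your outline follows the paper's strategy: match powers of $b$, solve $\mathcal LT_1=\Lambda Q$ and then an equation for $T_2$ via the explicit Green's functions of $L_0$ at the partial-mass level, and use a cutoff radiation term to fix $c_b\sim 2/|\log b|$. One technical point is glossed over, however, and it matters for the far-field bound in \fref{esttwo}.

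You set the full $O(b^2)$ coefficient of $\Psi_b$ to zero, i.e.\ $\mathcal LT_2=\Lambda T_1-\nabla\cdot(T_1\nabla\phi_{T_1})-c_b\chi_{B_0/4}T_1$. Passing to masses, this makes $L_0m_2=\hat\Sigma_2$ with $\hat\Sigma_2(r)\to -4+c_b\int_0^\infty\chi_{B_0/4}m_1'\,d\tau$ as $r\to\infty$; with the specific $c_b$ of \fref{defcb} this limit is $O(1/|\log b|)$, not zero, so $m_2\sim r^2/|\log b|$ and $T_2$ fails the decay $|T_2|\lesssim 1/(b^2r^4|\log b|)$ on $r\ge 6B_0$. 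The paper does \emph{not} kill the full $O(b^2)$ term. Instead it constructs an auxiliary radiation $\Sigma_b$ solving its own $L_0$ equation \fref{equationradiation} with two parameters $c_b,d_b$, engineered so that $\Sigma_b=c_bm_1$ for $r\le B_0/4$ and $\Sigma_b=4\psi_1$ \emph{exactly} for $r\ge 6B_0$. The forcing for $m_2$ is then $\Sigma_2=m_1m_1'/r-rm_1'+\Sigma_b$, and the exact far-field match $\Sigma_b\to 4$ against $-rm_1'\to -4$ (together with $\psi_1-1=O(\log r/r^2)$) is precisely what yields $\Sigma_2=O(|\log r|^2/r^2)$ on $r\ge 6B_0$ and hence the improved bound \fref{imprvedmoneout} for $m_2'/r$. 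The price is a residual $O(b^2)$ piece $-b^2[\Sigma_b'/r-c_bT_1\chi_{B_0/4}]$ in $\Psi_b$, supported on $r\gtrsim B_0$, but this is harmless for \fref{roughboundltaow}--\fref{cneneoneonoenoe}. Apart from this refinement (and the minor slip $L_0m_1=\Lambda m_Q$, which should read $L_0m_1=-rm_0'=-8\psi_0$), your sketch is correct.
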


\begin{remark} The error $\Psi_b$ displays slowly decaying tail as $r\o +\infty$, and hence the norm in which is measured the error is essential. This explains why polynomials in $b$ in the error vary from \fref{roughboundltaow} to \fref{cneneoneonoenoe}. These errors are the leading order terms in the control of the radiation for the full solution, see step 4 of the proof of Proposition \ref{htwoqmonton}.
\end{remark}

\begin{proof}[Proof of Proposition \ref{propblowup}]
{\bf step 1} Setting the computation on the mass.\\
We look for radial profiles and it is therefore simpler -but not necessary- to work with the partial mass: 
\be
\label{partialmass}
m_b(r)=\int_0^r Q_b(\tau)\tau d\tau.
\ee
 The Poisson field for radial solutions is given by $$\phi'_{Q_b}=\frac{m_b}{r}.$$ Let using \fref{deferreur}: 
 \be
 \label{defphib}
 \Phi_b=m''_b-\frac{m_b'}{r}+\frac{m_bm_b'}{r}-brm'_b,\ \ 
 \Psi_b=\frac{1}r\Phi_b'+c_b b^2 \chi_{\frac{B_0}{4}}T_1.
 \ee
 We proceed to an expansion $$m_b=m_0+bm_1+b^2m_2$$ where $$m_0(r)=\int_0^rQ(\tau)\tau d\tau=\frac{4r^2}{1+r^2}=-\frac{rQ'}{Q} = r  \phi'_Q.$$ Correspondingly, $$Q_b=Q+bT_1+b^2T_2 =\frac{m_b'}{r}.$$ We let the linearized operator close to $m_0$ be given by \fref{defmo}:
 $$ L_0 m=-m''+\left(\frac{1}{r}+\frac{Q'}{Q}\right)m'-Qm=-m''-\frac{3r^2-1}{r(1+r^2)}m'-\frac{8}{(1+r^2)^2}m
 $$
 and obtain the expansion:
 \bea
 \label{computationphib}
 \nonumber \Phi_b&=& b\left[-L_0m_1-rm'_0\right]+b^2\left[-L_0m_2+\frac{m_1m'_1}{r}-rm'_1\right] \\
 \nonumber &+&   b^3\left[ \frac{(m_1m_2)'}{r}-rm'_2\right] +b^4\left[\frac{m_2m_2' }{r} \right].
  \eea
 
 {\bf step 2} Inversion of $L_0$. The Green's functions of $L_0$ are explicit\footnote{This structure is reminiscent from the parabolic heat flow problem and one could show that this operator can be factorized $L_0=A_0^*A_0$ where the adjoint is taken against $\frac{(1+r^2)^2}rdr$ and $A_0$ is first order, and this explains why all formulas are explicit.} 
 and the set of radial solutions to the homogeneous problem $$L_0m=0$$ is spanned according to \fref{basislzero} by:
 \be
 \label{defpsiunspdio}
 \psi_0(r)=\frac{r^2}{(1+r^2)^2}, \ \ \psi_1(r)=\frac{r^4+4r^2\log r-1}{(1+r^2)^2}
 \ee with Wronskian 
 \be
 \label{defW}
 W=\psi_1'\psi_0-\psi_1\psi_0'=\frac{rQ}{4}=\frac{2r}{(1+r^2)^2}.
 \ee
  Hence a solution to $$L_0m=-f$$ can be found by the method of variation of constants:
 \be
 \label{fromulau}
 m=A\psi_0+B\psi_1\ \ \mbox{with}\ \ \left\{\begin{array}{ll}A'\psi_0+B'\psi_1=0,\\ A'\psi'_0+B'\psi_1'=f, \end{array}\right ..
 \ee 
This leads to
 $$B'=\frac{f\psi_0}{W}=\frac{r}{2}f, \ \ A'=-\frac{f\psi_1}{W}=-\frac{r^4+4r^2\log r-1}{2r}f$$ and a solution is given by:
  \be
 \label{inversionknot}
 m(r)=-\frac12\psi_0(r)\int_0^r\frac{\tau^4+4\tau^2\log \tau-1}{\tau}f(\tau)d\tau+\frac12\psi_1(r)\int_0^r\tau f(\tau)d\tau.
 \ee
 We compute
 \be
 \label{estderivatibves}
 \frac{\psi_0'}{r}=\frac{2(1-r^2)}{(1+r^2)^3}, \ \  \frac{\psi_1'}{r}=\frac{8(1+r^2-(r^2-1)\log r)}{(1+r^2)^3}.
 \ee
and then \fref{fromulau} yields 
\bea
\label{formulamprime}
\frac{m'}{r} & = & A\frac{\psi'_0}{r}+B\frac{\psi'_1}{r}\\
\nonumber & = & -\frac{1-r^2}{(1+r^2)^3}\int_0^r\frac{\tau^4+4\tau^2\log \tau-1}{\tau}f(\tau)d\tau+\frac{4(1+r^2-(r^2-1)\log r)}{(1+r^2)^3}\int_0^r\tau f(\tau)d\tau.
\eea

{\bf step 3} Construction of $m_1,T_1$. We let $m_1$ be the solution to 
\be
\label{demlo}
L_0m_1=-rm'_0=-r^2Q=-\frac{8r^2}{(1+r^2)^2}=-8\psi_0
\ee
 given by \fref{inversionknot}, explicitly: $$m_1(r)=-4\psi_0(r)\int_0^r\frac{\tau(\tau^4+4\tau^2\log \tau-1)}{(1+\tau^2)^2}d\tau+4\psi_1(r)\int_0^r\frac{\tau^3}{(1+\tau^2)^2}d\tau.$$ Then from \fref{formulamprime}:
 \bea
 \label{deftun}
 T_1& = & \frac{m'_1}{r}\\
 \nonumber & = & -\frac{8(1-r^2)}{(1+r^2)^3}\int_0^r\frac{\tau(\tau^4+4\tau^2\log \tau-1)}{(1+\tau^2)^2}d\tau+\frac{32(1+r^2-(r^2-1)\log r)}{(1+r^2)^3}\int_0^r\frac{\tau^3}{(1+\tau^2)^2}d\tau.
 \eea
 There holds the behavior at the origin $$m_1(r)=O(r^4), \ \ T_1(r)=O(r^2).$$ For $r$ large, we use the explicit formula:
 $$\int_0^r\frac{\tau^3}{(1+\tau^2)^2}d\tau=\frac{\log (1+r^2)}{2}+\frac12\left(\frac{1}{1+r^2}-1\right)$$ to compute:
 \be
 \label{boundm1}
 m_1(r)=4\left(\log r- 1\right)+O\left(\frac{|\log r|^2}{r^2}\right),
 \ee
 \be
 \label{developpementmonprime}
 m'_1(r)=\frac{4}{r}+O\left(\frac{|\log r|^2}{r^3}\right),
 \ee
 \be
 \label{developpementT1}
 T_1(r)=\frac{4}{r^2}+O\left(\frac{|\log r|^2}{r^4}\right).
 \ee 
 This yields in particular the bound for $i\geq 0$: $$ |r^i\pa_r^{i}T_1|\lesssim r^2{\bf 1}_{r\leq 1}+\frac{1}{r^2}{\bf 1}_{r\geq 1}.$$
 
 {\bf step 4} Construction of the radiation. We now introduce the radiation term which will allow us to adjust in a sharp way the tail of $T_2$ outside the parabolic zone $r\geq B_0$.  Let 
 \be
 \label{defcb}
 c_b = \frac 1{\int _0^{+\infty}  \chi_{\frac{B_0}4} \psi_0(\tau)  \tau d\tau}=\frac{2}{|\log b|}\left[ 1 + O \left( \frac{1}{|\log b|}\right)\right]
 \ee
 \be
 \label{defdb}
 d_b=4c_b\int_0^{+\infty} \chi_{\frac{B_0}{4}}\psi_1(\tau) \tau d\tau  = O\left(\frac{1}{b|\log b|}\right)
 \ee
  We let the radiation $\Sigma_b$ be the solution to 
 \be
 \label{equationradiation}
 L_0\Sigma_b=-8c_b\chi_{\frac{B_0}{4}}\psi_0+d_bL_0\left[(1-\chi_{3B_0})\psi_0\right]
 \ee
 given from \fref{inversionknot} by 
\bea
\label{fromulezardiaition}
\nonumber \Sigma_b(r) & = &-4c_b\psi_0(r)\int_0^r\frac{\tau(\tau^4+4\tau^2\log \tau-1)}{(1+\tau^2)^2}\chi_{\frac{B_0}{4}}d\tau+4c_b\psi_1(r)\int_0^r\frac{\tau^3}{(1+\tau^2)^2}\chi_{\frac{B_0}{4}}d\tau\\
& + &d_b(1-\chi_{3B_0})\psi_0(r).
\eea
Observe that by definition:
\be
\label{propradiaiton}
\Sigma_b=\left\{\begin{array}{ll} c_bm_1\ \ \mbox{for}\ \ r\leq \frac{B_0}{4} \\4\psi_1\ \ \mbox{for}\ \ r\geq 6B_0\end{array}\right ..
\ee
and we estimate for $\frac{B_0}{4} \le r \le 6B_0$:
\bea
\nonumber \Sigma_b(r)=4+O\left(\frac{1}{|\log b|}\right).
\eea
For $r\geq 6B_0$, we observe from \fref{defpsiunspdio}, \fref{propradiaiton} the degeneracy: $$\Sigma_b=4+O\left(\frac{\log r}{r^2}\right), \ \ r\pa_r\Sigma_b=O\left(\frac{\log r}{r^2}\right).$$
This yields in particular the bounds using \fref{boundm1}:
$$r^i\pa_r^i\Sigma_b(r)=O\left(\frac{r^4}{|\log b|}\right)\ \ \mbox{for}\ \ r\leq 1,\ \ i\ge 0,$$
\be
\label{estimateradiation}
\forall 1\leq r\leq 6B_0, \ \ \Sigma_b(r)=\frac{8\log r}{|\log b|}+O\left(\frac{1}{|\log b|}\right),
\ee
\be
\label{estcninoenoloin}
\forall r\geq 6B_0, \ \ \Sigma_b(r)=4+O\left(\frac{\log r}{r^2}\right).
\ee
We now observe an improved bound for derivatives far out. Indeed, from \fref{formulamprime}: for $r\leq 2B_1$,
\bee
\left|\frac{\Sigma'_b}{r}\right|& =-& \left| -4c_b\frac{\psi'_0(r)}{r}\int_0^r\frac{\tau(\tau^4+4\tau^2\log \tau-1)}{(1+\tau^2)^2}\chi_{\frac{B_0}{4}}d\tau+4c_b\frac{\psi'_1(r)}{r}\int_0^r\frac{\tau^3}{(1+\tau^2)^2}\chi_{\frac{B_0}{4}}d\tau\right.\\
& + & \left. d_b(1-\chi_{3B_0})\frac{\psi'_0(r)}{r}-d_b\chi'_{3B_0}\frac{\psi_0}{r}\right|\\
& \lesssim & \frac{1}{|\log b|(1+r^4)}\left[r^2{\bf 1}_{r\le1 }+ r^2{\bf 1}_{r\leq 6B_0}+\frac{1}{b}{\bf 1}_{r\geq 6B_0}\right]\\
& + & \frac{1+|\log r|}{|\log b|(1+r^4)}\left[(1+|\log r|){\bf 1}_{1\leq r\leq 6B_0}+|\log b|{\bf 1}_{r\geq B_0}\right]+  \frac{1}{b|\log b|(1+r^4)}{\bf 1}_{r\geq 3B_0}\\
& \lesssim & \frac{1}{|\log b|}\left[r^2{\bf 1}_{r\leq 1}+\frac{1}{r^2}{\bf 1}_{1\leq r\leq 6B_0}+\frac{1}{br^4}{\bf 1}_{r\geq 6B_0}\right]
\eee
and thus: for $i\geq 1$, $r\leq 2B_1$,
\be
\label{estradiationimproved}
|r^i\pa^i_r\Sigma_b|\lesssim \frac{1}{|\log b|}\left[r^4{\bf 1}_{r\leq 1}+{\bf 1}_{r\leq 6B_0}+\frac{1}{br^2}{\bf 1}_{r\geq 6B_0}\right].
\ee
We now estimate the $b$ dependance of $\Sigma_b$. From \fref{defcb}, \fref{defdb}, $$\frac{\pa c_b}{\pa b}=O\left(\frac{1}{b|\log b|^2}\right), \ \ \frac{\pa d_b}{\pa b}=O\left(\frac{1}{b^2|\log b|^2}\right),$$ and from \fref{propradiaiton}, $$\frac{\pa \Sigma_b}{\pa b}=\left\{\begin{array}{ll} \frac{\pa c_b}{\partial b}m_1\ \ \mbox{for}\ \ r\leq \frac{B_0}{4}\\ 0\ \ \mbox{for} \ \ r\geq 6B_0\end{array}\right..$$ This leads to the bound:
$$r^i\pa_r^i\frac{\pa\Sigma_b}{\pa b}(r)=O\left(\frac{r^4}{b|\log b|^2}\right)\ \ \mbox{for}\ \ r\leq 1,$$ $$r^i\pa_r^i\frac{\pa\Sigma_b}{\pa b}(r)=O\left(\frac{1+|\log r| }{b|\log b|^2}\right)\ \ \mbox{for}\ \ 1\leq r\leq \frac{B_0}{4}, \ \ i\geq 0.$$
 In the transition zone $\frac{B_0}{4}\leq r\leq 6B_0$, we estimate from \fref{fromulezardiaition}:
\bee
\left|\frac{\pa\Sigma_b}{\pa b}\right|& \lesssim & \frac{1}{b|\log b|}+\frac{|c_b|}{1+r^2}\int \tau\left|\partial_b\chi_{\frac{B_0}4}\right|d\tau+|c_b|\int\frac{d\tau}{\tau}\left|\pa_b\chi_{\frac{B_0}4}\right|+\frac{|d_b|}{1+r^2}|\pa_b\chi_{3B_0}|\\
& \lesssim & \frac{1}{b|\log b|}
\eee
and similarly for higher derivatives. This yields the bound: for $i\geq 0$, $r\leq 2B_1$,
\be
\label{estdsigmadb}
\left|br^i\pa_r^i\pa_b\Sigma_b\right|\lesssim \frac{1}{|\log b|}\left[r^4{\bf 1}_{r\leq 1}+\frac{1+|\log r|}{|\log b|}{\bf 1}_{r\leq 6B_0}\right].
\ee

{\bf step 5} Construction of $m_2,T_2$. We define: 
\be
\label{defsigmatwo}
\Sigma_2=\frac{m_1m'_1}{r}-rm'_1+\Sigma_b.
\ee
We estimate $$\Sigma_2=O(r^4)\ \ \mbox{for}\ \ r\leq 1.$$  For $1\leq r\leq 6B_0$, there holds the bound using  \fref{developpementmonprime}, \fref{estimateradiation}:
\bea
\label{sigma2milieu}
 \nonumber \Sigma_2& = & -4+\frac{8\log r}{|\log b|}+O\left(\frac{1}{|\log b|}\right)+O\left(\frac{|\log r|^2}{1+r^2}\right)\\
 & = & O\left(\frac{|\log (\sqrt{b}r)|}{|\log b|}\right)+O\left(\frac{1}{|\log b|}\right)
 \eea
 and for $r \geq 6B_0$, we have from \fref{defpsiunspdio}, \fref{developpementmonprime}, \fref{propradiaiton}:
 \be
 \label{sigma2infty}
 \Sigma_2 =  -4+4 \psi_1+O\left(\frac{|\log r|^2}{1+r^2}\right) = O\left(\frac{|\log r|^2}{1+r^2}\right).
 \ee 
 Hence, we obtain: $\forall i\geq 0$,
\bea
\label{sigma2bord}
|r^i\pa_r^i\Sigma_2| \lesssim r^4{\bf 1}_{ r \leq 1} + \frac{1 + |\log (r\sqrt b)|}{|\log b|} {\bf 1}_{1 \leq r \leq 6B_0} + \frac{|\log r|^2}{1+r^2} {\bf 1}_{r \geq 6B_0}.
\eea
\begin{remark} The essential feature of \fref{sigma2bord} is the $\frac{1}{|\log b|}$ smallness of the $\Sigma_2$ tail in the parabolic zone $r\sim B_0$ which justifies the introduction of the radiation $\Sigma_b$ in \fref{defsigmatwo}, and this will turn out to modify the modulation equation for $b$ according to \fref{odeintro}.
\end{remark}
The $b$ dependance of $\Sigma_2$ is estimated from \fref{estdsigmadb}, \fref{defsigmatwo}:
\be
\label{dsigmatwodb}
\left|br^i\pa_r^i\pa_b\Sigma_2\right|\lesssim \frac{1}{|\log b|}\left[r^4{\bf 1}_{r\leq 1}+\frac{1+|\log r|}{|\log b|}{\bf 1}_{r\leq 6B_0}\right].
\ee
We now let $m_2$ be the solution to $$L_0m_2=\Sigma_2$$ given by
 \be
 \label{defm2}
 m_2(r)=\frac12\psi_0(r)\int_0^r\frac{\tau^4+4\tau^2\log \tau-1}{\tau}\Sigma_2(\tau)d\tau-\frac12\psi_1(r)\int_0^r\tau \Sigma_2(\tau)d\tau.
 \ee
 Near the origin, $$m_2=O(r^6).$$ For $1\leq r\leq 6B_0$, 
 \bee
 |m_2(r)|& \lesssim &\frac{1}{1+r^2}\int_0^r \tau^3\frac{1 + |\log (\tau\sqrt b)|}{|\log b|} d\tau+\int_0^r\tau\frac{1 + |\log (\tau\sqrt b)|}{|\log b|}d\tau\\
 & \lesssim & r^2\frac{1 + |\log (r\sqrt b)|}{|\log b|}.
 \eee
 For $r\geq 6B_0$, 
 \bee
 |m_2(r)|& \lesssim&  \frac{1}{b|\log b|}+\frac{1}{r^2}\int_{6B_0}^r\tau^3\frac{|\log \tau|^2}{1+\tau^2}d\tau+\int_{6B_0}^r\tau\frac{|\log \tau|^2}{1+\tau^2}d\tau\\
 & \lesssim &  \frac{1}{b|\log b|}+(\log r)^3.
 \eee
 We now observe the following cancellation when exiting the parabolic zone $r\geq 6B_0$ which will be important to treat the error term  in the sequel\footnote{Equivalently, one should observe that the $T_2$ equation is forced by $\Lambda T_1$ which enjoys the improved decay at infinity $\Lambda T_1=O(\frac{|\log r|^2}{r^4})$, see \cite{RaphRod} for related phenomenons.}: for $ 6B_0\leq r\leq 2B_1$,
 \bea
 \label{imprvedmoneout}
 \nonumber 
\left| \frac{m_2'}{r} \right|& = &\left|\frac12\frac{\psi'_0(r)}r\int_0^r\frac{\tau^4+4\tau^2\log \tau-1}{\tau}\Sigma_2(\tau)d\tau-\frac12\frac{\psi'_1(r)}r\int_0^r\tau \Sigma_2(\tau)d\tau\right|\\
\nonumber  & \lesssim & \frac{1}{r^4}\int_0^{r}\frac{\tau^4+4\tau^2\log \tau-1}{\tau}\left[\frac{1 + |\log (\tau\sqrt b)|}{|\log b|} {\bf 1}_{1 \leq \tau\leq 6B_0}+ \frac{|\log \tau|^2}{1+\tau^2} {\bf 1}_{\tau\geq 6B_0}\right]d\tau\\
\nonumber & + & \frac{|\log r|}{r^4}\int_0^{r}\tau\left[\frac{1 + |\log (\tau\sqrt b)|}{|\log b|} {\bf 1}_{1 \leq \tau \leq 6B_0}+ \frac{|\log \tau|^2}{1+\tau^2} {\bf 1}_{\tau \geq 6B_0}\right]d\tau\\
\nonumber& \lesssim & \frac{1}{r^4}\left[\frac{1}{b^2|\log b|}+r^2(\log r)^2\right]+\frac{\log r}{r^4}\left[\frac{1}{b|\log b|}+(\log r)^3\right]\\
& \lesssim & \frac{1}{r^4b^2|\log b|}.
 \eea
 The collection of above bounds yields the control: $\forall r\leq 2B_1$
 \be
 \label{estmtwo}
 |m_2|\lesssim r^6{\bf 1}_{r\le1} +r^2\frac{1 + |\log (r\sqrt b)|}{|\log b|}{\bf 1}_{1\leq r\leq 6B_0}+ \frac{1}{b|\log b|}{\bf 1}_{r\geq 6B_0},
 \ee
  \be
 \label{estmtwoderivaitve}
 |r^i\pa_r^im_2|\lesssim r^6{\bf 1}_{r\le1} +r^2\frac{1 + |\log (r\sqrt b)|}{|\log b|}{\bf 1}_{1\leq r\leq 6B_0}+ \frac{1}{r^2b^2|\log b|}{\bf 1}_{r\geq 6B_0}, \ \ i\geq 1.
 \ee
 The $b$ dependance is estimated using \fref{dsigmatwodb}:
  \bea
 \label{estmtwoddb}
 \nonumber  |b\pa_bm_2|&\lesssim & \frac{r^2}{1+r^4}\int_0^r\frac{1+\tau^4}{\tau}\frac 1{|\log b|}\left[\tau^4{\bf 1}_{\tau\leq 1}+\frac{1+|\log \tau|}{|\log b|}{\bf 1}_{1\leq\tau\leq 6B_0}\right]d\tau\\
 \nonumber & + & \int_0^r\frac{\tau}{|\log b|}\left[\tau^4{\bf 1}_{r\leq 1}+\frac{1+|\log \tau|}{|\log b|}{\bf 1}_{1\le \tau\leq 6B_0}\right]d\tau\\
  & \lesssim & \frac{1}{|\log b|}\left[r^6{\bf 1}_{r\leq 1}+r^2\frac{1+|\log r|}{|\log b|}{\bf 1}_{1\leq r\leq 6B_0}+\frac{1}b {\bf 1}_{r\geq 6B_0}\right]
  \eea
    and for higher derivatives:
  \bee
  \left| \frac{b\pa_b\pa_rm_2}{r} \right|& \lesssim  &\frac{1}{(1+r^4)|\log b|}\int_0^{r}\frac{\tau^4+4\tau^2\log \tau-1}{\tau}\left[\tau^4{\bf 1}_{\tau\leq 1}+\frac{|\log \tau|}{|\log b|}{\bf 1}_{\tau\leq 6B_0}\right]d\tau\\
  & + &  \frac{|\log r|}{(1+r^4)|\log b|}\int_0^{r}\tau\left[\tau^4{\bf 1}_{\tau\leq 1}+\frac{|\log \tau|}{|\log b|}{\bf 1}_{\tau\leq 6B_0}\right]d\tau\\
  & \lesssim & \frac{1}{|\log b|}\left[r^4{\bf 1}_{r\le1} +\frac{|\log r|}{|\log b|}{\bf 1}_{1\leq r\leq 6B_0}+ \frac{1}{r^4b^2}{\bf 1}_{r\geq 6B_0}\right],
  \eee
and hence the bounds for $r\leq 2B_1$:
\be
 \label{estmtwodtdb}
 |b\pa_bm_2|\lesssim \frac{1}{|\log b|}\left[r^6{\bf 1}_{r\le1} +r^2\frac{|\log r|}{|\log b|}{\bf 1}_{1\leq r\leq 6B_0}+ \frac{1}{b}{\bf 1}_{r\geq 6B_0}\right],
 \ee
 and for $i\geq 1$:
  \be
 \label{estmtwoderivaitvedtdb}
 |b\pa_br^i\pa_r^im_2|\lesssim \frac{1}{|\log b|}\left[r^6{\bf 1}_{r\le1} +r^2\frac{|\log r|}{|\log b|}{\bf 1}_{1\leq r\leq 6B_0}+ \frac{1}{r^2b^2}{\bf 1}_{r\geq 6B_0}\right].
 \ee
This yields the estimate on $T_2=\frac{m_2'}{r}$: for $i\geq 0$, $r\leq 2B_1$:
$$
 |r^i\pa_r^iT_2|\lesssim r^4{\bf 1}_{r\le1} +\frac{1 + |\log (r\sqrt b)|}{|\log b|}{\bf 1}_{1\leq r\leq 6B_0}+ \frac{1}{b^2r^4|\log b|}{\bf 1}_{r\geq 6B_0},
$$
$$|b\pa_br^i\pa_r^iT_2|\lesssim\frac{1}{|\log b|}\left[ r^4{\bf 1}_{r\le1} +\frac{ |\log r|}{|\log b|}{\bf 1}_{1\leq r\leq 6B_0}+ \frac{1}{b^2r^4}{\bf 1}_{r\geq 6B_0}\right].
$$
 
  {\bf step 6} Estimate on the error.\\

By construction, $\Phi_b$ given by \fref{defphib} satisfies:
\be
\label{bejbeobveoboe}
\Phi_b=-b^2\Sigma_b+R(r)
\ee
with
$$
R(r) =  b^3\left[ \frac{(m_1m_2)'}{r}-rm'_2\right] +b^4\left[\frac{m_2m_2' }{r} \right]
$$
and the error to self similarity given by \fref{deferreur} satisfies:
\be
\label{defphibbiwbiwbi}
 \Psi_b=\frac1r\Phi_b'+b^2c_bT_1\chi_{\frac{B_0}{4}}=\frac{R'}{r}-b^2\left[\frac{\Sigma'_b}{r}-c_bT_1\chi_{\frac{B_0}{4}}\right].
 \ee
We have from direct check the bound\footnote{the worst term is $b^3rm'_2$}: for $i\geq 0$,
\be
\label{estr}
|r^i\pa_r^iR(r)| \lesssim b^3 \left[ r^6{\bf 1}_{r\le1} +r^2\frac{1 + |\log (r\sqrt b)|}{|\log b|}{\bf 1}_{1\leq r\leq 6B_0}+ \frac{1}{r^2b^2|\log b|}{\bf 1}_{r\geq 6B_0}\right]
\ee
This yields using \fref{propradiaiton}, \fref{estradiationimproved},  the bound for $r\leq 2B_1$:
\bea
\label{esibgeogogeo}
 |\Psi_b|& \lesssim & b^3 \left[ r^4{\bf 1}_{r\le1} +\frac{1 + |\log (r\sqrt b)|}{|\log b|}{\bf 1}_{1\leq r\leq 6B_0}+ \frac{1}{r^4b^2|\log b|}{\bf 1}_{r\geq 6B_0}\right]\\
\nonumber & + &  \frac{b^2}{|\log b|}\left[\frac{1}{r^2}{\bf 1}_{\frac{B_0}{4}\leq r\leq 6B_0}+\frac{1}{br^4}{\bf 1}_{r\geq 6B_0}\right]\\
\eea
We therefore estimate:
\bee
\int_{r\leq 2B_1} |\Psi_b|^2& \lesssim & b^6\int_{r\leq 2B_1}\left[{\bf 1}_{r\leq 1} +\left(\frac{ |\log (r\sqrt b)|}{|\log b|}{\bf 1}_{1\leq r\leq 6B_0}\right)^2+ \frac{1}{r^8b^4|\log b|^2}{\bf 1}_{r\geq 6B_0}\right]\\
& + & \frac{b^4}{|\log b|^2}\int_{\tau\leq 2B_1}\left[\frac{1}{\tau^4}{\bf 1}_{\frac{B_0}{4}\leq \tau\leq 6B_0}+\frac{1}{b^2\tau^8}{\bf 1}_{\tau\geq 6B_0}\right]\\
& \lesssim & \frac{b^5}{|\log b|^2}
\eee
and similarily for higher derivatives:
$$\int _{r\leq 2B_1} |r^i\pa^i_r\Psi_b|^2\lesssim  \frac{b^5}{|\log b|^2}.$$
Moreover, the radial representation of the Poisson field and \fref{defphibbiwbiwbi} ensure: 
\bee
\phi'_{\Psi_b} & = & \frac{\Phi_b}{r}+\frac{b^2c_b}{r}\int_0^rm'_1(\tau)\chi_{\frac{B_0}{4}}d\tau=\frac{R}{r}-\frac{b^2}{r}\int_0^r\left[\frac{\Sigma'_b}{\tau}-c_bT_1\chi_{\frac{B_0}{4}}\right]\tau d\tau. \eee
We estimate from \fref{propradiaiton}, \fref{estimateradiation}, \fref{estradiationimproved}:
\bee
\left|\frac{b^2}{r}\int_0^r\left[\frac{\Sigma'_b}{\tau}-c_bT_1\chi_{\frac{B_0}{4}}\right]\tau d\tau\right|\lesssim \frac{b^2}{|\log b|}\left[\frac{{\bf 1}_{\frac{B_0}{4}\leq r\leq 6B_0}}{r}+\frac{1}{br^3}{\bf 1}_{r\geq 6B_0}\right]
\eee
wich together with \fref{estr} yields the bound:
\bee
&&\int_{\tau\leq 2B_1} \frac{|\phi'_{\Psi_b}|^2}{1+\tau^2}\lesssim  \frac{b^4}{|\log b|^2}\int_{\tau\leq 2B_1}\frac{1}{1+\tau^2}\left[\frac{{\bf 1}_{\frac{B_0}{4}\leq \tau\leq 6B_0}}{\tau^2}+\frac{1}{b^2\tau^6}{\bf 1}_{\tau\geq 6B_0}\right]\\
& + & b^6\int_{r\leq 2B_1}\frac{1}{1+\tau^2}\left[ \tau^3{\bf 1}_{\tau\le1} +\tau\frac{1 + |\log (\tau\sqrt b)|}{|\log b|}{\bf 1}_{1\leq \tau\leq 6B_0}+ \frac{1}{\tau^3b^2|\log b|}{\bf 1}_{\tau\geq 6B_0}\right]^2\\
& \lesssim & \frac{b^5}{|\log b|^2}.
\eee
We further estimate from the definition \fref{formuleL}:
$$
\int_{r\leq 2B_1} \frac{|\mathcal L\Psi_b|^2}{Q} \lesssim  \Sigma_{i=0}^2\int_{r\leq 2B_1}(1+r^{2i})|\pa^i_r\Psi_b|^2+\int_{r\leq 2B_1} \frac{|\pa_r\phi_{\Psi_b}|^2}{1+r^6}\lesssim \frac{b^5}{|\log b|^2}$$ which concludes the proof of \fref{roughboundltaow}.
Finally, we obtain the control of the stronger norm:
\bee
\int_{r\leq 2B_1} Q|\nabla \mathcal M\Psi_b|^2&\lesssim & \int_{r\leq 2B_1} (1+r^2)\left[|r\pa_r\Psi_b|^2+|\Psi_b|^2\right]+\frac{|\phi'_{\Psi_b}|^2}{1+r^4}\\
& \lesssim & b^6\int_{r\le 2B_1}  \left[ 1+\frac{(1 + |\log (r\sqrt b)|^2)(1+r^2)}{|\log b|^2}{\bf 1}_{1\leq r\leq 6B_0}+ \frac{1}{r^6b^4|\log b|}{\bf 1}_{r\geq 6B_0}\right]\\
\nonumber & + &  \frac{b^4}{|\log b|^2}\int_{r\le 2B_1}\left[\frac{1}{r^2}{\bf 1}_{\frac{B_0}{4}\leq r\leq 6B_0}+\frac{1}{b^2r^6}{\bf 1}_{r\geq 6B_0}\right]+\frac{b^5}{|\log b|^2}\\
& \lesssim & \frac{b^4}{|\log b|^2},
\eee
and \fref{cneneoneonoenoe} is proved. This concludes the proof of Proposition \ref{propblowup}.
\end{proof}

\begin{remark} Observe from \fref{deftun} and the readily verified formula $$\mathcal L \left(\frac{m'}{r}\right)=-\frac{1}{r}(L_0m)'$$ that
\be
\label{eqtonekvne}
\matchal LT_1=\matchal L\left(\frac{m_1'}{r}\right)=-\frac{1}{r}(L_0m_1)'=\frac{8\psi_0'}{r}=\Lambda Q.
\ee
\end{remark}
%%%%%%%%%%%%%%%%%%%%%%%%%%%%%%%%%%%%%%%%%% %%%%%%%%%%%%%%%%%%%%%
 %%%%%%%%%%%%%%%%%%%%%%%%%%%%%%%%%%%%%%%%%% %%%%%%%%%%%%%%%%%%%%%
 
 \subsection{Localization}
 
 %%%%%%%%%%%%%%%%%%%%%%%%%%%%%%%%%%%%%%%%%% %%%%%%%%%%%%%%%%%%%%%
 %%%%%%%%%%%%%%%%%%%%%%%%%%%%%%%%%%%%%%%%%% %%%%%%%%%%%%%%%%%%%%%

We now proceed to a brute force localization of the tails of $T_1,T_2$ which become irrelevant strictly outside the parabolic zone $r\geq 2B_1$. We need to be careful with localization due to the non local structure of the problem, and we shall rely again on the specific structure of the Poisson equation for radial fields. 
\begin{proposition}[Localization]
\label{proploc}
Given a small parameter
$$
0<b\ll 1,$$ let the localized profile:
\be
\label{deflocprofiles}
\qbt=Q+b\tt_1+b^2\tt_2\ \ \tt_i=\chi_{B_1}T_i \ \ \mbox{for}\  \ i=1,2.
\ee  Let the error:
\bea
\label{deferreurtilde}
\Psit_b& = & \nabla\cdot(\nabla \qbt+\qbt\nabla \phi_{\qbt})-b\Lambda \qbt+ c_b b^2 \chi_{\frac{B_0}{4}}T_1
\eea
then there holds:\\
{\em (i) Control of the tails}: $ \forall i\geq 0$:
\be
\label{esttoneproploc}
|r^i\pa^i_r\tt_1|\lesssim r^2{\bf 1}_{r\le 1}+\frac{1}{r^2}{\bf 1}_{1\leq r\leq 2B_1},
\ee
\be
\label{esttpemfmp}
|b\pa_b r^i\pa^i_rT_1|\lesssim \frac{1}{r^2}{\bf 1}_{B_1\leq r\le 2B_1},
\ee
\be
 \label{esttwoloc}
 |r^i\pa_r^i\tt_2|\lesssim r^2{\bf 1}_{r\le1} +\frac{1 + |\log (r\sqrt b)|}{|\log b|}{\bf 1}_{1\leq r\leq 6B_0}+ \frac{1}{b^2r^4|\log b|}{\bf 1}_{ 6B_0\leq r\leq 2B_1},
 \ee
\be
\label{estttwodtdbloc}
|b\pa_br^i\pa_r^i\tt_2|\lesssim\frac{1}{|\log b|}\left[ r^2{\bf 1}_{r\le1} +\frac{ |\log r|}{|\log b|}{\bf 1}_{1\leq r\leq 6B_0}+ \frac{1}{b^2r^4}{\bf 1}_{6B_0\leq r\leq 2B_1}\right].
\ee
{\em (ii) Control of the error in weighted norms}: 
\be
\label{roughboundltaowloc}
\int |r^i\nabla^i\Psit_b|^2+\int \frac{|\nabla \phi_{\tPsi_b}|^2}{1+r^2}+\int\frac{|\mathcal L\tPsi_b|^2}Q\lesssim \frac{b^5}{|\log b|^2}
\ee
\be
\label{roughboundltaowlocbisbis}
\int Q|\nabla \mathcal M\Psit_b|^2\lesssim \frac{b^4}{|\log b|^2}.
\ee
{\em (iii) Degenerate flux}: Let $\frac{B_0}{20}\leq B\leq 20 B_0$ and $\Phi_{0,B}=\chi_B r^2$, then 
\be
\label{degeencos}
|(\mathcal L\Psit_b,\Phi_{0,B})|\lesssim \frac{b^2}{|\log b|}.
\ee

\end{proposition}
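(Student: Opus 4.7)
The plan is to reduce everything to Proposition \ref{propblowup} by viewing $\qbt$ as the cutoff of $Q_b$ at scale $B_1$, and to use that the truncation error lives on $r\geq B_1$ where the tails of $T_1,T_2$ are already small. Part (iii) will then follow from part (ii) by Cauchy--Schwarz.

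The pointwise estimates of part (i) are essentially automatic: on $r\leq B_1$, $\chi_{B_1}\equiv 1$ so \eqref{esttoneproploc}--\eqref{esttwoloc} reduce to \eqref{esttoneprop}--\eqref{esttwo}, while on $B_1\leq r\leq 2B_1$ the Leibniz rule and the uniform bound $|r^i\partial_r^i\chi_{B_1}|\lesssim 1$ conclude. For the $b$-derivative bounds \eqref{esttpemfmp}, \eqref{estttwodtdbloc}, one additionally differentiates $\chi_{B_1}(r)=\chi(r/B_1)$ using $\partial_b B_1 = O(b^{-3/2}|\log b|)$: the resulting term $b\partial_b\chi_{B_1}$ is supported in $B_1\leq r\leq 2B_1$ and bounded there, which together with the tail size of $T_i$ on that annulus gives exactly the stated bounds.

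For part (ii) I would set $\qbt = Q_b - bv$ with $v:=(1-\chi_{B_1})(T_1+bT_2)$ supported on $r\geq B_1$. Using $\phi_{\qbt}=\phi_{Q_b}-b\phi_v$ in the definition of $\tPsi_b$ and comparing with the definition of $\Psi_b$ yields the clean decomposition
\[
\tPsi_b=\Psi_b-b\mathcal L v+b^2\Lambda v-b\nabla\cdot\bigl[v\nabla(\phi_{Q_b}-\phi_Q)+(Q_b-Q)\nabla\phi_v\bigr]+b^2\nabla\cdot(v\nabla\phi_v).
\]
The $\Psi_b$ piece is controlled on $r\leq 2B_1$ by \eqref{roughboundltaow}--\eqref{cneneoneonoenoe}, and on $r\geq 2B_1$ only the $-b\Lambda Q$ contribution survives, with pointwise decay $O(b/r^4)$ that is comfortably integrable. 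For the correction terms the tail bounds \eqref{esttoneprop}--\eqref{esttwo} on $r\geq B_1$ give $|T_1|+br^2|T_2|\lesssim b/|\log b|^2$ with compatible derivative estimates, while the radial Poisson representation $\phi'_u(r)=r^{-1}\int_0^r\tau u\,d\tau$ provides pointwise control of $\phi_v$ and $\phi_{Q_b}-\phi_Q$ with tails driven by their (small) total masses. Plugging these into each summand and integrating over $\mathbb R^2$ shows that every correction fits within the budget $b^5/|\log b|^2$ of \eqref{roughboundltaowloc} and $b^4/|\log b|^2$ of \eqref{roughboundltaowlocbisbis}.

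Part (iii) is then immediate: Cauchy--Schwarz with $1/\sqrt Q$ and $\sqrt Q$ weights yields
\[
|(\mathcal L\tPsi_b,\Phi_{0,B})|\leq\Bigl(\int\frac{|\mathcal L\tPsi_b|^2}{Q}\Bigr)^{1/2}\Bigl(\int Q\chi_B^2 r^4\Bigr)^{1/2}\lesssim\frac{b^{5/2}}{|\log b|}\cdot B_0=\frac{b^2}{|\log b|},
\]
using \eqref{roughboundltaowloc} and the explicit computation $\int Q\chi_B^2 r^4\lesssim B^2\lesssim B_0^2 = 1/b$. The main technical obstacle will be in part (ii), where the nonlocal Poisson contributions $\phi_v$ and $\phi_{Q_b}-\phi_Q$ are not compactly supported even though $v$ is: one must carefully track their tails using the specific decay rates of $T_1,T_2$ in order to absorb each correction into the right-hand sides of \eqref{roughboundltaowloc}--\eqref{roughboundltaowlocbisbis}.
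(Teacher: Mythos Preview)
Your treatment of parts (i) and (iii) coincides with the paper's: for (iii) the paper also uses Cauchy--Schwarz together with the cancellation $(\mathcal L\tPsi_b,1)=0$ from \eqref{estvooee}, giving the same $B\|\mathcal L\tPsi_b\|_{L^2_Q}$ bound you obtain by weighting with $\sqrt Q$.

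For part (ii) your route is genuinely different. You expand $\tPsi_b-\Psi_b$ directly at the density level, while the paper works with the partial masses $\tilde m_b(r)=\int_0^r\tau\tilde Q_b\,d\tau$. Writing $\tilde m_b=m_0+\tilde n_b$ with $\tilde n_b'=\chi_{B_1}n_b'$, the paper gets the exact relation $\tilde\Phi_b=\chi_{B_1}\Phi_b+Z_b$ with $Z_b$ explicit as in \eqref{defzb}, and hence $\tPsi_b=\chi_{B_1}\Psi_b+\chi'_{B_1}\Phi_b/r+Z_b'/r$. The advantage of the mass reduction is that the radial Poisson field $\phi'_u=m_u/r$ is \emph{local} in the mass, so all nonlocal effects disappear into the algebra: $Z_b$ only involves $n_b,n_b'$ on $[B_1,2B_1]$ (where Proposition~\ref{propblowup} applies) and the explicit tail $-b(1-\chi_{B_1})rm_0'$, which is pure $Q$.

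Your direct decomposition is algebraically correct but has a gap on $r>2B_1$. There $v=T_1+bT_2$, so each correction term --- and $\Psi_b$ itself --- involves $T_2$ in a region where Proposition~\ref{propblowup} provides no bound (its estimates \eqref{esttwo}--\eqref{estttwodtdb} are stated only for $r\leq 2B_1$). Your claim that ``only the $-b\Lambda Q$ contribution survives'' on $r\geq 2B_1$ is true for $\tPsi_b$ (since $\tilde Q_b\equiv Q$ there, up to a Poisson tail of total mass $\tilde n_b(2B_1)=O(b|\log b|)$), but it is \emph{not} true for $\Psi_b$ taken alone, and it does not follow from your splitting. To repair this you would have to abandon the decomposition on $r>2B_1$ and estimate $\tPsi_b$ there directly from $\tilde Q_b=Q$; this works, but means the splitting is only used on $r\leq 2B_1$. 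The paper's mass-level localization sidesteps the issue because $\chi_{B_1}\Phi_b$ vanishes identically on $r\geq 2B_1$, so the only contribution there is $Z_b$, already under control.
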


\begin{proof}[Proof of Proposition \ref{proploc}]
{\bf step 1} Terms induced by localization.
We compute from \fref{deflocprofiles}:
$$|b\pa_b\tt_i|=\left|b\chi_{B_1}\pa_bT_i-b\frac{\partial_b B_1}{B_1}(y\chi')\left(\frac{y}{B_1}\right)\right|\lesssim |b\pa_bT_i|{\bf 1}_{r\leq 2B_1}+|T_i|{\bf 1}_{B_1\leq r\leq 2B_1}.$$  and the bounds \fref{esttoneprop}, \fref{esttwo}, \fref{estttwodtdb} now yield \fref{esttoneproploc}, \fref{esttpemfmp}, \fref{esttwoloc}, \fref{estttwodtdbloc}.\\
Let now the partial mass $$m_b(r)=\int_0^r Q_b\tau d\tau, \ \ \tilde{m}_b(r)=\int_0^r\tilde{Q}_b\tau d\tau,$$ and consider the decomposition$$m_b=m_0+n_b,\ \ \tilde{m}_b=m_0+\tn_b, \ \ m_0(r)=\int_0^r Q\tau d\tau,$$ then by definition $$\tn'_b=\chi_{B_1}n'_b.$$ We compute the error induced by localization at the level of the masses. Let like for \fref{defphib}: $$\tPsi_{b}=\frac{\Phit'_b}{r}+b^2c_bT_1\chi_{\frac{B_0}{4}}, \ \ \tilde{\Phi}_b=\tm''_b-\frac{\tm_b'}{r}+\frac{\tm_b\tm_b'}{r}-br\tm'_b,$$ then: 
\bea
\label{defphibtilde}
\nonumber \tilde{\Phi}_b& = & \tn''_b-\frac{\tn'_b}{r}+\frac 1r(\tn_bm'_0+m_0\tn_b'+\tn_b\tn'_b)-br(m'_0+\tn_b')\\
& = & \chi_{B_1}\Phi_b+Z_b
\eea
with
\bea
\label{defzb}
\nonumber Z_b& = & \chi'_{B_1}n'_b+\frac{m'_0}{r}(\tn_b-\chi_{B_1}n_b)+\frac{1}{r}\chi_{B_1}n'_b(\tn_b-n_b)\\
& - & b(1-\chi_{B_1})rm'_0.
\eea
We estimate from \fref{boundm1}, \fref{estmtwo}:
\bea
\label{estnbjop}
\nonumber  |n_b|& \lesssim &  b\left[r^2{\bf 1}_{r\leq 1}+|\log r|{\bf 1}_{r\ge 1}\right]+  b^2\left[r^4{\bf 1}_{r\le1} +r^2\frac{1 + |\log (r\sqrt b)|}{|\log b|}{\bf 1}_{1\leq r\leq 6B_0}+ \frac{1}{b|\log b|}{\bf 1}_{r\geq 6B_0}\right]\\
& \lesssim & b\left[r^2{\bf 1}_{r\leq 1}+|\log r|{\bf 1}_{r\ge 1}\right],
 \eea
 and for $i\ge 1$ from \fref{estmtwoderivaitve}:
\bee
|r^i\pa^i_rn'_b|& \lesssim &  b\left[r^2{\bf 1}_{r\leq 1}+{\bf 1}_{r\ge 1}\right]+ b^2\left[r^4{\bf 1}_{r\le1} +r^2\frac{1 + |\log (r\sqrt b)|}{|\log b|}{\bf 1}_{1\leq r\leq 6B_0}+ \frac{1}{b^2r^2|\log b|}{\bf 1}_{r\geq 6B_0}\right]\\
& \lesssim &   b\left[r^2{\bf 1}_{r\leq 1}+{\bf 1}_{r\ge 1}\right].
\eee
This yields the bound:
\bea
\label{boundzb}
\nonumber |r^i\pa^i_rZ_b| &\lesssim & \frac{b}{r^2}{\bf 1}_{B_1\le r\leq 2B_1}+\frac{b|\log b|}{1+r^4}{\bf 1}_{r\geq B_1}+\frac{1}{r}{\bf 1}_{B_1\leq r\leq 2B_1}\frac{b^2|\log b|}{r}+  \frac{b}{r^2}{\bf 1}_{r\geq B_1}\\
&\lesssim & \frac{b}{r^2}{\bf 1}_{r\geq B_1}.
\eea 
We compute:
\be
\label{bneoibeonaoheovneonv}
\tPsi_{b}=\chi_{B_1}\frac{\tPhi'_b}{r}+\frac{\chi'_{B_1}}{r}\Phi_b+\frac{Z'_b}{r}+b^2c_bT_1\chi_{\frac{B_0}{4}}=\chi_{B_1}\Psi_b+\frac{\chi'_{B_1}}{r}\Phi_b+\frac{Z'_b}{r}.
\ee
We now estimate from \fref{bejbeobveoboe}, \fref{estcninoenoloin}, \fref{boundzb}:
$$
|\tPsi_{b}-\chi_{B_1}\Psi_b| \lesssim \frac{b^2}{r^2}{\bf 1}_{B_1\leq r\leq 2B_1}+  \frac{b}{r^4}{\bf 1}_{r\geq B_1}\\
$$
 and hence using \fref{roughboundltaow}:
\be
\label{firsteatmie}
\int |r^i\pa_r^i\tPsi_{b,0}|^2\lesssim \frac{b^5}{|\log b|^2}+\int_{r\geq B_1}\left[\frac{b^4}{r^4}+\frac{b^2}{r^8}\right]\lesssim \frac{b^5}{|\log b|^2}.
\ee
We now estimate using the radial representation of the Poisson field and \fref{bneoibeonaoheovneonv}, \fref{boundzb}:
$$
\phi'_{\tPsi_{b}}=\phi'_{\Psi_b}\ \ \mbox{for}\ \ r\leq B_1$$ and for $r\geq B_1$: 
\bee
|\phi'_{\tPsi_{b}}|\lesssim \frac{1}{r}\int_{B_1}^r\left[ \frac{b^2}{\tau^2}{\bf 1}_{B_1\leq \tau\leq 2B_1}+\frac{b}{\tau^4}{\bf 1}_{\tau\geq B_1}\right]\tau d\tau\lesssim \frac{b^2}{r}{\bf 1}_{r\geq B_1}
\eee
and thus from \fref{roughboundltaow}:
$$\int\frac{|\phi'_{\tPsi_{b}}|^2}{1+r^2}\lesssim \frac{b^5}{|\log b|^2}+\int_{r\geq B_1} \frac{b^4}{r^4}\lesssim \frac{b^5}{|\log b|^2}.$$ This last estimate together with \fref{firsteatmie} implies from \fref{formuleL}: 
\be
\label{cenkoneoneon}
\int \frac{|\mathcal L\Psit_{b}|^2}{Q}\lesssim \frac{b^5}{|\log b|^2}.
\ee

{\bf step 2} Degenerate flux.\\

We now turn to the proof of \fref{degeencos}. We estimate using \fref{estimationortho}, \fref{cenkoneoneon} with the cancellation \fref{estvooee}:
$$|(\matchal L\Psit_{b},\Phi_{0,B})|\lesssim  B\|\matchal L\Psit_{b}\|_{L^2_Q}\lesssim \frac{b^{\frac 52}}{\sqrt{b}|\log b|}\lesssim \frac{b^2}{|\log b|}$$
and \fref{degeencos} is proved.
\end{proof}

 %%%%%%%%%%%%%%%%%%%%%%%%%%%%%%%%%%%%%%%%%% %%%%%%%%%%%%%%%%%%%%%
 %%%%%%%%%%%%%%%%%%%%%%%%%%%%%%%%%%%%%%%%%% %%%%%%%%%%%%%%%%%%%%%
 
 \section{The bootstrap argument}
 \label{sectionboot}
 
 %%%%%%%%%%%%%%%%%%%%%%%%%%%%%%%%%%%%%%%%%% %%%%%%%%%%%%%%%%%%%%%
 %%%%%%%%%%%%%%%%%%%%%%%%%%%%%%%%%%%%%%%%%% %%%%%%%%%%%%%%%%%%%%%
 
 In this section, we set up the bootstrap arguments  at the heart of the proof of Theorem \ref{thmmain}. We will in particular give an explicit description of the set of initial data and detail the preliminary information on the dynamical flow, in particular modulation equations. The dynamical bounds on radiation in higher weighted Sobolev norms at the heart of the analysis are then derived in section \ref{htwowbound}. The boostratp is finally closed in section \ref{proofthmmai}, and this will easily yield the statements of Theorem \ref{thmmain}.
 
  %%%%%%%%%%%%%%%%%%%%%%%%%%%%%%%%%%%%%%%%%% %%%%%%%%%%%%%%%%%%%%%
 %%%%%%%%%%%%%%%%%%%%%%%%%%%%%%%%%%%%%%%%%% %%%%%%%%%%%%%%%%%%%%%
 
 \subsection{Geometrical description of the set of initial data}
 
 %%%%%%%%%%%%%%%%%%%%%%%%%%%%%%%%%%%%%%%%%% %%%%%%%%%%%%%%%%%%%%%
 %%%%%%%%%%%%%%%%%%%%%%%%%%%%%%%%%%%%%%%%%% %%%%%%%%%%%%%%%%%%%%%

 Let us describe explicitly the set of initial data $\mathcal O$ for which the conclusions of Theorem \ref{thmmain} hold.\\
  We first recall the standard modulation argument.
 
 \begin{lemma}[$L^1$ modulation]
 \label{lemmamodutlation}
 Let $M>M^*$ large enough, then there exists a universal constant $\delta^*(M)>0$ such that $\forall v\in L^1$ with $$\|v-Q\|_{L^1}<\delta^*(M),$$ there exists a unique decomposition $$v=\frac{1}{\lambda_1^2}(\qbt+\e_1)\left(\frac{x}{\l_1}\right)$$ such that $$(\e_1,\Phi_M)=(\e_1,\mathcal L^*\Phi_M)=0.$$ Moreover, $$\|\e_1\|_{L^1}+|\lambda_1-1|+|b|\lesssim C(M)\delta^*.$$
\end{lemma}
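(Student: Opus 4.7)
The strategy is a standard quantitative implicit function theorem, carefully tracking the $M$-dependence of constants. I introduce the map
$$
\Psi:(0,+\infty)\times \RR\times L^1 \longrightarrow \RR^2,\qquad
\Psi(\l_1,b,v)=\bigl((\e_1,\Phi_M),\,(\e_1,\mathcal L^*\Phi_M)\bigr),
$$
where $\e_1(y)=\l_1^2 v(\l_1 y)-\qbt(y)$. Since $\Phi_M$ is compactly supported and bounded, and since $\mathcal L^*\Phi_M$ is bounded with compact support by \fref{eopjepjpeg}, both pairings are continuous linear functionals on $L^1$, so $\Psi$ is well-defined. At the reference point $(\l_1,b,v)=(1,0,Q)$, we have $\e_1=0$ and $\Psi=0$.

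Next I compute the Jacobian of $\Psi$ with respect to $(\l_1,b)$ at the reference point. The scaling derivative gives $\pa_{\l_1}(\l_1^2 v(\l_1\cdot))|_{\l_1=1,v=Q}=\Lambda Q$. For $\pa_b\qbt|_{b=0}$, I use that on the support of $\Phi_M$ (a fixed compact set for $M$ fixed), $\qbt=Q+bT_1+b^2 T_2$, and Proposition \ref{propblowup} together with the localization estimates give $T_2=O(1/|\log b|)$ and $b\pa_b T_2=O(1/|\log b|)$, so the one-sided limit $\pa_b\qbt|_{b=0^+}=T_1$ exists in $L^1_{\rm loc}$. Therefore
$$
J \;=\; \begin{pmatrix}(\Lambda Q,\Phi_M) & -(T_1,\Phi_M)\\ (\Lambda Q,\mathcal L^*\Phi_M) & -(T_1,\mathcal L^*\Phi_M)\end{pmatrix}.
$$
Now I use the orthogonality $(\Phi_M,T_1)=0$ from \fref{orthophim}, the cancellation $\mathcal L(\Lambda Q)=0$ from \fref{cneneovneo} so that $(\Lambda Q,\mathcal L^*\Phi_M)=(\mathcal L\Lambda Q,\Phi_M)=0$, and the key identity $\mathcal L T_1=\Lambda Q$ from \fref{eqtonekvne} so that $(T_1,\mathcal L^*\Phi_M)=(\mathcal L T_1,\Phi_M)=(\Lambda Q,\Phi_M)$. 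The Jacobian is therefore diagonal,
$$
J=\begin{pmatrix}(\Lambda Q,\Phi_M) & 0\\ 0 & -(\Lambda Q,\Phi_M)\end{pmatrix},
$$
with $(\Lambda Q,\Phi_M)=-(32\pi)\log M+O(1)$ by \fref{orthophim}. For $M\geq M^*$ large enough this is invertible with $\|J^{-1}\|\lesssim 1/\log M$.

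A Banach fixed-point argument (the quantitative form of the implicit function theorem) then yields unique $C^1$ maps $v\mapsto \l_1(v),\,b(v)$ in a small $L^1$ neighborhood of $Q$ with $\Psi(\l_1(v),b(v),v)=0$, $\l_1(Q)=1$, $b(Q)=0$. Quantitatively, writing $\delta v=v-Q$ and noting that $\|\Psi(1,0,v)\|=|(\delta v,\Phi_M)|+|(\delta v,\mathcal L^*\Phi_M)|\leq C(M)\|\delta v\|_{L^1}$, one obtains $|\l_1-1|+|b|\lesssim \|J^{-1}\|\,C(M)\,\delta^*\lesssim C(M)\delta^*$. For the radiation, the triangle inequality together with the $L^1$-continuity of scaling gives
$$
\|\e_1\|_{L^1}\leq \|\l_1^2 v(\l_1\cdot)-Q\|_{L^1}+\|Q-\qbt\|_{L^1}\lesssim \|v-Q\|_{L^1}+o_{\l_1\to1}(1)+|b|\,|\log b|,
$$
where the last term uses $\int|\tilde T_1|\lesssim |\log b|$ from \fref{esttoneproploc} together with $\int|b\tilde T_2|\lesssim 1$. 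All three contributions are $\lesssim C(M)\delta^*$.

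The only non-routine point is the $b\to 0^+$ singularity of $\qbt$ through the coefficient $c_b\sim 1/|\log b|$ appearing in the construction of $T_2$. This is the main obstacle, but it is benign: on the fixed compact support of $\Phi_M$ and $\mathcal L^*\Phi_M$, Proposition \ref{propblowup} shows that all $b$-singular pieces enter $\qbt$ multiplied by $b^2$, and the estimates $T_2,\,b\pa_b T_2,\,b^2\pa_b^2 T_2=O(1)$ as $b\to 0^+$ give enough regularity in $b$ for the fixed-point argument to run. Uniqueness follows from the invertibility of $J$ and standard shrinking of the neighborhood.
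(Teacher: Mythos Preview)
Your proposal is correct and follows essentially the same route as the paper: the same implicit function theorem setup, the same Jacobian computation yielding the same lower-triangular (in fact diagonal) matrix with entries $\pm(\Lambda Q,\Phi_M)$, using $(\Phi_M,T_1)=0$, $\mathcal L\Lambda Q=0$, and $\mathcal LT_1=\Lambda Q$.

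One small inaccuracy: $\mathcal L^*\Phi_M$ is \emph{not} compactly supported. The reference \fref{eopjepjpeg} concerns $\mathcal L^*(\chi_M r^2)$, which is indeed supported in $r\le 2M$, but $\Phi_M=\chi_M r^2+c_M\mathcal L^*(\chi_M r^2)$, so $\mathcal L^*\Phi_M$ also contains $c_M(\mathcal L^*)^2(\chi_M r^2)$, which by \fref{vbeojboihe} has a tail $O\bigl((1+r^2)^{-1}\bigr)$ for $r\ge M$. This does not affect your argument: boundedness of $\mathcal L^*\Phi_M$ (which holds, see \fref{newestimate}) suffices for the pairing with $L^1$ to be continuous. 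Your explicit treatment of the $b\to 0^+$ regularity of $\qbt$ through the $T_2$ term is a useful clarification that the paper's proof simply asserts.
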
 

\begin{proof}[Proof of Lemma \ref{lemmamodutlation}] This is a standard consequence of the implicit function theorem. Let us denote $$\Mod=(\l_1,b), \ \ v_{\rm Mod}=\lambda_1^2v(\l_1x)-\qbt$$ and consider the $\matchal C^1$ functional $$F(v,\l_1,b)=\bigg[(v_{\rm Mod},\Phi_M),(v_{\rm Mod},\mathcal L^*\Phi_M)\bigg].$$ Then $F(Q,1,0)=0$, and we compute the Jacobian at $(v,\rm Mod)= (Q,1,0)$:
\bee
\frac{\pa}{\pa\l_1}(v_{\rm Mod},\Phi_M)_{|(v,\rm Mod)  =   (Q,1,0,)}& = & \frac{\pa}{\pa\l_1}\left(v,\Phi_M(\frac{x}{\l_1})\right)_{|(v,\rm Mod)= (Q,1,0)}\\
& = & -(Q,y\cdot\nabla \Phi_M)=(\Lambda Q,\Phi_M)\\
& = & -24\log M+O(1),
\eee
\bee
\frac{\pa}{\pa\l_1}(v_{\rm Mod},\mathcal L^*\Phi_M)_{|(v,\rm Mod)  =   (Q,1,0)}=-(Q,y\cdot\nabla \mathcal L^*\Phi_M)=(\mathcal L\Lambda Q,\Phi_M)=0,
\eee
\bee
\frac{\pa}{\pa b}(v_{\rm Mod},\Phi_M)_{|(v,\rm Mod)  =   (Q,1,0)}& = &-(T_1,\Phi_M)=0,
\eee
\bee
\frac{\pa}{\pa b}(v_{\rm Mod},\mathcal L^*\Phi_M)_{|(v,\rm Mod)  =   (Q,1,0)}& = &-(T_1,\mathcal L^*\Phi_M)=-(\Lambda Q,\Phi_M)\\
& = & -24\log M+O(1)
\eee
 The collection of above computations yields the Jacobian:
 $$\left|\frac{\pa F}{\pa Mod} \right|_{|(v,\rm Mod)  =   (Q,1,0)}=\left(24 \log M+O(1)\right)^2>0$$ for $M$ large enough, and the claim follows from the implicit function theorem.
 \end{proof}

 We are now in position to describe explicitly the set of initial data leading to the conclusions of Theorem \ref{thmmain}. 

\begin{definition}[Description of the open set of stable chemotactic blow up]
\label{defo}
Fix $M>M^*$ large enough. For $0<\alpha^*<\alpha^*(M)$ small enough, we let $\mathcal O$ be the set of initial data of the form  
$$
 u_0=\frac{1}{\l_0^2}(\tilde{Q}_{b_0}+\e_0)\left(\frac{x}{\l_0}\right), \ \ \e_0\in \mathcal E_{\rm rad},
 $$
 where $\e_0$ satisfies the orthogonality conditions 
 \be
 \label{initicondit}
 (\e_0,\Phi_M)=(\e_0,\mathcal L^*\Phi_M),
 \ee
  and with the following a priori bounds:\\
 {\em (i) Positivity}: $u_0>0$.\\
  {\em (ii) Small super critical mass}: 
 \be
 \label{intiialmass}
 \int Q<\int u_0<\int Q+\alpha^*.
 \ee
 {\em (iii) Positivity and smallness of $b_0$}: 
 \be
 \label{positivitybzero} 0<b_0<\delta(\alpha^*).
 \ee
 {\em (iv) Initial smallness}:
 \be
 \label{initialsmalneesb}
 \|\e_0\|_{H^2_Q}<b_0^{10}.
 \ee
\end{definition}

Observe using Lemma \ref{lemmamodutlation} that $\matchal O$ is open in $\mathcal E_{\rm rad}\cap \{u>0\}$. It is also non empty and for example $u_0=\tilde{Q}_{b_0}\in \matchal O$ for $b_0>0$ small enough.\footnote{Observe that the positivity $\tilde{Q}_{b_0}>0$ for $b_0>0$ directly follows from \fref{developpementT1}, \fref{esttwo}.} The positivity \fref{positivitybzero} of $b_0$ coupled with the smallness \fref{initialsmalneesb} means that we are imposing a deformation of $Q$ which pushes towards concentration in the regime formally predicted by the construction of $\qbt$. The challenge is to show dynamically that the solution keeps a similar shape in time meaning that $\e(t)$ remains negligible in suitable norms with respect to $b(t)$.

 %%%%%%%%%%%%%%%%%%%%%%%%%%%%%%%%%%%%%%%%%% %%%%%%%%%%%%%%%%%%%%%
 %%%%%%%%%%%%%%%%%%%%%%%%%%%%%%%%%%%%%%%%%% %%%%%%%%%%%%%%%%%%%%%
 
 \subsection{Setting up the bootstrap}
 
 %%%%%%%%%%%%%%%%%%%%%%%%%%%%%%%%%%%%%%%%%% %%%%%%%%%%%%%%%%%%%%%
 %%%%%%%%%%%%%%%%%%%%%%%%%%%%%%%%%%%%%%%%%% %%%%%%%%%%%%%%%%%%%%%
 
 Let $u_0\in \mathcal O$ and let $u\in \mathcal C([0,T),\mathcal E)$ be the corresponding strong solution to \fref{kps}, then using the implicit function theorem, the solution admits on some small time interval  $[0,T^*)$ a unique decomposition 
 \be
 \label{decompe}
 u(t,x)=\frac{1}{\l^2(t)}(\tilde{Q}_{b(t)}+\e)\left(t,\frac{x}{\l(t)}\right)
 \ee where $\e(t)$ satisfies the orthogonality conditions
 \be
 \label{orthoe}
 (\e(t),\Phi_M)=(\e(t),\mathcal L^*\Phi_M)=0,
 \ee
 and the geometrical parameters satisfy from standard argument\footnote{see for example \cite{MMkdv} for a complete proof in a related setting.} $(\lambda(t))\in \mathcal C([0,T^*),\Bbb R^*_+\times \R^*_+)$.  Using the initial smallness assumption, we may assume on $[0,T^*)$ the bootstrap bounds:
 \begin{itemize}
 \item Positivity and smallness of $b$: 
 \be
 \label{positiv}
0<b(t)<10b_0.
 \ee
 \item $L^1$ bound:
\be
\label{bootsmalllone}
\int |\e(t)|<(\delta^*)^{\frac 14},
\ee
\item Control of $\e$ in smoother norms: let $$\e_2=\mathcal L\e,$$ then:
\be
\label{bootsmallh2q}
\|\e_2(t)\|^2_{L^2_Q}\leq K^*\frac{b^3(t)}{|\log b(t)|^2},
\ee
\end{itemize}

Here $(\delta^*,K^*)$ denote respectively small and large enough universal constants with $$\delta^*=\delta(\alpha^*)\to 0\ \ \mbox{as} \ \ \alpha^*\to 0,$$ and $$K^*=K^*(M)\gg 1.$$ 

We claim that this regime is trapped:

\begin{proposition}[Bootstrap]
\label{propboot}
Assume that $K^*$ in \fref{bootsmallh2q}  has been chosen large enough, then $\forall t\in [0,T^*)$:
\be
 \label{positivboot}
0<b(t)<2b_0,
 \ee
\be
\label{bootsmallloneboot}
\int |\e(t)|<\frac12(\delta^*)^{\frac 14},
\ee
\be
\label{bootsmallh2qboot}
\|\e_2(t)\|^2_{L^2_Q}\leq \frac{K^*}{2}\frac{b^3(t)}{|\log b(t)|^2}.
\ee
\end{proposition}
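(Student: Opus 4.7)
The plan is to upgrade each of the three bootstrap bounds \fref{positiv}, \fref{bootsmalllone}, \fref{bootsmallh2q} in turn, treating $b$ and $\int|\e|$ as consequences of the energy control on $\e_2 = \mathcal{L}\e$ combined with the modulation equations.

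\textbf{Step 1 (Modulation equations).} Inserting \fref{decompe} into \fref{kps} and rewriting in the renormalized variables $(s,y)$ with $ds/dt = 1/\lambda^2$, $y = x/\lambda$, the remainder $\e$ satisfies an equation of the schematic form $\partial_s \e = \mathcal{L}\e - b\Lambda \e + \tPsi_b + \widetilde{\Mod} + N(\e)$, where $\widetilde{\Mod} = (\lambda_s/\lambda + b)\Lambda \qbt - b_s \pa_b \qbt$ collects the modulation source terms and $N(\e)$ is quadratic. Differentiating the two orthogonality conditions \fref{orthoe} in $s$, using the adjunction \fref{biebibvei}, the algebraic identities \fref{cneneovneo}--\fref{nveovneneon}, and the sharp localization bounds of Lemma \ref{direcitonroth}, in particular \fref{estfonamentalebus}, one extracts the system
\[
\Bigl|\frac{\lambda_s}{\lambda} + b\Bigr| + \Bigl|b_s + \frac{2b^2}{|\log b|}\Bigr| \lesssim \frac{b^{5/2}}{|\log b|} + C(M)\,\|\e_2\|_{L^2_Q},
\]
where the $b^{5/2}/|\log b|$ term arises from the errors \fref{roughboundltaowloc} and \fref{degeencos} on $\tPsi_b$, and the logarithmic gain $2/|\log b|$ in the law for $b$ is produced precisely by the radiation $c_b b^2 \chi_{B_0/4} T_1$ built into $\tPsi_b$ in \fref{deferreurtilde}.

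\textbf{Step 2 (Energy bound on $\e_2$, the main obstacle).} This is the technical heart, and is the content of section \ref{htwowbound}. Passing to the unrescaled perturbation $\tilde u(t,x) = \lambda^{-2}\e(t,x/\lambda)$, which solves \fref{nkenenenoe}, one introduces the second-order linearized entropy $\mathcal{I}(t) = (\mathcal{M}_\lambda \mathcal{L}_\lambda \tilde u, \mathcal{L}_\lambda \tilde u)$, with $\mathcal{M}_\lambda, \mathcal{L}_\lambda$ the rescaled versions of $\mathcal{M}, \mathcal{L}$ centred on the concentrating bubble. Differentiating in time and using:\ (a) the coercivity \fref{coerclwotht}--\fref{contorlcoerc} of Proposition \ref{interpolationhtwo}, which applies because \fref{orthoe} exactly kills the generalized kernel of $\mathcal{L}$ localized on the directions $\Phi_M$ and $\mathcal{L}^*\Phi_M$;\ (b) the flux bound \fref{cneneoneonoenoe} equivalently \fref{roughboundltaowlocbisbis} to handle the forcing by $\tPsi_b$;\ (c) an integration by parts in time to absorb the slow spatial decay of $\widetilde{\Mod}$, as indicated in the discussion of Lemma \ref{lemmasharpmod};\ one obtains a differential inequality
\[
\frac{d\mathcal{I}}{dt} \le -\frac{c_0}{\lambda^6}\|\mathcal{L}\e\|_{L^2_Q}^2 + \frac{C_0}{\lambda^6}\,\frac{b^4}{|\log b|^2}.
\]
Reintegrating in $s$ using the modulation law from step 1 to convert $dt$ into $ds$ and then into $db$, and invoking the initial smallness \fref{initialsmalneesb} (which gives $\mathcal{I}(0) \ll b_0^3/|\log b_0|^2$), one obtains $\|\e_2(t)\|_{L^2_Q}^2 \le C_1\, b(t)^3/|\log b(t)|^2$ with $C_1$ universal and independent of $K^*$. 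Choosing $K^* \geq 4C_1$ yields \fref{bootsmallh2qboot}.

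\textbf{Step 3 (Improvement of $b$ and of the $L^1$ bound).} Feeding the improved $H^2_Q$ bound back into step 1 gives $b_s = -2b^2/|\log b| + O(b^{5/2}/|\log b|)$, which forces $b_s < 0$ for $b(t) > 0$ small, hence $b(t)$ is strictly decreasing in $s$ and $0 < b(t) \leq b_0 < 2b_0$, proving \fref{positivboot}. For the $L^1$ bound, mass conservation and the mass-invariance of the rescaling yield $\int \e(t) = (\int u_0 - 8\pi) - b\int\tilde T_1 - b^2\int \tilde T_2$, which by \fref{intiialmass}, \fref{esttoneproploc}, \fref{esttwoloc} is $O(\alpha^* + b|\log b|)$. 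To upgrade this signed identity to a bound on $\int|\e|$, decompose $\e = \e_+ - \e_-$ with $\e_- \le \qbt$ pointwise (a consequence of $u \ge 0$); on the core $|y| \lesssim B_1$ we use Cauchy--Schwarz $\int |\e| \le \|\e\|_{L^2_Q}\|\sqrt{Q}\|_{L^1_{\mathrm{loc}}}$ together with the $L^2$ control of $\e$ coming from \fref{contorlcoerc} and \fref{bootsmallh2qboot}, giving $O(b^{3/2}/|\log b|)$; the tail is controlled directly by the decay of $\qbt$ and the signed bound. For $\alpha^*$ and $b_0$ small enough this gives $\int|\e(t)| \le \tfrac12 (\delta^*)^{1/4}$, which is \fref{bootsmallloneboot}.

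The principal obstacle is step 2: the nonlocal nature of $\mathcal{L}$ coupled with the time-dependence of the linearization $\mathcal{M}_\lambda \mathcal{L}_\lambda$ generates local terms on the soliton core which must be absorbed into the dissipated quadratic form provided by Proposition \ref{interpolationhtwo}, while the slowly decaying tails of $\widetilde{\Mod}$ force a careful integration by parts in time. The sharp constants in the logarithmic gains from Lemma \ref{direcitonroth} and Proposition \ref{propblowup} must be tracked precisely in order for $C_1$ to remain independent of $K^*$ and for the bootstrap to close.
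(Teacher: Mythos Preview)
Your overall architecture (modulation equations $\to$ energy/monotonicity $\to$ closing) matches the paper's Section 6.1, but there are two genuine gaps and two smaller inaccuracies.

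\textbf{The main gap (Step 1 and Step 2).} You claim a sharp law $|b_s + 2b^2/|\log b|| \lesssim b^{5/2}/|\log b| + C(M)\|\e_2\|_{L^2_Q}$ directly. The paper cannot get this: projecting the $\e$-equation onto $\mathcal L^*\Phi_M$ only yields the rough bound $|b_s|\lesssim b^{3/2}$ (Lemma \ref{lemmeparam}), because $\Phi_M$ is localized at scale $M$, not at the parabolic scale $B_0\sim b^{-1/2}$ where the flux $c_b b^2 T_1$ lives. To see the $2b^2/|\log b|$ one must project onto $\mathcal L^*\Phi_{0,\hat B_0}$, but then the orthogonality is unavailable. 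The paper resolves this by a \emph{second decomposition} (Lemma \ref{lemmaradiaiton}): a lifted parameter $\hat b$ with $(\hat\e,\mathcal L^*\Phi_{0,\hat B_0})=0$, and the sharp law is for $\hat b_s$ (Lemma \ref{lemmasharpmod}). The resulting radiation $\zeta=\hat\e-\e$ is large in $L^2_Q$ (size $b/|\log b|$, \emph{bigger} than $\e$) and enters the monotonicity formula; the degenerate cancellations of Lemma \ref{lemmaradiation} (e.g.\ \fref{degenrateoneone}, \fref{degenrateonebis}) are essential to make Step 2 close. Your phrase ``integration by parts in time to absorb the slow decay of $\widetilde{\Mod}$'' understates this: the radiation $\zeta$ is a structural object, not a device, and the energy identity is run on $\hat\e$, not $\e$.

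\textbf{Second gap (Step 3, positivity of $b$).} Knowing $b_s<0$ (or $\hat b_s<0$) gives $b(t)\le b_0$ but does not prevent $b$ from touching zero. The paper's argument is different: if $b(s^*)=0$ then \fref{bootsmallh2q} forces $\e_2(s^*)=0$, hence by coercivity $\e(s^*)=0$, hence $\int u=\int Q$, contradicting the strict super-critical mass \fref{intiialmass}.

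\textbf{Smaller points.} The closing of \fref{bootsmallh2qboot} in the paper does not produce a $K^*$-independent constant: integrating the monotonicity \fref{eetfonafmetea} and inserting the bootstrap bound on $\|\e_2\|_{L^2_Q}$ gives a factor $C(M)\sqrt{K^*}$ on the right, and the bootstrap closes because $C(M)\sqrt{K^*}\le K^*/2$ for $K^*$ large, not because $C_1$ is universal. Finally, your $L^1$ argument uses $\|\e\|_{L^2_Q}$, but coercivity \fref{contorlcoerc} only controls $\|\e\|_{L^2}$, not the weighted norm; the paper's proof (Lemma \ref{htwoqmontonbis}) is more elementary, splitting $\tilde\e$ according to $|\tilde\e|\lessgtr Q$ and using positivity of $u$ together with mass conservation directly.
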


In other words, the regime is trapped and $$T^*=T.$$ The rest of this section together with section \ref{htwowbound} are devoted to the exhibition of the main dynamical arguments at the heart of the proof of Proposition \ref{propboot} which is completed in section \ref{proofthmmai}, and easily implies Theorem \ref{thmmain}. All along the proof, we shall make an extensive use of the interpolation bounds of Appendix \ref{appendixc} which hold in the bootstrap regime.\\
Our aim for the rest of this section is to derive the modulation equations driving the parameters $(b,\l)$.

 %%%%%%%%%%%%%%%%%%%%%%%%%%%%%%%%%%%%%%%%%% %%%%%%%%%%%%%%%%%%%%%
 %%%%%%%%%%%%%%%%%%%%%%%%%%%%%%%%%%%%%%%%%% %%%%%%%%%%%%%%%%%%%%%
 
 \subsection{Setting up the equations}
 
 %%%%%%%%%%%%%%%%%%%%%%%%%%%%%%%%%%%%%%%%%% %%%%%%%%%%%%%%%%%%%%%
 %%%%%%%%%%%%%%%%%%%%%%%%%%%%%%%%%%%%%%%%%% %%%%%%%%%%%%%%%%%%%%%
 
 Let  the space time renormalization\footnote{we will show that the rescaled time is global $s(t)\to +\infty$ as $t\to T$.} 
 \be
 \label{defst}
 s(t)=\int_0^t\frac{d\tau}{\l^2(\tau)}, \ \ y=\frac{x}{\lambda(t)}.
 \ee
 We will use the notation:
 \be
 \label{resacling}
 f_{\lambda}(t,x)=\frac{1}{\l^2}f(s,y), \ \ r=|y|
 \ee
 which leads to: 
\be\label{neneoneonoeo}
\pa_tf_\l=\frac{1}{\l^2}\left[\pa_sf-\lsl \Lambda f\right]_\l.
\ee
 Let the renormalized density:
 $$u(t,x)=v_{\lambda}(s,y)$$ then $$\pa_sv-\lsl\Lambda v=\nabla\cdot(\nabla v+v\nabla \phi_v).$$ We expand using the geometrical decomposition \fref{decompe}: $$v(s,y)=\tilde{Q}_{b(s)}(y)+\e(s,y)$$ and obtain the renormalized linearized equations:
 \be
 \label{eqe}
 \pa_s\e-\lsl\Lambda \e=\mathcal L\e+\mathcal F,\ \ \mathcal F=F+\Mod+E
 \ee
 with
 \be
 \label{deff}
 F=\tilde{\Psi}_b+\Theta_b(\e)+N(\e),
 \ee
 \be
 \label{defpsibt}
 \tilde{\Psi}_b=  \nabla\cdot(\nabla \tilde{Q}_b+\qbt\nabla \phi_{\qbt})-b\Lambda \qbt-c_b b^2 \chi_{\frac{B_0}{4}}T_1,
 \ee
 \be
 \label{defss}
 E=-b_s\frac{\partial \qbt}{\pa b}-c_b b^2 \chi_{\frac{B_0}{4}}T_1,
 \ee
 \be
 \label{defmod}
\Mod=\left(\lsl+b\right)\Lambda \qbt,
\ee 
\be
\label{tbe}
\Theta_b(\e)=\nabla\cdot\left[\e(\nabla \phi_{\qbt}-\nabla \phi_Q)+(\qbt-Q)\nabla \phi_\e\right],
 \ee
 \be
 \label{nonlinearterm}
 N(\e)=\nabla\cdot(\e\nabla \phi_\e).
 \ee
 Equivalently, let us consider the fluctuation of density in original variables $$w(t,x)=\e_\l(s,y),$$ then 
 \be
 \label{defflambda}
 \pa_tw=\mathcal L_{\lambda}w+\frac{1}{\l^2}\mathcal F_\l
 \ee
 with $$\mathcal L_\l w=\nabla \cdot(Q_\l\nabla \mathcal M_\l w), \ \ \mathcal M_\l w=\frac{w}{Q_\l}+\phi_w.$$

 %%%%%%%%%%%%%%%%%%%%%%%%%%%%%%%%%%%%%%%%%% %%%%%%%%%%%%%%%%%%%%%
 %%%%%%%%%%%%%%%%%%%%%%%%%%%%%%%%%%%%%%%%%% %%%%%%%%%%%%%%%%%%%%%

\subsection{$L^1$ bound}

%%%%%%%%%%%%%%%%%%%%%%%%%%%%%%%%%%%%%%%%%%%%%%%%%%%%%%%%%%%%%%%%%%%%%%%%%%%%%%%%%%%%%%%%%%%%%%%%%%%%%%%%%%%

We start with closing the $L^1$ bound on the radiation \fref{bootsmallloneboot} which is a consequence of the conservation of mass. This bound is very weak but gives a control of the solution far out which allows us to derive bounds with logarithmic losses, see Appendix \ref{appendixc}.

\begin{lemma}[$L^1$  bound]
\label{htwoqmontonbis}
There holds:
\be
\label{lonebound}
\int|\e|<\frac 12(\delta^*)^{\frac 14}.
\ee
\end{lemma}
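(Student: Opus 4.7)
The argument will combine mass conservation \fref{consmasse} with the positivity $u>0$ and the bootstrap bound \fref{bootsmallh2q} on $\|\e_2\|_{L^2_Q}$. Observe first that because the data $u_0$ is radial the full set of orthogonality conditions of Proposition \ref{interpolationhtwo} is met by $\e(t)$: the two remaining inner products $(\e,\Phi_{i,M})$ and $(\e,\mathcal L^*\Phi_{i,M})$, $i=1,2$, vanish automatically by parity since $\Phi_{i,M}=\chi_M y_i$ and $\mathcal L^*$ preserves the even/odd angular decomposition. Hence the coercivity \fref{contorlcoerc} applies and gives, in particular, $\|\e(t)\|_{L^2}^2\lesssim \|\e_2(t)\|_{L^2_Q}^2\lesssim K^*b^3/|\log b|^2$.

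I will first compute the signed integral of $\e$. Conservation of mass and the change of variables $x=\lambda y$ in \fref{decompe} give
$$
\int \e(t,y)\,dy = \int u_0\,dx - \int \tilde Q_{b(t)}(y)\,dy,
$$
and the tail bounds \fref{esttoneproploc}--\fref{esttwoloc} on the localized profiles show that $\int \tilde T_1=O(|\log b|)$ and $\int \tilde T_2 = O(1/(b|\log b|))$, so that $\int \tilde Q_b = 8\pi + O(b|\log b|)$. Combined with the initial data condition \fref{intiialmass} and the bootstrap \fref{positiv}, this produces the signed control $|\int \e(t)|\lesssim \alpha^* + b|\log b|\lesssim \delta^*$.

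To upgrade this to $\int|\e|$, I will use positivity. Since $u=(\tilde Q_b+\e)/\lambda^2>0$ and $\tilde Q_b>0$ in the bootstrap regime (by an elementary check on \fref{esttoneproploc}--\fref{esttwoloc}), one has $\e_-\le \tilde Q_b$ pointwise. I then split at radius $R=b^{-1/4}$, which lies well inside the parabolic zone $R\ll B_0$, and estimate
$$
\int_{|y|\le R}|\e| \le (\pi R^2)^{1/2}\,\|\e\|_{L^2} \lesssim R\cdot \frac{b^{3/2}}{|\log b|} \lesssim \frac{b^{5/4}}{|\log b|},
$$
while the tail bounds of Proposition \ref{proploc} give
$$
\int_{|y|\ge R} \tilde Q_b \le \int_{|y|\ge R} Q + b\int_{R\le r\le 2B_1}\!|\tilde T_1|\,r\,dr + b^2\!\!\int_{R\le r\le 2B_1}\!|\tilde T_2|\,r\,dr \lesssim \frac{1}{R^2}+b|\log b| \lesssim b^{1/2}.
$$
Consequently $\int \e_- \lesssim b^{1/2}\lesssim (\delta^*)^{1/2}$.

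Putting everything together, $\int|\e|=\int\e+2\int\e_- \lesssim \alpha^*+b|\log b|+b^{1/2}\lesssim (\delta^*)^{1/2}$, which is strictly less than $\tfrac12(\delta^*)^{1/4}$ for $\delta^*$ sufficiently small. No real difficulty arises: the only ingredients are mass conservation, the pointwise positivity constraint $\e\ge -\tilde Q_b$, and the $L^2$ coercivity supplied by the bootstrap hypothesis on $\e_2$; the rest is careful bookkeeping of the tail contributions of the localized profiles $\tilde T_1,\tilde T_2$.
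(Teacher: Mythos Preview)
Your proof is correct and the ingredients are the same as the paper's (mass conservation, positivity $u>0$, and the bootstrap control on $\e$ coming from $\|\e_2\|_{L^2_Q}$), but the organization is genuinely different.

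The paper works with $\tilde\e=\e+(\tilde Q_b-Q)$ and splits by \emph{amplitude}: $\tilde\e_<=\tilde\e\,{\bf 1}_{|\tilde\e|<Q}$ and $\tilde\e_>=\tilde\e\,{\bf 1}_{\tilde\e>Q}$ (the case $\tilde\e\le -Q$ being excluded by positivity). The part $\tilde\e_<$ is controlled via the $L^\infty$ smallness $\|\tilde\e\|_{L^\infty}\lesssim\delta(\alpha^*)$ (from \fref{linftyboundbis}): wherever $|\tilde\e_<|>\eta^*Q$ one is forced far out, so $\int|\tilde\e_<|\lesssim \eta^*+\delta(\alpha^*)$. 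Then $\int\tilde\e<\alpha^*$ and $\tilde\e_>>0$ bound $\int|\tilde\e|$, and one finishes with $\int|\e|\le\int|\tilde\e|+\int|\tilde Q_b-Q|\lesssim\delta(\alpha^*)+\sqrt b$.

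You instead work directly with $\e$, use the $L^2$ coercivity $\|\e\|_{L^2}\lesssim C(M)\|\e_2\|_{L^2_Q}$, and split \emph{spatially} at $R=b^{-1/4}$: Cauchy--Schwarz handles $|y|\le R$, and the pointwise constraint $\e_-\le\tilde Q_b$ together with the tail estimate $\int_{r\ge R}\tilde Q_b\lesssim b^{1/2}$ handles $|y|\ge R$. This yields the sharper quantitative bound $\int|\e|\lesssim \alpha^*+b^{1/2}$ rather than the paper's $\delta(\alpha^*)+\sqrt b$. Both routes are equally legitimate here; the paper's amplitude decomposition is perhaps more robust if one did not have the $L^2$ coercivity readily available, while yours is more direct once \fref{contorlcoerc} is in hand.
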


\begin{proof}[Proof of Lemma \ref{htwoqmontonbis}]
We introduce the decomposition $$u=\frac{1}{\l^2}(Q+\et)\left(t,\frac{x}{\l(t)}\right)\ \ \mbox{i.e.}\ \ \et=\e+\qbt-Q$$ and define:
\be
\label{decompfoejejo}
\et=\et_<+\et_>, \ \ \et_{<}=\et{\bf 1}_{|\et|<Q},\ \  \et_>=\et{\bf 1}_{\et>Q}.
\ee 
The bootstrap bounds \fref{linftyboundbis}, \fref{positiv} with \fref{esttoneproploc}, \fref{esttwoloc} imply: 
\be
\label{etlinftiysmall}
\|\et\|_{L^{\infty}}\lesssim \|\e\|_{L^{\infty}}+|b|\lesssim \delta(\alpha^*).
\ee Pick a small constant $\eta^*\ll 1$, then $|\et_<|>\eta^*Q$ implies $\delta(\alpha^*)\gtrsim \eta^* Q$ and thus $|x|\geq r(\alpha^*)$ with $r(\alpha^*)\to +\infty$ as $\alpha^*\to 0$ from which: 
 \bee
 \int |\et_<| & \leq & \int  |\et_<|{\bf 1}_{\eta^*Q<|\et|< Q}+\int |\et_<|{\bf 1}_{|\et_<|\leq \eta^*Q}\lesssim \int_{|x|\geq r(\alpha^*)}Q+\eta^*\int Q\\
& \lesssim& \delta(\alpha^*)+\eta^*\lesssim \delta(\alpha^*).
\eee
 We now write down the conservation of mass which from \fref{intiialmass} implies $$\int \et<\alpha^*$$ from which using by definition $\et_>>0$: 
 \be
 \label{cneiocneneo}
 \int |\et| \lesssim \delta(\alpha^*).
 \ee
 Together with the bootstrap bound \fref{positiv} and \fref{esttoneproploc}, \fref{esttwoloc}, this yields:
 $$\int |\e|\lesssim \int |\et|+\sqrt{b}\lesssim \delta(\alpha^*)+\sqrt{\delta^*}<\frac 12(\delta^*)^{\frac 14}
$$ 
and concludes the proof of Lemma \ref{htwoqmontonbis}.
\end{proof}

%%%%%%%%%%%%%%%%%%%%%%%%%%%%%%%%%%%%%%%%%% %%%%%%%%%%%%%%%%%%%%%
 %%%%%%%%%%%%%%%%%%%%%%%%%%%%%%%%%%%%%%%%%% %%%%%%%%%%%%%%%%%%%%%

\subsection{Rough modulation equations}

%%%%%%%%%%%%%%%%%%%%%%%%%%%%%%%%%%%%%%%%%%%%%%%%%%%%%%%%%%%%%%%%%%%%%%%%%%%%%%%%%%%%%%%%%%%%%%%%%%%%%%%%%%%

We compute the modulation equations for $(b,\l)$ which follow from our choice of orthogonality conditions \fref{orthoe}. 

\begin{remark}
\label{remarkcm} Here and in the sequel, we note $C(M)$ a generic large constant which depends on $M$ {\it independently} of the bootstrap constant $K^*(M)$ in \fref{bootsmallh2q}, provided $\alpha^*<\alpha^*(M)$ has been chosen
small enough in Definition \ref{defo}.
\end{remark}

\begin{lemma}[Rough control of the modulation parameters]
\label{lemmeparam}
There holds the bounds: 
\be
\label{estlambda}
|\lsl+b|\lesssim C(M)\frac{b^2}{|\log b|},
\ee
\be
\label{estb}
|b_s|\lesssim b^{\frac 32}.
\ee
\end{lemma}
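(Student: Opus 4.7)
The plan is to derive two algebraic relations by differentiating the orthogonality conditions $(\varepsilon, \Phi_M) = 0$ and $(\varepsilon, \mathcal{L}^*\Phi_M) = 0$ with respect to the rescaled time $s$, producing a $2\times 2$ near-diagonal linear system for the unknowns $(\frac{\lambda_s}{\lambda}+b,\,b_s)$. Substituting $\partial_s\varepsilon$ from the evolution equation \eqref{eqe} and using the adjoint relation \eqref{biebibvei} to kill the principal linear term -- $(\mathcal{L}\varepsilon,\Phi_M) = (\varepsilon,\mathcal{L}^*\Phi_M) = 0$ by orthogonality, and similarly the first term disappears in the second identity up to $(\mathcal{L}\varepsilon,\mathcal{L}^*\Phi_M)$ which I will estimate perturbatively -- one obtains
\begin{align*}
0 &= \tfrac{\lambda_s}{\lambda}(\Lambda\varepsilon,\Phi_M) + (F,\Phi_M) + \bigl(\tfrac{\lambda_s}{\lambda}+b\bigr)(\Lambda \tilde Q_b,\Phi_M) - b_s(\partial_b\tilde Q_b,\Phi_M) - c_b b^2(\chi_{B_0/4}T_1,\Phi_M),\\
0 &= \tfrac{\lambda_s}{\lambda}(\Lambda\varepsilon,\mathcal{L}^*\Phi_M) + (\mathcal{L}\varepsilon,\mathcal{L}^*\Phi_M) + (F,\mathcal{L}^*\Phi_M) + \bigl(\tfrac{\lambda_s}{\lambda}+b\bigr)(\Lambda \tilde Q_b,\mathcal{L}^*\Phi_M)\\
 & \qquad - b_s(\partial_b\tilde Q_b,\mathcal{L}^*\Phi_M) - c_b b^2(\chi_{B_0/4}T_1,\mathcal{L}^*\Phi_M).
\end{align*}

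Next, I extract the Jacobian. Using Lemma \ref{direcitonroth} (in particular $(\Phi_M,\Lambda Q) = -32\pi\log M + O(1)$ and $(\Phi_M, T_1) = 0$), the cancellation $\mathcal{L}\Lambda Q = 0$, the identity $\mathcal{L} T_1 = \Lambda Q$ from \eqref{eqtonekvne}, and the fact that the cut-off $\chi_{B_1}$ in $\tilde T_1$ is identically $1$ on the support of $\Phi_M$ and $\mathcal{L}^*\Phi_M$, the diagonal coefficients $(\Lambda \tilde Q_b,\Phi_M)$ and $-(\partial_b\tilde Q_b,\mathcal{L}^*\Phi_M) = -(\mathcal{L}\partial_b\tilde Q_b,\Phi_M)$ are both of size $\log M + O(1)$, while the off-diagonals $-(\partial_b\tilde Q_b,\Phi_M)$ and $(\Lambda\tilde Q_b,\mathcal{L}^*\Phi_M)$ are $O(b\log M)$ (using $\partial_b\tilde Q_b = \tilde T_1 + O(b)$ and $\mathcal{L}\Lambda\tilde Q_b = O(b)$). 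Hence for $M$ large and $b$ small the matrix is diagonal-dominant with determinant $\sim (\log M)^2$, so invertible.

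I then bound the right-hand sides. For the $\Lambda\varepsilon$ contribution I integrate by parts via $\Lambda = \nabla\cdot(y\,\cdot)$ and apply Cauchy--Schwarz combined with the $L^2_Q$ coercivity \eqref{coerclwotht}--\eqref{idemnonraidal} and the bootstrap \eqref{bootsmallh2q}, obtaining a bound $\lesssim C(M)\sqrt{K^*}\, b^{3/2}/|\log b|$. For the profile error I use \eqref{roughboundltaowloc} together with the scalar-product estimates \eqref{estimationorthobis}--\eqref{estimationortho} and the vanishing average \eqref{estvooee} applied to $(\mathcal{L}\tilde\Psi_b,\Phi_M)$, which controls both $(\tilde\Psi_b,\Phi_M)$ and $(\tilde\Psi_b,\mathcal{L}^*\Phi_M)$ by $C(M)\,b^{5/2}/|\log b|$. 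The nonlinear terms $(\Theta_b(\varepsilon),\cdot)$ and $(N(\varepsilon),\cdot)$ are handled by integration by parts and bootstrap interpolation bounds from the appendix. The one genuinely non-small term is the explicit radiation contribution $-c_b b^2(\chi_{B_0/4}T_1,\mathcal{L}^*\Phi_M) = -c_b b^2(\Lambda Q,\Phi_M) + (\text{small}) \sim b^2\log M/|\log b|$, which drives the sharp law for $b_s$.

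Inverting the diagonal-dominant Jacobian and collecting all bounds yields the crude estimate $|b_s|\lesssim C(M) b^2/|\log b|\ll b^{3/2}$ and the desired $|\frac{\lambda_s}{\lambda}+b|\lesssim C(M)\,b^2/|\log b|$. The principal technical point I anticipate having to be careful with is ensuring that the nonlinear $(\Theta_b(\varepsilon),\mathcal{L}^*\Phi_M)$ and $(\mathcal{L}\varepsilon,\mathcal{L}^*\Phi_M)$ terms are controlled by the bootstrap \emph{without} absorbing a factor of $K^*$ into the final $C(M)$; this forces the dominant contribution to $b_s$ to come from the radiation term rather than from the radiation $\varepsilon$, and is precisely why $\tilde\Psi_b$ was designed with the sharp $b^{5/2}/|\log b|$ decay and why $\Phi_M$ was constructed to be orthogonal to $T_1$.
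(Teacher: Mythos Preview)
Your approach is essentially the paper's: project the $\varepsilon$-equation onto $\Phi_M$ and $\mathcal L^*\Phi_M$ (equivalently, differentiate the two orthogonality conditions) to obtain a near-diagonal $2\times 2$ system for $(\frac{\lambda_s}{\lambda}+b,\,b_s)$. The paper organizes the endgame slightly differently, introducing $V=\bigl|\frac{\lambda_s}{\lambda}+b\bigr|+|b_s|$ and proving the coupled pair $\bigl|\frac{\lambda_s}{\lambda}+b\bigr|\lesssim b^{3/4}V+C(M)\frac{b^2}{|\log b|}$ and $|b_s|\lesssim b^{3/2}+C(M)bV$, then summing; this sidesteps having to track $\frac{\lambda_s}{\lambda}$ inside the $\Lambda\varepsilon$ terms before the system is solved, but your direct-inversion variant is equivalent.

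One over-claim to flag: you assert you will obtain $|b_s|\lesssim C(M)\frac{b^2}{|\log b|}$, which is \emph{stronger} than the lemma's $b^{3/2}$. This is not achievable here. The obstruction is precisely the term $(\mathcal L\varepsilon,\mathcal L^*\Phi_M)=(\varepsilon_2,\mathcal L^*\Phi_M)$: using the vanishing average $(\varepsilon_2,1)=0$ and \eqref{estfonamentalebus} one gets $|(\varepsilon_2,\mathcal L^*\Phi_M)|\lesssim\int_{r\geq M}|\varepsilon_2|\lesssim M^{-1}\|\varepsilon_2\|_{L^2_Q}$, which via the bootstrap is of order $\frac{b^{3/2}}{M|\log b|}$ and dominates $\frac{b^2}{|\log b|}$ for small $b$. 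After dividing by the Jacobian $\sim\log M$ you only recover $|b_s|\lesssim b^{3/2}$. Obtaining the sharp $\frac{b^2}{|\log b|}$ law genuinely requires the lifted parameter $\hat b$ and projection at the dynamical scale $\hat B_0\sim b^{-1/2}$ (Lemma~\ref{lemmasharpmod}); at the fixed scale $M$ the $\varepsilon_2$ term is the bottleneck. This does not damage your argument for the lemma as stated, but your final paragraph conflates the rough and sharp modulation equations.

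A smaller imprecision: $\mathcal L^*\Phi_M$ is \emph{not} compactly supported (the Poisson-field piece of $\mathcal M$ in $\mathcal L^*=\mathcal M\nabla\cdot(Q\nabla\,\cdot)$ produces a tail), so the claim that $\chi_{B_1}\equiv 1$ on its support is false. This is harmless, since the pointwise estimates of Lemma~\ref{direcitonroth} already quantify that tail and give the needed bounds.
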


\begin{remark} Note that \fref{estlambda}, \fref{estb} imply the bootstrap bound: 
\be
\label{cnkonenoeoeoi3poi}
\left|\lsl+b\right|+|b_s|\lesssim b^{\frac32}.
\ee
\end{remark}

\begin{proof}[Proof of Lemma \ref{lemmeparam}]
We make an implicit use of the bootstrap bounds of Proposition \ref{bootbound}. Let $$V=\left|\lsl+b\right|+|b_s|,$$ we claim the bounds:
\be
\label{estiboundone}
\left|\lsl+b\right|\lesssim b^{\frac 34}|V|+C(M)\frac{b^2}{|\log b|},
\ee
\be
\label{estiboundtwo}
\left|b_s\right|\lesssim b^{\frac 32}+C(M)b|V|.
\ee
Summing these bounds yields $V\lesssim b^{\frac32}$ which reinjected into \fref{estiboundone}, \fref{estiboundtwo} yields \fref{estlambda}, \fref{estb}.\\
{\it Proof of \fref{estiboundone}}: We project \fref{eqe} onto $\Phi_M$ which is compactly supported in $r\leq 2M$ and compute from \fref{orthoe}, \fref{estphim}, \fref{roughboundltaowloc}, \fref{bootsmallh2q}:
\bea
\label{vneionioehoehoe}
\nonumber \left|(\Mod +E,\Phi_M)\right|&=&\left|-\lsl(\Lambda \e,\Phi_M)-(F,\Phi_M)\right|\\
\nonumber & \lesssim & C(M)|b|(|V|+\|\e_2\|_{L^2_Q})+C(M)\frac{b^2}{|\log b|}\\
& \lesssim &  b^{\frac 34}|V|+C(M)\frac{b^2}{|\log b|}.
\eea
We estimate in brute force using \fref{estphim}, \fref{orthophim}, \fref{defcb}:
$$|(E,\Phi_M)|\lesssim C(M)\left[b|V|+|b_s||(T_1,\Phi_M)|+\frac{b^2}{|\log b|}\right]\lesssim  b^{\frac 34}|V|+C(M)\frac{b^2}{|\log b|}$$
and we further compute using \fref{orthophim}:
\bee
(\Mod, \Phi_M) & = & \left(\lsl+b\right)(\Lambda Q,\Phi_M)+O(C(M)|b||V|)\\
& = & -\left[(32\pi)\log M+O(1)\right]\left(\lsl+b\right)+O(C(M)|b||V|).
\eee Injecting these estimates into \fref{vneionioehoehoe} yields \fref{estiboundone}.\\

{\it Proof of \fref{estiboundtwo}}: We let $\mathcal L$ go through \fref{eqe} and project onto $\Phi_M$. We compute from \fref{orthoe}, \fref{novneioheonoen}, \fref{roughboundltaowloc}, \fref{estfonamentalebus} with the degeneracy \fref{estvooee}:
\bea
\label{cneknvnonveoneon}
\nonumber &&\left|(\mathcal L\Mod+\mathcal LE ,\Phi_M)\right|=\left| -(\mathcal L\e_2,\Phi_M) -\lsl(\mathcal L\Lambda \e,\Phi_M)-(\mathcal LF,\Phi_M)\right|\\
\nonumber & = & \left|-(\e_2,\matchal L^*\Phi_M) -\lsl(\Lambda \e,\mathcal L^*\Phi_M)-(F,\mathcal L^*\Phi_M)\right|\\
\nonumber & \lesssim  & \frac{\|\e_2\|_{L^2_Q}}{M}+C(M)\left[(b+V)\int \frac{|\Lambda \e|+|\nabla \e|}{1+r^2}+\int \frac{|N(\e)|+|\Theta_b(\e)|}{1+r^2}+\frac{b^2}{|\log b|}\right]\\
& \lesssim & b^{\frac 32}+C(M)b|V|
\eea
where the $\Psit_b$ term is estimated from \fref{novneioheonoen}, \fref{roughboundltaowloc}, and the estimates of $(N(\e), \Theta_b(\e))$ terms easily follow from the bootstrap bounds of Appendix \ref{appendixc}. We then estimate:
\bee
(\mathcal LE,\Phi_M)& = & b_s(\mathcal L\tt_1,\Phi_M)+O\left(C(M)|b_s|b+C(M)\frac{b^2}{|\log b|}\right)\\
& =& b_s\left[-(32\pi) \log M+O_{M\to +\infty}(1)\right]+O\left(C(M)b|V|+C(M)\frac{b^2}{|\log b|}\right).
\eee
Moreover, using $\mathcal L\Lambda Q=0$:
\bee
|(\mathcal L\Mod,\Phi_M)|& \lesssim &\left|\lsl+b\right||\Lambda (\qbt-Q),\mathcal L^*\Phi_M)|\lesssim  C(M)|V|\int \frac{1}{1+r^2}\left[b\frac{{\bf 1}_{r\leq 2B_1}}{1+r^2}\right]\\
& \lesssim & C(M)b|V|.
\eee
Injecting the collection of above bounds into \fref{cneknvnonveoneon} yields \fref{estiboundtwo}.
\end{proof}

%%%%%%%%%%%%%%%%%%%%%%%%%%%%%%%%%%%%%%%%%%%%%%%%%%%%%%%%%%%%%%%%%%%%%%%%%%%%%%%%%%

 %%%%%%%%%%%%%%%%%%%%%%%%%%%%%%%%%%%%%%%%%% %%%%%%%%%%%%%%%%%%%%%
 %%%%%%%%%%%%%%%%%%%%%%%%%%%%%%%%%%%%%%%%%% %%%%%%%%%%%%%%%%%%%%%

\section{$H^2_Q$ bound}
\label{htwowbound}

%%%%%%%%%%%%%%%%%%%%%%%%%%%%%%%%%%%%%%%%%%%%%%%%%%%%%%%%%%%%%%%%%%%%%%%%%%%%%%%%%%%%%%%%%%%%%%%%%%%%%%%%%%%

After the construction of the approximate blow up profile and the derivation of the modulation equations of Lemma \ref{lemmasharpmod}, we now turn to the second main input of our analysis which is the control of the $H^2_Q$ norm through the derivation of a suitable Lyapounov functional.  Our strategy is to implement an energy method for $\e_2=\mathcal L\e$ which breaks the $L^1$ scaling invariance of the problem.\\
Here we are facing a technical problem which is that the equation \fref{eqe} is forced on the RHS by the $b_s$ term in \fref{defss}  which satisfies pointwise the weak estimate \fref{estb} only, and this is not good enough to close \fref{bootsmallh2qboot}. This is a consequence of the fact that the elements of the kernel of $\mathcal L^*$ grow too fast in space, and therefore $b_s$ remains of order $1$ in $\e$. We however claim that better estimates hold {\it up to an oscillation in time}, but this requires the introduction of a second decomposition of the flow, see for example \cite{MMkdv} for a related difficulty.

%%%%%%%%%%%%%%%%%%%%%%%%%%%%%%%%%%%%%%%%%%%%%%%%%%%%%%%%%%%%%%%%%%%%%%%%%%%%%%%%%%

\subsection{The radiation term}

%%%%%%%%%%%%%%%%%%%%%%%%%%%%%%%%%%%%%%%%%%%%%%%%%%%%%%%%%%%%%%%%%%%%%%%%%%%%%%%%%%

We use the bootstrap bound \fref{bootsmallh2q} to introduce a second decomposition of the flow which will {\it lift} the parameter $b$. We recall the notation \fref{defphimzero}.

\begin{lemma}[Bound on the lifted parameter]
\label{lemmaradiaiton}
There exists a unique decomposition 
\be
\label{defehat}
\tilde{Q}_{b}+\e=\tilde{Q}_{\hat{b}}+\hat{\e}
\ee with $\hb\lesssim b\lesssim \bh$ and $\hat{\e}$ satisfying the orthogonality condition
\be
\label{orthoehat}
(\hat{\e},\matchal L^*\Phi_{0,\hat{B}_0})=0, \ \ \hat{B}_0=\frac{1}{\sqrt{\hat{b}}}.
\ee
Moreover, there holds the bound:
\be
\label{boundbootbbhat}
|b-\hat{b}|\lesssim \frac{b}{|\log b|}.
\ee
\end{lemma}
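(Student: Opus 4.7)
The plan is to define $\hat b$ as the implicit solution to the scalar orthogonality equation
\begin{equation*}
G(\hat b) := (\tilde Q_b + \e - \tilde Q_{\hat b},\, \mathcal L^*\Phi_{0,\hat B_0}) = 0, \qquad \hat B_0 = 1/\sqrt{\hat b},
\end{equation*}
and then quantify $|\hat b - b|$ by Taylor expansion. Existence and uniqueness will follow from the implicit function theorem applied at the reference point $(\hat b,\e) = (b,0)$, where $G$ trivially vanishes; the quantitative bound will come from the rearranged equation $(\tilde Q_{\hat b} - \tilde Q_b,\, \mathcal L^*\Phi_{0,\hat B_0}) = (\e,\, \mathcal L^*\Phi_{0,\hat B_0})$.

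For Step 1, I would differentiate $G$ in $\hat b$. The dominant contribution is $-(\partial_{\hat b}\tilde Q_{\hat b}|_{\hat b = b},\mathcal L^*\Phi_{0,B_0}) = -(T_1 + O(b),\mathcal L^*\Phi_{0,B_0})$, the contribution from differentiating the cutoff radius $\hat B_0$ being of lower order via \fref{esttoneprop} and the $\chi_{\hat B_0}$ structure of $\Phi_{0,\hat B_0}$. Using the adjunction \fref{biebibvei} and the algebraic identity $\mathcal L T_1 = \Lambda Q$ from \fref{eqtonekvne}, one obtains $(T_1,\mathcal L^*\Phi_{0,B_0}) = (\Lambda Q,\chi_{B_0}r^2)$. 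Repeating the integration by parts of \fref{orthophim} with $M$ replaced by $B_0 = 1/\sqrt b$ gives
\begin{equation*}
(\Lambda Q, \chi_{B_0} r^2) = -32\pi \log B_0 + O(1) = 16\pi|\log b| + O(1),
\end{equation*}
so $\partial_{\hat b}G$ stays comparable to $|\log b|$ uniformly in the bootstrap regime. The IFT then delivers a unique $\hat b$ with $\hat b \sim b$.

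For Step 2, applying the same expansion to the left-hand side of the rearranged equation yields $(\hat b - b)\cdot 16\pi|\log b|\cdot(1+o(1))$. On the right-hand side, \fref{bjebbeibei} bounds
\begin{equation*}
\big|(\e,\mathcal L^*\Phi_{0,\hat B_0})\big| \lesssim \int_{r\leq 2\hat B_0}|\e| \lesssim \|\e\|_{L^2_Q}\Big(\int_{r\leq 2\hat B_0}Q\Big)^{1/2} \lesssim \|\e\|_{L^2_Q}\sqrt{|\log b|}
\end{equation*}
by Cauchy--Schwarz with weight $Q$, using $\int_{r\leq 2\hat B_0} Q \sim |\log b|$. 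The bootstrap \fref{bootsmallh2q} combined with the coercivity \fref{contorlcoerc} of Proposition \ref{interpolationhtwo} (applicable since radiality makes the $\Phi_{i,M}$ orthogonality conditions automatic) and the interpolation bounds of Appendix C yield $\|\e\|_{L^2_Q} \lesssim b^{3/2}/|\log b|$. Combining,
\begin{equation*}
|\hat b - b|\,|\log b| \lesssim \frac{b^{3/2}}{\sqrt{|\log b|}}, \qquad \text{hence} \qquad |\hat b - b| \lesssim \frac{b^{3/2}}{|\log b|^{3/2}} \lesssim \frac{b}{|\log b|}.
\end{equation*}

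The main obstacle I anticipate is Step 1: one has to control all sub-leading corrections to $\partial_{\hat b}G$ (from $b^2 \partial_b \tilde T_2$ via \fref{estttwodtdbloc}, from differentiating $\chi_{\hat B_0}$ in $\hat b$, and from the $\e$-dependent term) and show they are genuinely $o(|\log b|)$ relative to the dominant $16\pi|\log b|$ from $(\Lambda Q,\chi_{B_0}r^2)$, so that strict monotonicity is preserved throughout the entire bootstrap regime and uniqueness is guaranteed.
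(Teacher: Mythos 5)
Your overall strategy (implicit function theorem for existence/uniqueness, then Taylor expansion for the quantitative bound) matches the paper's proof, but Step~2 contains two concrete errors that would not survive scrutiny, and a third structural issue.

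First, you claim $\int_{r\leq 2\hat B_0} Q \sim |\log b|$. This is false: $Q \in L^1(\R^2)$ with $\int Q = 8\pi$, so $\int_{r\leq 2\hat B_0}Q \lesssim 1$ uniformly. (You may be confusing it with $\int_{r\leq 2\hat B_0}\frac{1}{1+r^2} \sim \log B_0 \sim |\log b|$.) Second, you assert $\|\e\|_{L^2_Q}\lesssim b^{3/2}/|\log b|$, attributing it to \fref{contorlcoerc} and Appendix~C. But the coercivity bound \fref{coercbase} only controls $\int\e^2$, $\int(1+r^2)|\nabla\e|^2$, $\int(1+r^4)|\nabla^2\e|^2$, etc.; it does \emph{not} control $\|\e\|_{L^2_Q}^2 = \int\frac{\e^2}{Q}\sim\int(1+r^4)\e^2$, and no Hardy inequality upgrades $\|\e\|_{L^2}$ to that weighted norm. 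These two errors happen to cancel in the final arithmetic, which is why your number came out small enough, but as written the chain of inequalities is broken. Third, you absorb the sub-leading terms of $(\tilde Q_{\hat b}-\tilde Q_b,\mathcal L^*\Phi_{0,\hat B_0})$ into a multiplicative $(1+o(1))$; they are in fact an \emph{additive} $O(b)$ error (this is exactly the content of \fref{fluxoneproof}), so the correct expansion is $-16\pi(\hat b - b)|\log b| + O(b)$. Your intermediate bound $|\hat b - b|\lesssim b^{3/2}/|\log b|^{3/2}$ is therefore too strong; the true obstruction is the $O(b)$ term, which gives exactly the stated $b/|\log b|$.

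The fix is the paper's route: use the adjunction \fref{biebibvei} to rewrite $(\e,\mathcal L^*\Phi_{0,\hat B_0})=(\e_2,\Phi_{0,\hat B_0})$, then Cauchy--Schwarz with the weight $Q$ (noting $\int_{r\leq 2\hat B_0}Q\,r^4\sim \hat B_0^2\sim 1/b$) gives $|(\e_2,\Phi_{0,\hat B_0})|\lesssim \|\e_2\|_{L^2_Q}/\sqrt{b}$, and the bootstrap \fref{bootsmallh2q} controls $\|\e_2\|_{L^2_Q}$ directly, yielding \fref{diffbbhatone}. Alternatively, you can stay with your \fref{bjebbeibei} but Cauchy--Schwarz against the measure of the ball: $\int_{r\leq 2\hat B_0}|\e|\leq \|\e\|_{L^2}\,|B_{2\hat B_0}|^{1/2}\lesssim \|\e\|_{L^2}/\sqrt{b}$, and then $\|\e\|_{L^2}\lesssim \|\e_2\|_{L^2_Q}$ from \fref{contorlcoerc} — this avoids the uncontrolled norm $\|\e\|_{L^2_Q}$ entirely. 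Either way, you must then add the $O(b)$ error from the profile expansion to obtain the final $|b-\hat b|\lesssim b/|\log b|$.
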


\begin{remark} It is essential for the rest of the analysis that the constant in \fref{boundbootbbhat} is independent of $K^*(M)$.
\end{remark}

\begin{proof}[Proof of Lemma \ref{lemmaradiaiton}] The claim follows from the implicit function theorem again. For $|b-\bh|\lesssim b$, let the map $F(v,\bh)=(v-\tilde{Q}_{\hat{b}},\matchal L^*\Phi_{0,\hat{B}_0})$, then $F(\qbt,b)=0$ and $$\frac{\partial F}{\partial \hb}_{|(\hb=b,v=\qbt)}=-(\frac{\partial \qbt}{\partial b},\matchal L^*\Phi_{0,B_0}).$$ We compute using the bounds of Proposition \ref{proploc}:
$$\frac{\partial \qbt}{\partial b}=\chi_{B_1}T_1+O\left(\frac{1}{1+r^2}{\bf 1}_{B_1\leq r\leq 2B_1}+b\left[\frac{1+|\log( r\sqrt{b})|}{|\log b|}{\bf 1}_{r\leq 6B_0}+\frac{1}{b^2r^4}{\bf 1}_{6B_0\leq r\leq 2B_1}\right]\right),$$ and thus using \fref{orthophim}, \fref{bjebbeibei}:
\bee
&&(\frac{\partial \qbt}{\partial b},\matchal L^*\Phi_{0,B_0})\\
&= & (\chi_{B_1}T_1,\matchal L^*\Phi_{0,B_0})+bO\left(\int \left[\frac{1 + |\log (r\sqrt b)|}{|\log b|}{\bf 1}_{r\leq 6B_0}+\frac{1}{b^2r^4|\log b|}{\bf 1}_{ 6B_0\leq r\leq 2B_1}\right]\right)\\
& = & -32\pi \log B_0+O(1) <0,
\eee
and the existence of the decomposition \fref{defehat} with $b\sim \hat{b}$ now follows form the implicit function theorem.\\
The size of the deformation can be measured thanks to \fref{bootsmallh2q}. Indeed, we first claim that for all $\frac{1}{\sqrt{b}}\lesssim B\lesssim \frac{1}{\sqrt{b}}$:
\be
\label{fluxoneproof}
(\tilde{Q}_{b}-Q,\mathcal L^*\Phi_{0,B})=-32\pi b\log B+O(b).
\ee
Indeed, we estimate from \fref{bjebbeibei}:
\bee
&& (\qbt-Q,\mathcal L^*\Phi_{0,B})\\
 & = & b(\mathcal LT_1,\Phi_{0,B})+O\left[\int_{r\leq 2B} |b(\tt_1-T_1)+b^2\tt_2|\right].
\eee
By assumption, \be
\label{estnieneoifn}
\left|\frac{\log B}{|\log b|}-\frac 12\right|=\left|\frac{\log (B\sqrt{b})}{|\log b|}\right|\lesssim \frac{1}{|\log b|},
\ee
and we therefore estimate from \fref{orthophim}:
$$b(\mathcal LT_1,\Phi_{0,B})=b(\Lambda Q,\Phi_B)=b\left[-(32\pi) \log B+O(1)\right].$$
We estimate from \fref{esttoneproploc}, \fref{esttwoloc}:
$$
\int_{r\leq 2B} b|\tt_1-T_1| \lesssim \int _{B_1\leq r\leq 2B_1}\frac{b}{r^2}\lesssim b,
$$
 \bee
\int_{r\leq 2B} b^2|\tt_2|& \lesssim &b^2\int _{ r\leq 2B_1}\left[r^2{\bf 1}_{r\leq 1}+\frac{1 + |\log (r\sqrt b)|}{|\log b|}{\bf 1}_{1\leq r\leq 6B_0}+\frac{1}{b^2r^4|\log b|}{\bf 1}_{ 6B_0\leq r\leq 2B_1}\right]\\
&\lesssim & b
\eee
and \fref{fluxoneproof} follows.\\
We now claim the bound
\be
\label{diffbbhatone}
|b-\hat{b}|\lesssim\frac{|b|}{|\log b|}+\frac{\|\e_2\|_{L^2_Q}}{\sqrt{b}|\log b|}
 \ee
 which together with the bootstrap bound \fref{bootsmallh2q} implies \fref{boundbootbbhat}. Indeed, we take the scalar product of \fref{defardiation} with $\mathcal L^*\Phi_{0,\hat{B}_0}$ and conclude from the choice of orthogonality condition \fref{orthoehat} and \fref{fluxoneproof}, \fref{estnieneoifn}:
 $$|\log b||b-\hat{b}|\lesssim |b|+|\hat{b}|+
|(\e_2,\Phi_{0,\hat{B}_0})|\lesssim |b|+\frac{\|\e_2\|_{L^2_Q}}{\sqrt{b}}$$ and \fref{diffbbhatone} is proved. This concludes the proof of Lemma \ref{lemmaradiaiton}.
\end{proof}

We now introduce the radiation term 
\be
\label{defardiation}
\zeta=\hat{\e}-\e=\tilde{Q}_{b}-\tilde{Q}_{\hat{b}}
\ee
which we split in two parts: 
\be
\label{defhiefheoh}
\zeta=\zeta_{\rm big}+\zeta_{\rm sm},
\ee
$$\zb=(b-\hat{b})\chi_{B_1}T_1,$$
$$
\zs= \bh(\chi_{B_1}-\chi_{\hat{B}_1})T_1+\int_{\hat{b}}^b\left[2b\tt_2+b^2\pa_b\tt_2\right]db
$$
We let $$\zeta_2=\matchal L\zeta.$$

\begin{lemma}[Bounds on the radiation]
\label{lemmaradiation}
There holds the pointwise bounds:
\be
\label{estimatonebis} 
\int |r^i\nabla^i\zb|^2+\int \frac{|\nabla \phi_{\zb}|^2}{1+r^2}\lesssim \frac{b^2}{|\log b|^2},
\ee
\be
\label{estimatone} 
\int |r^i\nabla^i\zs|^2+\int \frac{|\nabla \phi_{\zs}|^2}{1+r^2}+\int \frac{|\mathcal L\zs|^2}{Q}+\int \frac{|\mathcal L\Lambda \zs|^2}{Q}\lesssim \frac{
b^3}{|\log b|^4},
\ee
\be
\label{estimatonebisbis} 
\int |\mathcal L\zs|^2\lesssim \frac{b^4}{|\log b|^2},
\ee
\be
\label{nneonevonoer}
\int \frac{|\mathcal L\zb|^2}{Q}+\int \frac{|\mathcal L\Lambda \zb|^2}{Q}\lesssim \frac{b^2}{|\log b|^2},
\ee
\be
\label{degenrateoneone}
\forall v\in L^2_Q\ \ \mbox{with}\ \  \int v=0, \ \ |(\mathcal M\zeta_2,v)|\lesssim \frac{b^{\frac 32}}{|\log b|}\|v\|_{L^2_Q},
\ee
\be
\label{noenoeno}
\left|(\matchal M\zeta_2,\zeta_2)\right|\lesssim \frac{b^3}{|\log b|^2},
\ee
\be
\label{degenrateonebis}
\int Q|\nabla \mathcal M\zeta_2|^2\lesssim \frac{b^4}{|\log b|^2}.
\ee
\end{lemma}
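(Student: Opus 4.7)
The proof rests on three algebraic facts -- $\mathcal{L}T_1=\Lambda Q$ from~\eqref{eqtonekvne}, $\mathcal{M}\Lambda Q=-2$ from~\eqref{relationsm}, and $\int\Lambda Q=0$ -- combined with the small deviation bound $|b-\hat b|\lesssim b/|\log b|$ from Lemma~\ref{lemmaradiaiton}. The pointwise estimates~\eqref{estimatonebis}--\eqref{nneonevonoer} then follow by direct computation: for $\zeta_{\rm big}=(b-\hat b)\chi_{B_1}T_1$ one applies the sharp bounds~\eqref{esttoneprop} on $T_1$, computes the Poisson field via the radial formula $\phi_u'(r)=r^{-1}\int_0^r u\tau\,d\tau$, and uses the commutator expansion
$$
\mathcal{L}(\chi_{B_1}T_1)=\chi_{B_1}\Lambda Q+R_{B_1},
$$
where $R_{B_1}$ collects differential and Poisson--field commutators supported in the transition annulus $B_1\le r\le 2B_1$. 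In the weight $Q\sim r^{-4}$ there, both pieces contribute $O(1)$ to the $L^2_Q$ norm, so~\eqref{nneonevonoer} and its $\Lambda$-variant follow upon multiplication by $(b-\hat b)^2\lesssim b^2/|\log b|^2$. For $\zeta_{\rm sm}$ we split it into the transition piece $\hat b(\chi_{B_1}-\chi_{\hat B_1})T_1$, supported on a thin annulus of log-bounded thickness, and the integral piece $\int_{\hat b}^b[2b\tilde T_2+b^2\partial_b\tilde T_2]\,db$ estimated using~\eqref{esttwoloc}--\eqref{estttwodtdbloc}; both contribute at the $b^{3/2}/|\log b|^2$ level claimed in~\eqref{estimatone} and~\eqref{estimatonebisbis}.

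The key new estimate is the degenerate pairing~\eqref{degenrateoneone}. Split $\zeta_2=\mathcal{L}\zeta_{\rm big}+\mathcal{L}\zeta_{\rm sm}$. For the sm piece, \eqref{mcontiniuos} gives $\|\mathcal{M}\mathcal{L}\zeta_{\rm sm}\|_{L^2_{Q^{-1}}}\lesssim\|\mathcal{L}\zeta_{\rm sm}\|_{L^2_Q}\lesssim b^{3/2}/|\log b|^2$ by~\eqref{estimatone}, which closes by Cauchy--Schwarz without using $\int v=0$. For the big piece the identity $\mathcal{M}\Lambda Q=-2$ yields
$$
\mathcal{M}(\chi_{B_1}\Lambda Q)=-2\chi_{B_1}+\bigl[\phi_{\chi_{B_1}\Lambda Q}-\chi_{B_1}\phi_{\Lambda Q}\bigr],
$$
so that pairing with $v$ satisfying $(v,1)=0$ reduces the leading contribution to $-2(\chi_{B_1}-1,v)$. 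The tail bound $\int_{r\ge B_1}Q\lesssim B_1^{-2}$ then gives $|(\chi_{B_1}-1,v)|\lesssim B_1^{-1}\|v\|_{L^2_Q}$ by Cauchy--Schwarz; the nonlocal Poisson-field commutator $\phi_{\chi_{B_1}\Lambda Q}-\chi_{B_1}\phi_{\Lambda Q}$ satisfies an analogous bound since its Laplacian is supported in $B_1\le r\le 2B_1$ with explicit decay, and $\mathcal{M}R_{B_1}$ is treated in the same spirit using the rapid decay of $\Lambda Q$ and $T_1$ outside the ball of radius $B_1$. Multiplying through by $|b-\hat b|\lesssim b/|\log b|$ produces the claimed $b^{3/2}/|\log b|$.

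The quadratic bounds~\eqref{noenoeno} and~\eqref{degenrateonebis} follow as consequences. For~\eqref{noenoeno}, we use $(\mathcal{M}u,u)=\int u^2/Q-\int|\nabla\phi_u|^2$ together with the zero-mean cancellation $(\mathcal{M}\Lambda Q,\Lambda Q)=-2\int\Lambda Q=0$, which forces $(\mathcal{M}(\chi_{B_1}\Lambda Q),\chi_{B_1}\Lambda Q)=O(B_1^{-2}\log B_1)$ after explicit computation of the Poisson cutoff error; the big/big contribution is then $O(b^3/|\log b|^3)$, while the big/sm cross term is controlled by~\eqref{degenrateoneone} (applied with $v=\zeta_{2,\rm sm}$, which has zero average by~\eqref{estvooee}) combined with~\eqref{estimatonebisbis}, and the sm/sm term directly by~\eqref{mcontiniuos} and~\eqref{estimatonebisbis}. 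For~\eqref{degenrateonebis}, integration by parts gives $\int Q|\nabla\mathcal{M}\zeta_2|^2=-(\mathcal{M}\zeta_2,\mathcal{L}\zeta_2)$, and the double cancellation $\mathcal{L}^2T_1=\mathcal{L}\Lambda Q=0$ reduces $\mathcal{L}^2\zeta_{\rm big}$ to purely localized commutators, which closes the estimate via Cauchy--Schwarz together with~\eqref{estimatonebisbis}. The main obstacle throughout is the bookkeeping in~\eqref{degenrateoneone}: the $\sqrt b$ gain beyond naive $L^2_Q$ Cauchy--Schwarz must be extracted through the simultaneous use of the zero-mean hypothesis on $v$, the structural identity $\mathcal{M}\Lambda Q=-2$, and careful treatment of the nonlocal Poisson-field commutator $\phi_{\chi_{B_1}\Lambda Q}-\chi_{B_1}\phi_{\Lambda Q}$, all of which a priori contribute at the critical borderline size.
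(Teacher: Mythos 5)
Your plan for the pointwise estimates \eqref{estimatonebis}--\eqref{nneonevonoer}, the degenerate pairing \eqref{degenrateoneone}, and the quadratic bound \eqref{noenoeno} follows essentially the same route as the paper: explicit pointwise bounds on $\zeta_{\rm big}$ and $\zeta_{\rm sm}$ from Propositions \ref{propblowup}, \ref{proploc} and the small-deviation bound $|b-\hat b|\lesssim b/|\log b|$, and then the algebraic identities $\mathcal M\Lambda Q=-2$, $\mathcal LT_1=\Lambda Q$ combined with the zero-mean hypothesis on $v$. The paper writes this as $\mathcal L\zeta_{\rm big}=(b-\hat b)\Lambda Q+\xi$ with $\xi=|b-\hat b|\,O({\bf 1}_{r\geq B_1}/(1+r^4))$ and pairs the constant $-2(b-\hat b)$ against $\int v=0$; your expansion around $\chi_{B_1}\Lambda Q$ is an equivalent reorganization. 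Two small imprecisions: the Poisson-field commutator $\chi_{B_1}\nabla\phi_{T_1}-\nabla\phi_{\chi_{B_1}T_1}$ is supported on all of $\{r\geq B_1\}$, not only the transition annulus, and your claim that both \eqref{estimatone} and \eqref{estimatonebisbis} sit at the ``$b^{3/2}/|\log b|^2$ level'' glosses over the fact that the leading contribution to \eqref{estimatonebisbis} comes from the interior region $r\le 1$ (with a different $b$-exponent), not from the annulus. Neither of these affects the outcome.

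The genuine gap is in \eqref{degenrateonebis}. The integration-by-parts identity $\int Q|\nabla\mathcal M\zeta_2|^2=-(\mathcal M\zeta_2,\mathcal L\zeta_2)$ is correct, and the cancellation $\mathcal L\Lambda Q=0$ does make $\mathcal L^2\zeta_{\rm big}=(b-\hat b)\mathcal L^2(\chi_{B_1}T_1)$ a small tail error. But $\mathcal L\zeta_2$ also contains $\mathcal L^2\zeta_{\rm sm}$, and the estimates \eqref{estimatone}, \eqref{estimatonebisbis} only control $\mathcal L\zeta_{\rm sm}$ -- a single application of $\mathcal L$ -- while your Cauchy--Schwarz step requires a bound on $\|\mathcal L^2\zeta_{\rm sm}\|_{L^2_Q}$, which costs two more derivatives on $\zeta_{\rm sm}$ and is not provided. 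Invoking \eqref{estimatonebisbis} here does not close the argument. The paper avoids this entirely: it estimates $\int Q|\nabla\mathcal M\mathcal L\zeta_{\rm sm}|^2$ and $\int Q|\nabla\mathcal M\mathcal L\zeta_{\rm big}|^2$ directly from the pointwise bounds \eqref{nkeonneonve} and \eqref{zetabig}--\eqref{defxi} together with the Hardy--Littlewood--Sobolev bound $\|\nabla\phi_{\mathcal L\zeta}\|_{L^4}\lesssim\|\mathcal L\zeta\|_{L^{4/3}}$, so that only one derivative of $\mathcal L\zeta$ is ever needed; the identity $\mathcal M\Lambda Q=-2$ kills the leading $(b-\hat b)\Lambda Q$ part of $\mathcal L\zeta_{\rm big}$ under $\nabla\mathcal M$ before any Cauchy--Schwarz is invoked. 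You should redo this step along those lines.
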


\begin{remark} This lemma quantifies the fact that the radiation is a priori large, in particular larger than $\e$ when comparing \fref{estimatonebis} with \fref{bootsmallh2q}, but because the leading order term in $\zeta$ is supported along $T_1$, some degenerate norms like \fref{degenrateoneone}, \fref{degenrateonebis} are better behaved, and this will be essential to close the $H^2_Q$ bound for $\e$, see step 6 of the proof of Proposition \ref{htwoqmonton}.
\end{remark}

\begin{proof}[Proof of Lemma \ref{lemmaradiation}]

{\it Proof of \fref{estimatonebis}}: We have the pointwise bound using \fref{esttoneproploc}:
$$|r^i\nabla^i\zeta_{big}|\lesssim |\bh-b|\frac{{\bf 1}_{r\leq 2B_1}}{1+r^2},$$
and hence from \fref{boundbootbbhat}:
\bee
\int |r^i\nabla^i\zb
|^2\lesssim |\bh-b|^2\lesssim \frac{b^2}{|\log b|^2}.
\eee
We estimate the Poisson field using the radial representation:
\be
\label{nceoneoneveklnvel}
|\nabla \phi_{\zb}|\lesssim |\bh-b|\frac{1+|\log r|}{1+r}
\ee from which:
$$\int \frac{|\nabla \phi_{\zb}|^2}{1+r^2}\lesssim |\bh-b|^2\lesssim \frac{b^2}{|\log b|^2}.$$
{\it Proof of \fref{estimatone}, \fref{estimatonebisbis}}: We have using \fref{boundbootbbhat}, \fref{esttwo} the pointwise bound:
\bea
\label{estammleon}
|r^i\nabla^i\zs| & \lesssim & |b-\hat{b}|\frac{{\bf 1}_{\frac{B_1}{2}\leq r\leq 3B_1}}{r^2}\\
\nonumber & + & b|b-\bh|\left[r^2{\bf 1}_{r\le1} +\frac{1 + |\log (r\sqrt b)|}{|\log b|}{\bf 1}_{1\leq r\leq 6B_0}+ \frac{1}{b^2r^4|\log b|}{\bf 1}_{ 6B_0\leq r\leq 2B_1}\right]
\eea
from which using \fref{boundbootbbhat}:
\bee
\nonumber \int |r^i\nabla^i\zs|^2& \lesssim & \frac{b^2}{|\log b|^2}\int \frac{{\bf 1}_{\frac{B_1}{2}\leq r\leq 3B_1}}{r^4}\\
\nonumber & + & \frac{b^4}{|\log b|^2}\int\left[{\bf 1}_{r\le1} +\frac{1 + |\log (r\sqrt b)|^2}{|\log b|^2}{\bf 1}_{1\leq r\leq 6B_0}+ \frac{1}{b^4r^8|\log b|^2}{\bf 1}_{ 6B_0\leq r\leq 2B_1}\right]\\
& \lesssim & \frac{b^3}{|\log b|^4}.
\eee
We recall from \fref{estmtwo}, \fref{esttwoloc} the bounds:
$$|\phi'_{\tt_2}|=|\frac{m_2}{r}|\lesssim  r^5{\bf 1}_{r\le1} +\frac{1 + r|\log (r\sqrt b)|}{|\log b|}{\bf 1}_{1\leq r\leq 6B_0}+\frac{1}{rb|\log b|}{\bf 1}_{r\geq 2B_0} \ \ \mbox{for}\ \ r\leq B_1,$$
$$ |\phi'_{\tt_2}|\lesssim \frac{1}{r}\int_0^{2B_1} r|\tt_2|dr\lesssim\frac{1}{rb|\log b|} \ \ \mbox{for}\ \ r\geq 2B_1.$$
We then estimate the Poisson field using the radial representation:
\be
\label{estammleonbis}
|\nabla \phi_{\zs}|\lesssim \frac{|b-\bh|}{r}{\bf 1}_{r\geq \frac{B_1}{2}}+b|b-\bh|\left[ r^5{\bf 1}_{r\le1} +\frac{1 + r|\log (r\sqrt b)|}{|\log b|}{\bf 1}_{1\leq r\leq 6B_0}+\frac{1}{rb|\log b|}{\bf 1}_{r\geq 2B_0}\right]
\ee
 and hence:
\bee
\int \frac{|\nabla\phi_{\zs}|^2}{1+r^2} & \lesssim & \frac{b^2}{|\log b|^2}\int_{r\geq \frac{B_1}{2}}\frac{1}{r^4}\\
& + &   \frac{b^4}{|\log b|^2}\int\frac{1}{1+r^2}\left[r^5{\bf 1}_{r\le1} +\frac{1 + r^2|\log (r\sqrt b)|^2}{|\log b|^2}{\bf 1}_{1\leq r\leq 6B_0}+\frac{1}{r^2b^2|\log b|^2}{\bf 1}_{r\geq 2B_0}\right]\\
& \lesssim & \frac{b^3}{|\log b|^4}.
\eee
 We estimate from \fref{formuleL}, \fref{estammleon}, \fref{estammleonbis}:
\bea
\label{nkeonneonve}
 |r^i\nabla^i\mathcal L\zs|& \lesssim &  |b-\hat{b}|\frac{{\bf 1}_{\frac{B_1}{2}\leq r\leq 3B_1}}{r^4}\\
\nonumber & + & b|b-\bh|\left[{\bf 1}_{r\le1} +\frac{1 + |\log (r\sqrt b)|}{r^2|\log b|}{\bf 1}_{1\leq r\leq 6B_0}+ \frac{1}{b^2r^6|\log b|}{\bf 1}_{ 6B_0\leq r\leq 2B_1}\right]\\
\nonumber& + &  \frac{b-\bh}{r^6}{\bf 1}_{r\geq \frac{B_1}{2}}\\
\nonumber& + & \frac{b|b-\bh|}{1+r^5}\left[r^5{\bf 1}_{r\le1} +\frac{1 + r|\log (r\sqrt b)|}{|\log b|}{\bf 1}_{1\leq r\leq 6B_0}+\frac{1}{rb|\log b|}{\bf 1}_{r\geq 2B_0}\right]\\
\nonumber& \lesssim &  |b-\hat{b}|\frac{{\bf 1}_{\frac{B_1}{2}\leq r\leq 3B_1}}{r^4}\\
\nonumber & + & b|b-\bh|\left[{\bf 1}_{r\le1} +\frac{1 + |\log (r\sqrt b)|}{r^2|\log b|}{\bf 1}_{1\leq r\leq 6B_0}+ \frac{1}{b^2r^6|\log b|}{\bf 1}_{ r\geq 2B_0}\right]
\eea
and thus from \fref{boundbootbbhat}:
\bee
\int |\mathcal L\zs|^2&\lesssim & \int \left[ |b-\hat{b}|^2\frac{{\bf 1}_{\frac{B_1}{2}\leq r\leq 3B_1}}{r^8}\right]\\
& + & \frac{b^4}{|\log b|^2}\left[{\bf 1}_{r\le1} +\frac{1 + |\log (r\sqrt b)|^2}{(1+r^4)|\log b|^2}{\bf 1}_{1\leq r\leq 6B_0}+ \frac{1}{b^4r^{12}|\log b|^2}{\bf 1}_{ 6B_0\leq r\leq 2B_1}\right]\\
& \lesssim & \frac{b^4}{|\log b|^2},
\eee
\bee
\int \frac{|\mathcal L\zs|^2}{Q} &\lesssim & \int \left[ |b-\hat{b}|^2\frac{{\bf 1}_{\frac{B_1}{2}\leq r\leq 3B_1}}{r^4}\right]\\
& + & \frac{b^4}{|\log b|^2}\left[{\bf 1}_{r\le1} +\frac{1 + |\log (r\sqrt b)|^2}{|\log b|^2}{\bf 1}_{1\leq r\leq 6B_0}+ \frac{1}{b^4r^8|\log b|^2}{\bf 1}_{ 6B_0\leq r\leq 2B_1}\right]\\
& \lesssim & \frac{b^3}{|\log b|^4}.
\eee
Finally,  pick a well localized function $f$ and let $f_\l(x)=\l^2f(\lambda x)$, then from direct check:
$$\matchal Lf_\l=\l^2\left[\Delta f+2Q_{\frac1\l}f+\nabla \phi_{Q_{\frac 1\l}}\cdot \nabla f+\nabla Q_{\frac 1\l}\cdot\nabla \phi_f\right]_{\l}$$ and differentiating this relation at $\l=1$ yields: 
\be
\label{cneocneonoeeooe}
\mathcal L\Lambda f=2\mathcal Lf+\Lambda(\mathcal L f)-2(\Lambda Q) f-\nabla\phi_{\Lambda Q}\cdot\nabla f-\nabla (\Lambda Q)\cdot\nabla \phi_f.
\ee Applying this to $\zs$ and using \fref{nkeonneonve}, 
\fref{estammleon}, \fref{estammleonbis} yields the pointwise bound:
\bee
|\mathcal L(\Lambda \zs)|&\lesssim & |b-\hat{b}|\frac{{\bf 1}_{\frac{B_1}{2}\leq r\leq 3B_1}}{r^4}\\
\nonumber & + & b|b-\bh|\left[{\bf 1}_{r\le1} +\frac{1 + |\log (r\sqrt b)|}{r^2|\log b|}{\bf 1}_{1\leq r\leq 6B_0}+ \frac{1}{b^2r^6|\log b|}{\bf 1}_{ r\geq 2B_0}\right]
\eee
and hence $$\int \frac{|\mathcal L\Lambda \zs|^2}{Q} \lesssim \frac{b^3}{|\log b|^2}.$$
{\it Proof of \fref{nneonevonoer}, \fref{degenrateoneone}}: We estimate from \fref{estimatone}, \fref{mcontiniuos}:
$$|(\mathcal M\mathcal L\zs,v)|\lesssim \|\mathcal L\zs\|_{L^2_Q}\|v\|_{L^2_Q}\lesssim \frac{b^{\frac 32}}{|\log b|}\|v\|_{L^2_Q}.$$
We now compute in brute force using \fref{formuleL}, \fref{eqtonekvne}:
\bea
\label{ccjooeccjwo}
\nonumber \mathcal L(\chi_{B_1}T_1) & = & \chi_{B_1}[\Delta T_1+2QT_1+\nabla \phi_{Q}\cdot \nabla T_1]+\nabla Q\cdot\nabla \phi_{\chi_{B_1}T_1}+O\left(\frac{{\bf 1}_{B_1\leq r\leq 2B_1}}{1+r^4}\right)\\
& = & \chi_{B_1}\Lambda Q+\nabla Q\cdot\left[\chi_{B_1}\nabla \phi_{T_1}-\nabla \phi_{\chi_{B_1}T_1}\right]=\Lambda Q+O\left(\frac{{\bf 1}_{r\geq B_1}}{1+r^4}\right)
\eea
and thus:
\be
\label{zetabig}
\mathcal L\zb =  (b-\bh)\Lambda Q+\xi,
\ee
\be
\label{defxi}
\xi=|b-\bh|O\left(\frac{{\bf 1}_{r\geq B_1}}{1+r^4}\right).
\ee
This yields the rough bound using \fref{boundbootbbhat}:
$$\int \frac{|\mathcal L\zb|^2}{Q}\lesssim |b-\hat{b}|^2\lesssim \frac{b^2}{|\log b|^2}.$$ The second estimate in \fref{nneonevonoer} follows similarily using \fref{cneocneonoeeooe}.
We further estimate from \fref{zetabig}, \fref{defxi}, \fref{relationsm}, \fref{efadjointness}, \fref{estvooee}, \fref{boundbootbbhat}:
$$
|(\matchal M\mathcal L\zb,v)|  \lesssim  |(-2,v)|+  \|v\|_{L^2_Q}\left[|b-\bt|^2\int \frac{{\bf 1}_{r\geq B_1}}{Q(1+r^8)}\right]^{\frac 12}\lesssim  \frac{b^{\frac 32}}{|\log b|} \|v\|_{L^2_Q}
$$
and \fref{degenrateoneone} is proved.\\
{\it Proof of \fref{noenoeno}}: From \fref{zetabig}, \fref{degenrateoneone}, \fref{estimatone}:
\bee
|(\mathcal M\zeta_2,\zeta_2)|& = & |(\matchal M\zeta_2,\xi+\matchal L\zs)|\lesssim \frac{b^{\frac 32}}{|\log b|}\left[\|\xi\|_{L^2_Q}+\|\mathcal L\zs\|_{L^2_Q}\right]\\
& \lesssim &  \frac{b^{\frac 32}}{|\log b|}\left[\left(\frac{b^2}{|\log b|^2}\int_{r\geq B_1}\frac{1}{1+r^4}\right)^{\frac 12}+\frac{b^{\frac 32}}{|\log b|}\right]\lesssim \frac{b^3}{|\log b|^2}.
\eee

{\it Proof of \fref{degenrateonebis}}: We estimate from \fref{nkeonneonve} and Hardy Littlewood Sobolev:
\bee
\int Q|\nabla \mathcal M\mathcal L\zs|^2& \lesssim & \int \frac{1}{Q}\left\{ |b-\hat{b}|^2\frac{{\bf 1}_{\frac{B_1}{2}\leq r\leq 3B_1}}{r^{10}}\right.\\
\nonumber & + &\left . \frac{b^4}{|\log b|^2}\left[{\bf 1}_{r\le1} +\frac{1 + |\log (r\sqrt b)|^2}{r^6|\log b|^2}{\bf 1}_{1\leq r\leq 6B_0}+ \frac{1}{b^4r^{14}|\log b|^2}{\bf 1}_{ 6B_0\leq r\leq 2B_1}\right]\right\}\\
& + &\|\nabla \phi_{\mathcal L\zs}\|_{L^4}^2\\
& \lesssim & \frac{b^4}{|\log b|^2}+\|\mathcal L\zs\|_{L^{\frac 43}}^2\lesssim \frac{b^4}{|\log b|^2}.
\eee
We estimate from \fref{zetabig}, \fref{defxi}, \fref{relationsm}, \fref{boundbootbbhat} and Hardy Littlewood Sobolev:
\bee
\int Q|\nabla \mathcal M\mathcal L\zb|^2 &\lesssim & \int \frac{1}{Q}\left||b-\bh|\frac{{\bf 1}_{r\geq B_1}}{1+r^5}\right|^2+ \int Q|\nabla \phi_{\xi}|^2\\
& \lesssim &\frac{|b-\bh|^2}{B_1^4}+\|\xi\|_{L^{\frac 43}}^2 \lesssim \frac{|b-\bh|^2}{B_1^4}\\
& \lesssim & \frac{b^4}{|\log b|^2}.
\eee
This concludes the proof of Lemma \ref{lemmaradiation}.
\end{proof}

%%%%%%%%%%%%%%%%%%%%%%%%%%%%%%%%%%%%%%%%%%%%%%%%%%%%%%%%%%%%%%%%%%%%%%%%%%%%%%%%%%

\subsection{Sharp modulation equation for $b$}

%%%%%%%%%%%%%%%%%%%%%%%%%%%%%%%%%%%%%%%%%%%%%%%%%%%%%%%%%%%%%%%%%%%%%%%%%%%%%%%%%%
%%%%%%%%%%%%%%%%%%%%%%%%%%%%%%%%%%%%%%%%%%%%%%%%%%%%%%%%%%%%%%%%%%%%%%%%%%%%%%%%%%

We now compute the sharp modulation equation for $b$ using the lifted parameters $\hat{b}$.

\begin{lemma}[Sharp modulation equations for $b$]
\label{lemmasharpmod}
 There holds the differential inequations:
\be
\label{poitnzeiboud}
\left|\hat{b}_s+\frac{2b^2}{|\log b|}\right|\lesssim C(M)\frac{\sqrt{b}}{|\log b|}\|\e_2\|_{L^2_Q}+\frac{b^2}{|\log b|^2}.
\ee
\end{lemma}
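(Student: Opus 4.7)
The plan is to project the equation for the lifted error $\hat{\e}$ onto $\Phi_{0,\hat{B}_0}$ and use the orthogonality \fref{orthoehat} to kill the linear term, thereby isolating $\hat{b}_s$. The crucial new input compared to the rough bound \fref{estb} in Lemma \ref{lemmeparam} is the degenerate flux \fref{degeencos}, which encodes the fact that the radiation $\Sigma_b$ was introduced in the construction of $\qbt$ precisely to suppress the leading $b^2$ contribution of $\tilde{\Psi}_b$ against $\Phi_{0,B}$ for $B\sim B_0$.

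First I would derive the equation for $\hat{\e}=\e+\zeta$ from \fref{eqe}: using $\partial_s\zeta=b_s\partial_b\tilde Q_b-\hat b_s\partial_{\hat b}\tilde Q_{\hat b}$, the $-b_s\partial_b\tilde Q_b$ piece of $E$ cancels, leaving a modulation term $-\hat b_s\partial_{\hat b}\tilde Q_{\hat b}$ driven by the lifted parameter, together with $\tilde\Psi_b$, the source $-c_b b^2\chi_{B_0/4}T_1$, the correction $(\lambda_s/\lambda+b)\Lambda\tilde Q_b$, the nonlinear/transport pieces $N(\e),\Theta_b(\e)$, and the radiation contributions $-\mathcal L\zeta$ and $-(\lambda_s/\lambda)\Lambda\zeta$. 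Pairing with $\Phi_{0,\hat B_0}$ and using $(\mathcal L\hat\e,\Phi_{0,\hat B_0})=(\hat\e,\mathcal L^*\Phi_{0,\hat B_0})=0$ from \fref{orthoehat} (with $(\partial_s\hat\e,\Phi_{0,\hat B_0})$ absorbed into admissible errors via the differentiated orthogonality and \fref{estimationortho}) reduces the identity to an algebraic relation for $\hat b_s$.

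Next I would extract the leading coefficients. For the LHS, the coefficient of $\hat b_s$ comes from $(\partial_{\hat b}\tilde Q_{\hat b},\mathcal L^*\Phi_{0,\hat B_0})$, computed as in the proof of Lemma \ref{lemmaradiaiton}: since $\partial_{\hat b}\tilde Q_{\hat b}=\chi_{\hat B_1}T_1+\mathrm{lot}$, using \fref{eqtonekvne} and the flux $(\Lambda Q,\chi_{\hat B_0}r^2)=-32\pi\log\hat B_0+O(1)=-16\pi|\log\hat b|+O(1)$ from step 1 of the proof of Lemma \ref{direcitonroth}, this is $-16\pi|\log\hat b|+O(1)$. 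For the RHS source, the degenerate flux \fref{degeencos} already gives $(\tilde\Psi_b,\mathcal L^*\Phi_{0,\hat B_0})=O(b^2/|\log b|)$, while the explicit term $-c_b b^2\chi_{B_0/4}T_1$ contributes, using \fref{defcb} and the same flux computation, exactly $c_b b^2\cdot 16\pi|\log b|+O(b^2)=32\pi b^2+O(b^2/|\log b|)$. Matching yields $-16\pi|\log\hat b|\hat b_s=32\pi b^2+\mathrm{err}$, i.e. $\hat b_s+\frac{2b^2}{|\log b|}=O(b^2/|\log b|^2)+\mathrm{err}$, with $|\log\hat b|=|\log b|+O(1)$ by \fref{boundbootbbhat}.

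The main obstacle is controlling the remaining errors at the sharp order $C(M)\frac{\sqrt b}{|\log b|}\|\e_2\|_{L^2_Q}+\frac{b^2}{|\log b|^2}$. The nonlinear and transport terms $N(\e),\Theta_b(\e)$ are handled by the interpolation bounds of Appendix C exactly as in the derivation of \fref{estiboundtwo}, producing the $\sqrt b\|\e_2\|_{L^2_Q}/|\log b|$ contribution; the $\Lambda\e$ term absorbed via \fref{estlambda} together with the bootstrap bound \fref{bootsmallh2q} likewise is admissible. The delicate point is the radiation contributions $\mathcal L\zeta$ and $(\lambda_s/\lambda)\Lambda\zeta$: the naive estimate $|b-\hat b|\lesssim b/|\log b|$ from \fref{boundbootbbhat} only gives $b^2/|\log b|$, which is too weak. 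Here one must exploit the structural identity \fref{zetabig}, namely $\mathcal L\zeta_{\mathrm{big}}=(b-\hat b)\Lambda Q+O(\mathbf 1_{r\ge B_1})$, so that pairing against the zero-mean part of $\Phi_{0,\hat B_0}$ (after subtracting the $(\e,1)$-piece using \fref{estvooee} and \fref{estfonamentalebus}) gains an extra $|\log b|^{-1/2}$ factor, as encoded in the degenerate estimate \fref{degenrateoneone}; $\zeta_{\mathrm{sm}}$ is then controlled directly via \fref{estimatone}. Summing these estimates gives \fref{poitnzeiboud}.
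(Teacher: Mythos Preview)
Your overall strategy is right, and the numerics you extract (the $-16\pi|\log b|$ coefficient, the $32\pi b^2$ flux) are exactly what the paper obtains. However, there is a persistent confusion in your write-up: you say you pair the $\hat\e$ equation with $\Phi_{0,\hat B_0}$, yet every scalar product you then compute (the coefficient of $\hat b_s$, the degenerate flux \fref{degeencos}, the $c_bb^2$ term) is a pairing against $\mathcal L^*\Phi_{0,\hat B_0}$. The orthogonality \fref{orthoehat} only kills $(\partial_s\hat\e,\mathcal L^*\Phi_{0,\hat B_0})$ via differentiation, not $(\partial_s\hat\e,\Phi_{0,\hat B_0})$; so the correct multiplier is $\mathcal L^*\Phi_{0,\hat B_0}$, as your computations already reflect.

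More importantly, the paper organizes the computation differently and thereby avoids the radiation terms altogether. Instead of writing the $\hat\e$ equation, the paper pairs the original $\e$ equation \fref{eqe} with $\mathcal L^*\Phi_{0,\hat B_0}$ and groups $\partial_s\e+\partial_s(\tilde Q_b-Q)$ into a full time derivative of $(\tilde Q_b-Q+\e,\mathcal L^*\Phi_{0,\hat B_0})$. By \fref{defardiation}, \fref{orthoehat}, this boundary term equals $(\tilde Q_{\hat b}-Q,\mathcal L^*\Phi_{0,\hat B_0})$, and differentiating it produces $\hat b_s$ directly via $(\partial_{\hat b}\tilde Q_{\hat b},\mathcal L^*\Phi_{0,\hat B_0})$ with no $\mathcal L\zeta$ or $\Lambda\zeta$ terms appearing. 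The linear term on the RHS is $(\e_2,\mathcal L^*\Phi_{0,\hat B_0})$, sharply bounded by $\sqrt b\,\|\e_2\|_{L^2_Q}$ via \fref{estfonamentalebus} and $(\e_2,1)=0$.

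Your route, by contrast, forces you to estimate $(\mathcal L\zeta,\mathcal L^*\Phi_{0,\hat B_0})$ and $\tfrac{\lambda_s}{\lambda}(\Lambda\zeta,\mathcal L^*\Phi_{0,\hat B_0})$, and the justifications you give are not sufficient as stated. The bound \fref{estimatone} on $\zeta_{\rm sm}$ yields via Cauchy--Schwarz only $\int_{r\ge\hat B_0}|\mathcal L\zeta_{\rm sm}|\lesssim b^{3/2}/|\log b|^2$, hence $b^{3/2}/|\log b|^3$ after division, which is too large; you would need the pointwise estimate \fref{nkeonneonve} instead. The degenerate bound \fref{degenrateoneone} concerns $(\mathcal M\zeta_2,v)$ and does not apply to $(\zeta_2,\mathcal L^*\Phi_{0,\hat B_0})$. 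And for $\Lambda\zeta$ you would need the cancellation $\Lambda T_1=O(|\log r|^2/r^4)$, which you do not invoke. These gaps are all fixable, but the paper's rearrangement simply sidesteps them.
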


\begin{remark} Note that \fref{poitnzeiboud} coupled with the bootstrap bound \fref{bootsmallh2q} ensures the pointwise control: 
\be
\label{pointziender}
|\hat{b}_s|\lesssim \frac{b^2}{|\log b|}.
\ee
\end{remark}

\begin{proof}[Proof of Lemma \ref{lemmasharpmod}]

{\it Proof of \fref{poitnzeiboud}}: We take the scalar product of \fref{eqe} with $\mathcal L^*\Phi_{0,\hat{B}_0}$:
\bea
\label{eiqoheoeh}
\nonumber \frac{d}{ds}\left\{(\qbt-Q+\e,\mathcal L^*\Phi_{0,\hat{B}_0})\right\}& = & (\qbt-Q+\e,\pa_s\mathcal L^*\Phi_{0,\hat{B}_0})+(\e_2,\mathcal L^*\Phi_{0,\hat{B}_0})\\
 & - & c_bb^2(\chi_{\frac{B_0}{4}}T_1,\mathcal L^*\Phi_{0,\hat{B}_0})\\
\nonumber & + & \lsl(\Lambda \e,\mathcal L^*\Phi_{0,B})+(F,\mathcal L^*\Phi_{0,\hat{B}_0})\\
\nonumber & + & \left(\lsl+b\right)(\Lambda \qbt,\mathcal L^*\Phi_{0,\hat{B}_0})
\eea
and estimate all terms in \fref{eiqoheoeh}. For the boundary term in time, we have from \fref{defardiation}, \fref{orthoehat}:
$$(\qbt-Q+\e,\mathcal L^*\Phi_{0,\hat{B}_0})=(\tilde{Q}_{\hat{b}}-Q+\hat{\e},\mathcal L^*\Phi_{0,\hat{B}_0})=(\tilde{Q}_{\hat{b}}-Q,\mathcal L^*\Phi_{0,\hat{B}_0}).$$  We then compute:
$$
\frac{d}{ds}(\tilde{Q}_{\hat{b}}-Q,\mathcal L^*\Phi_{0,\hat{B}_0})=  (\pa_s\tilde{Q}_{\hat{b}},\mathcal L^*\Phi_{0,\hat{B}_0})+(\tilde{Q}_{\hat{b}}-Q,\mathcal L^*\pa_s\Phi_{0,\hat{B}_0}).
$$
The leading order term is estimated using \fref{bjebbeibei}:
\bee
 &&(\pa_s\tilde{Q}_{\hat{b}},\mathcal L^*\Phi_{0,\hat{B}_0}) = \hat{b}_s(\tt_1+\hat{b}\pa_{b}\tt_1+2\hat{b}\tt_2+\hat{b}^2\pa_b\tt_2,\mathcal L^*\Phi_{0,\hat{B}_0})\\
 & = & \hat{b}_s(T_1,\mathcal L^*\Phi_{0,\hat{B}_0})+|\hat{b}_s|O\left(\int |\tt_1-T_1|+b|\pa_b\tt_1|+b|\tt_2|+b^2|\pa_b\tt_2|\right).
 \eee
 We have from \fref{orthophim}:
 $$ \hat{b}_s(T_1,\mathcal L^*\Phi_{0,\hat{B}_0})=\hat{b}_s (\Lambda Q,\Phi_{0,\hat{B}_0})=\hat{b}_s\left[-(32\pi) \log \hat{B}_0+O(1)\right],
$$ and we estimate the error terms un brute force using the estimates of Proposition \ref{proploc}:
 \bee
&& \int |\tt_1-T_1|+\bh|\pa_b\tt_1|+\hat{b}|\tt_2|+\hat{b}^2|\pa_b\tt_2| \lesssim  \int_{B_1\leq r\leq 2B_1}\frac{1}{r^2}\\
& + & b\int \left[\frac{1 + |\log (r\sqrt b)|}{|\log b|}{\bf 1}_{1\leq r\leq 6B_0}+\frac{1}{b^2r^4|\log b|}{\bf 1}_{ 6B_0\leq r\leq 2B_1}\right]\\
& \lesssim & 1.
 \eee
 We also estimate using the definition \fref{defphimzero} and Proposition \ref{proploc}:
 \be
 \label{nkeonvorononore}
|(\tilde{Q}_{\hat{b}}-Q,\pa_s\mathcal L^*\Phi_{0,\hat{B}_0})|\lesssim |\hat{b}_s|\int_{\hat{B}_0\leq r\leq 2\hat{B}_0}r^2|\mathcal L(\tilde{Q}_{\hat{b}}-Q)|\lesssim |\hat{b}_s|\int_{\frac{B_0}{4}\leq r\leq 4B_0}\frac{r^2}{r^4}\lesssim |\hat{b}_s|
.\ee
We therefore have obtained the preliminary bound using also \fref{boundbootbbhat}, \fref{estnieneoifn}:
$$
\frac{d}{ds}\left\{(\qbt-Q+\e,\mathcal L^*\Phi_{0,\hat{B}_0})\right\}=\hat{b}_s\left[-(32\pi) \log B_0+O(1)\right].$$
We now estimate the RHS of \fref{eiqoheoeh}. The main linear term is treated using \fref{estvooee}, \fref{estfonamentalebus}  which yield a sharp bound:
$$\left|(\e_2,\mathcal L^*\Phi_{0,\hat{B}_0})\right|\lesssim \int_{r\geq \hat{B}_0}|\e_2|\lesssim \sqrt{b}\|\e_2\|_{L^2_Q}.$$ We estimate like for the proof of \fref{nkeonvorononore}:
\bee
|(\qbt-Q,\pa_s\mathcal L^*\Phi_{0,\hat{B}_0})|\lesssim |\hat{b}_s|.
\eee
The second linear term is estimated in brute force:
\bee
|(\e,\pa_s\mathcal L^*\Phi_{0,\hat{B}_0})|&= & \left|(\e_2,\frac{(\hat{B}_0)_s}{\hat{B}_0}(r\chi')(\frac{r}{\hat{B}_0})r^2)\right|\lesssim \frac{|\hat{b}_s|}{b}\|\e_2\|_{L^2_Q}\left(\int_{B_0\leq r\leq 2B_0} \frac{r^4}{r^4}\right)^{\frac 12}\\
& \lesssim & \frac{|\hat{b}_s|}{b\sqrt{b}}\|\e_2\|_{L^2_Q}\lesssim |\hat{b}_s|
\eee
where we used the bootstrap bound \fref{bootsmallh2q} in the last step.
The leading order {\it flux} term is computed from \fref{orthophim}, \fref{bjebbeibei}, \fref{defcb}, \fref{estnieneoifn}:
\bee
&&- c_bb^2(\chi_{\frac{B_0}{4}}T_1,\mathcal L^*\Phi_{0,\hat{B}_0})=  -c_bb^2\left[(\matchal LT_1,\Phi_{0,\hat{B}_0})+((\chi_{\frac{B_0}{4}}-1)T_1,\mathcal L^*\Phi_{0,\hat{B}_0})\right]\\
& = & \frac{2b^2}{|\log b|}\left[1+O\left(\frac{1}{|\log b|}\right)\right]\left[32\pi\log B_0+O(1)+\int_{r\leq 2\hat{B}_0}\frac{{\bf 1}_{r\geq \frac{B_0}{4}}}{1+r^2}\right]\\
& = & 32\pi b^2\left[1+O\left(\frac{1}{|\log b|}\right)\right].
\eee
Next, from \fref{bjebbeibei}, \fref{bootsmallh2q}:
\bee
\left|\lsl(\Lambda \e,\mathcal L^*\Phi_{0,\hat{B}_0})\right|& \lesssim & b\int_{r\leq 2B_0}(|\e|+|y\cdot\nabla \e|)\lesssim  \sqrt{b}(\|\e\|_{L^2}+\|y\cdot\nabla \e\|_{L^2})\\
& \lesssim & C(M)\sqrt{b}\|\e_2\|_{L^2_Q}. 
\eee

We now treat the $F$ terms. The $\Psit_b$ term is estimated from the degenerate flux estimate \fref{degeencos}:
$$|(\mathcal L\Psit_b,\Phi_{0,\hat{B}_0})|\lesssim \frac{b^2}{|\log b|}.$$
To treat the small linear term $\Theta_b(\e)$, we first extract from Proposition \ref{propblowup} the pointwise bounds:
 \be
 \label{opojinonvo}
 |r^i\pa_r^i(\qbt-Q)|\lesssim b\frac{{\bf 1}_{r\leq 2B_1}}{1+r^2}, \ \ | r^i\pa_r^i\nabla \phi_{\qbt-Q}|\lesssim b\frac{1+|\log r|}{1+r}
 \ee
which imply the pointwise bounds:
 \be
 \label{poietpoeutone}
 |\Theta_b(\e)|\lesssim b\left[\frac{1+|\log r|}{1+r} |\nabla \e|+\frac{|\e|}{1+r^2}+\frac{|\nabla \phi_\e|}{1+r^3}\right],
 \ee
 \be
 \label{poietpoeuttwo}
 |\nabla\Theta_b(\e)|\lesssim  b\left[\frac{1+|\log r|}{1+r} |\nabla^2 \e|+\frac{1+|\log r|}{1+r^2}|\nabla \e|+\frac{|\nabla^2 \phi_\e|+|\e|}{1+r^3}+\frac{|\nabla \phi_\e|}{1+r^4}\right].
 \ee
We also observe by definition the cancellation $(\Theta_b(\e),1)=0$ and therefore estimate from \fref{estfonamentalebus} and Proposition \ref{bootbound}:
\bee
|(\Theta_b(\e),\mathcal L^*\Phi_{0,\hat{B}_0})|&\lesssim & \int_{r\geq \hat{B}_0}|\Theta_b(\e)|\lesssim b\int \left[\frac{(1+|\log r|)|\nabla \e|}{1+r}+\frac{|\e|}{1+r^2}+\frac{|\nabla \phi_\e|}{1+r^3}\right]\\
& \lesssim& bC(M)\|\e_2\|_{L^2_Q}\lesssim \frac{b^2}{|\log b|}
\eee
and similarly for the nonlinear term using \fref{interpolationfield}:
\bee
|(N(\e),\mathcal L^*\Phi_{0,\hat{B}_0})|&\lesssim& \int|N(\e)|\lesssim \int \e^2+|\nabla \phi_\e||\nabla \e|\lesssim |\log b|^C\|\e_2\|_{L^2_Q}^2\lesssim \frac{b^2}{|\log b|}.
\eee
We now treat the terms induced by modulation. We use the $L^1$ critical relation $\int \Lambda Q=\int \Lambda \qbt=0$, \fref{estfonamentalebus}, \fref{cnkonenoeoeoi3poi} and \fref{esttoneproploc}, \fref{esttwoloc} to estimate:
\bee
&&\left|\lsl+b\right||(\mathcal L\Lambda \qbt,\Phi_{0,\hat{B}_0})\lesssim  b^{\frac 32}\int_{r\geq \hat{B}_0} |\Lambda (\qbt-Q)|\\
& \lesssim &b^{\frac 32}\int_{\frac{B_0}{2}\leq r\leq 2B_1} \frac{b}{1+r^2}\lesssim \frac{b^2}{|\log b|}.
\eee
Injecting the collection of above bounds into \fref{eiqoheoeh} yields:
$$\hat{b}_s\left[-(32\pi) \log B_0+O(1)\right]=32\pi b^2+O\left(\frac{b^2}{|\log b|}+C(M)\sqrt{b}\|\e_2\|_{L^2_Q}+|\hb_s|\right).$$
Hence using \fref{estnieneoifn}:
\bee
\left|\tilde{b}_s+\frac{b^2}{|\log B_0|}\right|&\lesssim &\frac{1}{|\log B_0|}\left[\frac{b^2}{|\log b|}+C(M)\sqrt{b}\|\e_2\|_{L^2_Q}+|\hb_s|\right]\\
& \lesssim &
\frac{b^2}{|\log b|^2}+C(M)\frac{\sqrt{b}}{|\log b|}\|\e_2\|_{L^2_Q}+\frac{|\hb_s|}{|\log b|}
\eee
and \fref{poitnzeiboud} now follows using the bootstrap bound \fref{bootsmallh2q}.
\end{proof}

%%%%%%%%%%%%%%%%%%%%%%%%%%%%%%%%%%%%%%%%%%%%%%%%%%%
%%%%%%%%%%%%%%%%%%%%%%%%%%%%%%%%%%%%%%%%%%%%%%%%%%%

\subsection{The monotonicity formula}

%%%%%%%%%%%%%%%%%%%%%%%%%%%%%%%%%%%%%%%%%%%%%%%%%%%
%%%%%%%%%%%%%%%%%%%%%%%%%%%%%%%%%%%%%%%%%%%%%%%%%%%

We now turn to the heart of the energy method and claim the following monotonicity formula at the $H^2_Q$ level:

\begin{proposition}[$H^2_Q$ monotonicity]
\label{htwoqmonton}
There holds:
\bea
\label{eetfonafmetea}
&&\frac{d}{dt}\left\{\frac{(\mathcal M \e_2,\e_2)}{\l^2}+O\left(\frac{C(M)b^{\frac32}}{|\log b|}\|\e_2\|_{L^2_Q}+\frac{C(M)b^3}{\l^2|\log b|^2}\right)\right\}\\
\nonumber &  \leq &  \frac{bC(M)}{\l^4}\left[\frac{b^{\frac32}}{|\log b|}\|\e_2\|_{L^2_Q}+\frac{b^3}{|\log b|^2}\right].
\eea

\end{proposition}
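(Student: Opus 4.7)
The strategy is to implement an energy method in the original (unrescaled) variables: set $w := \e_\lambda$ and $w_2 := \mathcal L_\lambda w$, and propagate the linearized entropy $\tfrac{1}{\lambda^2}(\mathcal M_\lambda w_2, w_2)$ as Lyapunov functional. A direct change of variables using \ref{resacling}--\ref{neneoneonoeo} shows that this quantity equals $(\mathcal M \e_2,\e_2)/\lambda^2$. Differentiating in $t$, using \ref{defflambda}, the self-adjointness \ref{efadjointness} and the representation $\mathcal L = \nabla\cdot(Q\nabla\mathcal M\cdot)$, one obtains schematically
\begin{equation*}
\frac{d}{dt}\frac{(\mathcal M \e_2,\e_2)}{\lambda^2} \;=\; -\frac{2}{\lambda^6}\int Q|\nabla \mathcal M \e_2|^2 \;+\; \frac{2}{\lambda^6}(\mathcal M\e_2,\mathcal L\mathcal F) \;+\; \text{commutator terms}.
\end{equation*}
The dissipative core, together with the coercivity \ref{coerclwotht}--\ref{idemnonraidal} of Proposition \ref{interpolationhtwo} under the orthogonality conditions \ref{orthoe}, controls $\|\e_2\|_{L^2_Q}^2/\lambda^6$ up to $M$-losses and will absorb all subcritical cross-terms through a Cauchy--Schwarz split with a small parameter $\eta$.

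The forcing $\mathcal F = \tilde\Psi_b + \Theta_b(\e) + N(\e) + \mathrm{Mod} + E$ splits into four distinguishable pieces. For the main profile error, Cauchy--Schwarz against the dissipation combined with the sharp weighted bound \ref{roughboundltaowlocbisbis} gives
\begin{equation*}
\frac{1}{\lambda^6}|(\mathcal M\e_2,\mathcal L\tilde\Psi_b)|\;\le\;\frac{\eta}{\lambda^6}\int Q|\nabla\mathcal M\e_2|^2 \;+\;\frac{C_\eta}{\lambda^6}\int Q|\nabla\mathcal M\tilde\Psi_b|^2\;\lesssim\;\frac{\eta}{\lambda^6}\int Q|\nabla\mathcal M\e_2|^2+\frac{C_\eta}{\lambda^6}\frac{b^4}{|\log b|^2},
\end{equation*}
which matches the target RHS of \ref{eetfonafmetea} after observing $1/\lambda^6 = b/\lambda^4 \cdot (1/b\lambda^2)$ and reshuffling via the bootstrap. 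The linear perturbation $\Theta_b(\e)$ is handled by the pointwise bounds \ref{poietpoeutone}--\ref{poietpoeuttwo} together with the weighted Hardy interpolation used throughout; the nonlinear term $N(\e)$ is quadratic in $\e$ and absorbed by the bootstrap \ref{bootsmallh2q} at the cost of a large $C(M)$; the $\mathrm{Mod}$ contribution is harmless because $|\lambda_s/\lambda+b|\lesssim b^2/|\log b|$ by Lemma \ref{lemmeparam} provides the extra gain. The commutator terms arising from the $b\Lambda\e$ transport, which following the discussion after \ref{nkenenenoe} cannot be treated perturbatively, are precisely what is tamed by working with $w_2$ in original variables: the time derivative of the $\lambda$-dependent operator $\mathcal L_\lambda$ generates $[\Lambda,\mathcal L]$-type terms whose local-on-core part is absorbed by the dissipation and whose tail part is controlled by the repulsivity structure exhibited in Step 2 of the proof of Proposition \ref{interpolationhtwo}.

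The main obstacle is the modulation term $E=-b_s\partial_b\tilde Q_b - c_b b^2\chi_{B_0/4}T_1$: the rough pointwise bound $|b_s|\lesssim b^{3/2}$ from Lemma \ref{lemmeparam} only delivers an error of order $b^{3/2}\|\e_2\|_{L^2_Q}/\lambda^6$, falling short of the target $b^4/(\lambda^6|\log b|^2)$ by a factor $b/|\log b|$. The remedy is the lifted decomposition of Lemma \ref{lemmaradiaiton} and the sharp bound $|\hat b_s|\lesssim b^2/|\log b|$ of Lemma \ref{lemmasharpmod}. One rewrites
\begin{equation*}
E \;=\; -\hat b_s\, \partial_b \tilde Q_b \;-\; \bigl[(b_s-\hat b_s)\partial_b\tilde Q_b + c_b b^2\chi_{B_0/4}T_1\bigr],
\end{equation*}
and integrates the $\hat b_s$ piece by parts in time against $\mathcal M\e_2/\lambda^2$, which produces exactly the boundary term $O\bigl(C(M)b^{3/2}\|\e_2\|_{L^2_Q}/|\log b|\bigr)$ that is built into the braces on the LHS of \ref{eetfonafmetea}, and an admissible flux of size $b\cdot b^{3/2}\|\e_2\|_{L^2_Q}/(|\log b|\lambda^4)$. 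The remaining bracket is, by construction of $\hat b$, the $\mathcal L$-image of the radiation $\zeta=\tilde Q_b-\tilde Q_{\hat b}$ of \ref{defardiation}; here the degenerate estimates \ref{noenoeno} and \ref{degenrateonebis} of Lemma \ref{lemmaradiation} play the decisive role, because even though $\zeta_2=\mathcal L\zeta$ is large in $L^2_Q$, the $\mathcal M$-weighted pairings controlling the radiation are only of size $b^3/|\log b|^2$ and $b^4/|\log b|^2$ respectively. The algebraic fact that the $\partial_b\tilde Q_b$ correction, the flux $c_b b^2\chi T_1$ and the radiation $\zeta$ compensate with exactly the right powers of $b$ is the subtle point of the whole scheme and is the key obstacle in making the time integration by parts close the bootstrap with the sharp exponents announced in \ref{eetfonafmetea}.
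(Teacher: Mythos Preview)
Your overall architecture is right---energy method on $w_2=\mathcal L_\lambda w$ in original variables, with the $b_s$ obstruction resolved via the lifted parameter $\hat b$ of Lemma~\ref{lemmaradiaiton} and the degenerate radiation bounds of Lemma~\ref{lemmaradiation}. However, there is a genuine gap in your treatment of what you call the ``commutator terms''.

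The term you are glossing over is not a commutator at all: differentiating $(\mathcal M_\lambda w_2,w_2)$ produces, from $\partial_t(1/Q_\lambda)$, the quadratic contribution
\[
-\int\frac{\partial_t Q_\lambda}{2Q_\lambda^2}\,w_2^2 \;=\; -\frac{1}{2\lambda^2}\int\frac{(\Lambda Q)_\lambda}{Q_\lambda^2}\,\Bigl(\tfrac{\lambda_s}{\lambda}\Bigr)w_2^2,
\]
and after splitting $\lambda_s/\lambda=(\lambda_s/\lambda+\hat b)-\hat b$ the piece $-\tfrac{\hat b}{2\lambda^2}\int\frac{(\Lambda Q)_\lambda}{Q_\lambda^2}w_2^2$ has \emph{critical size} $\hat b\|\e_2\|_{L^2_Q}^2/\lambda^6$ and no sign, since $\Lambda Q/Q\to -2$ at infinity. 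It cannot be absorbed by dissipation, and the subcoercivity Pohozaev computation in Proposition~\ref{interpolationhtwo} is a different repulsivity (for $\mathcal L$, not for this potential). The paper's mechanism is a Morawetz-type correction: one adds to the energy the auxiliary functionals
\[
\int\frac{\hat b(\Lambda Q)_\lambda}{\lambda^2 Q_\lambda^2}\hat w_2\hat w\;-\;2\int\frac{\hat b\,\hat w_1^2}{\lambda^2 Q_\lambda},
\]
whose time derivatives produce exactly the opposite of the bad term plus $+\tfrac{2\hat b}{\lambda^2}(\mathcal M_\lambda\hat w_2,\hat w_2)$. The key algebraic ingredients are the $L^1$-critical degeneracy $\Lambda Q/Q+2=-\phi_{\Lambda Q}=O(1/(1+r^2))$ and the cancellation $\int\frac{\hat\e_2\hat\e+\hat\e_1^2}{Q}=\int\hat\e_1\cdot\nabla\phi_{\hat\e}$, which together ensure that the remainder terms carry an extra $1/(1+r^2)$ decay. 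The surviving $+\tfrac{\hat b}{\lambda^6}(\mathcal M\hat\e_2,\hat\e_2)$ is then killed by multiplying the whole identity by $\lambda^2$: the product rule produces $-\tfrac{\lambda_s/\lambda}{\lambda^4}(\mathcal M\hat\e_2,\hat\e_2)$, and the two combine into $\tfrac{1}{\lambda^4}|\lambda_s/\lambda+\hat b|\cdot|(\mathcal M\hat\e_2,\hat\e_2)|\lesssim \tfrac{b}{|\log b|\lambda^4}\|\e_2\|_{L^2_Q}^2$, now small enough.

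A second, related point you miss: the term with $(\lambda_s/\lambda+\hat b)$ in front of $\int\frac{(\Lambda Q)_\lambda}{Q_\lambda^2}\hat w_2^2$ is only log-small, while $\hat w_2=\e_2+\zeta_2$ contains the large radiation $\zeta_2\sim(b-\hat b)\Lambda Q$. Here one needs the algebraic identity $\mathcal M(\Lambda^2 Q)=(\Lambda Q/Q)^2$ and the vanishing $\int(\Lambda Q/Q)^2\Lambda Q=0$, together with an integration by parts against $\nabla\mathcal M\e_2$ to trade this term against dissipation. Finally, note that the paper does \emph{not} integrate the $\hat b_s$ piece by parts in time as you propose; instead it shifts the entire energy method to the $\hat\e$ variable from the outset, so that the forcing carries $\hat b_s$ (with the sharp bound $|\hat b_s|\lesssim b^2/|\log b|$) directly, and the radiation enters as the separate forcing $E_1=-\mathcal L\zeta-(\lambda_s/\lambda)\Lambda\zeta$ controlled by Lemma~\ref{lemmaradiation}. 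Your integration-by-parts route may be workable, but the boundary terms it produces are not the ones in \eqref{eetfonafmetea}.
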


{\it Proof of Proposition \ref{htwoqmonton}}\\

{\bf step 1} Introduction of the radiation.\\

The $\e$ equation \fref{eqe} is forced on the RHS by the $b_s$ term which satisfies pointwise the weak estimate \fref{estb} only which is not good enough to close \fref{htwoqmonton}. This is a consequence of the fact that the elements of the kernel of $\mathcal L^*$ grow too fast in space. We therefore aim at relying on the sharp bounds of Lemma \ref{lemmasharpmod} and on the lifted parameter $\bh$. Recall \fref{defardiation} which yields $$\pa_s(\qbt+\e)=\pa_s(\tilde{Q}_{\bh}+\hat{\e}), \ \ \e=\eh-\zeta,$$ and rewrite the $\e$ equation \fref{eqe} as:

\be
\label{eqehat}
\pa_s\eh-\lsl\Lambda \eh=\mathcal L\eh+\hat{\mathcal F}, \ \ \hat{\mathcal F}=F+\widehat{\Mod}+E_1+E_2,
\ee
with $F$ given by \fref{deff} and:
\be
\label{dehhate}
E_1=-\mathcal L\z-\lsl\Lambda \z,\\
\ee
\be
\label{etwobis}
 E_2=-c_b b^2 \chi_{\frac{B_0}{4}}T_1,
\ee
\be
\label{mohat}
\widehat{\Mod}=-\pa_s\tilde{Q}_{\bh}+\left(\lsl+b\right)\tilde{Q}_{b}.
\ee

{\bf step 2} Renormalized variables and energy identity.  Let $$ w=\e_{\l},  \ \ \hat{w}=\eh_{\l}.$$ We introduce the differential operator of order one:
\be
\label{defal}
A_\l w=Q_\l\nabla \mathcal M_\l w=\nabla w+\nabla \phi_{Q_\l}w+Q_\l\nabla \phi_w
\ee and the suitable derivatives\footnote{which correspond to the factorization of  the linearized operator $\mathcal L$ in two operators of order one: $\mathcal L=\nabla \cdot(A)$.}
\be
\label{denkofnekoneovnk}
\left\{\begin{array}{ll}w_1=A_\lambda w, \ \ \wh_1=A_\lambda \wh\\ \e_1=A\e, \ \ \eh_1=A\eh
\end{array}\right ., \  \ \left\{\begin{array}{ll}w_2=\nabla \cdot w_1=\mathcal L_\l w, \ \ \wh_2=\nabla \cdot \wh_1=\mathcal L_\l\wh\\ \e_2=\nabla \cdot \e=\mathcal L\e,\ \ \eh_2=\nabla \cdot \eh=\mathcal L\eh\end{array}\right.
\ee
and similarly for the radiation: $$\zeta_1=A\zeta, \ \ \zeta_2=\mathcal L\zeta.$$
We compute from \fref{eqehat} the equation satisfied by $\hw, \hw_1,\hw_2$:
\be
\label{eqw}
\pa_t\hw=\hw_2+\frac{1}{\l^2}\mathcal \hF_\l,
\ee
\be
\label{eqwone}
\pa_t\hw_1=A_\l \hw_2+\frac{1}{\l^2}A_\l\mathcal \hF_\l+[\pa_t,A_\l]\hw,
\ee
\be
\label{eqwtwo}
\pa_t\hw_2=\matchal L_\l \hw_2+\frac{1}{\l^2}\mathcal L_\l \mathcal \hF_\l+\nabla \cdot([\pa_t,A_\l]\hw).
\ee
We compute the energy identity for $\hw_2$, and will all along the proof repeatedly use \fref{neneoneonoeo}:
\bea
\label{cenoneoneo}
\nonumber &&\frac{1}{2}\frac{d}{dt}(\mathcal M_\l \hw_2,\hw_2) =    (\pa_t\hw_2,\mathcal M_\l \hw_2)-\int\frac{\pa_tQ_\l}{2Q_\l^2}\hw_2^2\\
\nonumber & =&  (\mathcal L_\l \hw_2+\frac{\mathcal L_\l\mathcal \hF_\l}{\l^2}+\nabla \cdot \left([\pa_t,A_\l]\hw\right),\mathcal M_\l \hw_2)-\int\frac{\pa_tQ_\l}{2Q_\l^2}\hw_2^2\\
\nonumber & = & -\int Q_\l|\nabla \mathcal M_\l \hw_2|^2+(\hw_2,\frac{1}{\l^2}\mathcal M_\l \mathcal L_\l F_\l)-\int  [\pa_t,A_\l]\hw\cdot \nabla (\mathcal M_\l \hw_2)\\
\nonumber & + &\int \frac{\hw_2^2}{\l^2Q_\l^2}\left[ \left(\lsl+\bh\right)(\Lambda Q)_{\l}\right]-  \int\frac{\bh(\Lambda Q)_\l}{2\l^2Q_\l^2}\hw_2^2.
\eea

{\bf step 3} Repulsivity and sharp control of tails at infinity. We need to treat the last quadratic term in \fref{cenoneoneo} which has no definite sign and critical size for the analysis, and corresponds to an interaction of radiation with the soliton profile. For this, we will use dissipation and repulsivity properties of $\mathcal L$ measured in suitable weighted norms. We will in particular use the fundamental degeneracy \fref{philambdaq}\footnote{which is  a consequence of the $L^1$ critical identity $\int \Lambda Q=0$.}:
\be
\label{gfijgbeigei}
\frac{\Lambda Q}{Q}+2=-\phi_{\Lambda Q}=O\left(\frac{1}{1+r^2}\right),
\ee
 which will allow us to measure in a sharp way the size of tails at infinity. Our approach is deeply related to the construction of suitable Morawetz type multiplier in the dispersive setting of \cite{RaphRod}, \cite{MRR}.\\
We compute from \fref{eqw}, \fref{eqwtwo}:
\bee
&&\frac{d}{dt}\left\{\int \frac{\bh(\Lambda Q)_{\l}}{\l^2Q_\l^2}\hw_2\hw\right\}=\int \hw_2\hw\frac{d}{dt}\left\{\frac{\bh(\Lambda Q)_{\l}}{\l^2Q_\l^2}\right\}\\
& + & \int \frac{\bh(\Lambda Q)_{\l}}{\l^2Q_\l^2}\hw\left[\mathcal L_\l \hw_2+\frac{1}{\l^2}\mathcal L_\l \mathcal \hF_\l+\nabla \cdot([\pa_t,A_\l]\hw)\right]+  \int \frac{\bh(\Lambda Q)_{\l}}{\l^2Q_\l^2}\hw_2\left[\hw_2+\frac{1}{\l^2}\mathcal\hF_\l\right].
\eee
Let us compute the leading order terms in this identity. First the potential term:
\bee
&&\frac{d}{dt}\left\{\frac{\bh(\Lambda Q)_{\l}}{\l^2Q_\l^2}\right\} =  \frac{d}{dt}\left\{\bh\left(\frac{\Lambda Q}{Q^2}\right)\left(\frac{x}{\l(t)}\right)\right\}\\
& = & \frac{1}{\l^2}\left[\bh_s\frac{\Lambda Q}{Q^2}-\bh\lsl y\cdot\nabla\left(\frac{\Lambda Q}{Q^2}\right)\right](y)\\
& = & \frac{1}{\l^2Q}\left[8\bh\lsl-2\bh_s+O\left(\frac{|\bh_s|+\bh|\lsl|}{1+r^2}\right)\right].
\eee
Next, we observe the integration by parts:
\bee
&&\int \frac{\bh(\Lambda Q)_{\l}}{\l^2Q_\l^2}\hw\mathcal L_\l \hw_2 = -2\bh\int \frac{\hw\mathcal L_\l \hw_2}{\l^2Q_\l}+\int \frac{\bh(2Q+\Lambda Q)_{\l}}{\l^2Q_\l^2}\hw\mathcal L_\l \hw_2\\
& = & \frac{2\bh}{\l^2}\int Q_\l \nabla(\mathcal M_\l \hw_2)\cdot\nabla\left[\frac{\hw}{Q_\l}+\phi_{\hw}\right]-\frac{\bh}{\l^2}\int Q_\l \nabla (\mathcal M_\l \hw_2)\cdot\nabla \left[2\phi_{\hw}+\frac{(2Q+\Lambda Q)_{\l}}{\l^2Q_\l^2}\hw\right]\\
& = & -\frac{2\bh}{\l^2}(\mathcal M_\l \hw_2,\hw_2)-\frac{\bh}{\l^2}\int Q_\l \nabla (\mathcal M_\l \hw_2)\cdot\nabla \left[2\phi_{\hw}+\frac{(2Q+\Lambda Q)_{\l}}{\l^2Q_\l^2}\hw\right].
\eee
We therefore obtain:
\bea
\label{cnioeeoheo}
\nonumber &&\frac{d}{dt}\left\{\int \frac{\bh(\Lambda Q)_{\l}}{\l^2Q_\l^2}\hw_2\hw\right\}= \int\frac{\bh(\Lambda Q)_\l}{\l^2Q_\l^2}\hw_2^2-\frac{2\bh}{\l^2}(\mathcal M_\l \hw_2,\hw_2)\\
\nonumber & + & \int\frac{\eh\eh_2}{\l^6Q}\left[8\bh\lsl-2\bh_s\right]\\
\nonumber & -&\frac{\bh}{\l^2}\int Q_\l \nabla (\mathcal M_\l \hw_2)\cdot\nabla \left[2\phi_{\hw}+\frac{(2Q+\Lambda Q)_{\l}}{\l^2Q_\l^2}\hw\right]\\
\nonumber & + &   \int \frac{\bh(\Lambda Q)_{\l}}{\l^2Q_\l^2}\hw\left[\frac{1}{\l^2}\mathcal L_\l \mathcal \hF_\l+\nabla \cdot([\pa_t,A_\l]\hw)\right]+  \int \frac{\bh(\Lambda Q)_{\l}}{\l^2Q_\l^2}\hw_2\left[\frac{1}{\l^2}\mathcal \hF_\l\right]\\
& + & O\left(\int \left[|\bh_s|+\bh|\lsl|\right]\frac{\eh_2\eh}{\l^6(1+r^2)Q}\right)
\eea
We now compute the energy identity for $\hw_1$ using \fref{eqwone}:
\bee
\frac{1}{2}\frac{d}{dt}\left\{\int \frac{\bh\hw_1^2}{\l^2Q_\l}\right\}=\frac12\int \hw_1^2\frac{d}{dt}\left\{\frac{\bh}{\l^2Q_\l}\right\}+\bh\int\frac{\hw_1}{\l^2Q_\l}\cdot\left[ A_\l \hw_2+\frac{1}{\l^2}A_\l\mathcal \hF_\l+[\pa_t,A_\l]\hw\right].
\eee
We compute the dominant terms:
\bee
& & \frac{d}{dt}\left\{\frac{\bh}{\l^2Q_\l}\right\}=\frac{d}{dt}\left\{\frac{\bh}{Q\left(\frac{x}{\l(t)}\right)}\right\}=\frac{1}{\l^2}\left[\frac{\bh_s}{Q}+\bh\frac{\lsl y\cdot\nabla Q}{Q^2}\right]\\
& = & \frac{1}{\l^2Q}\left[\bh_s-4\bh\lsl+O\left(\frac{|\bh_s|+\bh|\lsl|}{1+r^2}\right)\right],\\
\eee
and integrate by parts using \fref{defal} and the definition of $\hw_i$:
$$
\bh\int\frac{\hw_1}{\l^2Q_\l}\cdot A_\l \hw_2=-\frac{\bh}{\l^2}(\mathcal M_\l \hw_2,\nabla \cdot \hw_1)=-\frac{\bh}{\l^2}(\mathcal M_\l \hw_2,\hw_2).$$ This leads to the second identity\footnote{This identity reflects the fact that the dual Hamiltonian $\nabla \cdot A_\l$ driving the $w_1$ equation \fref{eqwone} is repulsive, and this is reminiscent from geometric equations, see \cite{RodSter}, \cite{RaphRod}.}:
\bee
& & \frac{1}{2}\frac{d}{dt}\left\{\int \frac{\bh \hw_1^2}{\l^2Q_\l}\right\}=-\frac{\bh}{\l^2}(\mathcal M\hw_2,\hw_2)+  \int \frac{\eh_1^2}{2\l^6Q}\left[\bh_s-4\bh\lsl\right]\\
& + & \bh\int\frac{\hw_1}{\l^2Q_\l}\left[ \frac{1}{\l^2}A_\l\mathcal \hF_\l+[\pa_t,A_\l]\hw\right]+  O\left(\int \frac{\eh_1^2}{\l^6Q}\left[\frac{|\bh_s|+\bh|\lsl|}{1+r^2}\right]\right).
\eee
We combine this with \fref{cnioeeoheo} to derive:
\bee
&&\frac{d}{dt}\left\{\int \frac{\bh(\Lambda Q)_{\l}}{\l^2Q_\l^2}\hw_2\hw-2\int \frac{\bh\hw_1^2}{\l^2Q_\l}\right\}=\int\frac{\bh(\Lambda Q)_\l}{\l^2Q_\l^2}\hw_2^2+ \frac{2\bh}{\l^2}(\mathcal M_\l \hw_2,\hw_2)\\
& + &  \int\frac{\eh \eh_2+\eh_1^2}{\l^6Q}\left[8\bh\lsl-2\bh_s\right]\\
\nonumber & -&\frac{\bh}{\l^2}\int Q_\l \nabla (\mathcal M_\l \hw_2)\cdot\nabla \left[2\phi_{\hw}+\frac{(2Q+\Lambda Q)_{\l}}{\l^2Q_\l^2}\hw\right]\\
\nonumber & + &   \int \frac{\bh(\Lambda Q)_{\l}}{\l^2Q_\l^2}\left[\frac{\hw}{\l^2}\mathcal L_\l \mathcal \hF_\l+\hw_2\frac{1}{\l^2}\mathcal \hF_\l\right]-4\bh\int\frac{\hw_1}{\l^2Q_\l}\cdot\left[ \frac{1}{\l^2}A_\l\mathcal \hF_\l\right]\\
& + & \int \frac{\bh(\Lambda Q)_{\l}}{\l^2Q_\l^2}\hw\nabla \cdot([\pa_t,A_\l]\hw)-  4\bh\int\frac{w_1}{\l^2Q_\l}\cdot[\pa_t,A_\l]\hw\\
& + & O\left(\int \frac{|\eh_2\eh|+|\eh_1|^2}{\l^6Q(1+r^2)}(|\bh_s|+\bh|\lsl|)\right).
\eee
We now observe by integration by parts:
$$\int \frac{\eh_2\eh}{Q}=-\int Q\nabla\mathcal M\eh\cdot\nabla\left(\frac{\eh}{Q}\right)=-\int\frac{\eh_1^2}{Q}+\int Q\nabla \mathcal M \eh\cdot\nabla \phi_{\eh}$$ and hence the cancellation\footnote{which corresponds to an improved decay at infinity, each term in the left hand side of \fref{nioennene} being too slowly decaying at infinity to be treated separately.} :
\be
\label{nioennene}
\int \frac{\eh_2\eh+\eh_1^2}{Q}=\int \eh_1\cdot\nabla \phi_{\eh}.
\ee
Similarily:
\bee
&&-\int \frac{2}Q(\eh\mathcal L\mathcal \hF+\eh_2\mathcal \hF)-4\int\frac{1}{Q} \eh_1\cdot A\mathcal \hF\\
& =&  2\int Q\nabla \matchal M\mathcal \hF\cdot\nabla (\mathcal M\eh-\phi_{\eh})+2\int Q\nabla \matchal M\mathcal \eh\cdot\nabla (\mathcal M\mathcal \hF-\phi_\mathcal \hF)-4\int Q \nabla \mathcal M\eh\cdot\nabla \mathcal M\mathcal \hF\\
& = & -2\int\eh_1\cdot\nabla \phi_\mathcal \hF-2\int Q\nabla \matchal M\mathcal \hF\cdot\nabla\phi_{\eh}
\eee
and thus:
\bee
& & \int \frac{\bh(\Lambda Q)_{\l}}{\l^2Q_\l^2}\left[\frac{\hw}{\l^2}\mathcal L_\l \mathcal \hF_\l+\hw_2\frac{1}{\l^2}\mathcal \hF_\l\right]-4\bh\int\frac{\hw_1}{\l^2Q_\l}\cdot\left[ \frac{1}{\l^2}A_\l\mathcal \hF_\l\right]\\
& = & \frac{\bh}{\l^6}\left\{\int\frac{\Lambda Q+2Q}{Q^2}(\eh\mathcal L\mathcal \hF+\e_2\mathcal \hF)-2\int\eh_1\cdot\nabla \phi_{\mathcal \hF}-2\int\nabla \phi_{\eh}\cdot Q\nabla \mathcal M\mathcal \hF\right\}.
\eee
We finally obtain the identity:
\bee
&&\frac{d}{dt}\left\{\int \frac{\bh(\Lambda Q+2Q)}{\l^4Q^2}\eh_2\eh-2\int \frac{\bh\eh_1\cdot\nabla\phi_{\eh}}{\l^4}\right\}=\int\frac{\bh(\Lambda Q)_\l}{\l^2Q_\l^2}\hw_2^2 + \frac{2\bh}{\l^2}(\mathcal M_\l \hw_2,\hw_2)\\
& + &  \int\frac{\eh_1\cdot\nabla \phi_{\eh}}{\l^6}\left[8\bh\lsl-2\bh_s\right]\\
 \nonumber & -&\frac{\bh}{\l^2}\int Q_\l \nabla (\mathcal M_\l \hw_2)\cdot\nabla \left[2\phi_{\hw}+\frac{(2Q+\Lambda Q)_{\l}}{\l^2Q_\l^2}\hw\right]\\
& + & \int \frac{\bh(\Lambda Q)_{\l}}{\l^2Q_\l^2}\hw\nabla \cdot([\pa_t,A_\l]\hw)-  4\bh\int\frac{\hw_1}{\l^2Q_\l}\cdot[\pa_t,A_\l]\hw\\
& + & \frac{\bh}{\l^6}\left\{\int\frac{\Lambda Q+2Q}{Q^2}(\eh\mathcal L\mathcal \hF+\e_2\mathcal \hF)-2\int \eh_1\cdot\nabla \phi_\mathcal \hF-2\int\nabla \phi_{\eh}\cdot Q\nabla \mathcal M\mathcal \hF\right\}\\
& + & O\left(\int \frac{|\eh_2\eh|+|\eh_1|^2}{\l^6Q(1+r^2)}(|\bh_s|+\bh|\lsl|)\right).
\eee
We combine this with the energy identity \fref{cenoneoneo} and obtain the modified and manageable energy identity:
\bea
\label{noumberneergyidentity}
 &&\frac 12\frac{d}{dt}\left\{(\mathcal M_\l \hw_2,\hw_2) +\int \frac{\bh(\Lambda Q+2Q)}{\l^4Q^2}\eh_2\eh-2\int \frac{\bh\eh_1\cdot\nabla\phi_{\eh}}{\l^4}\right\}\\
\nonumber& = &  -\int Q_\l|\nabla \mathcal M_\l \hw_2|^2+\frac{\bh}{\l^2}(\mathcal M_\l \hw_2,\hw_2)+\frac{1}{\l^6}(\eh_2,\mathcal M\mathcal L\mathcal \hF)\\
\nonumber & + &  \int\frac{\eh_1\cdot\nabla \phi_{\eh}}{2\l^6}\left[8\bh\lsl-2\bh_s\right]\\ 
\nonumber& + & \frac{\bh}{2\l^6}\left\{\int\frac{\Lambda Q+2Q}{Q^2}(\eh\mathcal L\mathcal\hF+\eh_2\mathcal \hF)-2\int \eh_1\cdot\nabla \phi_{\mathcal \hF}-2\int\nabla \phi_{\eh}\cdot Q\nabla \mathcal M\mathcal \hF\right\}\\
\nonumber& - & \frac{\bh}{2\l^2}\int Q_\l \nabla (\mathcal M_\l \hw_2)\cdot\nabla \left[2\phi_{\hw}+\frac{(2Q+\Lambda Q)_{\l}}{\l^2Q_\l^2}\hw\right]\\
\nonumber& - & \int  [\pa_t,A_\l]\wh\cdot \nabla (\mathcal M_\l \wh_2)+\int \frac{b(\Lambda Q)_{\l}}{2\l^2Q_\l^2}\wh\nabla \cdot([\pa_t,A_\l]\wh)-  2b\int\frac{\wh_1}{\l^2Q_\l}\cdot[\pa_t,A_\l]\wh\\
\nonumber & + &\int \frac{\hw_2^2}{\l^2Q_\l^2}\left[ \left(\lsl+\bh\right)(\Lambda Q)_{\l}\right]\\
\nonumber & + &   O\left(\int \frac{|\eh_2\e|+|\eh_1|^2}{\l^6Q(1+r^2)}(|\bh_s|+\bh|\lsl|)\right).
\eea
We now aim at estimating all terms in \fref{noumberneergyidentity}. We will implicitly use the bounds of Lemma \ref{bootbound}. We will make an essential use of the improved decay \fref{gfijgbeigei}.\\

{\bf step 4}. Boundary term in time.  We estimate using Lemma \ref{lemmaradiation}:
 \bee
\left| \int \frac{\bh(\Lambda Q+2Q)}{\l^4Q^2}\eh_2\eh\right|&\lesssim& \frac{b}{\l^4}\int \frac{|\eh_2\eh|}{(1+r^2)Q}\lesssim \frac{b}{\l^4}\left(\int \frac{\eh_2^2}{Q}\right)^{\frac 12}\left(\int \eh^2\right)^{\frac 12}\\
& \lesssim &\frac{C(M)b}{\l^4}\left[\|\e_2\|_{L^2_Q}^2+\|\z_2\|_{L^2_Q}^2+\|\zeta\|_{L^2}^2+\|\e\|_{L^2}^2\right]\\
&\lesssim & \frac{C(M)b^3}{|\log b|^2}
\eee
and using \fref{controleun}, \fref{interpolationfield}:
\bee
&&\left|\frac1{\l^4}\int b\eh_1\cdot\nabla \phi_{\eh}\right|  \lesssim  \frac{b}{\l^4}\left(\int(1+r^2)|\eh_1|^2\right)^{\frac12}\left(\int \frac{|\nabla\phi_{\eh}|^2|}{1+r^2}\right)^{\frac 12}\\
& \lesssim & \frac{b}{\l^4}\left[\int(1+r^2)|\e_1|^2+\int \frac{|\nabla\phi_{\e}|^2}{1+r^2}+\int(1+r^2)|\nabla \zeta|^2+\int |\zeta|^2+\int \frac{|\nabla\phi_{\zeta}|^2}{1+r^2}\right]\\
& \lesssim & \frac{b}{\l^4}\left[|\log b|^C\left(\|\e_2\|_{L^2_Q}^2+b^5\right)+\frac{b^2}{|\log b|^2}\right]\lesssim \frac{C(M)b^3}{|\log b|^2}.
\eee

{\bf step 5}. Lower order quadratic terms. We now estimate the lower order quadratic terms in the RHS of \fref{noumberneergyidentity}. First the terms which are estimated using dissipation, \fref{coercbase} and Lemma \ref{lemmaradiation}:
\bee
&&\left|\frac{b}{2\l^2}\int Q_\l \nabla (\mathcal M_\l \wh_2)\cdot\nabla \left[2\phi_{ \wh}+\frac{(2Q+\Lambda Q)_{\l}}{\l^2Q_\l^2} \wh\right]\right|\\
& \leq& \frac{1}{100}\int Q_\l| \nabla \mathcal M_\l  \wh_2|^2+\frac{Cb^2}{\l^6}\int Q\left[|\nabla\phi_{\eh}|^2+\frac{|\nabla \eh|^2}{(1+r^4)Q^2}+\frac{ \eh^2}{(1+r^6)Q^2}\right]\\
& \leq & \frac{1}{100}\int Q_\l| \nabla \mathcal M_\l  \wh_2|^2+C(M)\frac{b^2}{\l^6}\left[\matchal \|\e_2\|_{L^2_Q}^2+\frac{b^2}{|\log b|^2}\right].
\eee
Next, we observe from \fref{defal} and \fref{gfijgbeigei} the decay estimate:
$$|[\pa_t,A_\l]\wh|\lesssim\frac{b}{\l^2}\left[ \frac{|\eh|}{1+r^3}+\frac{|\nabla \phi_{\eh}|}{1+r^4}\right]$$
which leads using \fref{coercbase}, \fref{controleun}, \fref{interpolationfield} to the bounds:
\bee
&&\left| \int  [\pa_t,A_\l]\wh\cdot \nabla (\mathcal M_\l w_2)\right|\leq \frac{1}{100}\int Q_\l| \nabla \mathcal M_\l \wh_2|^2+C(M)\frac{b^2}{\l^6}\int \frac{1}{Q}\left[\frac{|\eh|^2}{1+r^6}+\frac{|\nabla \phi_{\eh}|^2}{1+r^8}\right]\\
& \leq & \frac{1}{100}\int Q_\l| \nabla \mathcal M_\l \hat{w}_2|^2+C(M)\frac{b^2}{\l^6}\left[\matchal \|\e_2\|_{L^2_Q}^2+\frac{b^2}{|\log b|^2}\right],
\eee
\bee
&&\left|\int \frac{b(\Lambda Q)_{\l}}{2\l^2Q_\l^2}\wh\nabla \cdot([\pa_t,A_\l]\wh)\right|\lesssim \frac{b^2}{\l^6}\int \left[ \frac{|\eh|}{1+r^3}+\frac{|\nabla \phi_{\eh}|}{1+r^4}\right]\left[\frac{|\nabla \eh|}{Q}+\frac{|\eh|}{(1+r)Q}\right]\\
& \lesssim & \frac{b^2}{\l^6}\left[|\log b|^C\matchal \|\e_2\|_{L^2_Q}^2+\int |\zeta|^2+\int(1+r^2)|\nabla \zeta|^2+\int \frac{|\nabla\phi_{\zeta}|^2}{1+r^2}\right]\\
& \lesssim & \frac{b^2}{\l^6}\left[|\log b|^C\matchal \|\e_2\|_{L^2_Q}^2+\frac{b^2}{|\log b|^2}\right],
\eee
\bee
&&\left|b\int\frac{\wh_1}{\l^2Q_\l}\cdot[\pa_t,A_\l]\wh\right|\lesssim \frac{b^2}{\l^6}\int \frac{|\eh_1|}{Q}\left[ \frac{|\eh|}{1+r^3}+\frac{|\nabla \phi_{\eh}|}{1+r^4}\right]\\
& \lesssim &\frac{b^2}{\l^6}\left[ \int (1+r^2)|\e_1|^2+\int |\e|^2+\int\frac{|\nabla \phi_\e|^2}{1+r^2}+\int |\zeta|^2+\int(1+r^2)|\nabla \zeta|^2+\int \frac{|\nabla\phi_{\zeta}|^2}{1+r^2}\right]\\
& \lesssim & \frac{b^2}{\l^6}\left[|\log b|^C\matchal \|\e_2\|_{L^2_Q}^2+\frac{b^2}{|\log b|^2}\right].
\eee
We now estimate using from \fref{estlambda}, \fref{boundbootbbhat}, \fref{pointziender} the rough bounds: $$\left|\lsl\right|\lesssim b, \ \ |\hb_s|\lesssim b^2,$$ and therefore:
\bee
&&\left|\int\frac{\eh_1\cdot\nabla \phi_{\eh}}{2\l^6}\left[8\bh\lsl-2\bh_s\right]\right|\\
& \lesssim & \frac{b^2}{\l^6}\int |\eh_1\cdot\nabla \phi_{\eh}|\lesssim \frac{b^2}{\l^6}\left[|\log b|^C\|\e_2\|_{L^2_Q}^2+b^4+\int(1+r^2)|\nabla \zeta|^2+\int |\zeta|^2+\int \frac{|\nabla\phi_{\zeta}|^2}{1+r^2}\right]\\
& \lesssim & \frac{b^2}{\l^6}\left[|\log b|^C\matchal \|\e_2\|_{L^2_Q}^2+\frac{b^2}{|\log b|^2}\right],
\eee
and for the error term:
\bee
&&\int \frac{|\eh_2\eh|+|\eh_1|^2}{\l^6Q(1+r^2)}(|b_s|+b|\lsl+b|+b^2)\\
& \lesssim& \frac{b^2}{\l^6}\int\left[(1+r^4)\eh_2^2+(1+r^2)|\eh_1|^2+\eh^2\right]\\
& \lesssim & C(M)\frac{b^2}{\l^6}\left[\|\e_2\|_{L^2_Q}^2+\frac{b^2}{|\log b|^2}\right].
\eee
We now estimate using the bound \fref{estlambda}, \fref{boundbootbbhat}:
$$\left|\lsl+\bh\right|\lesssim \frac{b}{|\log b|}.$$  Note that this bound is very bad to treat the radiation term $\zeta$ and we will need some additional cancellation. We recall the decomposition \fref{zetabig}, \fref{defxi} and first estimate using Lemma \ref{lemmaradiation}:
\bee
&&\left|\int \frac{\wh_2^2}{\l^2Q_\l^2}\left[ \left(\lsl+\bh\right)(\Lambda Q)_{\l}\right]\right| \lesssim  \frac{b}{|\log b|\l^6}\left| \int \frac{(\e_2+\mathcal L\zs+\xi+(b-\hb)\Lambda Q)^2}{Q^2}\Lambda Q\right|\\
& \lesssim & \frac{b}{|\log b|\l^6}\left[\|\e_2\|_{L^2_Q}^2+\|\mathcal L\zs\|_{L^2_Q}^2+\|\xi\|_{L^2_Q}^2+\frac{b^2}{|\log b|^2}\left|\int\left(\frac{\Lambda Q}{Q}\right)^2\Lambda Q\right|\right.\\
& + & \left.\frac{b}{|\log b|}\left|\int(\e_2+\mathcal L\zs)\frac{(\Lambda Q)^2}{Q^2}\right|+\frac{b^2}{|\log b|^2}\int_{r\geq B_1}\frac{1}{1+r^4}\right]\\
& \lesssim & \frac{b}{\l^6}\left[\frac{b^{\frac 32}}{|\log b|}\|\e_2\|_{L^2_Q}+\frac{b^3}{|\log b|^2}+\frac{b^2}{|\log b|^2}\left|\int\left(\frac{\Lambda Q}{Q}\right)^2\Lambda Q\right|+\frac{b}{|\log b|^2}\left|\int(\e_2+\mathcal L\zs)\frac{(\Lambda Q)^2}{Q^2}\right|\right].
\eee
We now use the cancellation \fref{gfijgbeigei} to estimate $$\left(\frac{\Lambda Q}{Q}\right)^2=4+O\left(\frac{1}{1+r^2}\right)$$ and hence using \fref{estvooee}, \fref{estimatonebisbis}:
$$\left|\int \matchal L\zs\frac{(\Lambda Q)^2}{Q^2}\right|\lesssim \int \frac{|\mathcal L\zs|}{1+r^2}\lesssim \|\mathcal L\zs\|_{L^2}\lesssim \frac{b^2}{|\log b|}.$$
We now claim the algebraic identity 
\be
\label{defnneo}
\mathcal M(\Lambda ^2 Q)= \left(\frac{\Lambda Q}{Q}\right)^2
\ee which is proved below, and conclude using \fref{relationsm}: $$\int\left(\frac{\Lambda Q}{Q}\right)^2\Lambda Q=(\mathcal M(\Lambda ^2 Q),\Lambda Q)=-2\int \Lambda^2Q=0.$$ Finally, using \fref{defnneo}, \fref{relationsm}, \fref{estvooee}, \fref{degenrateonebis}:
\bee 
\left|\left(\e_2,\frac{(\Lambda Q)^2}{Q^2}\right)\right|&\lesssim& \left|( \e_2,\mathcal M(\Lambda^2Q+2\Lambda Q))\right| =\left|\int \mathcal M\e_2\nabla \cdot(r(\Lambda Q+2Q))\right|\\
& \lesssim & \left(\int Q|\nabla(\mathcal M\e_2)|^2\right)^{\frac 12}\left(\int \frac{r^2(\Lambda Q+2Q)^2}{Q}\right)^{\frac 12}\\
& \lesssim &  \left(\int Q|\nabla(\mathcal M\eh_2)|^2+\frac{b^4}{|\log b|^2}\right)^{\frac 12}.
\eee
The collection above bounds yields the admissible control:
\bee
\left|\int \frac{\wh_2^2}{\l^2Q_\l^2}\left[ \left(\lsl+\bh\right)(\Lambda Q)_{\l}\right]\right| & \lesssim &  \frac{b}{\l^6}\left[b^{\frac 32}\|\e_2\|_{L^2_Q}+\frac{b^3}{|\log b|^2}\right]+\frac{1}{100\l^6}\int Q|\nabla(\mathcal M\eh_2)|^2.
\eee

{\it Proof of  \fref{defnneo}}: For any well localized function, we have the commutator formula: $$\mathcal M\Lambda f=\Lambda \mathcal Mf +\frac{x\cdot\nabla Q}{Q^2}f-2\phi_f-\frac{\int f}{2\pi}$$ from which using \fref{relationsm},  \fref{philambdaq} and the critical relation $\int \Lambda Q=0$: 
\bee
\nonumber \mathcal M(\Lambda ^2Q)& = & \Lambda (\mathcal M\Lambda Q) +\frac{x\cdot\nabla Q}{Q^2}\Lambda Q-2\phi_{\Lambda Q}=  -4+\left(\frac{\Lambda Q}{Q}\right)^2-2\left[\phi_{\Lambda Q}+\frac{\Lambda Q}{Q}\right]\\
& = & \left(\frac{\Lambda Q}{Q}\right)^2.
\eee

{\bf step 6} Leading order $E_2$ terms. We claim the bounds:
\be
\label{neovhndkneo}
\left|(\eh_2,\mathcal M\mathcal L E_2)\right|\lesssim b\left[\frac{b^{\frac 32}}{|\log b|}\|\e_2\|_{L^2_Q}+\frac{b^3}{|\log b|^2}\right],
\ee
\bea
\label{neoihnehovinorno}
&&\nonumber \left|\int\frac{\Lambda Q+2Q}{Q^2}(\eh\mathcal LE_2+\eh_2E_2)-2\int \eh_1\cdot\nabla \phi_{E_2}-2\int\nabla \phi_{\eh}\cdot Q\nabla \mathcal ME_2\right|\\
& \lesssim & b\left[\frac{b^{\frac 32}}{|\log b|}\|\e_2\|_{L^2_Q}+\frac{b^3}{|\log b|^2}\right].
\eea
{\it Proof of \fref{estimatone}}: We compute using \fref{estvooee} and $\mathcal LT_1=\Lambda Q$:
\bee
\left|(\e_2,\mathcal M\mathcal L(\chi_{\frac{B_0}{4}}T_1))\right|& = & \left|(\e_2,\mathcal M\mathcal L[(1-\chi_{\frac{B_0}{4}})T_1)]\right|\\
& \lesssim & \|\e_2\|_{L^2_Q}\left(\int \frac{|\mathcal L[(1-\chi_{\frac{B_0}{4}})T_1]|^2}{Q}\right)^{\frac 12}.
\eee
We now estimate using the radial representation of the Poisson field and \fref{esttoneprop}:
\be
\label{estphibvijrbrijb}
\phi'_{(1-\chi_{\frac{B_0}{4}})T_1}=\frac{1}{r}\int_0^r(1-\chi_{\frac{B_0}{4}})T_1\tau d\tau=O\left(\frac{\log r}r{\bf 1}_{r\geq \frac{B_0}{4}}\right)
\ee
and hence the pointwise bound:
\be
\label{cbebei}
|\mathcal L[(1-\chi_{\frac{B_0}{4}})T_1]|\lesssim \frac{1}{1+r^4}{\bf 1}_{r\geq \frac{B_0}{4}}
\ee which leads to the bound:
\bee
\left|(\e_2,c_bb^2\mathcal M\mathcal L(\chi_{\frac{B_0}{4}}T_1))\right|& \lesssim & \frac{b^2}{|\log b|}\|\e_2\|_{L^2_Q}\left(\int_{r\geq \frac{B_0}{4}}\frac{1}{1+r^4}\right)^{\frac 12}\lesssim b\frac{b^{\frac 32}}{|\log b|}\|\e_2\|_{L^2_Q}.
\eee
We treat the term involving the radiation as follows. Arguing like for \fref{ccjooeccjwo}, we estimate:
\be
\label{cneneoeneo}
 \mathcal L(\chi_{\frac{B_0}{4}}T_1)=\Lambda Q+\tilde{\xi},\ \ \tilde{\xi}=O\left(\frac{{\bf 1}_{r\geq\frac{B_0}{4}}}{1+r^4}\right)
 \ee
from which using \fref{ccjooeccjwo}, \fref{defxi}, \fref{mcontiniuos}:
\bee
\left|(\mathcal L\zb,\mathcal M\mathcal L E_2)\right|& = &c_bb^2 \left|((b-\bh)\Lambda Q+\xi,\mathcal M(\Lambda Q+\tilde{\xi}))\right|\\
& \lesssim & \frac{b^2}{|\log b|}\left[|b-\bh|\int |\tilde{\xi}|+\int|\xi|+\|\xi\|_{L^2_Q}\|\tilde{\xi}\|_{L^2_Q}\right]\\
& \lesssim & \frac{b^4}{|\log b|^2}.
\eee
Similarily, using \fref{estvooee}, \fref{estimatone}, \fref{cneneoeneo}:
\bee
\left|(\mathcal L\zs,\mathcal M\mathcal L E_2)\right|& = &c_bb^2\left|(\matchal L\zs,\mathcal M(\Lambda Q+\tilde{\xi}))\right|\\
& = & c_bb^2\left|(\matchal L\zs,\mathcal M\tilde{\xi})\right|\lesssim \frac{b^2}{|\log b|}\|\mathcal L\zs\|_{L^2_Q}\|\tilde{\xi}\|_{L^2_Q}\\
& \lesssim & \frac{b^2}{|\log b|^2}\left(\frac{b^3}{|\log b|^2}\right)^{\frac 12}\left(\frac{1}{B_0^2}\right)^{\frac 12}\lesssim \frac{b^4}{|\log b|^2},
\eee
and \fref{neovhndkneo} is proved.\\
{\it Proof of \fref{neoihnehovinorno}}: We estimate in brute force using \fref{cneneoeneo}, \fref{coercbase} and Lemma \ref{lemmaradiation}:
\bee
b\left|\int\frac{\Lambda Q+2Q}{Q^2}(\eh\mathcal LE_2+\eh_2E_2)\right| & \lesssim & b\frac{b^2}{|\log b|}\int \frac{1}{(1+r^2)Q}\left[\frac{|\eh|}{1+r^4}+\frac{|\eh_2|}{1+r^2}\right]\\
& \lesssim & b\frac{b^2}{|\log b|}\left[\|\e\|_{L^2}+\|\e_2\|_{L^2_Q}+\|\z_2\|_{L^2_Q}+\|\zeta\|_{L^2}\right]\\
& \lesssim & b\left[C(M)\frac{b^2}{|\log b|}\|\e_2\|_{L^2_Q}+\frac{b^3}{|\log b|^2}\right].
\eee
Similarily, using \fref{controleun} and Lemma \ref{lemmaradiation}:
\bee
b\left|\int \eh_1\cdot\nabla \phi_{E_2}\right|& \lesssim & b\frac{b^2}{|\log b|}\int \frac{1+|\log r|}{1+r}\left[|\e_1|+|\nabla \zeta|+Q|\nabla \phi_\zeta|+\frac{|\zeta|}{1+r}\right]\\
& \lesssim & b\frac{b^2}{|\log b|}\left(\int (1+r^2)|\e_1|^2+|\zeta|^2+(1+r^2)|\nabla \zeta|^2+\frac{|\nabla \phi_\zeta|^2}{1+r^2}\right)^{\frac 12}\\
& \lesssim & b\frac{b^2}{|\log b|}\left[C(M)\|\e_2\|_{L^2_Q}+\frac{b}{|\log b|}\right]\lesssim b\left[\frac{b^2}{|\log b|}C(M)\|\e_2\|_{L^2_Q}+\frac{b^3}{|\log b |^2}\right],
\eee
and using an integration by parts:
\bee
b\left|\int \nabla \phi_{\eh}\cdot Q\nabla \mathcal ME_2\right|& =&b\left|(Q\eh+\nabla Q\cdot\nabla \phi_{\eh},\mathcal M E_2)\right|\\
& \lesssim & b\frac{b^2}{|\log b|}\int \left[\frac{|\eh|}{1+r^4}+\frac{|\nabla \phi_{\eh}|}{1+r^5}\right](1+r^2)\\
& \lesssim &b\frac{b^2}{|\log b|}\left(\int |\eh|^2+\frac{|\nabla \phi_{\eh}|^2}{(1+r^2)(1+|\log r|^2)}\right)^{\frac 12}\\
& \lesssim & b\left[\frac{b^2}{|\log b|}C(M)\|\e_2\|_{L^2_Q}+\frac{b^3}{|\log b |^2}\right],
\eee 
and \fref{neoihnehovinorno} is proved.\\

{\bf step 7} $\Psit_b$ terms. We now estimate the $\Psit_b$ type of terms. From \fref{roughboundltaowlocbisbis},
\bee
\left|(\eh_2,\mathcal M\mathcal L\Psit_b)\right|&\lesssim &\left(\int Q|\nabla \mathcal M\eh_2|^2\right)^{\frac12}\left(\int Q|\nabla \mathcal M\Psit_b|^2\right)^{\frac12}\\
& \leq & \frac{1}{100}\int Q|\nabla \mathcal M\eh_2|^2+C\frac{b^4}{|\log b|^2}.
\eee
Next, from \fref{roughboundltaowloc}:
\bee
\left| \frac{b}{\l^6}\int\frac{\Lambda Q+2Q}{Q^2}\eh\mathcal L\Psit_b\right|&\lesssim& \frac{b}{\l^6}\int \frac{|\eh\mathcal L\Psit_b|}{Q(1+r^2)}\lesssim \frac{b}{\l^6}\|\eh\|_{L^2}\|\mathcal L\Psit_b\|_{L^2_Q}\\
& \lesssim & C(M)\frac{b}{\l^6}\frac{b^{\frac 52}}{|\log b|}\left(\|\e_2\|_{L^2_Q}+\|\z\|_{L^2}\right)\lesssim  \frac{b}{\l^6}\left[b^2\|\e_2\|_{L^2_Q}+\frac{b^3}{|\log b|^2}\right].
\eee
From \fref{roughboundltaowloc}, \fref{coercbase}:
\bee
\left| \frac{b}{\l^6}\int\frac{\Lambda Q+2Q}{Q^2}\eh_2\Psit_b\right|&\lesssim&  \frac{b}{\l^6}\int \frac{|\eh_2\Psit_b|}{Q(1+r^2)}\lesssim \frac{b}{\l^6}\|\eh_2\|_{L^2_Q}\|\Psit_b\|_{L^2}\\
& \lesssim &  \frac{b}{\l^6}\frac{b^2}{|\log b|}\left[\|\e_2\|_{L^2_Q}+\|\zeta\|_{L^2_Q}\right]\lesssim \frac{b}{\l^6}\left[b^2\|\e_2\|_{L^2_Q}+\frac{b^3}{|\log b|^2}\right].
\eee
From \fref{roughboundltaowloc}, \fref{controleun}:
\bee
\left|\frac{b}{\l^6}\int \eh_1\cdot\nabla \phi_{\Psit_b}\right|&\lesssim&\frac{b}{\l^6} \left(\int (1+r^2)|\eh_1|^2\right)^{\frac 12}\left(\int \frac{|\nabla \phi_{\Psit_b}|^2}{1+r^2}\right)^{\frac12}\\
& \lesssim &  \frac{b}{\l^6}\frac{b^{\frac 52}}{|\log b|}\left[C(M) \|\e_2\|_{L^2_Q}+\left(\int|\nabla \z|^2+\int \frac{|\zeta|^2}{1+r^2}+\int\frac{ |\nabla \phi_{\zeta}|^2}{1+r^4}\right)^{\frac 12}\right]\\
& \lesssim & \frac{b}{\l^6}\left[b^2\|\e_2\|_{L^2_Q}+\frac{b^3}{|\log b|^2}\right].
\eee
Finally, from \fref{roughboundltaowloc}, \fref{interpolationfield}:
\bee
&&\left|\frac{b}{\l^6}\int\nabla \phi_{\eh}\cdot Q\nabla \mathcal M\Psit_b\right|\lesssim \frac{b}{\l^6}\int |\nabla \phi_{\eh}|\left[|\nabla \Psit_b|+\frac{|\Psit_b|}{1+r}+Q|\nabla \phi_{\Psit_b}|\right]\\
& \lesssim & \frac{b}{\l^6}\left(\int \frac{|\nabla \phi_\e|^2}{1+r^2}+\frac{|\nabla \phi_{\z}|^2}{1+r^2}\right)^{\frac 12}\left(\int (1+r^2)\left[|\nabla \Psit_b|^2+\frac{|\Psit_b|^2}{1+r^2}+Q^2|\nabla \phi_{\Psit_b}|^2\right]\right)^{\frac 12}\\
& \lesssim &  \frac{b}{\l^6}\frac{b^{\frac 52}}{|\log b|}\left[|\log b|^C\|\e_2\|_{L^2_Q}+\frac{b}{|\log b|}\right]\lesssim  \frac{b}{\l^6}\left[b^2\|\e_2\|_{L^2_Q}+\frac{b^3}{|\log b|^2}\right].
\eee

{\bf step 8} Modulation terms. We now treat the modulation parameters given by \fref{mohat} which require the introduction of the lifted parameter $\hb$ and the radiation term $\zeta_b$. We decompose:
$$\widehat{\Mod}=\Mod_0+\Mod_1,
$$
\be
\label{defmodo}
\Mod_0=-\pa_s\tilde{Q}_{\bh},
\ee
\be
\label{defmodone}
\Mod_1=\left(\lsl+b\right)\Lambda \qbt=\left(\lsl+b\right)(\Lambda Q+b\Lambda \tt_1+b^2\Lambda \tt_2),
\ee 
and further split: $$\Mod_0=\Mod_{0,1}+\Mod_{0,2},$$
 $$\Mod_{0,1}=-\hb_sT_1, \ \ \Mod_{0,2}=-\hb_s\left(\tt_1-T_1+\hb\frac{\partial\tt_1}{\partial b}+2\hb\tt_2+\hb^2\frac{\partial \tt_2}{\partial b}\right).$$ We claim the bounds:
\be
\label{cneneonveonveo}
\int |r^i\pa^i_r\Mod_{0,1}|^2+\int \frac{|\mathcal L\Mod_{0,1}|^2}{Q}+ \int \frac{|\nabla \phi_{\Mod_{0,1}}|^2}{1+r^2}\lesssim |\hb_s|^2,
\ee
\be
\label{cneneonveonveobis}
\int |r^i\pa^i_r\Mod_{0,2}|^2+\int \frac{|\mathcal L\Mod_{0,2}|^2}{Q}+ \int \frac{|\nabla \phi_{\Mod_{0,2}}|^2}{1+r^2}\lesssim b|\hb_s|^2,
\ee
\be
\label{modoneone}
\int |r^i\pa^i_r\Mod_{1}|^2+ \int \frac{|\nabla \phi_{\Mod_{1}}|^2}{1+r^2}\lesssim \left|\lsl+b\right|^2,
\ee
\be
\label{modoneonbise}
\int \frac{|\mathcal L\Mod_{1}|^2}{Q}\lesssim b^2\left|\lsl+b\right|^2.
\ee

{\it Proof of \fref{cneneonveonveo}}: This is a direct consequence of \fref{developpementmonprime}, \fref{developpementT1}.\\
{\it Proof of \fref{cneneonveonveobis}}: We estimate from \fref{esttoneproploc}, \fref{esttpemfmp}, \fref{esttwoloc}, \fref{estttwodtdbloc}:
\bea
\label{Pneiovbebvoebb}
&&\left|r^i\pa_r^i\left[\tt_1-T_1+b\frac{\partial\tt_1}{\partial b}+2b\tt_2+b^2\frac{\partial \tt_2}{\partial b}\right]\right|\\
\nonumber &\lesssim &\frac{{\bf 1}_{B_1\leq r\leq 2B_1}}{r^2}+ b\left[r^2{\bf 1}_{r\le1 }+\frac{1+|\log (r\sqrt{b})|}{|\log b|}{\bf 1}_{1\leq r\leq 6B_0}+\frac{1}{b^2r^4}{\bf 1}_{B_0\leq r\leq 2B_1}\right],
\eea
and using the radial representation of the Poisson field:
\be
\label{ceoknekonvoen}
\left|\nabla  \phi_{r^i\pa_r^i\left[\tt_1-T_1+b\frac{\partial\tt_1}{\partial b}+2b\tt_2+b^2\frac{\partial \tt_2}{\partial b}\right]}\right|\lesssim b(1+r){\bf 1}_{r\leq B_0}+\frac{{\bf 1}_{r\geq B_0}}{r}.
 \ee
 This yields:
 \bee
 \int |r^i\pa^i_r\Mod_{0,2}|^2& \lesssim & |\hb_s|^2\int\left[\frac{{\bf 1}_{B_1\leq r\leq 2B_1}}{1+r^4}+b^2\left(\frac{1+|\log (r\sqrt{b})|}{|\log b|}{\bf 1}_{1\leq r\leq 6B_0}\right)^2+\frac{1}{b^2r^8}{\bf 1}_{B_0\leq r\leq 2B_1}\right]\\
 & \lesssim & b|\hb_s|^2,
 \eee
 $$ \int \frac{|\nabla \phi_{\Mod_{0,2}}|^2}{1+r^2}\lesssim |\bh_s|^2\int\frac{1}{1+r^2}\left[ b^2(1+r^2){\bf 1}_{r\leq B_0}+\frac{{\bf 1}_{r\geq B_0}}{r^2}\right]\lesssim b|\bh_s|^2,$$
 and similarly using the explicit representation \fref{formuleL}:
 \bee
 \int \frac{|\mathcal L\Mod_{0,2}|^2}{Q}& \lesssim & b|\hb_s|^2.
 \eee

{\it Proof of \fref{modoneone}}: We extract from \fref{esttoneproploc}, \fref{esttwoloc} the rough bound:
\be
\label{neovnononvne}
\left|r^i\pa_r^i\left(\tt_1+b\tt_2\right)\right|\lesssim \frac{1}{1+r^2}{\bf 1}_{r\leq 2B_1},
\ee
\be
\label{cneonceoneo}
|\nabla \phi_{r^i\pa_r^i(\tt_1+b\tt_2)}|\lesssim \frac{1+|\log r|}{1+r}.
\ee This yields:
$$\int |r^i\pa_r^i\Mod_{1,1}|^2\lesssim\left|\lsl+b\right|^2\int \frac{1}{1+r^4}\lesssim \left|\lsl+b\right|^2,$$ 
$$ \int \frac{|\nabla \phi_{\Mod_{1,1}}|^2}{1+r^2}\lesssim \left|\lsl+b\right|^2\int \frac{1+|\log r|^2}{1+r^4}\lesssim \left|\lsl+b\right|^2.$$
{\it Proof of \fref{modoneonbise}}: We use the cancellation $\mathcal L\Lambda Q=0$, the bound \fref{neovnononvne}, \fref{cneonceoneo} and the explicit formula \fref{formuleL} to estimate:
$$\int \frac{|\mathcal L\Mod_{1,1}|^2}{Q}\lesssim b^2\left|\lsl+b\right|^2\int\left[\frac{1}{1+r^4}{\bf 1}_{1\leq r\leq 2B_1}+ \frac{1}{1+r^6}\frac{1+|\log r|^2}{1+r^2}\right]\lesssim  b^2\left|\lsl+b\right|^2.$$
 We are now in position to estimate all terms in \fref{noumberneergyidentity}. From \fref{degenrateoneone},  \fref{cneneonveonveobis}, \fref{modoneonbise}:
 \bee
\left|(\eh_2,\mathcal M\mathcal L\widehat{\Mod})\right|& \lesssim &\left|(\mathcal M\zeta_2,\mathcal L\widehat{\Mod})\right|+\|\e_2\|_{L^2_Q}\left[\|\mathcal L\Mod_{0,2}\|_{L^2_Q}+\|\mathcal L\Mod_1\|_{L^2_Q}\right]\\
& \lesssim & \left(\|\e_2\|_{L^2_Q}+\frac{b^{\frac32}}{|\log b|}\right)\left[\|\mathcal L\Mod_{0,2}\|_{L^2_Q}+\|\mathcal L\Mod_1\|_{L^2_Q}\right]\\
& \lesssim & \left(\|\e_2\|_{L^2_Q}+\frac{b^{\frac32}}{|\log b|}\right)\left[\sqrt{b}|\hat{b}_s|+b\left|\lsl+b\right|\right].
\eee
 We now use the pointwise bound \fref{estlambda} and in a fundamental way the improved bound \fref{pointziender} which motivates the introduction of the lifted parameter $\hat{b}_s$ to conclude:
\bee
\left|(\eh_2,\mathcal M\mathcal L\widehat{\Mod})\right|& \lesssim & \left(\|\e_2\|_{L^2_Q}+\frac{b^{\frac 32}}{|\log b|}\right)\left[\sqrt{b}\frac{b^2}{|\log b|}+bC(M)\frac{b^2}{|\log b|}\right]\\
& \lesssim & b\left[\frac{b^{\frac 32}}{|\log b|}\|\e_2\|_{L^2_Q}+\frac{b^3}{|\log b|^2}\right].
\eee
Next, from \fref{lemmaradiation}, \fref{estlambda}, \fref{pointziender}, \fref{coercbase}:
\bee
b\left|\int\frac{\Lambda Q+2Q}{Q^2}\eh\mathcal L\widehat{\Mod}\right|& \lesssim &b\|\eh\|_{L^2}\|\mathcal  L\widehat{\Mod}\|_{L^2_Q}\lesssim b\left[C(M)\|\e_2\|_{L^2_Q}+\|\zeta\|_{L^2}\right]\left[|\hb_s|+b\left|\lsl+b\right|\right]\\
& \lesssim & b\left[C(M)\|\e_2\|_{L^2_Q}+\frac{b}{|\log b|}\right]\frac{b^2}{|\log b|}\\
& \lesssim & b\left[\frac{b^{\frac 32}}{|\log b|}\|\e_2\|_{L^2_Q}+\frac{b^3}{|\log b|^2}\right],
\eee
\bee
b\left|\int\frac{\Lambda Q+2Q}{Q^2}\eh_2\widehat{\Mod}\right|& \lesssim & b\left[\|\e_2\|_{L^2_Q}+\|\zeta\|_{L^2_Q}\right]\|\widehat{\Mod}\|_{L^2}\\
& \lesssim& b\left[\|\e_2\|_{L^2_Q}+\frac{b}{|\log b|}\right]\left[|\hb_s|+\left|\lsl+b\right|\right]\\
& \lesssim &  bC(M)\left[\|\e_2\|_{L^2_Q}+\frac{b}{|\log b|}\right]\frac{b^2}{|\log b|}\\
& \lesssim & b\left[\frac{b^{\frac 32}}{|\log b|}\|\e_2\|_{L^2_Q}+C(M)\frac{b^3}{|\log b|^2}\right].
\eee
Similarily,
\bee
b\left|\int \eh_1\cdot\nabla \phi_{\widehat{\Mod}}\right|&\lesssim& b\left[\int (1+r^2)|\e_1|^2+\int |\zeta|^2+(1+r^2)|\nabla \zeta|^2+\frac{|\nabla \phi_\zeta|^2}{1+r^2}\right]^{\frac 12}\left[\int \frac{|\nabla \phi_{\widehat{\Mod}}|^2}{1+r^2}\right]^{\frac 12}\\
& \lesssim & b\left[C(M)\|\e_2\|_{L^2_Q}+\frac{b}{|\log b|}\right]\left[|\hb_s|+\left|\lsl+b\right|\right]\\
& \lesssim & b\left[\frac{b^{\frac 32}}{|\log b|}\|\e_2\|_{L^2_Q}+C(M)\frac{b^3}{|\log b|^2}\right],
\eee
and using \fref{interpolationfield}:
\bee
&&b\left|\int \nabla\phi_{\eh}\cdot Q\nabla \matchal M\widehat{\Mod}\right| \lesssim b\int|\nabla\phi_{\eh}|\left||\nabla\widehat{\Mod}|+\frac{|\widehat{\Mod}|}{1+r}+\frac{|\nabla \phi_{\widehat{\Mod}}|}{1+r^4}\right|\\
& \lesssim & b\left(\int \frac{|\nabla \phi_\e|^2}{1+r^2}+\frac{|\nabla \phi_\z|^2}{1+r^2}\right)^{\frac 12}\left(\int(1+r^2)|\nabla \widehat{\Mod}|^2+|\widehat{\Mod}|^2+\frac{|\nabla \phi_{\widehat{\Mod}}|^2}{1+r^2}\right)^{\frac 12}\\
& \lesssim & b\left(|\log b|^C\|\e_2\|^2_{L^2_Q}+b^{10}+\frac{b^2}{|\log b|^2}\right)^{\frac 12}\left[|\hb_s|+\left|\lsl+b\right|\right]\\
& \lesssim & bC(M)\frac{b^2}{|\log b|}\left(|\log b|^C\|\e_2\|^2_{L^2_Q}+b^{10}+\frac{b^2}{|\log b|^2}\right)^{\frac 12}\\
& \lesssim &  b\left[\frac{b^{\frac 32}}{|\log b|}\|\e_2\|_{L^2_Q}+C(M)\frac{b^3}{|\log b|^2}\right].
 \eee
This concludes the control of the forcing induced by the modulation parameters.\\

{\bf step 9} $E_1$ terms. We split $$E_1=-\zeta_2+E_{1,1}, \ \ E_{1,1}=-\lsl \Lambda \zeta.$$
{\it $\zeta_2$ terms}: Fom \fref{degenrateonebis}:
\bee
|(\eh_2,\matchal M\mathcal L\zeta_2)|& = & \left(\int Q|\nabla\matchal M\eh_2|^2\right)^{\frac 12}\left(\int Q|\nabla\matchal M\zeta_2|^2\right)^{\frac 12}\\
& \leq & \frac{1}{100}\int Q|\nabla\matchal M\eh_2|^2+C\frac{b^4}{|\log b|^2}.
\eee
Next, after an integration by parts:
\bee
b\left|\int \frac{\Lambda Q+2Q}{Q^2}\eh\mathcal L\zeta_2\right| & \lesssim & b\left(\int Q|\nabla\matchal M\zeta_2|^2\right)^{\frac 12}\left(\int |\nabla \eh|^2+\frac{|\eh|^2}{1+r^2}\right)^{\frac12}\\
& \lesssim & \frac{b^3}{|\log b|^2}\left[\frac{b}{|\log b|}+C(M)\|\e_2\|_{L^2_Q}\right]\\
& \lesssim & b\left[\frac{b^{\frac 32}}{|\log b|}\|\e_2\|_{L^2_Q}+C(M)\frac{b^3}{|\log b|^2}\right].
\eee
Similarily, using Lemma \ref{lemmaradiation}:
\bee
b\left|\int\nabla\phi_{\eh}\cdot Q\nabla \mathcal M\zeta_2\right|& \lesssim & b\left(\int\frac{|\nabla \phi_{\eh}|^2}{1+r^4}\right)^{\frac 12}\left(\int Q|\nabla \mathcal M\zeta_2|^2\right)^{\frac 12}\\
& \lesssim & b\frac{b^2}{|\log b|}\left[C(M)\|\e_2\|_{L^2_Q}+\frac{b}{|\log b|}\right]\\
& \lesssim &  b\left[\frac{b^{\frac 32}}{|\log b|}\|\e_2\|_{L^2_Q}+\frac{b^3}{|\log b|^2}\right].
\eee
To control the last terms, we introduce the decomposition from \fref{zetabig}: $$\zeta_2=(b-\bh)\Lambda Q+\tilde{\zeta}_2, \  \ \tilde{\zeta}_2=\xi+\mathcal L\zeta_{\rm sm}, \ \ \xi=|b-\bh|O\left(\frac{{\bf 1}_{r\geq B_1}}{1+r^4}\right).$$ Observe using Lemma \ref{lemmaradiation} that: 
\be
\label{nekvneoneovn}
\|\tilde{\zeta}_2\|_{L^2}^2\lesssim \frac{b^4}{|\log b|^2}+\frac{b^2}{|\log b|^2}\int_{r\geq B_1}\frac{1}{1+r^8}\lesssim \frac{b^4}{|\log b|^2},
\ee and using by construction $\int \tilde{\zeta}_2=0$ and \fref{improvedlinfty}:
\be
\label{cnocneonoenvov}
\int|\nabla \phi_{\tilde{\zeta}_2}|^2\lesssim \|\tilde{\zeta}_2\|_{L^2_Q}^2\lesssim \frac{b^3}{|\log b|^2}.
\ee
We therefore estimate in brute force:
\bee
b\left|\int \frac{\Lambda Q+2Q}{Q^2}\eh_2\tilde{\zeta}_2\right|&\lesssim &b\left[\|\e_2\|_{L^2_Q}+\|\zeta_2\|_{L^2_Q}\right]\|\tilde{\zeta}_2\|_{L^2}\\
& \lesssim &  b\frac{b^2}{|\log b|}\left[\|\e_2\|_{L^2_Q}+\frac{b}{|\log b|}\right]\\
& \lesssim &  b\left[\frac{b^{\frac 32}}{|\log b|}\|\e_2\|_{L^2_Q}+\frac{b^3}{|\log b|^2}\right].
\eee
Similarily, 
\bee
b\int |\e_1\cdot\nabla \phi_{\tilde{\zeta}_2}|\lesssim b\left(\int (1+r^2)|\e_1|^2\right)^{\frac 12}\left(\int \frac{|\nabla\phi_{\tilde{\zeta}_2}|^2}{1+r^2}\right)^{\frac 12}\lesssim bC(M)\|\e_2\|_{L^2_Q}\frac{b^{\frac 32}}{|\log b|}.
\eee
and for the term induced by the radiation using \fref{cnocneonoenvov}, \fref{nekvneoneovn} and the degeneracy \fref{gfijgbeigei}:
\bee
b\left|\int Q\nabla (\mathcal M\zeta)\cdot\nabla \phi_{\tilde{\zeta}_2}\right|& = & b\left|\int \zeta_2\phi_{\tilde{\zeta}_2}\right|\lesssim b|b-\hat{b}|\int\left|\tilde{\zeta}_2\phi_{\Lambda Q}\right|+b\int |\nabla \phi_{\tilde{\zeta}_2}|^2\\
& \lesssim& \frac{b^2}{|\log b|}\|\tilde{\zeta}_2\|_{L^2}+b\frac{b^3}{|\log b|^2}\lesssim \frac{b^4}{|\log b|^2}.
\eee
It remains to estimate the term:
$$
b(b-\hat{b})\left[\int \frac{\Lambda Q+2Q}{Q^2}\eh_2\Lambda Q-2\int \eh_1\cdot\nabla \phi_{\Lambda Q}\right]=  b(b-\hat{b})\int \eh_2\left[\frac{\Lambda Q(\Lambda Q+2Q)}{Q^2}+2\phi_{\Lambda Q}\right].$$
Here we need an additional algebra in order to be able to use dissipation\footnote{for which a better bound than for $\|\e_2\|_{L^2_Q}$ holds in time averaged sense.}. We compute from \fref{philambdaq}:
$$\frac{\Lambda Q(\Lambda Q+2Q)}{Q^2}+2\phi_{\Lambda Q}=\left(\frac{\Lambda Q}{Q}\right)^2-4.$$
We now use \fref{defnneo}, integrate by parts and use the degeneracy \fref{estvooee} and \fref{gfijgbeigei} to estimate:
\bee
&& \left|\int \eh_2\left[\frac{\Lambda Q(\Lambda Q+2Q)}{Q^2}+2\phi_{\Lambda Q}\right]\right|=\left|\int \eh_2\mathcal M(\Lambda^2Q)\right|\\
& = & \left|\int \eh_2\mathcal M(\Lambda^2Q+2\Lambda Q)\right| =\left|\int \mathcal M\eh_2\nabla \cdot(r(\Lambda Q+2Q))\right|\\
& = & \left(\int Q|\nabla(\mathcal M\eh_2)|^2\right)^{\frac 12}\left(\int \frac{r^2(\Lambda Q+2Q)^2}{Q}\right)^{\frac 12}\lesssim  \left(\int Q|\nabla(\mathcal M\eh_2)|^2\right)^{\frac 12}
 \eee
 and hence the control of the last term:
 \bee
 &&\left|b(b-\hat{b})\left[\int \frac{\Lambda Q+2Q}{Q^2}\eh_2\Lambda Q-2\int \eh_1\cdot\nabla \phi_{\Lambda Q}\right]\right|\\
 & \lesssim & \frac{b^2}{|\log b|} \left(\int Q|\nabla(\mathcal M\eh_2)|^2\right)^{\frac 12}\\
 & \leq & \frac 1{100} \int Q|\nabla(\mathcal M\eh_2)|^2+C\frac{b^4}{|\log b|^2}.
 \eee
 {\it $E_{1,1}$ terms}:  %Pick a well localized function $f$ and let $f_\l(x)=\l^2f(\lambda x)$, then from direct check:
%$$\matchal Lf_\l=\l^2\left[\Delta f+2Q_{\frac1\l}f+\nabla \phi_{Q_{\frac 1\l}}\cdot \nabla f+\nabla Q_{\frac 1\l}\cdot\nabla \phi_f\right]_{\l}$$ and differentiating this relation at $\l=1$ yields: 
%\be
%\label{nneoneoneon}
%\mathcal L\Lambda f=2\mathcal Lf+\Lambda(\mathcal L f)-2(\Lambda Q) f-\nabla\phi_{\Lambda Q}\cdot\nabla f-\nabla (\Lambda Q)\cdot\nabla \phi_f.
%\ee
We systematically use the rough pointwise bound from \fref{estlambda}: $$\left|\lsl\right|\lesssim b.$$ For the first term, we estimate first using the decomposition \fref{zetabig}, \fref{defxi} and \fref{estimatone}:
\bee
&&b|(\eh_2,\mathcal M\mathcal L(\Lambda \zs))|\lesssim  b|(\xi+\mathcal L\zs,\matchal M\mathcal L(\Lambda \zs)|+b\|\e_2\|_{L^2_Q}\|\mathcal L(\Lambda \zs)\|_{L^2_Q}\\
& \lesssim & b\left(\int_{r\geq B_1}\frac{b^2}{|\log b|^2}\frac{1}{(1+r^8)Q}+\int\frac{|\mathcal L\zs|^2}{Q}\right)^{\frac 12}\|\mathcal L(\Lambda \zs)\|_{L^2_Q}+ b\frac{b^{\frac 32}}{|\log b|}\|\e_2\|_{L^2_Q}\\
&\lesssim & b\left(\frac{b^3}{|\log b|^2}\right)^{\frac 12}\left(\frac{b^3}{|\log b|^2}\right)^{\frac 12}+ b\frac{b^{\frac 32}}{|\log b|}\|\e_2\|_{L^2_Q}\lesssim \frac{b^4}{|\log b|^2}+ b\frac{b^{\frac 32}}{|\log b|}\|\e_2\|_{L^2_Q}.
\eee
We now observe using \fref{developpementT1} the cancellation $$r^i\pa_r^i(\Lambda T_1)=O\left(\frac{1+|\log r|}{1+r^4}\right)$$ which implies
\bee
\int Q|\nabla \mathcal M\Lambda(\chi_{B_1}T_1)|^2& \lesssim & \int \frac{|\nabla\phi_{\chi_{B_1}T_1}|^2}{1+r^4}+\int (1+r^4)|\nabla \Lambda(\chi_{B_1}T_1)|^2+(1+r^2)|\Lambda (\chi_{B_1}T_1)|^2\\
& \lesssim & 1+\int_{B_1\leq r\leq 2B_1}\frac{1}{1+r^2}\lesssim 1
\eee
from which:
\bee
b\left|(\eh_2,\mathcal M\mathcal L(\Lambda \zb)\right|& = & b|b-\bh|\left|\int Q\nabla \mathcal M\eh_2\cdot\nabla \mathcal M\Lambda (\chi_{B_1}T_1)\right|\\
& \leq & \frac{1}{100}\int Q|\nabla \matchal M\eh_2|^2+C\frac{b^4}{|\log b|^2}\int Q|\nabla \mathcal M\Lambda(\chi_{B_1}T_1)|^2\\
& \leq & \frac{1}{100}\int Q|\nabla \matchal M\eh_2|^2+C\frac{b^4}{|\log b|^2}.
\eee
We further estimate using Lemma \ref{lemmaradiation}:
\bee
b\left|\int \frac{\Lambda Q+2Q}{Q^2}\eh\mathcal L(b\Lambda \zeta)\right|& \lesssim & b^2\|\eh\|_{L^2}\|\mathcal L\Lambda \zeta\|_{L^2_Q}\lesssim b\left[C(M)\|\e_2\|_{L^2_Q}+\frac{b}{|\log b|}\right]\frac{b^2}{|\log b|}\\
& \lesssim & b\left[\frac{b^{\frac 32}}{|\log b|}\|\e_2\|_{L^2_Q}+\frac{b^3}{|\log b|^2}\right],
\eee
\bee
b\left|\int \frac{\Lambda Q+2Q}{Q^2}\eh_2(b\Lambda \zeta)\right|& \lesssim & b^2\|\eh_2\|_{L^2_Q}\|\Lambda \zeta\|_{L^2}\lesssim b\left[\|\e_2\|_{L^2_Q}+\frac{b}{|\log b|}\right]\frac{b^2}{|\log b|}\\
& \lesssim & \frac{b^4}{|\log b|^2}+ b\frac{b^{\frac 32}}{|\log b|}\|\e_2\|_{L^2_Q},
\eee
\bee
&&b\left|\int \eh_1\cdot\nabla \phi_{b\Lambda \zeta}\right|\\
& \lesssim& b^2\left(\int (1+r^2)|\e_1|^2+\int (1+r^2)|\nabla \zeta|^2+\int |\zeta|^2+\int \frac{|\nabla \phi_{\zeta}|^2}{1+r^2}\right)^{\frac 12}\left(\int \frac{|\nabla(y\cdot\nabla  \phi_{\zeta})|^2}{1+r^2}\right)^{\frac 12}\\
& \lesssim & b^2\left[C(M)\|\e_2\|_{L^2_Q}+\frac{b}{|\log b|}\right] \frac{b}{|\log b|} \lesssim  \frac{b^4}{|\log b|^2}+ b\frac{b^{\frac 32}}{|\log b|}\|\e_2\|_{L^2_Q},
\eee
and using Proposition \ref{bootbound}:
\bee
&&b\left|\int \nabla \phi_{\eh}\cdot Q\nabla \mathcal M(b\Lambda \zeta)\right|\lesssim b^2\left|(Q\eh+\nabla Q\cdot\nabla \phi_{\eh},\frac{\Lambda \zeta}{Q})\right|+b^2\left|\int \nabla \phi_{\eh}\cdot Q\nabla\phi_{\Lambda \zeta}\right|\\
& \lesssim & b^2\|\Lambda \zeta\|_{L^2}\left(\int |\eh|^2+\int\frac{|\nabla \phi_{\eh}|^2}{1+r^2}\right)^{\frac 12}+b^2\left(\int\frac{|\nabla \phi_{\eh}|^2}{1+r^4}\right)^{\frac 12}\left(\int\frac{|\nabla \phi_{\Lambda \zeta}|^2}{1+r^4}\right)^{\frac 12}\\
& \lesssim & b^2\frac{b}{|\log b|}\left(|\log b|^C\|\e_2\|_{L^2_Q}+\frac{b}{|\log b|}\right)\lesssim \frac{b^4}{|\log b|^2}+ b\frac{b^{\frac 32}}{|\log b|}\|\e_2\|_{L^2_Q}.
\eee

{\bf step 10} Non linear terms.  We estimate the contribution of the non linear term $$N(\e)=\nabla \cdot(\e\nabla\phi_\e)= \e^2+\nabla \e\cdot\nabla \phi_\e.$$ We decompose using \fref{cnecneoneonenoe}:
\be
\label{defdecompfojep}
Q\nabla \mathcal MN(\e)=\nabla N(\e)+Q\nabla \phi_{N(\e)}+N(\e)\nabla \phi_Q=S_1+S_2
\ee with $$S_1=2\e\nabla \e+\nabla^2\phi_\e\cdot\nabla \e+Q\nabla \phi_{N(\e)}+N(\e)\nabla \phi_Q, \ \ S_2=\nabla^2 \e_\cdot\nabla \phi_\e.$$
We claim the bounds:
\be
\label{controlphin}
\forall 2<p<+\infty, \ \ \|\nabla\phi_{N(\e)}\|_{L^p}\lesssim C(p)|\log b|^{C(p)}\left[\|\e_2\|_{L^2_Q}^{2}+b^{10}\right],
\ee
\be
\label{invoinvoorjg}
\int |N(\e)|^2\lesssim  |\log b|^C\|\e_2\|_{L^2_Q}^{4}+b^{20},
\ee
\be
\label{cneoeoufoef}
\int \frac{|S_1|^2}{(1+r^2)Q}\lesssim \log b|^{C}  \|\e_2\|_{L^2_Q}^4+b^{20},
\ee
\be
\label{noevneonveo}
\int \frac{|\nabla \cdot S_1|^2}{Q}\lesssim|\log b|^{C}  \|\e_2\|_{L^2_Q}^4+b^{20},
\ee
\be
\label{bounfknlsbis}
\int \frac{|S_2|^2}{Q}\lesssim  \|\e_2\|_{L^2_Q}^{4-\frac{1}{100}},
\ee
\be
\label{bounfknls}
 \int (1+r^2)|S_2|^2\lesssim |\log b|^{C}  \|\e_2\|_{L^2_Q}^4+b^{20}.
\ee
{\it Proof of \fref{controlphin}}: We estimate from Sobolev and Plancherel: 
\be
\label{ceniohoeh}
\|\nabla^2 \phi_\e\|_{L^{\infty}}\lesssim \|\nabla^2\phi_\e\|_{H^2}\lesssim \|\e\|_{H^2}\lesssim C(M)\|\e_2\|_{L^2_Q}.
\ee
We then estimate from Hardy Littlewood Sobolev:
$$\|\nabla\phi_{N(\e)}\|_{L^p}\lesssim \|N(\e)\|_{L^r}\ \ \mbox{with}\ \ r=\frac{2p}{p+2}\in (1,2).$$ Then from Sobolev and \fref{estlossyfield}, \fref{ceniohoeh}:
\bea
\label{cneobveooeoeh}
\nonumber \|N(\e)\|_{L^r}& \lesssim& \|\e\|^2_{L^{2r}}+\|\nabla \e\cdot\nabla\phi_\e\|_{L^{r}}\lesssim \|\e\|^2_{H^2}+\|\nabla \e\cdot\nabla \phi_\e\|_{H^1}\\
\nonumber & \lesssim & \|\e\|_{H^2}^2+\left\|\frac{\nabla \phi_\e}{1+r}\right\|_{L^{\infty}}\left[\|(1+r)\nabla \e\|_{L^2}+\|(1+r)\nabla^2\e\|_{L^2}\right]+\|\nabla^2\phi_\e\|_{L^{\infty}}\|\nabla \e\|_{L^2}\\
& \lesssim & |\log b|^C(\|\e_2\|_{L^2_Q}+b^{10})\|\e_2\|_{L^2_Q}
\eea
and \fref{controlphin} follows.\\
{\it Proof of \fref{invoinvoorjg}}: It follows from the chain of estimates \fref{cneobveooeoeh} with $r=2$.\\
{\it Proof of \fref{cneoeoufoef}}:
We estimate in brute force using \fref{ceniohoeh}, \fref{controlphin}, \fref{invoinvoorjg}:
\bee
\int \frac{|S_1|^2}{(1+r^2)Q}& \lesssim & (\|\e\|_{L^{\infty}}^2+\|\nabla^2\phi_\e\|_{L^{\infty}}^2)\int(1+r^2)|\nabla \e|^2+\int \frac{|\nabla \phi_{N(\e)}|^2}{1+r^6}+  \int (N(\e))^2\\
& \lesssim & C(M)\|\e_2\|_{L^2_Q}^4+\|\nabla \phi_{N(\e)}\|_{L^4}^2+ |\log b|^C\|\e_2\|_{L^2_Q}^{4}+b^{20}\\
& \lesssim&  |\log b|^C\|\e_2\|_{L^2_Q}^{4}+b^{20}.
\eee
{\it Proof of \fref{noevneonveo}}: 
We compute:
$$
\nabla \cdot S_1=  2\e\Delta \e+2|\nabla \e|^2+\nabla \cdot(\nabla^2\phi_\e\cdot\nabla\e)+\nabla Q\cdot\nabla \phi_{N(\e)}+2QN(\e)+\nabla \phi_Q\cdot\nabla N(\e)
$$
and hence the bound:
\bea
\label{boundsonehifo}
\nonumber |\nabla \cdot S_1| & \lesssim & |\e||\Delta \e|+|\nabla \e|^2+|\nabla^3\phi_\e||\nabla \e|+|\nabla^2\phi_\e||\nabla^2\e|+\frac{|\nabla \phi_{N(\e)}|}{1+r^5}\\
& + & \frac{|\e|^2+|\nabla \e||\nabla \phi_\e|}{1+r^4}+\frac{|\e||\nabla \e|+|\nabla^2\phi_\e||\nabla \e|+|\nabla^2\e||\nabla \phi_\e|}{1+r}.
\eea
We have the nonlinear estimate using Sobolev:
\bee
\int \frac{|\nabla \e|^4}{Q}&\lesssim& \int |\nabla \e|^4+\Sigma_{i=1}^2\int|\nabla [(1+x_i)\e]|^4\\
& \lesssim& \|\e\|_{H^2}^4+\Sigma_{i=1}^2\left(\int|\nabla[(1+x_i)\e]|^2\right)\left(\int |\nabla^2[(1+x_i)\e]|^2\right)\\
& \lesssim & c(M)\|\e_2\|_{L^2_Q}^4.
\eee
Similarily, using Lemma \ref{lemmahtwobound} with $p=1$:
\bee
\int \frac{|\nabla^3 \phi_\e|^4}{Q}&\lesssim & \int |\nabla^2 \phi_\e|^4+\Sigma_{i=1}^2\int|\nabla [(1+x_i)\nabla^2\phi_\e]|^4\\
& \lesssim &  \|\nabla^2 \phi_\e\|_{H^2}^4+\Sigma_{i=1}^2\left(\int|\nabla[(1+x_i)\nabla^2 \phi_\e]|^2\right)\left(\int |\nabla^2[(1+x_i)\nabla^2\phi_\e]|^2\right)\\
& \lesssim & \|\e\|_{H^2}^4+\Sigma_{i=1}^2\left(\|\e\|_{H^2}^2+\int|x_i|^2|\nabla^2(\nabla \phi_\e)|^2\right)\left(\|\e\|_{H^2}^2+\int|x_i|^2|\nabla^2(\nabla^2 \phi_\e)|^2\right)\\
&\lesssim & \|\e\|_{H^2}^4+\Sigma_{i=1}^2\left(\|\e\|_{H^2}^2+\int|x_i|^2|\Delta(\nabla \phi_\e)|^2\right)\left(\|\e\|_{H^2}^2+\int|x_i|^2|\Delta(\nabla^2 \phi_\e)|^2\right)\\
& = & \|\e\|_{H^2}^4+\left(\|\e\|_{H^2}^2+\int|x_i|^2|\nabla (\Delta\phi_\e)|^2\right)\left(\|\e\|_{H^2}^2+\int|x_i|^2|\nabla^2(\Delta \phi_\e)|^2\right)\\
& \lesssim &  C(M)\|\e_2\|_{L^2_Q}^4.
\eee
We then estimate using \fref{controlphin}, \fref{ceniohoeh}, \fref{boundsonehifo}, \fref{coercbase}, \fref{estlossyfield}:
\bee
\int \frac{|\nabla \cdot S_1|^2}{Q}& \lesssim & \|\e\|_{L^{\infty}}^2\int \frac{|\Delta\e|^2}{Q}+\int \frac{|\nabla \e|^4+|\nabla^3 \phi_\e|^4}{Q}+\|\nabla^2 \phi_\e\|_{L^{\infty}}^2\int\frac{|\nabla^2\e|^2}{Q}\\
& + & \int\frac{|\nabla \phi_{N(\e)}|^2}{1+r^6}+\|\e\|_{L^{\infty}}^4+\|\frac{\nabla \phi_\e}{1+r}\|_{L^{\infty}}^2\int\frac{|\nabla \e|^2}{1+r^2}\\
& + & (\|\e\|_{L^{\infty}}^2+\|\nabla^2\phi_\e\|_{L^{\infty}}^2)\int(1+r^2)|\nabla \e|^2+\|\frac{\nabla \phi_\e}{1+r}\|_{L^{\infty}}^2\int(1+r^4)|\nabla^2\e|^2\\
& \lesssim &C(M)\|\e_2\|_{L^2_Q}^4+\|\nabla \phi_{N(\e)}\|_{L^4}^2+\|\e_2\|_{L^2_Q}^2 |\log b|^C\left(\|\e_2\|_{L^2_Q}^2+b^{10}\right)\\
& \lesssim & |\log b|^{C}  \|\e_2\|_{L^2_Q}^4+b^{20}.
\eee
{\it Proof of \fref{bounfknlsbis}, \fref{bounfknls}}: From \fref{linftiyfilw}:
$$\int \frac{|S_2|^2}{Q}\lesssim \|\nabla \phi_\e\|_{L^{\infty}}^2\int\frac{|\nabla^2\e|^2}Q\lesssim \|\e_2\|_{L^2_Q}^{4-\frac1{100}}.$$
From \fref{estlossyfield}:
$$\int (1+r^2)|S_2|^2\lesssim \int (1+r^2)|\nabla^2\e|^2|\nabla \phi_\e|^2\lesssim \left\|\frac{\nabla \phi_\e}{1+r}\right\|_{L^{\infty}}^2\|\e\|_{H^2_Q}^2\lesssim |\log b|^C\|\e_2\|_{L^2_Q}^{4}+b^{20}.$$
We are now in position to obtain admissible bounds for the nonlinear terms in \fref{noumberneergyidentity}. From \fref{defdecompfojep}, \fref{mcontiniuos} and the nonlinear estimates \fref{noevneonveo}, \fref{bounfknls}:
\bee
&&|(\e_2,\mathcal M\mathcal LN(\e))|\lesssim|(\matchal M\e_2,\nabla\cdot S_1)|+|(\nabla\mathcal M\e_2\cdot S_2)|\\
& \lesssim & \|\e_2\|_{L^2_Q}\|\nabla \cdot S_1\|_{L^2_Q}+\left(\int Q|\nabla \mathcal M\e_2|^2\right)^{\frac 12}\left(\int \frac{|S_2|^2}{Q}\right)^{\frac 12}\\
& \leq &\frac{1}{100}\int Q|\nabla \mathcal M\e_2|^2+C(M)|\log b|^C \|\e_2\|_{L^2_Q}\left[\|\e_2\|^2_{L^2_Q}+b^{10}\right]+C(M)\|\e_2\|_{L^2_Q}^{4-\frac 1{100}}\\
& \leq & \frac{1}{100}\int Q|\nabla \mathcal M\eh_2|^2+C(M)\left[bb^{\frac 14}\|\e_2\|_{L^2_Q}^2+\frac{b^4}{|\log b|^2}\right].
\eee
We estimate the radiation term using Lemma \ref{lemmaradiation}:
\bee
|(\zeta_2,\mathcal M\mathcal LN(\e))|&\lesssim & |(\mathcal M\zeta_2,\nabla \cdot S_1)|+|(\nabla \mathcal M\zeta_2,S_2)|\\
& \lesssim & \frac{b^{\frac 32}}{|\log b|}\|\nabla \cdot S_1\|_{L^2_Q}+\left(\int Q|\nabla \mathcal M\zeta_2|^2\right)^{\frac 12}\left(\int \frac{|S_2|^2}{Q}\right)^{\frac 12}\\
& \lesssim & \frac{b^{\frac 32}}{|\log b|}\left[|\log b|^{C}  \|\e_2\|_{L^2_Q}^2+b^{10}\right]+\frac{b^2}{|\log b|}\|\e_2\|_{L^2_Q}^{2-\frac 1{50}}\\
& \lesssim & \frac{b^4}{|\log b|^2}.
\eee

Next, using the degeneracy \fref{gfijgbeigei} and integrating by parts:
\bee
&&b\left|\int\frac{\Lambda Q+2Q}{Q^2}\eh\mathcal L\mathcal N(\e)\right| \lesssim  b\int\frac{1}{(1+r^2)Q}\left[|\eh\nabla \cdot S_1|+\frac{|\eh||S_2|}{1+r}|+|\nabla \e||S_2|\right]\\
& \lesssim & b(\|\e\|_{L^2}+\|\zeta\|_{L^2})\left[\|\nabla \cdot S_1\|_{L^2_Q}+\|(1+r)S_2\|_{L^2}\right]\\
& + & b(\|(1+r)\nabla \e\|_{L^2}+\|(1+r)\nabla\zeta\|_{L^2})\|(1+r)S_2\|_{L^2}\\
& \lesssim & b\left(\|\e_2\|_{L^2_Q}+\frac{b}{|\log b|}\right)\left[|\log b|^C\|\e_2\|^2_{L^2_Q}+b^{10}\right]\lesssim \frac{b}{|\log b|}\left[ \|\e_2\|_{L^2_Q}^2+b^4\right].
\eee
Using \fref{controlphin}, \fref{controleun}:
\bee
&&b\left|\int \eh_1\cdot\nabla \phi_{N(\e)}\right|\\
&\lesssim& b\left(\int (1+r^2)|\e_1|^2+\int |\zeta|^2+\int(1+r^2)|\nabla \zeta|^2+\int\frac{|\nabla \phi_\zeta|^2}{1+r^2}\right)^{\frac 12}\left(\int \frac{|\nabla \phi_{N(\e)}|^2}{1+r^2}\right)^{\frac 12}\\
& \lesssim & b \left[C(M)\|\e_2\|_{L^2_Q}+\frac{b}{|\log b|}\right]\|\nabla\phi_{N(\e)}\|_{L^4}\lesssim b|\log b|^C \left[C(M)\|\e_2\|_{L^2_Q}+\frac{b}{|\log b|}\right]\left[\|\e_2\|_{L^2_Q}^2+b^{10}\right]\\
& \lesssim & \frac{b}{|\log b|}\left[ \|\e_2\|_{L^2_Q}^2+b^4\right]
\eee
Using \fref{controlphin}, \fref{cneoeoufoef}, \fref{bounfknls} and \fref{interpolationfield}:
\bee
b\left|\int\nabla \phi_{\eh} \cdot Q\nabla \mathcal M\mathcal N(\e)\right|& \lesssim &b \left(\int \frac{|\nabla \phi_{\e}|^2}{1+r^2}+\int \frac{|\nabla \phi_{\zeta}|^2}{1+r^2}\right)^{\frac 12}\left(\int (1+r^2)(|S_1|^2+|S_2|^2)\right)^{\frac 12}\\
& \lesssim &b\left(\|\e_2\|_{L^2_Q}+\frac{b}{|\log b|}\right)|\log b|^C\left[[\|\e_2\|_{L^2_Q}^2+b^{10}\right]\\
& \lesssim & \frac{b}{|\log b|}\left[ \|\e_2\|_{L^2_Q}^2+b^4\right].
\eee

 {\bf step 11} Small linear terms. We estimate the contribution of the small linear error $\Theta_b(\e)$ given by \fref{tbe}. We expand: $$\Theta_b(\e)=\nabla \e\cdot\nabla \phi_{\qbt-Q}+2(\qbt-Q)\e+\nabla (\qbt-Q)\cdot\nabla \phi_\e.$$
 We claim the bounds:
 \be
 \label{nekoneiohnneo|}
 \int |\Theta_b(\e)|^2\lesssim b^2C(M)\|\e_2\|_{L^2_Q}^2,
 \ee
 \be
 \label{nekoneiohnneo|bis}
 \int \frac{|\nabla\phi_{\Theta_b(\e)}|^2}{1+r^2}\lesssim b^2C(M)\|\e_2\|_{L^2_Q}^2,
 \ee
 \be
 \label{njibvbbvirbeo}
 \int Q|\nabla \mathcal M\Theta_b(\e)|^2\lesssim b^2C(M)\|\e_2\|_{L^2_Q}^2.
 \ee
 {\it Proof of \fref{nekoneiohnneo|}, \fref{nekoneiohnneo|bis}, \fref{njibvbbvirbeo}}: We recall the pointwise bounds \fref{poietpoeutone}, \fref{poietpoeuttwo}:
 $$ |\Theta_b(\e)|\lesssim b\left[\frac{1+|\log r|}{1+r} |\nabla \e|+\frac{|\e|}{1+r^2}+\frac{|\nabla \phi_\e|}{1+r^3}\right],
 $$
 $$
 |\nabla\Theta_b(\e)|\lesssim  b\left[\frac{1+|\log r|}{1+r} |\nabla^2 \e|+\frac{1+|\log r|}{1+r^2}|\nabla \e|+\frac{|\nabla^2 \phi_\e|+|\e|}{1+r^3}+\frac{|\nabla \phi_\e|}{1+r^4}\right].
 $$
 The bound \fref{nekoneiohnneo|} now follows from Proposition \ref{bootbound}. We estimate using \fref{bootbound}, HLS and \fref{poietpoeutone}:
\bea
\label{nkononeovn}
\nonumber \int \frac{|\nabla \phi_{\Theta_b(\e)}|^2}{1+r^2}&\lesssim &\|\nabla \phi_{\Theta_b(\e)}\|_{L^4}^2\lesssim \|\Theta_b(\e)\|_{L^{\frac 43}}^2\\
& \lesssim & C(M)b^2\|\e_2\|^2_{L^2_Q}
\eea
and \fref{nekoneiohnneo|bis} is proved. We then compute like for \fref{defdecompfojep}:
$$Q\nabla \mathcal M\Theta_b(\e)=\nabla \Theta_b(\e)+Q\nabla \phi_{\Theta_b(\e)}+\Theta_b(\e)\nabla \phi_Q
$$
and estimate using \fref{poietpoeutone}, \fref{poietpoeuttwo}, \fref{nkononeovn} and Proposition \ref{bootbound}:
\bee
\int Q|\nabla \mathcal M\Theta_b(\e)|^2&\lesssim & b^2\int (1+r^4)\left[\frac{1+|\log r|^2}{1+r^2} |\nabla^2 \e|^2+\frac{1+|\log r|^2}{1+r^4}|\nabla \e|^2\right.\\
& + & \left.\frac{|\nabla^2 \phi_\e|^2+|\e|^2}{1+r^6}+\frac{|\nabla\phi_{\e}|^2}{1+r^8}+\frac{|\nabla \phi_{\Theta_b(\e)}|^2}{1+r^8}\right]\\
& \lesssim & b^2C(M)\|\e_2\|_{L^2_Q}^2,
\eee
this is \fref{njibvbbvirbeo}.\\
We now estimate all corresponding terms in \fref{noumberneergyidentity}. From \fref{njibvbbvirbeo}, \fref{bootsmallh2qboot}:
\bee
\left|(\eh_2,\mathcal M\mathcal L\Theta_b(\e))\right| & = & \left|\int Q\nabla \cdot\mathcal M\eh_2\cdot\nabla \mathcal M\Theta_b(\e)\right|\lesssim \frac1{100}\int Q|\nabla \mathcal M\eh_2|^2+C\int Q|\nabla \mathcal M\Theta_b(\e)|^2\\
& \leq & \frac1{100}\int Q|\nabla \mathcal M\eh_2|^2+C(M)\frac{b^4}{|\log b|^2}.
\eee
Next, integrating by parts:
\bee
&&b\left|\int\frac{\Lambda Q+2Q}{Q^2}\eh\matchal L\Theta_b(\e)\right| \lesssim  b\int(1+r^2)\left(|\nabla \eh|+\frac{|\eh|}{1+r}\right)Q|\nabla \mathcal M\Theta_b(\e)|\\
& \lesssim & b\left(\int |\nabla \eh|^2+\frac{|\eh|^2}{1+r^2}\right)^{\frac 12}\left(\int Q|\nabla \mathcal M\Theta_b(\e)|^2\right)^{\frac 12}\\
& \lesssim & b\left[C(M)\|\e_2\|_{L^2_Q}+\frac{b}{|\log b|}\right]b\|\e_2\|_{L^2_Q} \lesssim  b\frac{b^{\frac 32}}{|\log b|}\|\e_2\|_{L^2_Q}+\frac{b^4}{|\log b|^2}.
\eee
Similarily, using \fref{nekoneiohnneo|}:
\bee
b\left|\int\frac{\Lambda Q+2Q}{Q^2}\eh_2\Theta_b(\e)\right|&\lesssim &b\|\eh_2\|_{L^2_Q}\|\Theta_b(\e)\|_{L^2}\lesssim b\left(\|\e_2\|_{L^2_Q}+\frac{b}{|\log b|}\right)bC(M)\|\e_2\|_{L^2_Q}\\
& \lesssim &  b\frac{b^{\frac 32}}{|\log b|}\|\e_2\|_{L^2_Q}+\frac{b^4}{|\log b|^2}.
\eee
Using \fref{nekoneiohnneo|bis}, \fref{controleun}:
\bee
b\left|\int \eh_1\cdot\nabla \phi_{\Theta_b(\e)}\right|&\lesssim &b\left(\int (1+r^2)|\eh_1|^2\right)^{\frac 12}\left(\int \frac{|\nabla \phi_{\Theta_b(\e)}|^2}{1+r^2}\right)^{\frac12}\\
& \lesssim & bC(M)\left(\|\e_2\|_{L^2_Q}+\frac{b}{|\log b|}\right)b\|\e_2\|_{L^2_Q}\lesssim  b\frac{b^{\frac 32}}{|\log b|}\|\e_2\|_{L^2_Q}+\frac{b^4}{|\log b|^2},
\eee
\bee
b\left|\int \nabla \phi_{\eh}\cdot Q\nabla \matchal M\Theta_b(\e)\right|& \lesssim & b\left(\int \frac{|\nabla \phi_{\eh}|^2}{1+r^4}\right)^{\frac 12}\left(\int Q|\nabla \mathcal M\Theta_b(\e)|^2\right)^{\frac 12}\\
& \lesssim & bC(M)\left(\|\e_2\|_{L^2_Q}+\frac{b}{|\log b|}\right)b\|\e_2\|_{L^2_Q}\\
& \lesssim & b\frac{b^{\frac 32}}{|\log b|}\|\e_2\|_{L^2_Q}+\frac{b^4}{|\log b|^2}.
\eee

{\bf step 12} Conclusion. Injecting the collection of bounds obtained in steps 4 through 8 into \fref{noumberneergyidentity} yields the following preliminary estimate:
\bee
\nonumber &&\frac 12\frac{d}{dt}\left\{\frac{(\mathcal M \eh_2,\eh_2)}{\l^4}+O\left(\frac{C(M)b^3}{\l^4|\log b|^2}\right)\right\}\\
& \leq & -\frac12\int Q_{\lambda}|\nabla \mathcal M_{\lambda}\wh_2|^2+\frac{bC(M)}{\l^6}\left[\frac{b^{\frac32}}{|\log b|}\|\e_2\|_{L^2_Q}+\frac{b^3}{|\log b|^2}\right]+\frac{\hb}{\l^6}(\mathcal M\eh_2,\eh_2).
\eee
We now make an essential of the precise numerology to treat the remaining $(\mathcal M\eh_2,\eh_2)$ term which has the wrong sign. We multiply this identity by $\l^2$ and obtain using the rough bound from \fref{cnkonenoeoeoi3poi}, \fref{boundbootbbhat}: 
\be
\label{nekoneoneneonveo}
\left|\lsl+\hb\right|\lesssim \frac{b}{|\log b|}
\ee
 the control:
\bea
\label{nvonveonveonenoen}
 &&\frac 12\frac{d}{dt}\left\{\frac{(\mathcal M \eh_2,\eh_2)}{\l^2}+O\left(\frac{C(M)b^3}{\l^2|\log b|^2}\right)\right\}\\
\nonumber& \leq & \frac{bC(M)}{\l^4}\left[\frac{b^{\frac32}}{|\log b|}\|\e_2\|_{L^2_Q}+\frac{b^3}{|\log b|^2}\right]\\
\nonumber& + & \frac{1}{\l^4}\left|\lsl+\hb\right||(\mathcal M\eh_2,\eh_2)|+\frac{1}{\l^4}\left|\lsl\right|\frac{C(M)b^3}{|\log b|^2}.
\eea
We now develop the quadratic term and estimate using \fref{degenrateoneone}, \fref{noenoeno}:
\bee
(\mathcal M\eh_2,\eh_2)& = & (\matchal M\e_2,\e_2)+2(\mathcal M\zeta_2,\e_2)+(\mathcal M\zeta_2,\zeta_2)\\
& = &  (\matchal M\e_2,\e_2)+O\left(\frac{b^{\frac32}}{|\log b|}\|\e_2\|_{L^2_Q}+\frac{b^3}{|\log b|^2}\right)
\eee
 which implies the upper bound $$|(\mathcal M\eh_2,\eh_2)|\lesssim \|\e_2\|_{L^2_Q}^2+\frac{b^3}{|\log b|^2}.$$ Injecting these bounds together with \fref{nekoneoneneonveo}, \fref{bootsmallh2q} into \fref{nvonveonveonenoen} yields \fref{eetfonafmetea} and concludes the proof of Proposition \ref{htwoqmonton}.

%%%%%%%%%%%%%%%%%%%%%%%%%%%%%%%%%%%%%%%%%%%%%%%%%%%%%%%%%%%%%%%%%%%%
%%%%%%%%%%%%%%%%%%%%%%%%%%%%%%%%%%%%%%%%%%%%%%%%%%%%%%%%%%%%%%%%%%%%

\section{Sharp description of the singularity formation}
\label{proofthmmai}

%%%%%%%%%%%%%%%%%%%%%%%%%%%%%%%%%%%%%%%%%%%%%%%%%%%%%%%%%%%%%%%%%%%%
%%%%%%%%%%%%%%%%%%%%%%%%%%%%%%%%%%%%%%%%%%%%%%%%%%%%%%%%%%%%%%%%%%%%

We are now in position to conclude the proof of the bootstrap Proposition \ref{propboot} from which Theorem \ref{thmmain} easily follows.

%%%%%%%%%%%%%%%%%%%%%%%%%%%%%%%%%%%%%%%%%%%%%%%%%%%%%%%%%%%%%%%%%%%%

\subsection{Closing the bootstrap}

%%%%%%%%%%%%%%%%%%%%%%%%%%%%%%%%%%%%%%%%%%%%%%%%%%%%%%%%%%%%%%%%%%%%

We now conclude the proof of Proposition \ref{propboot}.

\begin{proof}[Proof of Proposition \ref{propboot}] The $L^1$ bound \fref{bootsmallloneboot} follows from \fref{lonebound}. The pointwise upper bound on b \fref{positivboot} follows from \fref{poitnzeiboud} which implies $\bh_s<0$ and the conclusion follows from \fref{boundbootbbhat}. The lower bound $b>0$ follows from the bootstrap bound \fref{bootsmallh2q}. Indeed, if $b(s^*)=0$, then $\e(s^*)=0$ from \fref{bootsmallh2q} and then $\int u(s^*)=\int Q=\int u_0$ by conservation mass, and a contradiction follows from \fref{intiialmass}.\\
It remains to close the bound \fref{bootsmallh2qboot} which is the heart of the analysis, and this follows from the monotonicity formula \fref{htwoqmonton}. Indeed, we rewrite \fref{eetfonafmetea} using \fref{bootsmallh2q}:
\be
\label{nknvkonbrono}
\frac{d}{dt}\left\{\frac{(\mathcal M \e_2,\e_2)}{\l^2}+O\left(C(M)\sqrt{K^*}\frac{b^3}{\l^2|\log b|^2}\right)\right\}\lesssim  \sqrt{K^*}\frac{b^4}{\l^4|\log b|^2}.
\ee
In order to integrate this between $t=0$ and $t^*$, we first observe from \fref{boundbootbbhat}, \fref{pointziender}, \fref{estlambda} the bounds 
\be
\label{vnonvoneo}
|\hb_s|\lesssim \frac{b^2}{|\log b|}, \ \ |\l\l_t+b|\lesssim C(M)\frac{b^2}{|\log b|}
\ee from which:
\bee
\int_0^{t^*}\frac{b^4}{\l^4|\log b|^2}dt& = & \int -\l_t \frac{\bh^3}{\l^3|\log \bh|^2}dt+O\left(\int_0^t\frac{b^5}{\l^4|\log b|^5}dt\right)\\
& = & \left[\frac{\bh^3}{2\l^2|\log \bh|^2}\right]_0^t-\int_0^t\frac{1}{2\l^4}\frac{d}{ds}\left[\frac{\bh^3}{|\log \bh|^2}\right]dt+O\left(\int_0^t\frac{b^5}{\l^4|\log b|^5}dt\right)\\
& = &  \left[\frac{\bh^3}{2\l^2|\log \bh|^2}\right]_0^t+O\left(\int_0^t\frac{b^4}{\l^4|\log b|^3}dt\right)
\eee
and hence using the a priori smallness of $b$: $$\l^2(t)\int_0^{t^*}\frac{b^4}{\l^4|\log b|^2}dt\lesssim \frac{b^3(t)}{|\log b(t)|^2}+\left(\frac{\l(t)}{\l(0)}\right)^2\frac{b^3(0)}{|\log b(0)|^2}$$ Integrating \fref{nknvkonbrono} in time, we conclude using \fref{coerclwotht}: 
\bea
\label{cneonneeonoe}
\delta(M)\|\e_2(t)\|^2_{L^2_Q}& \lesssim & (\mathcal M\e_2(t),\e_2(t))\\
\nonumber & \lesssim & C(M)\left\{\frac{\l^2(t)}{\l^2(0)}\left[\|\e_2(0)\|_{L^2_Q}^2+\sqrt{K^*}\frac{b^3(0)}{|\log b(0)|^2}\right]+\sqrt{K^*}\frac{b^3(t)}{|\log b(t)|^2}\right\}
\eea
for some small enough universal constants $\delta(M),C(M)>0$ independent of $K^*$. Moreover, \fref{vnonvoneo} implies 
\be
\label{neiovneneonevoneo}
\frac{d}{ds}\left\{\frac{\bh^3}{\l^2|\log \bh|^2}\right\}>0
\ee
and thus \fref{cneonneeonoe} and the initialization \fref{initialsmalneesb} ensure: 
\bee
\delta(M)\|\e_2(t)\|^2_{L^2_Q}& \lesssim & C(M)\sqrt{K^*}\left[\l^2(t)\frac{b^3(0)}{\l^2(0)|\log b(0)|^2}+\frac{b^3(t)}{|\log b(t)|^2}\right]\\
& \lesssim &  C(M)\sqrt{K^*}\left[\l^2(t)\frac{\bh^3(0)}{\l^2(0)|\log\bh(0)|^2}+\frac{b^3(t)}{|\log b(t)|^2}\right]\\
& \lesssim & C(M)\sqrt{K^*}\frac{\bh^3(t)}{|\log \bh(t)|^2}\lesssim  C(M)\sqrt{K^*}\frac{b^3(t)}{|\log b(t)|^2}
\eee
 and \fref{bootsmallh2qboot} follows for $K^*=K^*(M)$ large enough. This concludes the proof of Proposition \ref{propboot}.
  \end{proof}
  
  %%%%%%%%%%%%%%%%%%%%%%%%%%%%%%%%%%%%%%%%%%%%%%%%%%%%%%%%%%%%%%%%%%%%

\subsection{Sharp control of the singularity formation}

%%%%%%%%%%%%%%%%%%%%%%%%%%%%%%%%%%%%%%%%%%%%%%%%%%%%%%%%%%%%%%%%%%%%

 We are now in position to conclude the proof of Theorem \ref{thmmain}.
 
 \begin{proof}[Proof of Theorem \ref{thmmain}]
 
 {\bf step 1} Let $u_0\in \mathcal O$ and $u\in \mathcal C([0,T),\mathcal E)$ be the corresponding solution to \fref{kps} with lifetime $0<T\leq +\infty$, then the estimates of Proposition \ref{propboot} hold on $[0,T)$. Observe from \fref{neiovneneonevoneo} the bound $$\l^2\lesssim b^3$$ from which using \fref{vnonvoneo} $$-\l\l_t\gtrsim b\gtrsim C(u_0)\l^{\frac 23} \ \ \mbox{and thus} \ \ -(\l^{\frac 43})_t\gtrsim C(u_0)>0$$ implies that $\l(t)$ touches zero in some finite time $0<T_0<+\infty$. Note that the bounds of Proposition \ref{propboot} injected into the decomposition \fref{decompe} ensure $$\|\e(t)\|_{H^2}\ll1\ \ \mbox{for}\ \  0\leq t<T_0 \  \ \mbox{and}\ \  \lim_{t\uparrow T_0}\|u(t)\|_{H^2}=+\infty$$ and thus from standard Cauchy theory, the solution blows up at $T=T_0<+\infty$.  Moreover, we obtain from \fref{vnonvoneo} the rough bound: $$|\lambda\lambda_t|=\left|\lsl\right|\lesssim 1 \ \ \mbox{and thus}\ \ \lambda(t)\lesssim \sqrt{T-t},$$ and hence from \fref{defst}: 
\be
\label{glovaltime}
s(t)\to+\infty\ \ \mbox{as}\ \ t\to T.
\ee
 
 {\bf step 2} Blow up speed. We now derive the sharp asymptotics at blow up time by reintegrating the modulation equations for $(b,\l)$ in the vicinity of blow up time or equivalently as $s\to \infty$. We estimate from \fref{poitnzeiboud}, \fref{bootsmallh2qboot}: 
 \be
 \label{eqsdbivbbie}
 \left|\hat{b}_s+\frac{2\bh^2}{|\log \bh|}\right|\lesssim\frac{\bh^2}{|\log \bh|^2}.
 \ee
 We then argue as in \cite{RaphRod}. We multiply \fref{eqsdbivbbie} by $\frac{|\log \hb|}{\hb^2}$ and obtain: $$\frac{\bh_s\log \bh}{\bh^2}=-2+O\left(\frac{1}{|\log \bh|}\right).$$ We use $$\left(\frac{\log t}{t}+\frac{1}{t}\right)'=-\frac{\log t}{t^2}$$ to conclude after integration: $$-\frac{\log \bh+1}{\bh}=2s+O\left(\int_0^s\frac{d\sigma}{|\log \bh|}\right)$$ and thus $$\hb(s)=\frac{\log s}{2s}\left(1+o(1)\right), \ \ \log \bh=\log\log s-\log s +O(1).$$ This finally yields the aysmptotic development near blow up time:
 $$\hb(s)=\frac{1}{2s}\left(\log s-\log \log s\right)+O\left(\frac{1}{s}\right).$$ We now rewrite the modulation equations for $\l$ using \fref{vnonvoneo}, \fref{boundbootbbhat}: 
 \be
 \label{cbeiheovejiugveuig}
 -\lsl=\hb+O\left(\frac{b}{|\log b|}\right)=\frac{1}{2s}\left(\log s-\log \log s\right)+O\left(\frac{1}{s}\right)
 \ee
  which time integration yields:
$$-\log \l=\frac14\left[(\log s)^2-2\log s \log \log s\right]+O(\log s)=\frac{(\log s)^2}4\left[1-\frac{2\log\log s}{\log s}+O\left(\frac1{\log s}\right)\right].$$ In particular, $$\sqrt{|\log \l|}=\frac{\log s}2\left[1-\frac{\log\log s}{\log s}+O\left(\frac1{\log s}\right)\right]$$ which also implies:
$$e^{2\sqrt{|\log \l|}+O(1)}=\frac{s}{\log s}, \ \ s=\sqrt{|\log \l|}e^{2\sqrt{|\log \l|}+O(1)}.$$ We use these relations to rewrite the modulation equation \fref{cbeiheovejiugveuig}: $$-s\lsl=-\sqrt{|\log \l|}e^{2\sqrt{|\log \l|}+O(1)}\l\l_t=\sqrt{|\log \l|}+O(1)$$ and thus $$-\l\l_te^{2\sqrt{|\log \l|}}=e^{O(1)}.$$ The time integration with boundary condition $\l(T)=0$ yields $$\lambda(t)=\sqrt{T-t}e^{-\sqrt{\frac{|\log (T-t)|}{2}}+O(1)}\ \ \mbox{as}\ \ t\to T,$$ this is \fref{blkpeoghenepojg}.\\
Observe now that the chain of above estimates ensures in particular $$b(t)\to 0\ \ \mbox{as}\ \ t\to T$$ which easily implies using Proposition \ref{proploc}: $$\|Q-\qbt\|_{H^2_Q}\to 0\ \ \mbox{as}\ \ t\to T,$$ and the strong convergence \fref{boievbebeo} now follows from \fref{bootsmallh2qboot}.\\
This concludes the proof of Theorem \ref{thmmain}.
 
 \end{proof}

%%%%%%%%%%%%%%%%%%%%%%%%%%%%%%%%%%%%%%%%%%%%%%%%%%%%%%%%%%%%%%%%%%%%%%%%%%%%%%%%%%%%%%%%%%%%%%%%%%%%%%%%%%%%%%%%%%%%%%%%%%%%%%%%%%%%%%%%%%%%%%%%%%%

\begin{appendix}

%%%%%%%%%%%%%%%%%%%%%%%%%%%%%%%%%%%%%%%%%%%%%%%%%%%%%%%%%%%%%%%%%%%%%%%%%%%%%%%%%%%%%%%%%%%%%%%%%%%%%

\section{Estimates for the Poisson field}

%%%%%%%%%%%%%%%%%%%%%%%%%%%%%%%%%%%%%%%%%%%%%%%%%%%%%%%%%%%%%%%%%%%%%%%%%%%%%%%%%%%%%%%%%%%%%%%%%%%%%

This appendix is devoted to the derivation of linear estimates, in particular for for the Poisson field in $H^2_Q$ and $\mathcal E$. We start with weighted $H^2$ type bounds:

\begin{lemma}[$H^2$ bound]
\label{lemmahtwobound}
Let $p=1,2$, then for all $u\in \mathcal D(\R^2)$,
\be
\label{estwieght}
\int |x|^{2p}|\nabla ^2u|^2\lesssim \int |x|^{2p}|\Delta u|^2+\int |x|^{2p-2}|\nabla u|^2.
\ee
\end{lemma}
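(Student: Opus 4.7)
The plan is to prove the bound by pure integration by parts, exploiting the fact that in $\R^2$ the weight $|x|^{2p}$ has derivatives of exactly the right homogeneity to balance the Laplacian identity $\int|\nabla^2 u|^2=\int|\Delta u|^2$.

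Since $u\in \mathcal{D}(\R^2)$, no boundary terms arise, and we may compute term by term. First I would expand
\[
\int |x|^{2p}|\nabla^2 u|^2 =\sum_{i,j}\int |x|^{2p}(\partial_i\partial_j u)^2
\]
and integrate by parts once in the $\partial_i$ slot:
\[
\sum_i \int |x|^{2p}(\partial_i\partial_j u)^2 =-\int |x|^{2p}\partial_j(\Delta u)\,\partial_j u-\sum_i\int \partial_i(|x|^{2p})\,\partial_i\partial_j u\,\partial_j u.
\]
Summing over $j$ and integrating by parts once more in $\partial_j$ on the first piece produces the main term $\int|x|^{2p}|\Delta u|^2$ together with a cross term $\int \nabla(|x|^{2p})\cdot\nabla u\,\Delta u$. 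For the remaining commutator, I would rewrite
\[
-\sum_{i,j}\int \partial_i(|x|^{2p})\,\partial_i\partial_j u\,\partial_j u=-\tfrac12\int \nabla(|x|^{2p})\cdot\nabla(|\nabla u|^2)=\tfrac12\int \Delta(|x|^{2p})\,|\nabla u|^2.
\]

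The key numerics are then the explicit formulas
\[
|\nabla(|x|^{2p})|\le 2p\,|x|^{2p-1},\qquad \Delta(|x|^{2p})=4p^2\,|x|^{2p-2},
\]
valid for $p=1,2$, which are exactly of homogeneity $2p-1$ and $2p-2$ respectively. The cross term is then controlled by Cauchy--Schwarz,
\[
\Bigl|\int \nabla(|x|^{2p})\cdot\nabla u\,\Delta u\Bigr|\le C\int |x|^{2p-1}|\nabla u||\Delta u|\le \tfrac12\int |x|^{2p}|\Delta u|^2+C\int |x|^{2p-2}|\nabla u|^2,
\]
while the $\Delta(|x|^{2p})$ commutator directly yields a clean $\int|x|^{2p-2}|\nabla u|^2$ contribution. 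Collecting these bounds gives \fref{estwieght}.

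There is no real obstacle here; the computation is a standard weighted elliptic identity, and the only thing to check is the matching of homogeneities, which works precisely because $\Delta$ lowers the weight degree by exactly two and $\nabla$ lowers it by one. The only small point of care is the treatment of mixed partials $\partial_i\partial_j u$ with $i\neq j$, which are handled identically to the diagonal ones since the chain of integrations by parts is symmetric.
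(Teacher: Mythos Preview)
Your proof is correct and follows essentially the same integration-by-parts strategy as the paper. The only cosmetic difference is that the paper works with the monomial weights $x_1^{2p}$ and $x_2^{2p}$ separately, which yields an exact identity $\int x_1^{2p}|\nabla^2 u|^2=\int x_1^{2p}|\Delta u|^2+2p(2p-1)\int x_1^{2p-2}(\partial_2 u)^2$ before summing, whereas you use the radial weight $|x|^{2p}$ directly and absorb the cross term $\int\nabla(|x|^{2p})\cdot\nabla u\,\Delta u$ by Cauchy--Schwarz; both routes are equivalent and standard.
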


\begin{proof}[Proof of Lemma \ref{lemmahtwobound}] We integrate by parts to compute:
\bee
&&\int x_1^{2p}(\pa_{11}u+\pa_{22}u)^2 =  \int x_1^{2p}\left[(\pa_{11}u)^2+(\pa_{22}u)^2\right]+2\int x_1^{2p}\pa_{11}u\pa_{22}u\\
& = & \int x_1^{2p}\left[(\pa_{11}u)^2+(\pa_{22}u)^2\right] -2\int x_1^{2p}\pa_2u\pa_{211}u\\
& = &  \int x_1^{2p}\left[(\pa_{11}u)^2+(\pa_{22}u)^2\right] +2\int \pa_{12}u\left[x_1^{2p}\pa_{12}u+2px_1^{2p-1}\pa_{2}u\right]\\
& = & \int x_1^{2p}\left[(\pa_{11}u)^2+(\pa_{22}u)^2+2(\pa_{12}u)^2\right] -2p(2p-1)\int x_1^{2p-2}(\pa_2u)^2
\eee
and similarly with $x_2$, and \fref{estwieght} follows.
\end{proof}

We now turn to the linear control of the Poisson field:

\begin{lemma}[Estimates for the Poisson field]
\label{lemmainterpolation}
There holds the bounds on the Poisson field:\\
(i) General $L^2_Q$ bounds:\be
\label{linftybound}
\|(1+|\log r|)u\|_{L^1}+\|\phi_u\|_{L^{\infty}(r\leq 1)}+\left\|\frac{|\phi_u|}{1+|\log r|}\right\|_{L^{\infty}(r\geq 1)}\lesssim \|u\|_{L^2_Q},
\ee
\be
\label{lfourbound}
\|\nabla \phi_u\|_{L^4}\lesssim \|u\|_{L^2_Q}.
\ee
(ii) Improved $L^2_Q$ bound: if moreover $\int u=0$, then:
\be
\label{improvedlinftybis}
\|\phi_u\|_{L^{\infty}}\lesssim \|u\|_{L^2_Q},
\ee
\be
\label{improvedlinfty}
\int|\nabla \phi_u|^2=-\int u\phi_u\lesssim \|u\|^2_{L^2_Q}.
\ee
(iii) Energy bound:
\be
\label{estcahmos}
\forall 1\leq p<2, \ \ \|\nabla \phi_u\|_{L^{\infty}}\leq C_p\|u\|^{1-\frac p2}_{L^{\infty}}\|u\|_{L^2}^{p-1}\|u\|^{1-\frac p2}_{L^1}.
\ee
(iv) Decay in the energy space: $\forall 0\leq \alpha<1$,
\be
\label{decaylinfty}
 |u(x)|\leq C_{\alpha} \frac{\|u\|_{\mathcal E}}{1+|x|^\alpha},
\ee
\be
\label{deactfiel}
|\nabla \phi_u(x)|\leq C_{\alpha} \frac{\|u\|_{\mathcal E}}{1+|x|^{\frac\alpha 2}}.
\ee
\end{lemma}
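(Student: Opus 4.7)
My plan follows the four parts in order, each reducing to convolution splittings combined with the decay $Q(y) \sim (1+|y|^2)^{-2}$.

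For part (i), I start from Cauchy--Schwarz against $Q^{1/2}$: since $(1+|\log r|)^2 Q$ is integrable on $\R^2$, this yields $\|(1+|\log r|)u\|_{L^1} \lesssim \|u\|_{L^2_Q}$, and the two pointwise bounds on $\phi_u$ follow by splitting the convolution $\phi_u = \frac{1}{2\pi}\log|\cdot|\star u$ at $|x-y|=1$ and controlling the near piece by the log-weighted $L^1$ norm just obtained. The $L^4$ bound on $\nabla \phi_u$ comes from Hardy--Littlewood--Sobolev applied to $\nabla \phi_u \sim |x|^{-1}\star u$, giving $\|\nabla \phi_u\|_{L^4} \lesssim \|u\|_{L^{4/3}}$, combined with $\|u\|_{L^{4/3}} \lesssim \|u\|_{L^2_Q}$ via H\"older against $Q^{1/2}$.

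For part (ii), when $\int u = 0$ the identity
$$
\phi_u(x) = \frac{1}{2\pi}\int \bigl[\log|x-y|-\log|x|\bigr]u(y)\,dy
$$
removes the logarithmic growth at infinity: the kernel is bounded for $|y|\ll|x|$ and $L^1$-integrable in $y$ on $|y|\gtrsim |x|$, so \fref{improvedlinftybis} follows from the log-weighted $L^1$ control of (i). The energy identity \fref{improvedlinfty} is then obtained by integrating $\Delta \phi_u = u$ by parts on $B_{R_n}$ for a suitable sequence $R_n\to\infty$, the surface contribution being forced to vanish along a subsequence by the improved decay of $\phi_u$ together with the $L^2$ control of $\nabla \phi_u$.

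For part (iii), I split $|\nabla\phi_u(x)| \lesssim \int |x-y|^{-1}|u(y)|\,dy$ at radius $R$: the near piece gives $\lesssim R\|u\|_{L^\infty}$, while H\"older against $|x-y|^{-1} \in L^{p'}_{\rm loc}$ on the far side (valid for $p<2$, so that $\int_{|x-y|>R}|x-y|^{-p'}dy \lesssim R^{2-p'}$) gives $\lesssim R^{1-2/p}\|u\|_{L^p}$. Optimizing in $R$ yields the intermediate bound $\|u\|_{L^\infty}^{1-p/2}\|u\|_{L^p}^{p/2}$, and the three-norm interpolation $\|u\|_{L^p} \leq \|u\|_{L^1}^{2/p-1}\|u\|_{L^2}^{2-2/p}$ produces exactly \fref{estcahmos}. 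This is the step I expect to be the main technical point, since the restriction $p<2$ reflects the marginal nature of the $2$d log convolution and the sharp exponents must be tracked carefully through the optimization.

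For part (iv), the pointwise decay of $u$ is a weighted Sobolev estimate: in two dimensions $H^2 \hookrightarrow L^\infty$, and on the unit ball $B_1(x)$ with $|x|\geq 2$ one has $|y|\sim |x|$ uniformly, so the three bounds defining $\mathcal E$ yield $\|u\|_{H^2(B_1(x))}^2 \lesssim |x|^{-2}\|u\|_{\mathcal E}^2$, hence $|u(x)|\lesssim \|u\|_{\mathcal E}/|x|$, which implies \fref{decaylinfty} for any $\alpha<1$ (in fact the argument is slightly stronger than what the statement asks). The gradient decay \fref{deactfiel} then follows from a second near/far split of the convolution, balancing a near contribution $\lesssim R \sup_{B_R(x)}|u| \lesssim R/|x|^\alpha$ against a far contribution $R^{-1}\|u\|_{L^1}$; the optimal choice $R=|x|^{\alpha/2}$ produces the announced rate $|x|^{-\alpha/2}$.
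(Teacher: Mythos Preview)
Your treatment of parts (i)--(iii) matches the paper's proof essentially line for line: Cauchy--Schwarz against $Q^{1/2}$ for the weighted $L^1$ bound, near/far splittings of the logarithmic kernel for the pointwise bounds on $\phi_u$, Hardy--Littlewood--Sobolev for the $L^4$ estimate, and the $R$-optimized splitting for (iii). One small circularity in (ii): to justify the integration by parts for the energy identity you invoke ``the $L^2$ control of $\nabla\phi_u$'', but this is precisely what you are trying to establish. The paper closes this loop by first testing against a cutoff, $\int\chi_R|\nabla\phi_u|^2 = \tfrac12\int(\Delta\chi_R)\phi_u^2 - \int\chi_R u\,\phi_u$, and bounding both terms uniformly in $R$ using only $\phi_u\in L^\infty$ and $u\in L^1$; this gives $\nabla\phi_u\in L^2$ a priori, after which your subsequence argument is legitimate.

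The genuine gap is in part (iv). Your claim $\|u\|_{H^2(B_1(x))}^2\lesssim |x|^{-2}\|u\|_{\mathcal E}^2$ fails at the zeroth-order term: the norm $\mathcal E$ controls only the \emph{unweighted} $\|u\|_{L^2}$, so $\|u\|_{L^2(B_1(x))}\leq \|u\|_{L^2}$ carries no decay in $|x|$, and the local $H^2\hookrightarrow L^\infty$ embedding therefore cannot yield any pointwise decay of $u$. (That your conclusion $|u(x)|\lesssim |x|^{-1}$ is strictly stronger than the lemma's range $\alpha<1$ should have been a warning sign.) The paper's fix is to apply the Morrey growth estimate not to $u$ but to $f_i=x_iu$: the gradient $\nabla(x_iu)$ is a combination of $u$ and $v_{i,j}=x_i\partial_j u$, and one checks $\|v_{i,j}\|_{H^1}\lesssim\|u\|_{H^2_Q}$ directly from the weighted structure of $H^2_Q$. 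Morrey then gives $|x_iu(x)|\lesssim \|u\|_{L^\infty(|y|\leq1)}+|x|^{1-2/p}\|\nabla(x_iu)\|_{L^p}\lesssim (1+|x|^{1-2/p})\|u\|_{\mathcal E}$ for any $p>2$, hence $|u(x)|\lesssim |x|^{-2/p}\|u\|_{\mathcal E}$, which is exactly the range $0\leq\alpha<1$. Your argument for $\nabla\phi_u$ in (iv) is fine once the decay of $u$ has been correctly established.
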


\begin{proof}[Proof of Lemma \ref{lemmainterpolation}]:
{\em Proof of (i)-(ii)}: By Cauchy-Schwarz:
$$\int(1+|\log r|)|u(y)|\lesssim \|u\|_{L^2_Q}\left(\int (1+|\log r|)^2Q\right)^{\frac 12}\lesssim \|u\|_{L^2_Q}.$$
For $|x|\leq 1$, we estimate using Cauchy-Schwarz: 
\bee
 |\phi_{u}(x)|&\lesssim &\int_{|x-y|\leq 1}|\log(|x-y|)|u(y)|dy+ \int_{|x-y|\geq 1}|\log|x-y|||u(y)|dy\\
 & \lesssim & \|u\|_{L^2_Q}+\int (1+|\log |y||)|u(y)|\lesssim  \|u\|_{L^2_Q}.
 \eee
 For $|x|\geq 1$, we rewrite
 \bee
 && \left|\phi_u(x)-\frac{\log |x|}{2\pi}\int u\right|  \lesssim  \int |\log (\frac{|x-y|}{|x|})||u(y)|dy\\
  &  \lesssim& \int_{|x-y|\geq \frac{|x|}{2}} |\log (\frac{|x-y|}{|x|})||u(y)|dy+\int_{|x-y|\leq \frac{|x|}{2}} |\log (\frac{|x-y|}{|x|})||u(y)|dy.
 \eee
 For the outer term, we estimate:
 \bee
 \int_{|x-y|\geq \frac{|x|}{2}} |\log (\frac{|x-y|}{|x|})||u(y)|dy&\lesssim& \int_{|x-y|\geq \frac{|x|}{2}} \left(\frac{|x-y|}{|x|}\right)^{\frac 34}|u(y)|dy\lesssim \int \left(1+\frac{|y|^{\frac34}}{|x|^{\frac 34}}\right)|u(y)|dy\\
 & \lesssim & \|u\|_{L^1}+ \frac{\|u\|_{L^2_Q}}{|x|^{\frac 34}}\lesssim \|u\|_{L^2_Q}.
 \eee
 On the singularity, we estimate using a simple change of variables:
 \bee
&& \int_{|x-y|\leq \frac{|x|}{2}} |\log (\frac{|x-y|}{|x|})||u(y)|dy \lesssim  \left(\int_{|x-y|\leq \frac{|x|}{2}}|\log (\frac{|x-y|}{|x|})|^2dy\right)^{\frac 12}\left( \int_{|x-y|\leq \frac{|x|}{2}} |u(y)|^2dy\right)^{\frac 12}\\
 & \lesssim & \left(\|x\|^2\int_{|z|\leq \frac 12}(\log |z|)^2dz\int_{|x-y|\leq \frac{|x|}{2}}|u(y)|^2dy\right)^{\frac 12}\lesssim \left(\int |y|^2|u(y)|^2dy\right)^{\frac12}\\
 & \lesssim & \|u\|_{L^2_Q}
 \eee
 and this concludes the proof of \fref{linftybound} and \fref{improvedlinftybis}.\\
 We then estimate from the 2 dimensional Hardy-Littlewood-Sobolev and H\"older: $$\|\nabla \phi_u\|_{L^4}\lesssim \|\frac{1}{|x|}\star u\|_{L^4}\lesssim \|u\|_{L^{\frac 43}}\lesssim \|u\|_{L^2_Q}\|Q\|_{L^2}^{\frac 12}$$ and \fref{lfourbound} is proved.  Let a smooth cut off function $\chi(x)=1$ for $|x|\leq 1$, $\chi(x)=0$ for $|x|\geq 2$, and $\chi_R(x)=\chi(\frac xR)$, then from \fref{improvedlinftybis}: 
 \bea
 \label{cneiocneoen}
 \int\chi_R|\nabla \phi_{u}|^2 & = & \frac 12\int\Delta \chi_R\phi_{u}^2-\int\chi_Ru\phi_{u}\\
 \nonumber & \lesssim &
 \|\phi_u\|^2_{L^{\infty}}+\|\phi_u\|_{L^{\infty}}\|u\|_{L^1}\lesssim \|u\|_{L^2_Q}^2
 \eea with constants independent of $R>0$, and hence 
 \be
 \label{cnenceneone}
 \int |\nabla \phi_u|^2<+\infty.
 \ee Moreover, integrating by parts: $$\int_{|x|\leq R}|\nabla \phi_u|^2=R\int_0^{2\pi}\phi_u\pa_r\phi_ud\theta-\int_{|x|\leq R}u\phi_u.$$ From \fref{cnenceneone}, we can find a sequence $R_n\to \infty$ such that $$R^2_n\int_0^{2\pi}|\pa_r\phi_{u}|^2\to 0$$ and then from \fref{improvedlinftybis}: $$R_n\left|\int_0^{2\pi}\phi_u\pa_r\phi_u\right|\lesssim\|\phi_u\|_{L^{\infty} }\left(R^2_n\int_0^{2\pi}|\pa_r\phi_{u}|^2\right)^{\frac 12}\to 0$$ and thus $$\int|\nabla \phi_u|^2=\lim_{R_n\to +\infty}\int_{|x|\leq R_n}|\nabla \phi_u|^2=-\int u\phi_u.$$ The estimate \fref{improvedlinfty} now follows from \fref{improvedlinftybis}.\\
 {\em Proof of (iii)}: Let $1\leq p<2$, we estimate in brute force:
  \bee
  |\nabla \phi_u| & \lesssim & \frac{1}{|x|}\star |u|\lesssim \int_{|x-y|\leq R}\frac{|u(y)|}{|x-y|}dy+\int_{|x-y|\geq R}\frac{|u(y)|}{|x-y|}dy\\
  & \lesssim & \|u\|_{L^{\infty}}\int_{|z|\leq R}\frac{1}{|z|}+\|u\|_{L^{p}}\left(\int_{|z|\geq R}\frac{1}{|z|^{p'}}\right)^{\frac 1{p'}} \lesssim  R\|u\|_{L^{\infty}}+\frac{\|u\|_{L^p}}{R^{1-\frac2{p'}}}.
  \eee
  We optimize in $R$ and interpolate:
  $$  \|\nabla\phi_u\|_{L^{\infty}}\leq C_p \|u\|_{L^p}^{\frac p2}\|u\|_{L^{\infty}}^{1-\frac p2}\lesssim C_p\|u\|^{1-\frac p2}_{L^{\infty}}\|u\|_{L^2}^{p-1}\|u\|^{1-\frac p2}_{L^1},
 $$
 this is \fref{estcahmos}.\\
  {\it Proof of (iv)}: By density, it suffices to prove \fref{decaylinfty} for $u\in \mathcal D(\R^2)$. Let $(v_{i,j}=x_i\pa_ju)_{1\leq i,j\leq 2}$, we estimate from \fref{estwieght} with $p=2$: $$\int |v_{i,j}|^2+\int|\nabla v_{i,j}|^2\lesssim \int (1+|x|^2)|\nabla u|^2+\int (1+|x|^4)\left[(\pa_{11}u)^2+(\pa_{22}u)^2+(\pa_{12}u)^2\right]\lesssim \|u\|_{H^2_Q}^2$$ and thus from Sobolev: 
\be
\label{efneonvnevo}
\forall p>2, \ \ \|v_{i,j}\|_{L^p}\lesssim \|v_{i,j}\|_{H^1}\lesssim \|u\|_{H^2_Q}.
\ee
We now recall the standard Sobolev bound, see for example \cite{Brezis}:
$$\ \ \forall p>2, \ \ \forall f\in \mathcal D(\R^2), \ \ |f(x)-f(y)|\lesssim |x-y|^{1-\frac{2}{p}}\|\nabla f\|_{L^p}.$$ We may find $|a|\leq 1$ such that $$|f(a)|\lesssim \left(\int_{|y|\leq 1}|f(y)|^2dy\right)^{\frac 12}$$ and hence the growth estimate: $$|f(x)|\lesssim \left(\int_{|y|\leq 1}|f(y)|^2dy\right)^{\frac 12}+|x|^{1-\frac{2}{p}}\|\nabla f\|_{L^p}.$$ We apply this to $f_{i}=x_iu$ and conclude from \fref{efneonvnevo}, \fref{linftyboundbis}: $\forall p>2$ and $i=1,2$:
\bee
|x_iu| & \lesssim & \left(\int_{|y|\leq 1}|x_iu|^2dy\right)^{\frac 12}+|x|^{1-\frac{2}{p}}\|\nabla (x_iu)\|_{L^p}\\
& \lesssim & \|u\|_{L^{\infty}}+|x|^{1-\frac{2}{p}}\left[\|v_{i,j}\|_{L^p}+\|u\|_{L^p}\right]\lesssim (1+|x|^{1-\frac{2}{p}})\|u\|_{\mathcal E}
\eee
and hence the decay: $$|u(x)|\lesssim\frac{ \|u\|_{\mathcal E}}{1+|x|^{\frac 2p}}$$ which yields \fref{decaylinfty}.\\
Let $0\leq \alpha<1$ and $|x|\gg 1$, we estimate the Poisson field in brute force using \fref{decaylinfty}:
\bee
|\nabla \phi_u(x)|& \lesssim & \int \frac{|u(y)|}{|x-y|}=\int_{|x-y|>|x|^{\frac{\alpha}{2}}} \frac{|u(y)|}{|x-y|}+\int_{|x-y|<|x|^{\frac{\alpha}{2}}} \frac{|u(y)|}{|x-y|}\\
& \lesssim & \frac{\|u\|_{L^1}}{|x|^{\frac{\alpha}{2}}}+\int_{|y|\geq \frac{|x|}{2}, \ \ |x-y|<|x|^{\frac{\alpha}{2}}}\frac{|u(y)|}{|x-y|}\lesssim  \frac{\|u\|_{L^1}}{|x|^{\frac{\alpha}{2}}}+\frac{\|u\|_{\mathcal E}}{1+|x|^{\alpha}}\int_{|z|\leq |x|^{\frac{\alpha}{2}}}\frac{dz}{|z|}\\
& \lesssim & \frac{\|u\|_{\mathcal E}}{1+|x|^{\frac{\alpha}{2}}}
\eee 
and \fref{deactfiel} is proved.
 \end{proof}

%%%%%%%%%%%%%%%%%%%%%%%%%%%%%%%%%%%%%%%%%%%%%%%%

\section{Hardy bounds}

%%%%%%%%%%%%%%%%%%%%%%%%%%%%%%%%%%%%%%%%%%%%%%%%

We recall some standard weighted Hardy inequalities:

\begin{lemma}[Weighted Hardy inequality]
\label{weightedhardy}
There holds the Hardy bounds: 
\be
\label{hardyboundbis}
\forall \alpha>-2, \ \  \int r^{\alpha+2}|\pa_ru|^2\geq \frac{(2+\alpha)^2}{4} \int r^{\alpha}u^2,
\ee
\be
\label{nenoeneo}
\int |\Delta \phi|^2\gtrsim \int \frac{|\nabla \phi|^2}{r^2(1+|\log r|)^2}-\int\frac{|\nabla \phi|^2}{1+r^4},
\ee
\be
\label{nenoeneobis}
 \int \frac{|\nabla \phi|^2}{r^2(1+|\log r|)^2}\gtrsim \int\frac{\phi^2}{(1+r^4)(1+|\log r|)^2}-\int \frac{\phi^2}{1+r^6}.
\ee

\end{lemma}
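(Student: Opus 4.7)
\medskip
\noindent\textbf{Plan of proof.} The three estimates are all of weighted Hardy type, and I will derive them by explicit integration by parts in the radial variable followed by Cauchy--Schwarz, after reducing by density to smooth compactly supported test functions. For \fref{hardyboundbis}, I would pass to polar coordinates (or, for non-radial $u$, freeze the angle) and use the identity $r^{\alpha+1} = \frac{1}{\alpha+2}\partial_r(r^{\alpha+2})$, which is available precisely because $\alpha+2>0$. Integrating by parts,
\[
\int r^{\alpha} u^2\,dx = -\frac{2}{\alpha+2}\int r^{\alpha+1}\,u\,\partial_r u\,dx,
\]
with the boundary terms at $r=0$ and $r=\infty$ vanishing thanks to $\alpha+2>0$ and compact support; Cauchy--Schwarz applied to the right-hand side produces exactly the sharp constant $4/(\alpha+2)^2$.

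For \fref{nenoeneo}, the plan is to combine the pointwise identity $\int|\Delta\phi|^2=\int|\nabla^{2}\phi|^{2}$ (obtained by two integrations by parts in $\mathbb R^{2}$, justified by density) with a critical (logarithmic) Hardy inequality applied to each Cartesian component $f=\partial_i\phi$. Writing
\[
\frac{1}{r(1+|\log r|)^{2}} = h'(r), \qquad h(r) := -\frac{\mathrm{sgn}(\log r)}{1+|\log r|},
\]
one has $h\in L^\infty$ with an admissible jump at $r=1$. Integrating by parts in the identity $\int \frac{f^{2}}{r^{2}(1+|\log r|)^{2}}\,dx = 2\pi\int f^{2}(r)\,h'(r)\,dr$ (for radial $f$; the general case comes by decomposition/Fubini), one extracts $\int 2 f\,\partial_r f\cdot h(r)\,dr$, and Cauchy--Schwarz plus $|h|\lesssim 1$ yields $\int \frac{f^2}{r^{2}(1+|\log r|)^{2}}\lesssim \int|\nabla f|^{2}+\text{error}$, the error arising from the jump of $h$ at $r=1$ being controlled by $\int f^{2}/(1+r^{4})$. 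Applied with $f=\nabla\phi$ and summed, this is exactly \fref{nenoeneo}.

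For \fref{nenoeneobis} the mechanism is identical but shifted one derivative down: I would apply the same weighted Hardy argument with $f=\phi$ and a heavier power weight at infinity, using
\[
\frac{1}{r^{3}(1+|\log r|)^{2}} = -\partial_r\!\left(\frac{h(r)}{2r^{2}}\right)+\text{lower order},
\]
then integrate by parts against $\phi^2$ to transfer two derivatives onto $\phi$, and Cauchy--Schwarz to convert one of them into $|\nabla\phi|^2/(r^{2}(1+|\log r|)^{2})$ and the other into an $L^2$ weight $\phi^2/(1+r^{6})$. This precisely produces the right-hand side of \fref{nenoeneobis}.

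\medskip
\noindent\textbf{Main obstacle.} The genuine difficulty is that \fref{nenoeneo}--\fref{nenoeneobis} correspond to the forbidden endpoint $\alpha=-2$ of \fref{hardyboundbis}, so the naive integration by parts is divergent at both $r=0$ and $r=\infty$. The logarithmic factor $(1+|\log r|)^{-2}$ is exactly the scaling correction that makes the primitive $h(r)$ bounded, and one has to be careful with the transition at $r=1$ where $\log r$ vanishes and $\mathrm{sgn}(\log r)$ jumps: the boundary/jump contributions in the integration by parts are what force the negative correction terms $-\int|\nabla\phi|^{2}/(1+r^{4})$ and $-\int\phi^{2}/(1+r^{6})$ on the right-hand sides, and keeping track of them (rather than hiding them in the left-hand side) is the only substantive technical point.
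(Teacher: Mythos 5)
Your proposal is correct and follows essentially the same route as the paper's proof: sharp integration by parts plus Cauchy--Schwarz for \fref{hardyboundbis}, and for \fref{nenoeneo}--\fref{nenoeneobis} a vector-field/Hardy argument built from a logarithmically corrected primitive of the weight, with the jump at $r=1$ as the source of the lower-order correction terms. The one step worth making explicit is that the boundary contribution at $r=1$ is a trace term (not directly an $L^2$ quantity), which the paper controls via the Sobolev trace embedding $\int_{r=1}|\nabla \phi|^2d\sigma\lesssim \int_{\frac12\leq r\leq 2}(|\nabla \phi|^2+|\nabla ^2\phi|^2)$ together with $\int|\nabla^2\phi|^2=\int|\Delta\phi|^2$, after which absorbing the half power of the left-hand side closes the estimate.
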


{\it Proof of Lemma \ref{weightedhardy}} Let $u\in \mathcal C^{\infty}_c(\R^2)$. We integrate by parts to estimate:
$$\frac{\alpha+2}{2}\int r^{\alpha} u^2=-\int r^{\alpha+1}u\pa_ru\leq \left(\int r^{\alpha}u^2\right)^{\frac 12}\left(\int r^{\alpha+2}(\pa_ru)^2\right)^{\frac 12}$$ and \fref{hardyboundbis} follows.\\ 
Let now $\phi\in \mathcal D(\R^2)$ and consider the radial continuous and piecewise $\matchal C^1$ function $$f(r)=\left\{\begin{array}{ll}\frac{1}{r(1-\log r)}\ \ \mbox{for}\  \ 0<r\leq 1,\\ \frac{1}{r(1+\log r)}\ \ \mbox{for}\  \ r\geq 1\end{array}\right.,\ \ F(x)=f(r)\frac{x}{r}$$ then $$\nabla\cdot F(x)=\left\{\begin{array}{ll}\frac{1}{r^2(1-\log r)^2}\ \ \mbox{for}\  \ 0<r< 1,\\ -\frac{1}{r^2(1+\log r)^2}\ \ \mbox{for}\  \ r> 1\end{array}\right.,$$ and thus:
\bee
\int \frac{1}{r^2(1+|\log r|)^2}|\nabla \phi|^2& \lesssim & \int_{|x|\leq 1}\nabla \cdot F |\nabla \phi|^2-\int_{|x|\geq 1}\nabla \cdot F |\nabla \phi|^2\\
& \lesssim & \int_{r=1}f|\nabla \phi|^2d\sigma+\int |f||\nabla (|\nabla \phi|^2)|\\
& \lesssim & \int_{r=1}f|\nabla \phi|^2d\sigma+\left(\int |f|^2|\nabla \phi|^2\right)^{\frac 12}\left(\int |\nabla^2\phi|^2\right)^{\frac 12}
\eee
Now by Sobolev: $$ \int_{r=1}f|\nabla \phi|^2d\sigma\lesssim \int_{\frac12\leq r\leq 2}(|\nabla \phi|^2+|\nabla ^2\phi|^2)$$ and thus 
\bee
& & \int \frac{1}{r^2(1+|\log r|)^2}|\nabla \phi|^2\\
& \lesssim & \int_{\frac12\leq r\leq 2}|\nabla \phi|^2+\int |\Delta \phi|^2+\left(\int \frac{1}{r^2(1+|\log r|)^2}|\nabla \phi|^2\right)^{\frac 12}\left(\int |\Delta\phi|^2\right)^{\frac 12}
\eee which implies \fref{nenoeneo}.\\
Similarily, for $r\geq r_0$ large enough: 
\bee
\int_{r\geq r_0}\frac{\phi^2}{(1+r^4)(1+|\log r|)^2}&\lesssim &-\int_{r\geq r_0}\nabla \cdot\left[\frac{1}{(1+r^3)(1+|\log r|)^2}\frac{y}{|y|}\right]\phi^2\\
& \lesssim & \int_{r=r_0}|\phi|^2d\sigma +\int_{r\geq r_0}\frac{|\phi||\nabla \phi|}{(1+r^3)(1+|\log r|)^2} 
\eee
and \fref{nenoeneobis} follows again from Cauchy Schwarz and Sobolev.\\
This concludes the proof of Lemma \ref{weightedhardy}.\\

%%%%%%%%%%%%%%%%%%%%%%%%%%%%%%%%%%%%%%%%%%%%%%%%

\section{Interpolation bounds}
\label{appendixc}
%%%%%%%%%%%%%%%%%%%%%%%%%%%%%%%%%%%%%%%%%%%%%%%%
%%%%%%%%%%%%%%%%%%%%%%%%%%%%%%%%%%%%%%%%%%%%%%%%

We collect in this appendix the bootstrap bounds on $\e$ which are a consequence of the spectral estimates of Proposition \ref{interpolationhtwo} and further interpolation estimates.

\begin{proposition}[Interpolation bounds]
\label{bootbound}
For $i=0,1$:\\
{\it (i)} $H^2_Q$ bound: 
\bea
\label{coercbase}
&&\int (1+r^4)|\nabla^2\e|^2+\int(1+r^2)|\nabla \e|^2+\int \e^2\\
\nonumber  & + & \int|\Delta \phi_\e|^2+\int\frac{|\nabla \phi_\e|^2}{r^2(1+|\log r|)^2}\lesssim  C(M)\|\e_2\|_{L^2_Q}^2,
\eea
\be
\label{controleun}
\int (1+r^2)|\e_1|^2\lesssim  C(M)\|\e_2\|_{L^2_Q}^2.
\ee
{\it (ii)} $L^{\infty}$ bounds: 
\be
\label{linftiyfilw}
\forall 0<\eta\leq \frac 12, \ \ \|\nabla \phi_\e\|_{L^{\infty}}\leq C_\eta\|\e_2\|^{1-\eta}_{L^2_Q},
\ee
and $\forall 0\leq \alpha<\frac 12$, 
\be
\label{linftyboundbis}
 \|(1+|x|^{\alpha})\e\|_{L^{\infty}}+\left\|(1+|x|^{\frac \alpha2})\nabla \phi_\e(x)\right\|_{L^{\infty}}\lesssim \delta(\alpha^*).
\ee
{\it (iii)} $H^2_Q$ bound with logarithmic loss: 
\be
\label{interpolationfield}
\int(1+\log (1+r))^C\frac{|\nabla \phi_\e|^2}{r^2(1+|\log r|)^2}\lesssim |\log b|^C\left(\|\e_2\|_{L^2_Q}^2+b^{10}\right),
\ee
{\it (iv)} $L^{\infty}$ bound with loss:
\be
\label{estlossyfield}
\left\|\frac{\nabla \phi_\e}{1+|x|}\right\|^2_{L^{\infty}}\lesssim |\log b|^C\left(\|\e_2\|_{L^2_Q}^2+b^{10}\right).
\ee
{\it (v)} Weighted bound with loss:
\be
\label{neoneoneo}
\int\frac{|\e|}{1+r}\lesssim C(M)\sqrt{|\log b|}\|\e_2\|_{L^2_Q}+b^{10}.
\ee
\end{proposition}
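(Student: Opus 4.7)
\medskip

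\noindent\textbf{Proof proposal for Proposition \ref{bootbound}.} The plan is to deduce all the claimed interpolation bounds from the coercivity of $\mathcal L$ established in Proposition \ref{interpolationhtwo}, combined with the Poisson field estimates of Lemma \ref{lemmainterpolation}, the Hardy inequalities of Lemma \ref{weightedhardy}, and the a priori $L^1$-smallness \fref{lonebound} in the bootstrap regime.

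The first step is to verify that the orthogonality hypothesis \fref{orthowappendix} of Proposition \ref{interpolationhtwo} holds for $\e$. Radial symmetry of $\e$ forces $(\e,\Phi_{i,M})=(\e,\mathcal L^*\Phi_{i,M})=0$ automatically since $\Phi_{i,M}=\chi_M y_i$ has vanishing radial average, while the two remaining orthogonality conditions are exactly \fref{orthoe}. Proposition \ref{interpolationhtwo} then yields directly \fref{coercbase} at the level of $\int(1+r^4)|\Delta\e|^2+\int(1+r^2)|\nabla\e|^2+\int\e^2+\int|\nabla\phi_\e|^2/(r^2(1+|\log r|)^2)$, after bounding $\|\mathcal L\e\|_{L^2_Q}^2\lesssim C(M)\|\e_2\|_{L^2_Q}^2$ via the coercivity constants in \fref{coerclwotht}-\fref{idemnonraidal}. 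The weighted second-derivative bound $\int(1+r^4)|\nabla^2\e|^2$ follows from Lemma \ref{lemmahtwobound} applied with $p=2$, and the control on $\int|\Delta\phi_\e|^2=\int\e^2$ is automatic. For \fref{controleun}, expand $\e_1=A\e=\nabla\e+\nabla\phi_Q\,\e+Q\nabla\phi_\e$, note that $|\nabla\phi_Q|+Q\lesssim 1/(1+r)$, and bound $\int(1+r^2)|Q\nabla\phi_\e|^2$ using Hardy \fref{hardyboundbis} against \fref{coercbase}.

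The $L^\infty$ bounds \fref{linftiyfilw} and \fref{linftyboundbis} follow by direct interpolation. Applying \fref{estcahmos} with $p$ close to $2$, together with Sobolev control $\|\e\|_{L^2}+\|\e\|_{L^\infty}\lesssim C(M)\|\e_2\|_{L^2_Q}^{1-\eta/2}$ obtained by interpolating the $L^2$ and $H^2$ bounds from \fref{coercbase}, yields \fref{linftiyfilw}. The estimate \fref{linftyboundbis} follows from \fref{decaylinfty}-\fref{deactfiel} applied with the $\mathcal E$-norm bounded by $\|\e\|_{H^2_Q}+\|\e\|_{L^1}\lesssim \delta(\alpha^*)$, which is a consequence of \fref{coercbase}, the bootstrap \fref{bootsmallh2q}, and the $L^1$-smallness \fref{lonebound}.

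The heart of the proof, and the main technical obstacle, lies in the logarithmically lossy estimates \fref{interpolationfield}, \fref{estlossyfield}, and \fref{neoneoneo}. The coercivity \fref{coercbase} already provides $\int|\nabla\phi_\e|^2/(r^2(1+|\log r|)^2)\lesssim C(M)\|\e_2\|_{L^2_Q}^2$ but \fref{interpolationfield} requires the extra factor $(\log(1+r))^C$ in the weight. The strategy is to split at a large scale $R\sim 1/\sqrt{b}$: on $r\le R$, the extra weight costs at most $|\log b|^C$ and is absorbed into $C(M)\|\e_2\|_{L^2_Q}^2$; on $r\ge R$, we use the explicit radial formula $\partial_r\phi_\e=\frac{1}{r}\int_0^r \tau\e\,d\tau$, split into the contribution from $\tau\le R$ (controlled by $\|\e\|_{L^2_Q}$ via Cauchy--Schwarz with the weight $Q$) and the contribution from $\tau\ge R$ (controlled by $\|\e\|_{L^1}\lesssim \delta^*$), and balance the two against $b^{10}$ using the bootstrap. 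The bound \fref{estlossyfield} is then deduced from \fref{estcahmos} with $p\to 2$ together with \fref{interpolationfield}. Finally, \fref{neoneoneo} follows by Cauchy--Schwarz applied separately on $r\le 1/\sqrt{b}$ (against $\|\e\|_{L^2_Q}$, producing the $\sqrt{|\log b|}$ factor) and on $r\ge 1/\sqrt{b}$ (against $\|\e\|_{L^1}$, which is negligible thanks to \fref{lonebound} and the power $b^{10}$).
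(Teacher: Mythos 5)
Your proposal correctly handles parts (i) and (ii) along the same lines as the paper (coercivity from Proposition \ref{interpolationhtwo} plus $L^2_Q$ orthogonality, weighted $H^2$ bound from Lemma \ref{lemmahtwobound}, and the $L^\infty$ interpolation \fref{estcahmos} with $p\to 2$ combined with the a priori $L^1$-smallness \fref{lonebound}). The observation that the angular orthogonality conditions $(\e,\Phi_{i,M})=(\e,\mathcal L^*\Phi_{i,M})=0$ hold automatically for radial $\e$ is correct and worth making explicit.

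However, there is a genuine gap in parts (iii) and (v): the cutoff scale $R\sim b^{-1/2}$ is far too small to produce the $b^{10}$ terms. On the far region $r\geq R$ the only available control is $\|\e\|_{L^1}\lesssim \delta^*$, a fixed small constant that does \emph{not} decay in $b$; the smallness of the outer contribution must therefore come entirely from powers of $1/R$. With your choice $R\sim b^{-1/2}$, for \fref{interpolationfield} the outer piece scales like $(\delta^*)^2 b\,|\log b|^C$ (using $|\nabla\phi_\e|\lesssim\|\e\|_{L^1}/r$), and for \fref{neoneoneo} like $\sqrt{b}\,\delta^*$ — neither is dominated by $|\log b|^C b^{10}$ (nor, in the bootstrap regime $\|\e_2\|^2_{L^2_Q}\sim b^3/|\log b|^2$, by $|\log b|^C\|\e_2\|^2_{L^2_Q}$). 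The paper instead cuts at $B=b^{-100}$ for \fref{interpolationfield} and at $b^{-20}$ for \fref{neoneoneo}, which is exactly what generates the $b^{10}$ terms; the rest of your argument (inner region paying $|\log b|^C$, outer region paying $\|\e\|_{L^1}$) is then fine. Two smaller points: in \fref{neoneoneo} the inner Cauchy--Schwarz must be run against $\|\e\|_{L^2}$ (controlled by \fref{coercbase}), not $\|\e\|_{L^2_Q}$, which is not directly controlled and would not give the $\sqrt{|\log b|}$ factor anyway; and for \fref{estlossyfield} the paper's route is cleaner than \fref{estcahmos}+\fref{interpolationfield} — one simply applies the Sobolev embedding $H^2\hookrightarrow L^\infty$ to $\nabla\phi_\e/(1+|x|)$ and uses \fref{interpolationfield} plus the $H^2$ control of $\e$ from \fref{coercbase}.
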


\begin{proof}[Proof of Proposition \ref{bootbound}]
{\it Proof of (i)}: The estimate \fref{coercbase} follows directly from \fref{coerclwotht}, \fref{contorlcoerc}, our choice of orthogonality conditions \fref{orthoe} and \fref{estwieght} with $p=2$. We then estimate from the definitions \fref{defal}, \fref{denkofnekoneovnk} and \fref{coercbase}:
\bee
\int (1+r^2)|\e_1|^2&\lesssim& \int (1+r^2)\left[|\nabla \e|^2+|\nabla \phi_Q|^2|\e|^2+Q^2|\nabla \phi_\e|^2\right]\\
& \lesssim & \|\e\|^2_{H^2_Q}+\int\frac{|\nabla \phi_\e|^2}{r^2(1+|\log r|)^2}\lesssim C(M)\|\e_2\|_{L^2_Q}^2.
\eee
{\it Proof of (ii)}:  Let $p=2(1-\eta)\in [1,2)$, then from \fref{estcahmos}, Sobolev, \fref{coercbase} and the bootstrap bound \fref{bootsmalllone}:
$$\|\nabla \phi_\e\|_{L^{\infty}}\lesssim C_p(M)\|\e_2\|_{L^2_Q}^{\frac p2}\|\e\|^{1-\frac p2}_{L^1}\lesssim C_\eta\|\e_2\|^{1-\eta}_{L^2_Q}.$$
The decay bound \fref{linftyboundbis} follows from the interpolation bounds \fref{decaylinfty}, \fref{deactfiel}, the $H^2_Q$ bound \fref{coercbase} and the bootstrap bounds \fref{bootsmalllone}, \fref{bootsmallh2q}.\\
{\it Proof of (iii)}: The lossy bound \fref{interpolationfield} follows from \fref{coercbase}, \fref{linftyboundbis} with $\alpha=\frac 12$. Indeed, let $B=b^{-100}$, then: 
\bee
&&\int(1+\log (1+r))^C\frac{|\nabla \phi_\e|^2}{r^2(1+|\log r|)^2}\\
& \lesssim &  \int_{r\leq B}(1+\log (1+r))^C\frac{|\nabla \phi_\e|^2}{r^2(1+|\log r|)^2}+\int_{r\geq B}(1+\log (1+r))^C\frac{|\nabla \phi_\e|^2}{r^2(1+|\log r|)^2}\\
& \lesssim & |\log b|^{C_1(C)}\int \frac{|\nabla \phi_\e|^2}{r^2(1+|\log r|)^2}+\int_{r\geq B}\frac{(1+|\log r|)^C}{r^{2+\frac12}}\\
& \lesssim &|\log b|^{C_1(C)}\left[\|\e_2\|_{L^2_Q}^2+\frac{1}{\sqrt{B}}\right]\lesssim   |\log b|^{C_1(C)}\left(\|\e_2\|_{L^2_Q}^2+b^{10}\right).
\eee
{\it Proof of (iv)}: From Sobolev:
$$ \left\|\frac{\nabla \phi_\e}{1+|x|}\right\|^2_{L^{\infty}}\lesssim \left\|\frac{\nabla\phi_\e}{1+|x|}\right\|^2_{H^2}\lesssim \int \frac{|\nabla\phi_\e|^2}{1+r^2}+\|\e\|_{H^2}^2$$ and \fref{estlossyfield} follows from 
\fref{interpolationfield}.\\
{\it Proof of \fref{neoneoneo}}: We use the global $L^1$ bound \fref{bootsmalllone}, \fref{coercbase} and Cauchy Schwarz to estimate:
\bee
\int\frac{|\e|}{1+r}&\lesssim &\|\e\|_{L^2}\left(\int_{r\leq b^{-20}}\frac{1}{r^2}\right)^{\frac 12}+\int_{r\geq b^{-20}}\frac{|\e|}{1+r}\\
& \lesssim & C(M)\sqrt{|\log b|}\|\e_2\|_{L^2_Q}+b^{20}\|\e\|_{L^1}
\eee and \fref{neoneoneo} is proved.
\end{proof}

 \end{appendix}

 %%%%%%%%%%%%%%%%%%%%%%%%%%%%
%%%%%%%%%%%%%%%%%%%%%%%%%%%%
%%%%%%%%%%%%%%%%%%%%%%%%%%%%
%%%%%%%%%%%%%%%%%%%%%%%%%%%%

\end{document}